\newcommand{\N}{\mathbb{N}}
\newcommand{\Z}{\mathbb{Z}}
\newcommand{\R}{\mathbb{R}}
\newcommand{\C}{\mathbb{C}}
\newcommand{\LL}{\mathcal{L}}
\newcommand{\K}{\mathcal{K}}
\newcommand{\LB}{\mathcal{L}_B}
\newcommand{\LSB}{\mathcal{L}_{SB}}
\newcommand{\KB}{\mathcal{K}_B}
\newcommand{\Hawaii}{Hawai\kern.05em`\kern.05em\relax i}
\theoremstyle{plain}
\newtheorem{theorem}{Theorem}[section]
\newtheorem{lemma}[theorem]{Lemma}
\newtheorem{corollary}[theorem]{Corollary}
\newtheorem{proposition}[theorem]{Proposition}
\newtheorem{conjecture}[theorem]{Conjecture}
\newtheorem{question}[theorem]{Question}
\newtheorem{definition-theorem}[theorem]{Definition / Theorem}
\newtheorem*{conjecture*}{Conjecture}
\newtheorem*{theorem*}{Theorem}
\theoremstyle{definition}
\newtheorem{definition}[theorem]{Definition}
\newtheorem{example}[theorem]{Example}
\newtheorem{examples}[theorem]{Examples}
\theoremstyle{remark}
\newtheorem{remark}[theorem]{Remark}
\newtheorem*{example*}{Example}  
\newtheorem*{remark*}{Remark}
\begin{document}

\title{The UCT for $C^*$-algebras with finite complexity}

\author{Rufus Willett and Guoliang Yu}


\maketitle

\begin{abstract}
A $C^*$-algebra satisfies the Universal Coefficient Theorem (UCT) of Rosenberg and Schochet if it is equivalent in Kasparov's $KK$-theory to a commutative $C^*$-algebra.  This paper is motivated by the problem of establishing the range of validity of the UCT, and in particular, whether the UCT holds for all nuclear $C^*$-algebras.

We introduce the idea of a $C^*$-algebra that ``decomposes'' over a class $\mathcal{C}$ of $C^*$-algebras.  Roughly, this means that locally, there are approximately central elements that approximately cut the $C^*$-algebra into two  $C^*$-subalgebras from $\mathcal{C}$ that have well-behaved intersection.   We show that if a $C^*$-algebra decomposes over the class of nuclear, UCT $C^*$-algebras, then it satisfies the UCT.  The argument is based on a Mayer-Vietoris principle in the framework of controlled $KK$-theory; the latter was introduced by the authors in earlier work.   Nuclearity is used via Kasparov's Hilbert module version of Voiculescu's theorem, and Haagerup's theorem that nuclear $C^*$-algebras are amenable.

We say that a $C^*$-algebra has finite complexity if it is in the smallest class of $C^*$-algebras containing the finite-dimensional $C^*$-algebras, and closed under decomposability; our main result implies that all $C^*$-algebras in this class satisfy the UCT.  The class of $C^*$-algebras with finite complexity is large, and comes with an ordinal-number invariant measuring the complexity level.  We conjecture that a $C^*$-algebra of finite nuclear dimension and real rank zero has finite complexity; this (and several other related conjectures) would imply the UCT for all separable nuclear $C^*$-algebras.  We also give new local formulations of the UCT, and some other necessary and sufficient conditions for the UCT to hold for all nuclear $C^*$-algebras.  
\end{abstract}

\tableofcontents

\section{Introduction}

Our aim in this paper is to present some new techniques to establish the Universal Coefficient Theorem in $C^*$-algebra $K$-theory, and some new necessary and sufficient conditions for the Universal Coefficient Theorem to hold for all nuclear $C^*$-algebras.  

Unless otherwise stated, anything in this introduction called $A$ or $B$ is a \emph{separable} $C^*$-algebra.  

\subsection{The Universal Coefficient Theorem}

A $C^*$-algebra $A$ satisfies the \emph{Universal Coefficient Theorem} (UCT) of Rosenberg and Schochet \cite{Rosenberg:1987bh} if for any $C^*$-algebra $B$, there is a canonical short exact sequence 
$$
0\to \text{Ext}(K_*(A),K_*(B))\to KK(A,B)\to \text{Hom}(K_*(A),K_*(B))\to 0.
$$
Equivalently (see \cite[page 456]{Rosenberg:1987bh} or \cite[Proposition 5.2]{Skandalis:1988rr}), $A$ satisfies the UCT if it is $KK$-equivalent to a commutative $C^*$-algebra.  

The UCT is known to hold for a large class of $C^*$-algebras.  The fundamental examples are the $C^*$-algebras in the \emph{bootstrap class} $\mathcal{N}$.  This is the smallest collection of separable, nuclear $C^*$-algebras that contains all type I $C^*$-algebras, and that is closed under the following operations: extensions; stable isomorphisms; inductive limits; and crossed products by $\R$ and $\Z$.  Rosenberg and Schochet \cite{Rosenberg:1987bh} showed that any $C^*$-algebra in $\mathcal{N}$ satisfies the UCT.  Another important class of examples was established by Tu in \cite[Proposition 10.7]{Tu:1999bq}: building on the work of Higson and Kasparov \cite{Higson:2001eb} on the Baum-Connes conjecture for a-T-menable groups, Tu showed that the groupoid\footnote{To be more precise, we need standard assumptions so that the groupoid $C^*$-algebra is defined and separable: here, appropriate assumptions are that the groupoid is locally compact, Hausdorff, and second countable, and that it admits a Haar system.} $C^*$-algebra of any a-T-menable groupoid satisfies the UCT.  In particular, Tu's work applies to the groupoid $C^*$-algebras of amenable groupoids.

There has been other significant work giving sufficient conditions for the UCT to hold, and in some cases also necessary conditions: as well as the work mentioned already, one also has for example \cite[Proposition 5.2]{Skandalis:1988rr}, \cite[Corollary 8.4.6]{Rordam:2002cs}, \cite{Dadarlat:2003tg}, \cite[Remark 2.17]{Kirchberg:2015ue}, \cite[Theorem 4.17]{Barlak:2017ts}, \cite{Barlak:2017aa}, and \cite{Barlak:2020tz}.  Nonetheless, the bootstrap class and the class of $C^*$-algebras of a-T-menable groupoids, which are defined in terms of \emph{global} properties of the $C^*$-algebras involved, remain the most important classes of $C^*$-algebras known to satisfy the UCT.

On the other hand, Skandalis \cite[page 571]{Skandalis:1988rr} has shown\footnote{See also the exposition in \cite[Sections 6.1 and 6.2]{Higson:2004la}.}  that there are $C^*$-algebras that do not satisfy the UCT.  Skandalis's examples are quite concrete: they are reduced group $C^*$-algebras of countably infinite hyperbolic groups with property (T), and in particular are exact \cite[Section 6.E]{Kirchberg:1999ss}.  Looking to more exotic examples, failures of exactness can also be used to produce non-UCT $C^*$-algebras: see for example \cite[Remark 4.3]{Chabert:2004fj}.  

Despite these counterexamples, there are no known \emph{nuclear} $C^*$-algebras that do not satisfy the UCT.  Whether or not the UCT holds for all nuclear $C^*$-algebras is a particularly important open problem.  One reason for this is the spectacular recent progress (see for example \cite{Kirchberg-ICM,Phillips-documenta,GongLinNiu-1,GongLinNiu-2,ElliottGongLinNiu,Elliott:2015fb,Tikuisis:2015kx,Carrion:2020aa}) in the Elliott program \cite{Elliott:1995dq} to classify simple, separable, nuclear $C^*$-algebras by $K$-theoretic invariants.  Establishing the range of validity of the UCT is now the only barrier to getting the `best possible' classification result in this setting.  

On the other hand, work inspired by the Elliott program has led to recent, and again spectacular, success in the general structure theory of nuclear $C^*$-algebras, including the recent solution of a large part of the Toms-Winter conjecture \cite{Castillejos:2019ab,Castillejos:2019aa}.  Our motivation in the current paper is to try to bridge the gap between properties that are relevant in this structure theory -- in particular the theory of nuclear dimension \cite{Winter:2010eb} introduced by Winter and Zacharias -- and properties that imply the UCT.  In particular, our aim is to give \emph{local} conditions that imply the UCT, in contrast to the global conditions from the work of Rosenberg and Schochet \cite{Rosenberg:1987bh} and Tu \cite{Tu:1999bq} mentioned above.


\subsection{Decompositions and the main theorem}

We now introduce our sufficient condition for the UCT.   For the statement below, if $X$ is a metric space, $S$ is a subset of $X$, $x\in X$, and $\epsilon>0$ we write ``$x\in_\epsilon S$'' if there exists $s\in S$ with $d(x,s)<\epsilon$.  

\begin{definition}\label{ais}
Let $\mathcal{C}$ be a class of unital $C^*$-algebras.  A unital $C^*$-algebra\footnote{Not necessarily separable.  For applications to the UCT, only the separable case is relevant, but the definition admits interesting examples in the non-separable case, and it seems plausible there will be other applications.} $A$ \emph{decomposes over $\mathcal{C}$} if for every finite subset $X$ of the unit ball of $A$ and every $\epsilon>0$ there exist $C^*$-subalgebras $C$, $D$, and $E$ of $A$ that are in the class $\mathcal{C}$ and contain $1_A$, and a positive contraction $h\in E$ such that:
\begin{enumerate}[(i)]
\item $\|[h,x]\|< \epsilon$ for all $x\in X$;
\item $hx\in_\epsilon C$, $(1-h)x\in_\epsilon D$, and $h(1-h)x\in_\epsilon E$ for all $x\in X$;
\item for all $e$ in the unit ball of $E$, $e\in_\epsilon C$ and $e\in_\epsilon D$.
\end{enumerate}
\end{definition}

One should think of $C$ and $D$ as being approximately (unitizations of) ideals in $A$ such that $C+D=A$, and $E$ being approximately equal to (the unitization of) $C\cap D$.  We will discuss examples later.



Here is our main theorem, which was inspired partly by our earlier work on the K\"{u}nneth formula (partly in collaboration with Oyono-Oyono) \cite{Oyono-Oyono:2016qd,Willett:2019aa}, and partly by our earlier work on finite dynamical complexity (in collaboration with Guentner) \cite{Guentner:2014bh}\footnote{This was in turn inspired by the work of Guentner, Tessera, and the second author on the stable Borel conjecture for groups with finite decomposition complexity  \cite{Guentner:2009tg}.}.   See Corollary \ref{nu cor} below for the proof.

\begin{theorem}\label{main}
If $A$ is a separable, unital $C^*$-algebra that decomposes over the class of separable, nuclear $C^*$-algebras that satisfy the UCT, then $A$ is nuclear and satisfies the UCT.  
\end{theorem}

One can thus think of decomposability as an addition to the closure operations that are used in the definition of the bootstrap class $\mathcal{N}$.

\subsection{$C^*$-algebras with finite complexity}
Following the precedent established by \cite{Guentner:2013aa} in coarse geometry, the notion of decomposability suggests a complexity hierarchy on $C^*$-algebras.

\begin{definition}\label{f c}
Let $\mathcal{D}$ denote a class of unital $C^*$-algebras.  For an ordinal number $\alpha$:
\begin{enumerate}[(i)]
\item if $\alpha=0$, let $\mathcal{D}_0$ be the class of $C^*$-algebras $D$ that are locally\footnote{A $C^*$-algebra is \emph{locally} in a class $\mathcal{D}$ if for any finite subset $X$ of $D$ and any $\epsilon>0$ there is a $C^*$-subalgebra $C$ of $D$ that is in $\mathcal{D}$, and such that $x\in_\epsilon C$ for all $x\in X$.} in $\mathcal{D}$;
\item if $\alpha>0$, let $\mathcal{D}_\alpha$ be the class of $C^*$-algebras that decompose over $C^*$-algebras in $\bigcup_{\beta<\alpha}\mathcal{D}_\beta$.
\end{enumerate}
A unital $C^*$-algebra $D$ has \emph{finite complexity relative to $\mathcal{D}$} if it is in $\mathcal{D}_\alpha$ for some $\alpha$.  If $\mathcal{D}$ is the class of finite-dimensional $C^*$-algebras, we just say that $D$ has \emph{finite complexity}.  

If a unital $C^*$-algebra $D$ has finite complexity relative to $\mathcal{D}$, the \emph{complexity rank of $D$ relative to $\mathcal{D}$} is the smallest $\alpha$ such that $D$ is in $\mathcal{D}_\alpha$.  If $\mathcal{D}$ is the class of finite-dimensional $C^*$-algebras, we just say the \emph{complexity rank of $D$} with no additional qualifiers.
\end{definition}

The following result is equivalent to Theorem \ref{main} above.  However, we think the reframing in terms of complexity is quite suggestive.

\begin{theorem}\label{complex cor}
Let $\mathcal{C}$ be a class of separable, unital, nuclear $C^*$-algebras that satisfy the UCT.  Then the class of separable, unital $C^*$-algebras that have finite complexity relative to $\mathcal{C}$ consists of nuclear $C^*$-algebras that satisfy the UCT.  

In particular, every separable $C^*$-algebra of finite complexity is nuclear and satisfies the UCT.
\end{theorem}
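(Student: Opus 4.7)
The proof is by transfinite induction on the ordinal $\alpha$ to establish the statement: every separable $C^*$-algebra in $\mathcal{C}_\alpha$ is nuclear and satisfies the UCT.  Since the class of $C^*$-algebras of finite complexity relative to $\mathcal{C}$ is, by definition, the union of the $\mathcal{C}_\alpha$ as $\alpha$ ranges over all ordinals, this establishes the main assertion.  The ``in particular'' clause is then the specialization to $\mathcal{C}$ the class of finite-dimensional $C^*$-algebras, which are obviously separable and nuclear, and lie in the bootstrap class $\mathcal{N}$.

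The key structural observation driving the induction is that any $C^*$-subalgebra of a separable $C^*$-algebra is itself separable.  With this, the successor step $\alpha=\beta+1$ is immediate from Theorem \ref{main}: if $A$ is separable and decomposes over $\mathcal{C}_\beta$, then the witnessing subalgebras $C$, $D$, $E\subseteq A$ are separable, lie in $\mathcal{C}_\beta$, and hence by the inductive hypothesis are separable nuclear UCT $C^*$-algebras; thus $A$ decomposes over the class of separable, nuclear, UCT $C^*$-algebras, and Theorem \ref{main} applies.  The base case $\alpha=0$ and the limit ordinal case proceed by the same mechanism but with ``locally in'' in place of decomposability: the witnessing subalgebras are separable and, either directly from the hypothesis on $\mathcal{C}$ (in the base case) or by the inductive hypothesis applied to some $\beta<\alpha$ (in the limit case), fall in the class of separable nuclear UCT $C^*$-algebras, so $A$ is locally in that class.

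The remaining point is that the class of separable nuclear $C^*$-algebras satisfying the UCT is itself closed under the ``locally in'' operation on separable algebras.  Nuclearity under local approximation is routine, since the completely positive approximation property is itself a finite-subset-with-$\varepsilon$ condition, and local CPAP factorizations assemble into a global one.  The corresponding closure for the UCT is the single substantive step beyond Theorem \ref{main}: it can be obtained either by extracting from the local approximations a countable nested family of UCT subalgebras whose closure is $A$ and invoking closure of the bootstrap class $\mathcal{N}$ under inductive limits, or by a direct appeal to the controlled $KK$-theory developed in the body of this paper.  I expect this to be available as an earlier lemma; once it is in hand, the transfinite induction is entirely routine bookkeeping, and no further ingredient beyond Theorem \ref{main} and the automatic separability of subalgebras is needed.
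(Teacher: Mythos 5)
Your proof outline matches the paper's argument: transfinite induction on $\alpha$, with the successor step handled by Theorem \ref{main} (via Corollary \ref{nu cor}) and the base and limit cases handled by closure of separable nuclear UCT algebras under local approximation. You correctly identify that the single substantive ingredient beyond Theorem \ref{main} is this ``locally in'' closure; the paper supplies it as Remark \ref{dad rem} (a weak form of Dadarlat's theorem \cite[Theorem 1.1]{Dadarlat:2003tg}) and cites both.

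One caveat worth flagging: your first suggested route to that ingredient --- ``extracting from the local approximations a countable nested family of UCT subalgebras whose closure is $A$ and invoking closure of $\mathcal{N}$ under inductive limits'' --- does not work. Local approximation gives, for each finite set and tolerance, \emph{some} good subalgebra, but there is no reason these subalgebras can be chosen to form an increasing chain: the $C^*$-subalgebra generated by two nuclear UCT subalgebras of $A$ need not be nuclear or UCT, so you cannot simply ``merge'' approximants. This obstruction is precisely what makes Dadarlat's theorem nontrivial and is why the paper goes through the controlled $KK$-theory machinery (your second suggested route) rather than through inductive limits. Since you offered that alternative as well, your argument is ultimately sound, but the inductive-limit shortcut should be discarded.
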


We can now give some non-trivial examples of $C^*$-algebras that decompose over natural, simpler, classes.

\begin{examples}\label{com ex}
\begin{enumerate}[(i)]
\item \label{cuntz ex} In Proposition \ref{ca d}, we show that for $2\leq n <\infty$, the Cuntz algebra $\mathcal{O}_n$ has complexity rank one.
\item \label{gpd ex} In \cite{Guentner:2014bh}, Guentner and the authors introduced ``finite dynamical complexity'' for groupoids, which also comes with a notion of complexity rank.  In Proposition \ref{fdc prop} we show that if $G$ is a locally compact, Hausdorff, \'{e}tale, principal, ample groupoid with compact base space, then the complexity rank of $C^*_r(G)$ is bounded above by that of $G$.  The class of groupoids with finite dynamical complexity is quite large: see Examples \ref{fdc exes} and \ref{fdc exes 2} below.
\end{enumerate}
\end{examples}

Combining part \eqref{gpd ex} above with Theorem \ref{complex cor} gives a new proof of the UCT for the groupoid $C^*$-algebras of a large class of groupoids.  However, we cannot claim any genuinely new examples: this is because the groupoids involved are all amenable, so the UCT for their $C^*$-algebras also follows from Tu's theorem \cite{Tu:1999bq} (see Remark \ref{no new uct} below for more details).



\subsection{Kirchberg algebras}

Generalizing the Cuntz algebras from \eqref{cuntz ex} above, recall that a \emph{Kirchberg algebra} is a separable, nuclear $C^*$-algebra $A$ such that for any non-zero $a\in A$, there are $b,c\in A$ such that $bac=1_A$.  Kirchberg algebras are closely connected to the UCT problem for nuclear $C^*$-algebras thanks to the following theorem of Kirchberg: see \cite[Corollary 8.4.6]{Rordam:2002cs} or \cite[Remark 2.17]{Kirchberg:2015ue}.

\begin{theorem}[Kirchberg]\label{kirchberg reduce}
To establish the UCT for all separable, nuclear $C^*$-algebras, it suffices to establish the UCT for any Kirchberg algebra with zero $K$-theory. \qed
\end{theorem}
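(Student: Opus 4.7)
The plan is to start with an arbitrary separable nuclear $C^*$-algebra $A$ and, via a mapping cone argument, reduce the UCT for $A$ to the UCT for a Kirchberg algebra with zero $K$-theory. Since the UCT class is closed under $KK$-equivalence, and $\mathcal{O}_\infty$ is $KK$-equivalent to $\C$, I may replace $A$ by $A' := A \otimes \mathcal{O}_\infty \otimes \K$, which is separable, nuclear, stable, and $\mathcal{O}_\infty$-absorbing.

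I would next choose a UCT Kirchberg algebra $B$ with $K_*(B) \cong K_*(A)$. Such a $B$ exists by the range-of-invariants half of the Kirchberg--Phillips classification, whose construction uses inductive limits of Cuntz-type building blocks and hence sits in the bootstrap class $\mathcal{N}$. Using the UCT for $B$, lift the chosen isomorphism $K_*(A') \xrightarrow{\cong} K_*(B)$ to a class in $KK(A', B \otimes \K)$; because the source is separable, nuclear, and $\mathcal{O}_\infty$-absorbing and the target is a stable Kirchberg algebra, Kirchberg's existence theorem produces a $*$-homomorphism $\phi : A' \to B \otimes \K$ realizing this class.

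Form the mapping cone $C_\phi$, which is separable and nuclear and fits into a short exact sequence
\[
0 \to S(B \otimes \K) \to C_\phi \to A' \to 0.
\]
The six-term sequence in $K$-theory together with the fact that $\phi_*$ is an isomorphism forces $K_*(C_\phi) = 0$. Since the UCT class is closed under extensions and stable isomorphism, and $B$ satisfies the UCT by construction, it remains to prove that $C_\phi$ satisfies the UCT.

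The main obstacle is this last reduction: $C_\phi$ is separable, nuclear, and has trivial $K$-theory, but it is \emph{not simple} and so not yet a Kirchberg algebra. I would try to show that $C_\phi$ is $KK$-equivalent to a Kirchberg algebra with zero $K$-theory. After tensoring with $\mathcal{O}_\infty \otimes \K$ (a $KK$-equivalence that produces a stable, $\mathcal{O}_\infty$-absorbing model without altering $K$-theory), one hopes to invoke Kirchberg's $\mathcal{O}_2$-embedding theorem and his classification of absorbing extensions to promote this model to a \emph{simple} purely infinite one with the same $KK$-type. Producing such a simple $KK$-theoretic replacement without presupposing the UCT is the crux of the argument, and is where the deepest part of Kirchberg's classification program is doing the work.
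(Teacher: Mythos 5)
The paper does not prove this statement: it attributes it to Kirchberg and cites Rørdam's book and Kirchberg's lecture notes, so there is no in-house argument to compare against and your attempt is assessed on its own merits.

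Your skeleton (mapping cone, two-out-of-three for the UCT class, reduction to trivial $K$-theory) is the right shape, but the UCT for $B$ is invoked in the wrong direction. The UCT for $B$ asserts exactness of
$$0 \to \mathrm{Ext}(K_*(B),K_*(C)) \to KK(B,C) \to \mathrm{Hom}(K_*(B),K_*(C)) \to 0$$
for all $C$; it gives surjectivity of $KK(B,-)\to\mathrm{Hom}(K_*(B),-)$, not of $KK(-,B)\to\mathrm{Hom}(-,K_*(B))$. Surjectivity of $KK(A',B\otimes\K)\to\mathrm{Hom}(K_*(A'),K_*(B))$ is precisely (part of) the UCT for $A'$, which is what you are trying to prove, so it cannot be assumed. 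The correction is to lift the \emph{inverse} isomorphism $K_*(B)\to K_*(A')$ to a class $\beta\in KK(B,A')$ using the UCT for $B$, realize $\beta$ by a $*$-homomorphism $\phi:B\to A'$, and then form the mapping cone of \emph{this} $\phi$. The resulting semisplit extension $0\to SA'\to C_\phi\to B\to 0$ still has trivial $K$-theory on the cone, and the two-out-of-three property then gives UCT for $SA'$, hence for $A'$, hence for $A$, once UCT for $C_\phi$ is known.

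You correctly flag the non-simplicity of the cone as the crux, but the closing paragraph is a restatement of the obstacle rather than a plan. The genuine missing ingredient is a freestanding theorem of Kirchberg, proved with no UCT input, that \emph{every separable nuclear $C^*$-algebra is $KK$-equivalent to a Kirchberg algebra}. That structural result is what lets you replace $C_\phi$ by a $KK$-equivalent Kirchberg algebra with trivial $K$-theory and invoke the hypothesis; it is not a consequence of the $\mathcal{O}_2$-embedding theorem on its own, and cannot be conjured by "tensoring with $\mathcal{O}_\infty\otimes\K$ and hoping." (The same theorem is also conveniently applied at the start, to replace $A'$ by a Kirchberg algebra $D$, so that the Kirchberg--Phillips existence theorem, which wants a Kirchberg target, applies cleanly to $\beta\in KK(B,D)$.) Your write-up gestures at this but treats it as an open-ended hope; it is in fact a specific, citable theorem that carries most of the weight of the argument and should be stated and cited as such.
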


Theorems \ref{complex cor} and \ref{kirchberg reduce} imply that if any Kirchberg algebra with zero $K$-theory has finite complexity, then the UCT holds for all separable, nuclear $C^*$-algebras.  Conversely, if the UCT holds for all separable, nuclear $C^*$-algebras, then from the Kirchberg-Phillips classification theorem \cite{Kirchberg-ICM,Phillips-documenta} (see also \cite[Corollary 8.4.2]{Rordam:2002cs} for the precise statement we want here), any unital Kirchberg algebra with zero $K$-theory will be isomorphic to the Cuntz algebra $\mathcal{O}_2$, and so will have complexity rank one by Examples \ref{com ex}, part \eqref{cuntz ex}.  We summarize this discussion in the theorem below.

\begin{theorem}\label{kirch the}
The following are equivalent:
\begin{enumerate}[(i)] 
\item Any Kirchberg algebra with zero $K$-theory has complexity rank one.
\item All separable nuclear $C^*$-algebras satisfy the UCT.  \qed
\end{enumerate}
\end{theorem}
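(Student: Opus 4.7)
The plan is to verify both implications, each of which reduces to a short combination of results already stated in the introduction together with Examples \ref{com ex}(i). There is no new technical machinery to introduce; the point of the theorem is to package Theorem \ref{complex cor}, Theorem \ref{kirchberg reduce}, the Kirchberg--Phillips classification, and the computation of the complexity rank of $\mathcal{O}_2$ into a clean equivalence.

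For the forward direction (i) $\Rightarrow$ (ii), I would apply Theorem \ref{complex cor} with $\mathcal{C}$ taken to be the class of finite-dimensional $C^*$-algebras, which is trivially a class of separable nuclear $C^*$-algebras satisfying the UCT. The hypothesis that every Kirchberg algebra with zero $K$-theory has complexity rank one means, by Definition \ref{f c}, that each such algebra lies in $\mathcal{D}_1$ for this choice of $\mathcal{D}$, and hence has finite complexity. Theorem \ref{complex cor} then delivers the UCT for every such algebra, and Theorem \ref{kirchberg reduce} immediately upgrades this to the UCT for all separable nuclear $C^*$-algebras.

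For the converse (ii) $\Rightarrow$ (i), let $A$ be a Kirchberg algebra (unital, by the definition adopted in this paper) with $K_*(A) = 0$. Since $A$ is separable and nuclear, hypothesis (ii) supplies the UCT for $A$. The Kirchberg--Phillips classification theorem (in the precise form cited above from \cite[Corollary 8.4.2]{Rordam:2002cs}) then forces $A \cong \mathcal{O}_2$, and Examples \ref{com ex}(i) records that $\mathcal{O}_2$ has complexity rank one.

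In short, no step here is itself hard: the deep content sits in Theorem \ref{main} (powering the forward direction via Theorem \ref{complex cor}), in Proposition \ref{ca d} (computing the complexity rank of $\mathcal{O}_n$), and in the two cited classification results of Kirchberg and of Kirchberg--Phillips. The only task in the present argument is to check that the hypotheses of those theorems are triggered in exactly the right order, which is immediate from the definitions.
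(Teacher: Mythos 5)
Your proof is correct and follows exactly the same route as the paper: Theorems \ref{complex cor} and \ref{kirchberg reduce} for (i) $\Rightarrow$ (ii), and Kirchberg--Phillips classification plus Examples \ref{com ex}(i) (i.e., Proposition \ref{ca d}) for (ii) $\Rightarrow$ (i). You have also correctly flagged the one point that requires care, namely that Kirchberg algebras are unital by the paper's convention, which is what makes the Kirchberg--Phillips step deliver an isomorphism with $\mathcal{O}_2$.
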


Generalizing Example \ref{com ex}, part \eqref{cuntz ex} above Jaime and the first author show in  \cite{Jaime:2021vh} that a Kirchberg algebra \emph{that satisfies the UCT} has complexity rank one if and only if its $K_1$ group is torsion free, and that moreover any UCT Kirchberg algebra has complexity rank at most two.  From Theorem \ref{kirch the}, if one could prove this without the UCT assumption, then the UCT for all separable nuclear $C^*$-algebras would follow.

The paper \cite{Jaime:2021vh} also discusses several other connections between complexity rank, real rank zero, and nuclear dimension.  We will not go into this any more deeply here; suffice to say that these other connections inspired us to make the following conjectures.  

\begin{conjecture}\label{fcc1}
Any separable unital $C^*$-algebra with real rank zero and finite nuclear dimension has finite complexity.
\end{conjecture}

\begin{conjecture}\label{fcc2}
Any separable unital $C^*$-algebra with finite nuclear dimension has finite complexity relative to the class of subhomogeneous\footnote{Recall that a $C^*$-algebra $C$ is \emph{subhomogeneous} if there is $N\in \N$ and a compact Hausdorff space $X$ such that $C$ is a $C^*$-subalgebra of $M_N(C(X))$: see for example \cite[IV.1.4]{Blackadar:2006eq} for background.} $C^*$-algebras.
\end{conjecture}

Thanks to Theorem \ref{kirch the} and the fact that all Kirchberg algebras have nuclear dimension one (see \cite[Theorem G]{Bosa:2014zr}) and real rank zero (see \cite{Zhang:1990aa}), either of these conjectures implies the UCT for all separable, nuclear $C^*$-algebras.  There are many other related conjectures one could reasonably make that imply the UCT for all nuclear $C^*$-algebras.  About the strongest such conjecture would be that any separable, nuclear $C^*$-algebra with real rank zero has finite complexity\footnote{It would also be natural to drop the real rank zero assumption, and then only ask for finite complexity relative to the subhomogeneous $C^*$-algebras, or even just relative to the type I $C^*$-algebras.}.  One of the weakest is that any Kirchberg algebra with zero $K$-theory has finite complexity.

\subsection{A local reformulation of the UCT}

We now discuss the methods that go into the proof of Theorem \ref{main}.

In our earlier work \cite{Willett:2020aa}, we introduced \emph{controlled $KK$-theory} groups $KK_\epsilon(X,B)$ associated to a $C^*$-algebra $B$, a finite subset $X$ of a $C^*$-algebra $A$ and a constant $\epsilon>0$.  Very roughly (we give more details below), one defines these by representing $A$ in ``general position'' inside the stable multiplier algebra $M(B\otimes \mathcal{K})$ of $B$.  The group $KK_\epsilon(X,B)$ then consists of the ``part of the $K$-theory of $B$ that commutes with $X$, up to $\epsilon$''.

To  be more precise about this, assume that $A$ and $B$ are $C^*$-algebras, and assume for simplicity\footnote{The theory also works for $C^*$-algebras that are not unital, but the definitions are a little more complicated.} that $A$ is unital.  Let $\pi:A\to M(B\otimes \K)$ be a faithful, unital, and strongly unitally absorbing\footnote{Roughly, a strongly unitally absorbing representation is one that satisfies the conclusion of Voiculescu's theorem for all representations of $A$ on Hilbert $B$-modules; for the current discussion, it is just important that such a representation always exists.  See Definition \ref{usa def} below for details.} representation.  Fixing such a representation, identify $A$ with a diagonal subalgebra of $M_2(M(B\otimes \K))$ via the representation $\pi\oplus \pi$.  For a finite subset $X$ of the unit ball of $A$ and $\epsilon>0$, define $\mathcal{P}_\epsilon(X,B)$ to be the set of projections in $M_2(M(B\otimes \mathcal{K}))$ such that $p-$${\tiny\begin{pmatrix} 1 & 0 \\ 0 & 0 \end{pmatrix}}$ is in $M_2(B\otimes \mathcal{K})$, and such that $\|[p,x]\|< \epsilon$ for all $x\in X$.  The associated \emph{controlled $KK$-theory group}\footnote{It is canonically a group, with the operation given by Cuntz sum in an appropriate sense.} is then defined to be the set
$$
KK^0_{\epsilon}(X,B):=\pi_0(\mathcal{P}_\epsilon(X,B))
$$
of path components in $\mathcal{P}_\epsilon(X,B)$.  One can show that this group is determined up to canonical isomorphism by the subset inclusion $X\subseteq A$, by $B$, and by $\epsilon$: it does not depend on the choice of representation.  

Note that if $X=\varnothing$, then $KK^0_{\epsilon}(\varnothing,B)$ is canonically isomorphic to the usual $K$-theory group $K_0(B)$ (for any $\epsilon)$: this is what we mean when we say $KK_\epsilon(X,B)$ consists of the ``part of the $K$-theory of $B$ that commutes with $X$, up to $\epsilon$''.

Now, if $0<\delta\leq\epsilon$ and if $Y\supseteq X$ are finite subsets of $A_1$, then there is an inclusion $\mathcal{P}_\delta(Y,B)\subseteq \mathcal{P}_\epsilon(X,B)$ that induces a ``forget control map''
$$
KK_\delta(Y,B)\to KK_\epsilon(X,B)
$$
In \cite[Theorem 1.1]{Willett:2020aa}, we showed that there is a short exact `Milnor sequence' relating the inverse system built from these forget control maps to the usual $KK$-group $KK(A,B)$: see Theorem \ref{milnor ses} below for details.  This sequence is an analogue of the Milnor sequence appearing in Schochet's work \cite{Schochet:1996aa,Schochet:2002aa}; however, unlike Schochet's version, it is local in nature, and does not require the UCT.

Our first goal in this paper is to use the Milnor sequence to establish the following `local reformulation' of the UCT.

\begin{theorem}\label{reform intro}
Let $A$ be a unital $C^*$-algebra.  Then the following are equivalent:
\begin{enumerate}[(i)]
\item $A$ satisfies the UCT.
\item \label{intro reform 2} Let $B$ be a separable $C^*$-algebra such that $K_*(B)=0$, and let $\pi:A\to M(SB\otimes \K)$ be a strongly unitally absorbing representation into the stable multiplier algebra of the suspension of $B$.  Then for any finite subset $X$ of $A$ and any $\epsilon>0$ there exists a finite subset $Y$ of $A$ containing $X$ and $\delta\leq \epsilon$ such that the canonical forget control map
$$
KK_\delta(Y,SB)\to KK_\epsilon(X,SB)
$$
for the suspension of $B$ is zero. 
\end{enumerate}
\end{theorem}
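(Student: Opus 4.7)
The strategy rests on two ingredients. The first is the Milnor-type short exact sequence
\[
0 \to {\lim}^1 KK^1_{\epsilon}(X, B) \to KK(A, B) \to \lim KK_{\epsilon}(X, B) \to 0
\]
from Theorem 1.1 of \cite{Willett:2020aa}, where the inverse limits run over the directed set of pairs $(X, \epsilon)$ with $X$ a finite subset of $A_1$ and $\epsilon > 0$, ordered by $(Y, \delta) \geq (X, \epsilon)$ iff $Y \supseteq X$ and $\delta \leq \epsilon$. The second is the standard consequence of the Rosenberg--Schochet description of the UCT as $KK$-equivalence to a commutative $C^*$-algebra: a separable $A$ satisfies the UCT if and only if $KK(A, B') = 0$ for every separable $B'$ with $K_*(B') = 0$.

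The direction (ii) $\Rightarrow$ (i) should go through quickly. Given a separable $B'$ with $K_*(B') = 0$, set $B := SB'$, which is separable with $K_*(B) = K_{*-1}(B') = 0$, so that hypothesis (ii) applies and yields the pro-zero condition for the inverse system $\{KK_{\epsilon}(X, SB)\}$. Pro-zero is strictly stronger than the simultaneous vanishing of $\lim$ and $\lim^1$, so the Milnor sequence gives $KK(A, SB) = KK(A, S^2 B') = 0$; Bott periodicity then yields $KK(A, B') = 0$, and the UCT characterization recalled above finishes this direction.

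The direction (i) $\Rightarrow$ (ii) is the main obstacle, since it demands a quantitative upgrade: UCT only delivers $\lim = 0 = \lim^1$ via the Milnor sequence, and this is in general strictly weaker than pro-zero, as witnessed by the system $\Z \xleftarrow{\cdot 2} \Z \xleftarrow{\cdot 2} \cdots$, which has vanishing $\lim$ and $\lim^1$ but whose transition maps are never zero. I would argue by contradiction. Suppose there exist $(X_0, \epsilon_0)$ and a specific $B$ with $K_*(B) = 0$ for which the forget-control map $KK_{\delta}(Y, SB) \to KK_{\epsilon_0}(X_0, SB)$ is nonzero for every refinement $(Y, \delta) \geq (X_0, \epsilon_0)$. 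Along a cofinal sequence $(Y_n, \delta_n)$ with $\delta_n \to 0$ and $\bigcup_n Y_n$ dense in $A_1$, select representatives $p_n \in \mathcal{P}_{\delta_n}(Y_n, SB)$ whose classes in $KK_{\epsilon_0}(X_0, SB)$ are nonzero. Now build an auxiliary separable $C^*$-algebra $B''$ from countably many copies of $SB$ -- naturally a mapping telescope or the asymptotic algebra $C_b([1,\infty), SB)/C_0([1,\infty), SB)$ -- whose $K$-theory is forced to vanish via the six-term sequence because $K_*(SB) = 0$. Package the incompatible tower $(p_n)$ into a nonzero class in $KK(A, B'')$ by means of the $\lim^1$-piece of the Milnor sequence applied to $B''$: the persistent non-vanishing of the forget-control maps is precisely what makes the tower of classes fail to be Mittag--Leffler after pullback to $B''$. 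This class contradicts the UCT.

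The main technical subtlety is this last step, where one must construct $B''$ so that its controlled $KK$-system exactly captures the obstruction, and verify that the assembled $\lim^1$-style class is genuinely nonzero in $KK(A, B'')$ rather than dying for ancillary reasons. I expect this will require a careful comparison between the controlled inverse systems associated with $SB$ and with $B''$, and a compatibility argument that exploits the precise form of the strongly unitally absorbing representation fixed at the outset.
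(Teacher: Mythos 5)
Your (ii) $\Rightarrow$ (i) argument is essentially the paper's: you apply hypothesis (ii) at two consecutive levels of suspension to kill both the $\lim$ and ${\lim}^1$ terms in the Milnor sequence, then invoke Bott periodicity, exactly as the paper does.

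The (i) $\Rightarrow$ (ii) direction has a genuine gap, and your diagnosis of the difficulty rests on a false example. You claim the inverse system $\Z \xleftarrow{\cdot 2} \Z \xleftarrow{\cdot 2} \cdots$ has $\lim = {\lim}^1 = 0$ while not being pro-zero. The inverse limit does vanish, but ${\lim}^1$ does \emph{not}: the images $\text{Image}(A_m \to A_n) = 2^{m-n}\Z$ shrink strictly forever, so the system fails the Mittag--Leffler condition, and for inverse sequences of \emph{countable} abelian groups the Mittag--Leffler condition is \emph{equivalent} to the vanishing of ${\lim}^1$. This equivalence is Gray's theorem, recorded in the paper as Proposition \ref{gray the}, and it is exactly the ingredient you are missing. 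In particular there is no obstruction of the sort you are trying to route around, and the paper's proof is a direct (non-contradiction) argument: the groups $KK_{\epsilon_n}(X_n, SB)$ are countable (Lemma \ref{con kk countable}, using separability of $B$); the UCT plus the Milnor sequence for $B$ gives ${\lim}^1 KK_{\epsilon_n}(X_n, SB) = 0$; Gray's theorem upgrades this to Mittag--Leffler; the Milnor sequence for $SB$ gives $\lim KK_{\epsilon_n}(X_n, SB) = 0$; and a Mittag--Leffler system with vanishing inverse limit is pro-zero, since the stable images form a subsystem with surjective transition maps, each of which is therefore a quotient of the (zero) inverse limit. That is condition (ii).

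Your proposed auxiliary $B''$ is also problematic on its own terms: the asymptotic algebra $C_b([1,\infty), SB)/C_0([1,\infty), SB)$ is non-separable and so cannot serve as a test algebra in the UCT characterization, and even a separable telescope model need not have vanishing $K$-theory, since telescopes introduce precisely the ${\lim}^1$ contributions you would want to exploit. It is also unclear how a tower $(p_n)$ living in $\mathcal{P}_{\delta_n}(Y_n, SB)$ is supposed to give rise to a class in the ${\lim}^1$-piece of the Milnor sequence for the different algebra $B''$. With Gray's theorem in hand, none of this machinery is needed.
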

This is a key ingredient in our main results, but we hope it will prove to be useful in its own right.  Note in particular that there are no assumptions on $A$ other than that it is separable and unital\footnote{Unitality is not really necessary - we do not do it in this paper, but similar techniques establish the result above for non-unital separable $C^*$-algebras, with appropriately reformulated controlled $KK$-groups.}.

There is a technical variation of Theorem \ref{reform intro} that applies to nuclear $C^*$-algebras, and that plays an important role in our arguments.  The key point is one of order of quantifiers: condition \eqref{intro reform 2} from Theorem \ref{reform intro} starts with quantifiers of the form 
$$
\text{``}\forall B ~\forall \pi ~\forall X~\forall \epsilon~\exists Y ~\exists \delta .... \text{''}
$$
If $A$ is nuclear, the same statement is true with the order of quantifiers replaced with 
$$
\text{``}\forall \epsilon~ \exists \delta ~ \forall B~ \forall \pi ~\forall X~\exists Y ... \text{''}
$$
i.e.\ $\delta$ depends \emph{only} on $\epsilon$ and not on any of the other choices involved.  To establish this, we adapt an averaging argument due to Christensen, Sinclair, Smith, White, and Winter \cite[Section 3]{Christensen:2012tt}, which is in turn based on Haagerup's theorem that nuclear $C^*$-algebras are amenable \cite{Haagerup:1983wg}.

\subsection{Strategy for the proof of the main theorem}\label{strat subsec}

Assume that $A$ is a nuclear, unital $C^*$-algebra that decomposes with respect to the class of nuclear UCT $C^*$-algebras as in the statement of Theorem \ref{main}.  Assume moreover that $K_*(B)=0$.  Thanks to Theorem \ref{reform intro} above, to establish the UCT for $A$ it suffices to show that for any finite subset $X$ of the unit ball $A_1$ of $A$, and any $\epsilon>0$ there exist $Y\supseteq X$ and $\delta\leq \epsilon$ such that the canonical forget control map 
$$
KK_\delta^0(Y,SB)\to KK_\epsilon^0(X,SB)
$$
is zero.  

Our approach to this is inspired directly by our earlier work with several collaborators: this includes the work on the K\"{u}nneth formula of Oyono-Oyono and the second author \cite{Oyono-Oyono:2016qd}, and separately by the first author \cite{Willett:2019aa}; the work of Guentner and the authors on the Baum-Connes conjecture for transformation groupoids with finite dynamical complexity \cite{Guentner:2014bh}; and the work of Guentner, Tessera, and the second author on the stable Borel conjecture for groups of finite decomposition complexity \cite{Guentner:2009tg}.  These other papers all use controlled $K$-theory as opposed to $KK$-theory; the seminal result along these lines is the second author's work on the Novikov conjecture for groups with finite asymptotic dimension \cite{Yu:1998wj}.

In the current context, we use decomposability and a Mayer-Vietoris argument.  Let $\gamma>0$ be a very small constant, which is in particular smaller than $\epsilon$.  Then any suitably small\footnote{The size of $\gamma$ depends linearly on $\epsilon$ and the size of $\delta$ depends linearly on $\gamma$; the constants involved are very large.} $\delta>0$ will have the following property.  Let $h$ and $C$, $D$, and $E$ be nuclear UCT algebras as in the definition of decomposability for the given set $X$ and parameter $\delta$.   Let $Y_C$, $Y_D$ and $Y_E$ be finite subsets of the unit balls $C_1$, $D_1$, and $E_1$ respectively that contain $hX\cup\{h\}$, $(1-h)X\cup\{h\}$ and $h(1-h)X\cup \{h\}$ respectively up to $\delta$-error, and so that $Y_C$ and $Y_D$ both contain $Y_E$ up to $\delta$-error.  Let $Y=Y_C\cup Y_D\cup Y_E\cup X$.  Then one can construct a diagram\footnote{The form of this diagram is not new: the basic idea is modeled on \cite[Diagram (5.8)]{Guentner:2009tg} from the work of the Guentner, Tessera, and the second author on the stable Borel conjecture for groups with finite decomposition complexity.  See also \cite[Proposition 7.6]{Guentner:2014bh} from work of the Guentner and the authors in a more closely related context.} of the form:
\begin{equation}\label{main diag}
\xymatrix{ & KK^0_\delta(Y,SB) \ar[d] \ar[r]^-{\kappa_C\oplus \kappa_D} & KK^0_{2\delta}(Y_C,SB)\oplus KK^0_{2\delta} (Y_D,SB) \\
KK_{\gamma}(Y_E,S^2B) \ar[r]^-\partial & KK^0_\epsilon(X,SB) & },
\end{equation}
where the vertical arrow is the canonical forget control map.   This diagram has the ``exactness'' property that if $[p]$ goes to zero under the map 
\begin{equation}\label{hoz map 1}
\kappa_C\oplus \kappa_D : KK^0_\delta(Y,B) \to KK^0_{2\delta}(Y_C,SB)\oplus KK^0_{2\delta} (Y_D,SB)
\end{equation}
then the image of $[p]$ under the forget control map $KK^0_\delta(Y,SB) \to KK^0_\epsilon(X,SB)$ is in the image of the map
\begin{equation}\label{hoz map 2}
\partial: KK_{\gamma}(Y_E,S^2B) \to KK^0_\epsilon(X,SB).
\end{equation}
However, as $K_*(B)=0$, if $\gamma$ and $\delta$ are small enough, one can use Theorem \ref{uct reform} (in the stronger form for nuclear $C^*$-algebras) to choose $Y_C$, $Y_D$, and $Y_E$ large enough so that the maps in lines \eqref{hoz map 1} and \eqref{hoz map 2} are zero.  This completes the proof.  

In the detailed exposition below we structure the proof to give it as `local' a flavor as possible, partly as we suspect that the ideas might be useful in other contexts.  The two main `local'(ish) technical results are recorded as Propositions \ref{uct incl} and \ref{mv vanish technical} below.

The argument above is directly inspired by the classical Mayer-Vietoris principle.  Indeed, assume that $C$ and $D$ are nuclear \emph{ideals} in $A$ with intersection $E$, and such that $A=C+ D$.  Then there is\footnote{It is not in the literature as far as we can tell.  For nuclear $C^*$-algebras, it can be derived from the usual long exact sequence in $KK$-theory using, for example, the argument of \cite[Proposition 2.7.15]{Willett:2010ay}.} an exact Mayer-Vietoris sequence 
$$
\cdots \to KK^0(E,SB) \to KK^0(A,B) \to KK^0(C,B)\oplus KK^0(D,B)\to \cdots .
$$
In particular if the groups at the left and right are zero, then the group in the middle is also zero.  Our analysis of the diagram in line \eqref{main diag} is based on a concrete construction of this classical Mayer-Vietoris sequence that can be adapted to our controlled setting.  The idea has its roots in algebraic $K$-theory, going back at least as far  as \cite[Chapter 2]{Milnor:1971kl}.   Having said this, there is significant work to be done adapting these classical ideas to the analytic superstructure that we built in \cite{Willett:2020aa}, and the resulting formulas and arguments end up being quite different.

\begin{remark}\label{intro nuc rem}
It would be very interesting to remove the nuclearity hypothesis from Theorem \ref{main}, or at least to replace it with something weaker such as exactness.  Let us explain how nuclearity is used in the proof of Theorem \ref{main}, in the hope that some reader will see a way around it.  

The first use of nuclearity is to show that any nuclear, unital $C^*$-algebra admits strongly unitally absorbing representations whose restriction to any nuclear, unital $C^*$-subalgebra is also strongly unitally absorbing: see Corollary \ref{sub sua} below.  The proof of this is based on Kasparov's version of Voiculescu's theorem for Hilbert modules \cite[Section 7]{Kasparov:1980sp}.  It seems plausible from the discussion in Remark \ref{nuc rem} below that some form of nuclearity is necessary for this to hold, but we do not know this.  

The second place nuclearity is used is via an averaging argument due to Christensen, Sinclair, Smith, White, and Winter \cite[Section 3]{Christensen:2012tt}; this is applicable to nuclear $C^*$-algebras thanks to Haagerup's theorem that nuclear $C^*$-algebras are always amenable \cite{Haagerup:1983wg}.  This lets us prove a stronger version of Theorem \ref{reform intro}: see Corollary \ref{uct reform 2} below.  We do not know if this result holds without nuclearity: see Remark \ref{con rem} for a more detailed discussion.
\end{remark}

\subsection{Notation and conventions}

For a subset $S$ of a metric space $X$, $x\in X$ and $\epsilon>0$, we write ``$x\in_\epsilon S$'' if there is $s\in S$ with $d(x,s)<\epsilon$.  For elements $x,y$ of a metric space $X$, we write ``$x\approx_\epsilon y$'' if $d(x,y)<\epsilon$.  

We write $\ell^2$ for $\ell^2(\N)$.  Throughout, the letters $A$ and $B$ are reserved for \emph{separable} $C^*$-algebras.  The letter $C$ will refer to a possibly non-separable $C^*$-algebra. The unit ball of $C$ (or a more general normed space) is denoted by $C_1$, its unitization is $C^+$, its multiplier algebra is $M(C)$, its suspension is $SC$, and its $n$-fold suspension is $S^nC$.  We write $M_n$ or $M_n(\C)$ for the $n\times n$ matrices, and $M_n(C)$ for the $n\times n$ matrices over a $C^*$-algebra $C$.

Our conventions on Hilbert modules follow those of Lance \cite{Lance:1995ys}.   We will write $H_B:=\ell^2\otimes B$ for the standard Hilbert $B$-module, and $\LB$, respectively $\KB$, as shorthand for the $C^*$-algebra $\LL(H_B)$ of adjointable operators on $H_B$, respectively the $C^*$-algebra $\K(H_B)$ of compact operators on $H_B$.  We will typically identify $\LB$ with the ``diagonal subalgebra'' $1_{M_n}\otimes \LB$ of $M_n\otimes \LB= M_n(\LB)$.  Thus we might write ``$[x,y]$'' for the commutator of $x\in\LB$ and $y\in M_n(\LB)$, when it would be more strictly correct to write something like ``$[1_{M_n}\otimes x,y]$''.

The symbol ``$\otimes$'' always denotes a completed tensor product: either the external tensor product of Hilbert modules (see \cite[Chapter 4]{Lance:1995ys} for background on this), or the minimal tensor product of $C^*$-algebras (see for example \cite[Chapter 3]{Brown:2008qy}).  

We will sometimes write $0_n$ and $1_n$ for the zero matrix and identity matrix of size $n$ when this seems helpful to avoid confusion, although we will generally omit the subscripts to avoid clutter.  If $n\leq m$, we will also use $1_n\in M_m(\C)$ for the rank $n$ projection with $n$ ones in the top-left part of the diagonal and zeros elsewhere.  Given an $n\times n$ matrix $a$ and an $m\times m$ matrix $b$, $a\oplus b$ denotes the ``block sum'' $(n+m)\times (n+m)$ matrix defined by 
$$ 
a\oplus b:=\begin{pmatrix} a & 0 \\ 0 & b \end{pmatrix}.
$$

Finally, $K_*(A):=K_0(A)\oplus K_1(A)$ denotes the graded $K$-theory group of a $C^*$-algebra, and $KK^*(A,B):=KK^0(A,B)\oplus KK^1(A,B)$ the graded $KK$-theory group.  We will typically just write $KK(A,B)$ instead of $KK^0(A,B)$.

\subsection{Outline of the paper}

Section \ref{reform sec} gives our reformulation of the UCT in terms of a concrete vanishing condition for controlled $KK$-theory.  The key ingredients for this are the Milnor sequence from \cite[Theorem 1.1]{Willett:2020aa}, and some ideas around the Mittag-Leffler condition from the theory of inverse limits (see for example \cite[Section 3.5]{Weibel:1995ty}).  We also show that a stronger vanishing result holds for nuclear, UCT $C^*$-algebras using an averaging argument of Christensen, Sinclair, Smith, White, and Winter \cite[Section 3]{Christensen:2012tt}; the averaging argument is in turn based on Haagerup's theorem \cite{Haagerup:1983wg} that nuclearity implies amenability.

Section \ref{flex sec} discusses our controlled $KK^0$-groups.  We introduced these in \cite{Willett:2020aa}, but we need a technical variation here.  This is essentially because in \cite{Willett:2020aa} we were setting up general theory, and for this it is easier to work with projections in a fixed $C^*$-algebra.  In this paper we are doing computations with concrete algebraic formulas, where it is more convenient to work with general idempotents, and to allow taking matrix algebras.  We will, however, use both versions in this paper, as we need to relate our work back here to the general theory of \cite{Willett:2020aa}.  We also introduce controlled $KK^1$-groups in a concrete formulation using invertible operators: in our earlier work \cite{Willett:2020aa} we (implicitly) defined controlled $KK^1$-groups using suspensions, but here we also need the more concrete version.

Section \ref{tech sec} collects together some technical facts.  These are all analogues for controlled $KK$-theory of well-known results from $K$-theory: for example, we prove ``controlled versions'' of the statements that homotopic idempotents are similar, and that similar idempotents are homotopic (up to increasing matrix sizes).   Some arguments in this section are adapted from the work of Oyono-Oyono and the second author \cite{Oyono-Oyono:2011fk} on controlled $K$-theory.

Section \ref{fd sec} revisits the vanishing conditions of Section \ref{reform sec}.  Using the techniques of Section \ref{tech sec}, we reformulate these results in the more flexible setting allowed by Section \ref{flex sec}.  This gives us the vanishing conditions that are the first main technical ingredient needed for Theorem \ref{main}.

Section \ref{bound sec} establishes the second main technical ingredient needed for Theorem \ref{main}.  Here we construct a ``Mayer-Vietoris boundary map'' for controlled $KK$-theory, and prove that it has an exactness property.  The construction is an analogue of the usual index map of operator $K$-theory (see for example \cite[Chapter 9]{Rordam:2000mz}), although concrete formulas for the Mayer-Vietoris boundary map unfortunately seem to be missing from the $C^*$-algebra literature.  The  formulas we use are instead inspired by classical formulas from algebraic $K$-theory \cite[Chapter 2]{Milnor:1971kl}, adapted to reflect our analytic setting.

Finally in the main body of the paper, Section \ref{dec sec} puts everything together and gives the proofs of Theorem \ref{main} and Theorem \ref{complex cor}.  We also include technical `local' vanishing results that we hope elucidate the structure of the proof, and might be useful in other contexts.
 
The paper concludes with Appendix \ref{examples app}, which gives examples of $C^*$-algebras with finite complexity.  We first use a technique of Winter and Zacharias \cite[Section 7]{Winter:2010eb} to show that the Cuntz alegbras $\mathcal{O}_n$ with $2\leq n<\infty$ have complexity rank one.  We then use our joint work with Guentner on dynamic complexity \cite{Guentner:2014bh} to show that ample, principal, \'{e}tale groupoids with finite dynamical complexity and compact base space have $C^*$-algebras of finite complexity; we also get a similar result without the ampleness assumption if we allow $C^*$-algebras with finite complexity relative to subhomogeneous $C^*$-algebras.

\subsection{Acknowledgements}

The authors were supported by the US NSF (DMS 1564281, DMS 1700021, DMS 1901522, DMS 2000082, DMS 2247313, DMS 2247968), and the Simons Fellow Program throughout the writing of this paper, and gratefully acknowledge this support.  We thank Wilhelm Winter for some helpful comments on a preliminary draft.  We thank Claude Schochet for pointing out the work of Gray \cite{Gray:1966tq} on inverse limits: this reference allowed us to significantly improve the results from an earlier circulated draft of this paper.  Finally, we thank the anonymous referee for useful comments.

\section{Reformulating the UCT}\label{reform sec}

In this section (as throughout), if $B$ is a separable $C^*$-algebra, then $\LB$ and $\K_B$ are respectively the adjointable and compact operators on the standard Hilbert $B$-module $\ell^2\otimes B$.  

Our goal in this section is to recall the definition of the controlled $KK$-theory groups, and then to reformulate the universal coefficient theorem in these terms.  

We first recall the definition of the controlled $KK$-theory groups from \cite{Willett:2020aa}: to be precise, we need the version from \cite[Sections A.1 and A.2]{Willett:2020aa} that is specific to unital $C^*$-algebras.   We need a definition.

\begin{definition}\label{np even}
Let $B$ be a separable $C^*$-algebra.  Choose a unitary isomorphism $\ell^2\cong \C^2\otimes \ell^2\otimes \ell^2$, which induces a unitary isomorphism $\ell^2\otimes B\cong (\C^2\otimes \ell^2\otimes \ell^2)\otimes B$ of Hilbert $B$-modules.  With respect to this isomorphism, let $e\in \LB$ be the projection corresponding to $\begin{pmatrix} 1 & 0 \\ 0 & 0 \end{pmatrix} \otimes 1_{\ell^2\otimes \ell^2\otimes B}$.  We call $e$ the \emph{neutral projection}.  A subset $X$ of $\LB$ is called \emph{large} if every $x\in X$ is of the form $1_{\C^2\otimes \ell^2}\otimes y$ for some $y\in \mathcal{L}(\ell^2\otimes B)$ with respect to this decomposition.
\end{definition}

\begin{definition}\label{basic con kk}
Let $B$ be a separable $C^*$-algebra.  Let $\epsilon>0$, let $X$ be a finite, large, subset of the unit ball of $\LB$ and let $e\in \LB$ be the neutral projection as in Definition \ref{np even}.  Let $\mathcal{P}_\epsilon(X,B)$ consist of those projections $p$ in $\LB$ such that:
\begin{enumerate}[(i)]
\item $p-e\in \K_B$; and
\item $\|[p,x]\|<\epsilon$ for all $x\in X$.
\end{enumerate}
Define $KK_{\epsilon}(X,B)$ to be the set $\pi_0(\mathcal{P}_\epsilon(X,B))$ of path components of $\mathcal{P}_\epsilon(X,B)$.  We write $[p]\in KK_\epsilon(X,B)$ for the class of $p\in \mathcal{P}_\epsilon(X,B)$.  

Choose now isometries $t_1,t_2\in \mathcal{B}(\ell^2)$ satisfying the \emph{Cuntz relation} $t_1t_1^*+t_2t_2^*=1$, and define $s_i:=1_{\C^2}\otimes t_i \otimes 1_{\ell^2\otimes B}\in \LB$.  Define an operation on $KK_\epsilon(X,B)$ by the \emph{Cuntz sum}
$$
[p]+[q]:=[s_1ps_1^*+s_2qs_2^*].
$$
\end{definition}  

The same proof as \cite[Lemma A.4]{Willett:2020aa} shows that $KK_\epsilon(X,B)$ is an abelian group, with identity element given by the class $[e]$ of the neutral projection.

We finish this subsection with two ancillary lemmas.  The first is extremely well-known; we include an argument for completeness as we do not know a convenient reference.

\begin{lemma}\label{spec lem}
Let $a$ and $b$ be elements of a unital $C^*$-algebra with $b$ normal.  Then any $z$ in the spectrum of $a$ is contained within distance $\|a-b\|$ of the spectrum of $b$.
\end{lemma}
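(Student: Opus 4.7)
The plan is to prove the contrapositive by contradiction: suppose $z \in \sigma(a)$ but $d(z, \sigma(b)) > \|a-b\|$, and derive that $z - a$ is invertible.

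First, since $z \notin \sigma(b)$, the element $z - b$ is invertible. Here I would invoke the crucial role of normality: by the continuous functional calculus applied to the normal element $b$, the norm of the resolvent is controlled by the spectrum, namely
\[
\|(z-b)^{-1}\| \;=\; \sup_{w \in \sigma(b)} \frac{1}{|z-w|} \;=\; \frac{1}{d(z,\sigma(b))}.
\]
By the assumption $d(z,\sigma(b)) > \|a-b\|$, this gives $\|(z-b)^{-1}\| \cdot \|a-b\| < 1$.

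Next I would write the algebraic identity
\[
z - a \;=\; (z-b) - (a-b) \;=\; (z-b)\bigl(1 - (z-b)^{-1}(a-b)\bigr),
\]
and note that the operator norm of $(z-b)^{-1}(a-b)$ is strictly less than $1$ by the previous estimate. A standard Neumann series argument then shows that $1 - (z-b)^{-1}(a-b)$ is invertible, and hence so is $z-a$ as a product of two invertibles. This contradicts $z \in \sigma(a)$, completing the proof.

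The only nontrivial ingredient is the identity $\|(z-b)^{-1}\| = 1/d(z,\sigma(b))$ for normal $b$, which is precisely where normality is used (and in fact the conclusion can fail dramatically for non-normal $b$, e.g.\ for nilpotent matrices, so there is no obstacle here beyond correctly invoking the functional calculus). Everything else is routine manipulation with the Neumann series.
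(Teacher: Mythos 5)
Your proof is correct and follows essentially the same route as the paper: both use the continuous functional calculus for the normal element $b$ to get $\|(z-b)^{-1}\| = 1/d(z,\sigma(b)) < \|a-b\|^{-1}$, and then a Neumann-series argument to conclude $z-a$ is invertible. The only cosmetic difference is that you factor $z-a = (z-b)\bigl(1-(z-b)^{-1}(a-b)\bigr)$ while the paper shows directly that $(a-z)(b-z)^{-1}$ is within distance $1$ of the identity; these are the same computation.
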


\begin{proof}
We need to show that if $z$ is further than $\|a-b\|$ from the spectrum of $b$, then $a-z$ is invertible.  Indeed, in this case the continuous functional calculus implies that $\|(b-z)^{-1}\|< \|a-b\|^{-1}$.  Hence 
$$
\|(a-z)(b-z)^{-1}-1\|\leq \|(a-z)-(b-z)\|\|(b-z)^{-1}\|<1,
$$
whence $(a-z)(b-z)^{-1}$ is invertible, and so $a-z$ is invertible too.
\end{proof}

\begin{lemma}\label{con kk countable}
Let $B$ be a separable $C^*$-algebra, let $\epsilon>0$, and let $X$ be a finite, large, subset of the unit ball of $\LB$.  With notation as in Definition \ref{basic con kk}, the group $KK_\epsilon(X,B)$ is countable.
\end{lemma}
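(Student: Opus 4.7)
The strategy is to show that $\mathcal{P}_\epsilon(X,B)$ is a separable metric space in which every point has a norm-open neighborhood lying in a single path component; countability of $\pi_0(\mathcal{P}_\epsilon(X,B))$ will then follow from the Lindel\"of property.

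For separability, since $B$ is separable the $C^*$-algebra $M_2(\K_B)$ is separable, hence so is the affine subspace $e + M_2(\K_B)$ of $M_2(\LB)$ in the norm topology. By Definition \ref{basic con kk}, $\mathcal{P}_\epsilon(X,B) \subseteq e + M_2(\K_B)$, so it is separable and Lindel\"of as a metric subspace.

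For local path-connectivity, fix $p \in \mathcal{P}_\epsilon(X,B)$ and set $\epsilon' := \max_{x \in X}\|[p,x]\| < \epsilon$. I will show that any $q \in \mathcal{P}_\epsilon(X,B)$ with $\|q - p\|$ sufficiently small is connected to $p$ inside $\mathcal{P}_\epsilon(X,B)$. Following the standard rotation-of-projections construction, put $v := qp + (1-q)(1-p)$, so $vp = qv$ and a short computation gives $v^*v = 1 - (p-q)^2$. For $\|p - q\| < 1$, Lemma \ref{spec lem} and continuous functional calculus produce the unitary $u := v(v^*v)^{-1/2}$, which satisfies $q = upu^*$. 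Using $p - e, q - e \in M_2(\K_B)$ one checks directly that $v - 1 \in M_2(\K_B)$, and thus $u - 1 \in M_2(\K_B)$. For $\|p - q\|$ small enough, Lemma \ref{spec lem} forces the spectrum of $u$ to lie in a small arc around $1$, so $h := -i\log u$ is a well-defined self-adjoint element of $M_2(\K_B)$ of small norm. Setting $u_t := \exp(ith)$ and $p_t := u_t p u_t^*$ yields a norm-continuous path of projections from $p$ to $q$ with $p_t - e \in M_2(\K_B)$ throughout, and since $\|x\| \le 1$ for $x \in X$,
$$
\|[p_t, x]\| \le \|[p,x]\| + 2\|p_t - p\| \le \epsilon' + 4\|u_t - 1\|,
$$
which stays strictly below $\epsilon$ once $\|p - q\|$ is small enough.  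Hence $[p] = [q]$ in $KK_\epsilon(X,B)$.

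Combining the two parts, $\mathcal{P}_\epsilon(X,B)$ admits a cover by open balls each contained in a single path component; Lindel\"of extracts a countable subcover, so $KK_\epsilon(X,B) = \pi_0(\mathcal{P}_\epsilon(X,B))$ is countable. The main non-routine point is verifying that the rotation unitary $u$ and its logarithm remain in $1 + M_2(\K_B)$ rather than merely close to it; this is an elementary ideal-theoretic calculation using $p - e, q - e \in \K_B$, together with the spectral control provided by Lemma \ref{spec lem}.
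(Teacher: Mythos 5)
Your proof is correct, but the local step connecting nearby cycles uses a different construction than the paper's.

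Both arguments share the same overall shape: separability of $\K_B$ makes $\mathcal{P}_\epsilon(X,B)$ a separable metric space, and then one shows that any two sufficiently close points of $\mathcal{P}_\epsilon(X,B)$ lie in the same path component. (Your ``local path-connectivity + Lindel\"{o}f'' phrasing and the paper's ``countable dense set surjects onto $\pi_0$'' phrasing are equivalent encodings of the same topological fact.) The difference is in how the local path is built. The paper's version is shorter: given $p$ and a nearby $q$ from the dense set, it simply takes the straight-line segment $p_t := (1-t)p + tq$ of positive contractions; Lemma \ref{spec lem} shows $\mathrm{spec}(p_t) \subseteq [0,\delta) \cup (1-\delta,1]$, so one can apply $\chi$ (the characteristic function of $(1/2,\infty)$) to each $p_t$ and get a path of honest projections $\chi(p_t)$ that stays in $\mathcal{P}_\epsilon(X,B)$, with $\chi(p_t) - e \in \K_B$ by working modulo the ideal. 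Your version instead uses the classical rotation of nearby projections: $v := qp + (1-q)(1-p)$, its polar unitary $u := v(v^*v)^{-1/2}$ with $upu^* = q$, the logarithm $h := -i\log u$, and the unitary path $u_t := \exp(ith)$. Both are standard and both are correct; your computations ($vp = qv$, $v^*v = 1 - (p-q)^2$, $v - 1 \in M_2(\K_B)$ hence $u_t - 1 \in M_2(\K_B)$, and the commutator estimate $\|[p_t,x]\| \le \epsilon' + 4\|u_t - 1\|$) all check out. The paper's approach is a little lighter, since it avoids the polar decomposition and the logarithm and needs only one application of the functional calculus; yours is somewhat longer but has the advantage that the path is explicitly given by unitary conjugation, which can be convenient if one wants to track more structure along the homotopy. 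As a stylistic remark: you invoke Lemma \ref{spec lem} to justify invertibility of $v$, where the Neumann series estimate $\|1 - v^*v\| = \|p-q\|^2 < 1$ already suffices; Lemma \ref{spec lem} is only truly needed to locate $\mathrm{spec}(u)$ in a small arc around $1$ so that $\log u$ is defined.
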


\begin{proof}
As $B$ is separable $\K_B$ is separable, and so the set $\mathcal{P}_\epsilon(X,B)$ is also separable.  Let $S$ be a countable dense subset of $\mathcal{P}_\epsilon(X,B)$.  It suffices to show that the map $S\to KK_\epsilon(X,B)$ defined by $p\mapsto [p]$ is surjective.  

Let $p\in \mathcal{P}_\epsilon(X,B)$ be arbitrary, and define 
$$
\delta:=\min\Big\{\frac{1}{4}(\epsilon-\max_{x\in X} \|[p,x]\|),1/2\Big\}.
$$
Let $q\in S$ be such that $\|p-q\|<\delta$, and let $p_t:=(1-t)p+tq$ for $t\in [0,1]$.  Then for each $t\in [0,1]$, $\|p_t-p\|<\delta$, so Lemma \ref{spec lem} and that $p_t$ is a positive contraction implies that the spectrum $p_t$ is contained in $[0,\delta)\cup (1-\delta,1]$.  Let $\chi$ be the characteristic function of $(1/2,\infty)$.  Then $\|\chi(p_t)-p_t\|<\delta$ for all $t$, whence $\|\chi(p_t)-p\|<2\delta$ for all $t$, from which it follows that $\|[\chi(p_t),x]\|<\epsilon$ for all $t$ and all $x\in X$.  As $p_t-e\in \K_B$ for all $t$, it follows from the fact that $\K_B$ is an ideal in $\LB$ that $\chi(p_t)-e\in \K_B$ too.  Hence $(\chi(p_t))_{t\in [0,1]}$ is a path connecting $p$ and $q$ within $\mathcal{P}_\epsilon(X,B)$ so $[p]=[q]$, and we are done.
\end{proof}

\subsection{The general case}\label{gen reform sec}

We need a special class of representations on Hilbert $B$-modules, essentially taken from work of Thomsen \cite[Definition 2.2]{Thomsen:2000aa} (see also \cite[Definition A.11]{Willett:2020aa}).  We do not need the details of the definition below, and only include it for completeness: all we really need are the facts about existence of such representations in Lemma \ref{scalar sa exist} below.

\begin{definition}\label{usa def}
Let $A$ be a separable, unital $C^*$-algebra, and let $B$ be a separable $C^*$-algebra.  A representation $\sigma:A\to \LB$ is \emph{unitally absorbing} if for any unital completely positive map $\phi:A\to \LB$ there exists a sequence of isometries $(v_n)$ in $\LB$ such that $\|v_n^*\sigma(a)v_n-\phi(a)\|\to 0$ as $n\to\infty$, and such that $v_n^*\sigma(a)v_n-\phi(a)\in \K_B$ for all $n\in \N$.  

For a representation $\sigma:A\to \LB=\mathcal{L}(H_B)$, let $\sigma^\infty:A\to \mathcal{L}(H_B^{\oplus\infty})$ be its infinite amplification, which we identify with a representation $\sigma^\infty:A\to \LB$ via a choice of unitary isomorphism $(\ell^2)^{\oplus \infty}\cong \ell^2$ as in the string of identifications below 
$$
\mathcal{L}(H_B^{\oplus \infty})= \mathcal{L}((\ell^2\otimes B)^{\oplus \infty})= \mathcal{L}((\ell^2)^{\oplus \infty}\otimes B)\cong \mathcal{L}(\ell^2\otimes B)=\LB
$$
(all of the identifications labeled ``='' are canonical).  A unital representation $\pi:A\to \LB$ is \emph{strongly unitally absorbing} if there is a unitally absorbing representation $\sigma:A\to \LB$ such that $\pi=\sigma^{\oplus\infty}$.
\end{definition}

Note that a (strongly) unitally absorbing representation is faithful.  The following result is essentially due to Thomsen and Kasparov.  Our main use of part \eqref{sa nuc} occurs much later in the paper.

\begin{lemma}\label{scalar sa exist}
Let $A$ be a separable, unital $C^*$-algebra, and let $B$ be a separable $C^*$-algebra.  Then:
\begin{enumerate}[(i)]
\item \label{sa gen} There exists a strongly unitally absorbing representation $\pi:A\to \LB$.
\item \label{sa nuc} Assume in addition that $A$ or $B$ is nuclear.  Let $\sigma:A\to \mathcal{B}(\ell^2)$ be any faithful unital representation, let $\iota:\mathcal{B}(\ell^2)\to \LB$ be the canonical inclusion arising from the decomposition $H_B=\ell^2\otimes B$, and let $\pi:A\to \LB$ be the infinite amplification of $\iota\circ \sigma$.  Then $\pi$ is strongly unitally absorbing.
\end{enumerate}
\end{lemma}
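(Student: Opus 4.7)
The plan is to derive both parts from Weyl--von Neumann--Voiculescu-type absorption theorems: part \eqref{sa gen} follows from the general existence result of Thomsen \cite{Thomsen:2000aa}, while part \eqref{sa nuc} follows from Kasparov's Hilbert-module version of Voiculescu's theorem \cite[Section 7]{Kasparov:1980sp}, which in the nuclear case identifies a concrete family of unitally absorbing representations. For part \eqref{sa gen}, I would simply invoke Thomsen's construction to produce a unitally absorbing $\sigma:A\to \LB$, which exists for any separable unital $A$ and separable $B$ with no nuclearity hypothesis. Setting $\pi := \sigma^{\oplus\infty}$ then yields a strongly unitally absorbing representation directly from Definition \ref{usa def}, with no further work needed.

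For part \eqref{sa nuc}, write $\tau := \iota\circ \sigma$, so that under the chain of unitary identifications in Definition \ref{usa def}, $\pi$ is identified with the infinite amplification $\tau^{\oplus\infty}$. The key step is to show that $\pi$ itself is unitally absorbing: given a unital completely positive $\phi:A\to \LB$, one must produce isometries $v_n \in \LB$ with $v_n^*\pi(a)v_n \to \phi(a)$ in norm, and with $v_n^*\pi(a)v_n - \phi(a) \in \K_B$ for every $n$ and every $a\in A$. Since $\pi$ is faithful (as $\tau$ is) and manifestly of infinite multiplicity (so that $\pi\cong \pi^{\oplus\infty}$ under a canonical unitary isomorphism), this is exactly the content of Kasparov's Voiculescu theorem in the nuclear setting. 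Once established, one takes $\sigma' := \pi$ in Definition \ref{usa def}, writing $\pi \cong \pi^{\oplus\infty} = (\sigma')^{\oplus\infty}$ with $\sigma'$ unitally absorbing, which gives that $\pi$ is strongly unitally absorbing.

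The main technical obstacle is the compactness clause $v_n^*\pi(a)v_n - \phi(a)\in \K_B$ in Definition \ref{usa def}, not merely the norm approximation. Point-norm convergence alone would follow routinely from a Stinespring dilation together with infinite amplification, but the Weyl--von Neumann--Berg-style requirement that the error terms lie in $\K_B$ is exactly where nuclearity is used: the completely positive approximation property for nuclear $C^*$-algebras lets one approximate $\phi$ point-norm by maps factoring through matrix algebras, and such finite-dimensional factorizations can be implemented by finite-rank isometries into $\pi$ whose compressions differ from the matrix approximations only by elements of $\K_B$. Stitching these finite-rank isometries together along an approximate identity for $\K_B$ then yields the desired $(v_n)$. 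In the absence of nuclearity (part \eqref{sa gen}), Thomsen's more delicate argument is required, which is why the two parts must be handled separately.
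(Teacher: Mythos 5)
Your proposal uses the same two ingredients as the paper: Thomsen's existence theorem \cite[Theorem 2.4]{Thomsen:2000aa} for part \eqref{sa gen}, and Kasparov's Hilbert-module Voiculescu theorem \cite[Theorem 5]{Kasparov:1980sp} for part \eqref{sa nuc}, and the overall strategy is essentially the paper's. The one genuine routing difference is in part \eqref{sa nuc}: the paper first replaces $\sigma$ by $\sigma^{\oplus\infty}$ so that $\sigma(A)\cap\K(\ell^2)=\{0\}$ (this hypothesis is required by Kasparov's Theorem 5), then shows $\iota\circ\sigma$ is unitally absorbing, after which $\pi=(\iota\circ\sigma)^{\oplus\infty}$ is strongly unitally absorbing directly from Definition \ref{usa def}; you instead prove $\pi$ itself is unitally absorbing and invoke $\pi\cong\pi^{\oplus\infty}$. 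Your route works --- since $\pi\cong\iota\circ\sigma^{\oplus\infty}$ and $\sigma^{\oplus\infty}(A)\cap\K(\ell^2)=\{0\}$ holds automatically, Kasparov applies --- but you never explicitly note the $\sigma(A)\cap\K(\ell^2)=\{0\}$ hypothesis, which is exactly what forces the reduction in the paper, so it is worth flagging. The final paragraph re-sketching why nuclearity matters via the CPAP is not needed once Kasparov's theorem is cited as a black box; the paper does not include such a sketch, and the actual mechanism in Kasparov's proof (quasi-central approximate units) is somewhat different from the ``stitch finite-rank isometries along an approximate identity'' heuristic you describe.
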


\begin{proof}
For part \eqref{sa gen}, Thomsen shows in \cite[Theorem 2.4]{Thomsen:2000aa} that a unitally absorbing representation $\sigma:A\to \LB$ exists under the given hypotheses.  Its infinite amplification  $\pi$ is then strongly unitally absorbing.  

For part \eqref{sa nuc}, note first that identifying $(\iota\circ\sigma)^\infty$ with $(\iota\circ (\sigma^{\oplus \infty}))^\infty$ we may assume $\sigma$ is the infinite amplification of some faithful unital representation $A\to \mathcal{B}(\ell^2)$.  Having made this assumption, note that $\sigma(A)\cap\K(\ell^2)=\{0\}$.  In \cite[Theorem 5]{Kasparov:1980sp}, Kasparov shows that if $A$ is a separable, unital $C^*$-algebra and $\sigma:A\to \mathcal{B}(\ell^2)$ is a faithful representation such that $\sigma(A)\cap \K(\ell^2)=\{0\}$, and moreover if either $A$ or $B$ is nuclear, then the composition $\iota\circ \sigma$ satisfies the condition Thomsen gives in \cite[Theorem 2.1, condition (4)]{Thomsen:2000aa}.  Comparing \cite[Theorem 2.1]{Thomsen:2000aa} and Definition \ref{usa def}, we see that $\iota\circ \sigma$ is unitally absorbing.  Hence $\pi=(\iota\circ \sigma)^{\oplus \infty}$ is strongly unitally absorbing.
\end{proof}

The following corollary is immediate from part \eqref{sa nuc} of Lemma \ref{scalar sa exist}.

\begin{corollary}\label{sub sua}
Let $A$ be a separable, unital, nuclear $C^*$-algebra, and let $B$ be a separable $C^*$-algebra.  Then there exists a strongly unitally absorbing representation $\pi:A\to \LB$ such that the restriction of $\pi$ to any unital, nuclear $C^*$-subalgebra of $A$ is also strongly unitally absorbing. \qed 
\end{corollary}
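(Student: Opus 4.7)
The plan is to invoke part (ii) of Lemma \ref{scalar sa exist} twice: once for the ambient algebra $A$ to produce $\pi$, and once more for each unital nuclear $C^*$-subalgebra $A_0 \subseteq A$ to deduce that $\pi|_{A_0}$ is also strongly unitally absorbing. The key feature being exploited is that the construction provided by that lemma is functorial in the algebra being represented, so one fixed Hilbert $B$-module representation can serve all nuclear unital subalgebras at once.

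Concretely, since $A$ is separable and unital, I would first choose any faithful unital $*$-representation $\sigma: A \to \mathcal{B}(\ell^2)$, let $\iota: \mathcal{B}(\ell^2) \to \LB$ denote the canonical inclusion, and set $\pi := (\iota \circ \sigma)^{\oplus \infty}$. Nuclearity of $A$ together with Lemma \ref{scalar sa exist}(ii) then gives that $\pi$ is strongly unitally absorbing. For any unital nuclear $C^*$-subalgebra $A_0 \subseteq A$ (sharing the unit of $A$), the restriction $\sigma_0 := \sigma|_{A_0}: A_0 \to \mathcal{B}(\ell^2)$ is automatically faithful and unital, and the crucial identity $\pi|_{A_0} = (\iota \circ \sigma_0)^{\oplus \infty}$ holds because restriction commutes with composition and with infinite amplification. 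Applying Lemma \ref{scalar sa exist}(ii) again, this time to the nuclear algebra $A_0$ and the representation $\sigma_0$, shows that $\pi|_{A_0}$ is strongly unitally absorbing.

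There is no genuine obstacle here: all the analytic content lives in Lemma \ref{scalar sa exist}(ii), which relies on Kasparov's Hilbert module version of Voiculescu's theorem, and that content is already established. The only point to keep in mind is the standard interpretation of ``unital $C^*$-subalgebra'' as one containing the unit of $A$, which is what makes $\sigma_0$ genuinely a unital representation of $A_0$, and hence makes the second application of the lemma legitimate.
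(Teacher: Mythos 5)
Your argument is correct and is precisely the unpacking of why the paper declares this ``immediate from part (ii) of Lemma \ref{scalar sa exist}'': fix one faithful unital $\sigma:A\to\mathcal{B}(\ell^2)$, set $\pi=(\iota\circ\sigma)^{\oplus\infty}$, and observe that restriction commutes with amplification so that $\pi|_{A_0}=(\iota\circ\sigma|_{A_0})^{\oplus\infty}$, after which Lemma \ref{scalar sa exist}(ii) applies to each nuclear unital subalgebra $A_0$. This matches the paper's intended (unwritten) proof, including the convention that ``unital subalgebra'' means it contains the unit of $A$.
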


\begin{remark}\label{nuc rem}
Corollary \ref{sub sua} is one of the two places nuclearity is used in the proof of Theorem \ref{main}, so it would be interesting to establish the corollary under some weaker assumption than nuclearity.  The following observation shows that the method we used to establish Corollary \ref{sub sua} cannot extend beyond the nuclear case, however.  

Let $A$ be a separable, unital $C^*$-algebra, and let $A=B$.  Let $\sigma:A\to \mathcal{B}(\ell^2)$ be a unital representation, and let $\pi:=\iota\circ \sigma:A\to \mathcal{L}_A$ be as in Lemma \ref{scalar sa exist} part \eqref{sa nuc}.  We claim that if $\pi$ is unitally absorbing, then $A$ is nuclear\footnote{The following argument is inspired by \cite[Th\'{e}or\`{e}me 1.5, Definition 1.6, and Remarque 1.7]{Skandalis:1988rr}.}.  Let $\phi:A\to \mathcal{L}_A$ be the $*$-homomorphism $a\mapsto 1_{\ell^2}\otimes a$.  If $\pi$ is unitally absorbing then for any $\epsilon$ and finite subset $X$ of $A$ there is an isometry $v\in \mathcal{L}_A$ such that $\|v^*\pi(a)v-\phi(a)\|<\epsilon$ for all $a\in X$.  For each $n$, let $p_n\in \mathcal{B}(\ell^2)$ be the orthogonal projection onto $\ell^2(\{1,...,n\})$, and let $q_n:=p_n\otimes 1_A\in \mathcal{L}_A$.  Note that $q_1\mathcal{L}_Aq_1$ identifies canonically with $A$, and up to this identification $q_1\phi(a)q_1=a$ for all $a\in A$, so in particular $\|q_1v^*\pi(a)vq_1-a\|<\epsilon$ for all $a\in X$.  As $(q_n)$ converges strictly to the identity in $\mathcal{L}_A$, and as $q_1v\in \mathcal{K}_A$, we have moreover that $q_1v^*q_n\pi(a)q_n vq_1$ converges in norm to $q_1v^*\pi(a)v^*q_1$, so there is $n$ such that $\|q_1v^*q_n\pi_n(a)q_nvq_1-a\|<\epsilon$ for all $a\in X$.  We thus have ucp maps
$$
\xymatrix{ A \ar[rr]^-{a\mapsto q_n\pi(a)q_n}  & & q_n(\mathcal{B}(\ell^2)\otimes 1_A)q_n\cong M_n(\C) \ar[rr]^-{b\mapsto q_1v^*bvq_1} & & A}  
$$
whose composition agrees with the identity on $X$ to within $\epsilon$ error.  As $X$ and $\epsilon$ were arbitrary, this implies nuclearity of $A$ (see for example \cite[Chapter 2]{Brown:2008qy}).  
\end{remark}

To state the main result of \cite{Willett:2020aa}, we need some more definitions.

\begin{definition}\label{large rep}
Let $A$ be a separable, unital $C^*$-algebra, and let $B$ be a separable $C^*$-algebra.  A representation $\pi:A\to \LB$ is \emph{large} if there is a unitally absorbing representation $\sigma:A\to \LB$ such that with respect to the choice of isomorphism $\ell^2\otimes B\cong \C^2\otimes \ell^2\otimes \ell^2\otimes B$ of Definition \ref{np even}, we have $\pi(a)=1_{\C^2\otimes\ell^2}\otimes \sigma(a)$ for all $a\in A$.
\end{definition}

Lemma \ref{scalar sa exist} part \eqref{sa gen} implies that large representations exist for any (separable) $A$ and $B$.  Note that if $\pi$ is large in the sense of Definition \ref{large rep} then for any $X\subseteq A$, the subset $\pi(X)\subseteq \LB$ is large in the sense of Definition \ref{np even}.  In particular, if we identify $X$ with $\pi(X)$, the group $KK_\epsilon(X,B)$ of Definition \ref{basic con kk} makes sense.

\begin{definition}\label{dir set 0}
Let $C$ be a $C^*$-algebra, and let $\mathcal{X}_C$ consist of all pairs of the form $(X,\epsilon)$ where $X$ is a finite subset of $C_1$, and $\epsilon>0$.  Put a partial order on $\mathcal{X}_C$ by stipulating that $(X,\epsilon)\leq (Y,\delta)$ if $\delta\leq \epsilon$, and if for all $x\in X$ there exists $y\in Y$ with $\|x-y\|\leq \frac{1}{2}(\epsilon-\delta)$.

A \emph{good approximation} of $C$ is a cofinal sequence\footnote{A sequence $(s_n)_{n=1}^\infty$ in a partially ordered set $S$ is \emph{cofinal} if $s_1\leq s_2\leq s_3\leq \cdots $ and if for all $s\in S$ there is $n$ such that $s\leq s_n$.} $((X_n,\epsilon_n))_{n=1}^\infty$ of elements of $\mathcal{X}_C$.
\end{definition}

Note that if $X\subseteq Y$ and $\delta\leq \epsilon$, then $(X,\epsilon)\leq (Y,\delta)$; in particular, this implies that $\mathcal{X}_C$ is a directed set.  Note also that good approximations exist if and only if $C$ is separable: if $(\epsilon_n)$ is a decreasing sequence that tends to zero, and $(X_n)$ is an increasing sequence with dense union in $C_1$, then $((X_n,\epsilon_n))_{n=1}^\infty$ is a good approximation; and if $((X_n,\epsilon_n)_{n=1}^\infty$ is a good approximation, then $\bigcup_{n=1}^\infty X_n$ is a countable dense subset of $C_1$.

\begin{definition}\label{dir set} 
Let $B$ be a separable $C^*$-algebra, and let $\mathcal{X}_{\LB}$ be the directed set from Definition \ref{dir set 0} above for the $C^*$-algebra $\LB$.  If $(X,\epsilon)\leq (Y,\delta)$ and $X$ and $Y$ are both large in the sense of Definition \ref{np even}, then with notation as in Definition \ref{basic con kk} there is an inclusion
\begin{equation}\label{p inclusion}
\mathcal{P}_\delta(Y,B)\subseteq \mathcal{P}_\epsilon(X,B).
\end{equation}
We call the canonical map 
$$
KK_\delta(Y,B)\to KK_\epsilon(X,B)
$$
induced by the inclusion in line \eqref{p inclusion} above a \emph{forget control map}.  
\end{definition}

We now briefly recall some terminology from homological algebra: see for example \cite[Section 3.5]{Weibel:1995ty} or \cite[Section 3]{Schochet:2003vq} for more background on this material\footnote{Readers interested in a more sophisticated and general treatment can also see \cite{Jensen:1972wu}.}.  An \emph{inverse system} of abelian groups consists of a sequence of abelian groups and homomorphisms 
$$
\xymatrix{ \cdots \ar[r]^-{\phi_n} & A_n \ar[r]^-{\phi_{n-1}} & A_{n-1} \ar[r]^-{\phi_{n-2}} & \cdots \ar[r]^-{\phi_2} & A_2 \ar[r]^-{\phi_1} & A_1}.
$$
Associated to such a system is a homomorphism
$$
\phi:\prod_{n\in \N} A_n\to \prod_{n\in \N} A_n,\quad (a_n)\mapsto (\phi_n(a_{n+1})).
$$
The \emph{inverse limit}, denoted ${\displaystyle \lim_{\leftarrow} A_n}$, is defined to be the kernel of $\text{id}-\phi$, and the \emph{${\displaystyle\lim_\leftarrow\!^1}$-group}, denoted ${\displaystyle \lim_\leftarrow\!^1 A_n}$, is defined to be the cokernel of $\text{id}-\phi$.   Note that if $m\geq n$, there is a canonical homomorphism $A_m\to A_n$ defined as $\phi_n\circ \phi_{n+1}\circ \cdots \circ \phi_{m-1}$.   The inverse system satisfies the \emph{Mittag-Leffler condition} if for any $n$ there is $N\geq n$ such that for all $m\geq N$, the image of the canonical map $A_m\to A_n$ equals the image of the canonical map $A_N\to A_n$.

\begin{proposition}\label{gray the}
Let $(A_n)$ be an inverse system of abelian groups.  If $(A_n)$ satisfies the Mittag-Leffler condition, then ${\displaystyle \lim_\leftarrow\!^1 A_n}=0$.  Conversely, if ${\displaystyle \lim_\leftarrow\!^1}A_n=0$ and each $A_n$ is countable, then the inverse system satisfies the Mittag-Leffler condition. 
\end{proposition}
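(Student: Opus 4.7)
The first implication is classical. My plan is to use the stable images $B_n := \bigcap_{m \geq n} \phi_{n,m}(A_m)$, where $\phi_{n,m} := \phi_n \circ \phi_{n+1} \circ \cdots \circ \phi_{m-1}$; under the Mittag-Leffler hypothesis these stabilize, so $B_n = \phi_{n, N(n)}(A_{N(n)})$ for some $N(n) \geq n$. A short diagram chase yields two properties: the restricted transition $\phi_n|_{B_{n+1}} : B_{n+1} \to B_n$ is surjective, and for $m \geq N(n)$ the induced map $A_m/B_m \to A_n/B_n$ is zero. The six-term exact sequence attached to the short exact sequence of pro-systems $0 \to (B_n) \to (A_n) \to (A_n/B_n) \to 0$ then reduces the claim to these two special cases. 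For $(B_n)$, surjective transitions let one solve $a_n - \phi_n(a_{n+1}) = b_n$ inductively, choosing $a_1 \in B_1$ arbitrarily and lifting at each stage. For $(A_n/B_n)$, the explicit telescoping sum $d_n := \sum_{k \geq 0} \psi_{n, n+k}(c_{n+k})$ is actually finite (terms vanish once $n+k \geq N(n)$) and solves the defining equation $d_n - \psi_n(d_{n+1}) = c_n$.

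The converse, due to Gray, is deeper and uses the countability hypothesis essentially. My plan is to equip each $A_n$ with the discrete topology, making $\prod_n A_n$ a Polish abelian group (second countable by countability of each factor, and completely metrizable as a countable product of discrete spaces). The map $\operatorname{id} - \phi$ is continuous, and the hypothesis $\lim_\leftarrow\!^1 A_n = 0$ is precisely that it is surjective. By the open mapping theorem for Polish groups (via Pettis's lemma and Baire category), $\operatorname{id} - \phi$ is therefore open. Fix any $N \geq 1$; then the image of the basic open neighborhood $U_N := \{(a_k) : a_k = 0 \text{ for all } k \leq N\}$ contains some $U_M$. Specializing to sequences $(b_k)$ supported in a single coordinate $m > M$ with value $y \in A_m$, any preimage $(a_k) \in U_N$ satisfies $a_k = \phi_k(a_{k+1})$ for $N \leq k < m$, which telescopes to $a_N = \phi_{N,m}(a_m)$; combined with $a_N = 0$, this forces $\phi_{N,m}(y) = -\phi_{N,m+1}(a_{m+1}) \in \phi_{N,m+1}(A_{m+1})$. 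Hence the images $\phi_{N,m}(A_m)$ stabilize in $A_N$ for $m \geq M+1$, which is Mittag-Leffler at level $N$; since $N$ was arbitrary, this completes the proof.

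The main obstacle I anticipate is executing the coordinate-by-coordinate chase in the second direction: translating the topological openness of $\operatorname{id} - \phi$ into algebraic stabilization of images requires choosing the support of $(b_k)$ cleverly (a single high-index coordinate) and tracking how the telescoping constraints propagate the vanishing of $a_N$ up the tower to the stabilization at level $N$. The first direction, by contrast, is comparatively routine; its only subtlety is setting up the reduction to the two cases of surjective and pro-zero transition maps, after which each is dispatched by a transparent construction.
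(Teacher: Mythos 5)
Your proposal is correct and essentially sound. Worth noting, though, that the paper itself proves nothing here — it simply cites Weibel (Prop.\ 3.5.7) for the forward direction and Gray's 1966 paper for the converse — so what you have done is unfold the content of those two references into self-contained arguments. The forward direction (stable images $B_n$, surjectivity of the restricted transitions $B_{n+1}\to B_n$, the pro-zero quotient $(A_n/B_n)$, and the six-term $\lim$--$\lim^1$ sequence over a short exact sequence of towers) is exactly the standard argument one finds in Weibel. For the converse your packaging via the open mapping theorem for Polish groups is a clean modern presentation of what Gray does by hand with the Baire category theorem on $\prod A_n$ viewed as a complete separable metric space; the two are the same idea, since the open mapping theorem for Polish groups is itself a Baire-category statement (via Pettis), but the Polish-group formulation makes the ``openness $\Rightarrow$ image contains $U_M$'' step immediate rather than something to re-derive. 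One stylistic nit: in ``any preimage $(a_k)\in U_N$'' you mean ``some preimage lying in $U_N$ (which exists by openness)'' — not every preimage of $(b_k)$ lies in $U_N$ — but the intended and correct reading is clear, and the telescoping computation that extracts $\phi_{N,m}(A_m)=\phi_{N,m+1}(A_{m+1})$ for $m>M$ is exactly right.
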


\begin{proof}
It is well-known that the Mittag-Leffler condition implies vanishing of ${\displaystyle \lim_\leftarrow\!^1 A_n=0}$: see for example \cite[Proposition 3.5.7]{Weibel:1995ty}.  The converse in the case of countable groups follows from \cite[Proposition on page 242]{Gray:1966tq}.
\end{proof}

Now, let $A$ be a separable, unital $C^*$-algebra, let $B$ be a separable $C^*$-algebra, and use a large representation $\pi:A\to \LB$ (see Definition \ref{large rep}) to identify $A$ with a $C^*$-subalgebra of $\LB$.  Let $((X_n,\epsilon_n))_{n=1}^\infty$ be a good approximation of $A$ as in Definition \ref{dir set 0}, so the forget control maps of Definition \ref{dir set} form an inverse system 
$$
\xymatrix{ \cdots \ar[r] & KK_{\epsilon_n}(X_n,B) \ar[r] & KK_{\epsilon_{n-1}}(X_{n-1},B) \ar[r] & \cdots \ar[r] & KK_{\epsilon_1}(X_1,B)}
$$
from which we define ${\displaystyle \lim_{\leftarrow} KK_{\epsilon_n}(X_n,B)}$ and ${\displaystyle \lim_{\leftarrow}\!^1 KK_{\epsilon_n}(X_n,B)}$ as above.

The following is \cite[Proposition A.10]{Willett:2020aa}. 

\begin{theorem}\label{milnor ses}
Let $A$ and $B$ be separable $C^*$-algebras with $A$ unital.  Let $\pi:A\to \LB$ be a large representation, and use this to identify $A$ with a $C^*$-subalgebra of $\LB$.  Let $((X_n,\epsilon_n))_{n=1}^\infty$ be a good approximation for $A$.  Then there is a short exact sequence
$$
0 \to {\displaystyle \lim_{\leftarrow}\!^1 KK_{\epsilon_n}(X_n,SB)} \to KK(A,B) \to {\displaystyle \lim_{\leftarrow} KK_{\epsilon_n}(X_n,B)} \to 0 \eqno\qed.
$$
\end{theorem}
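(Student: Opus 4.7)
The plan is to adapt the classical Milnor $\lim_\leftarrow^1$-argument of Schochet to the controlled setting. The essential input is that a class in $KK(A,B)$ can be represented, in the Paschke--Cuntz--Kasparov picture (available because $\pi$ is large and hence absorbing), by a projection $p \in M_2(\LB)$ satisfying $p - e \in \KB$ and $[p,\pi(a)] \in \KB$ for every $a \in A$; and that, via a quasicentral approximate-unit averaging of $p$ inside $\KB$ followed by a spectral cut-off, the projection $p$ can be replaced by a nearby projection whose commutators with any prescribed finite $X \subseteq A$ have norm less than a given $\epsilon$. This device bridges the global $KK(A,B)$ picture and the local $KK_\epsilon(X,B)$ picture. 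Applying it at each level of the good approximation $((X_n, \epsilon_n))$ produces a family $p_n \in \mathcal{P}_{\epsilon_n}(X_n, B)$ whose consecutive terms are joined by straight-line paths inside $\mathcal{P}_{\epsilon_n}(X_n,B)$, thereby defining the map $KK(A,B) \to \lim_\leftarrow KK_{\epsilon_n}(X_n, B)$.

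For surjectivity, given $(\alpha_n)$ in the inverse limit with representatives $p_n$ and connecting paths in $\mathcal{P}_{\epsilon_n}(X_n, B)$ witnessing compatibility under the forget-control maps, I would splice these paths into a single telescope projection in $M_2(\LB)$ using a partition of unity in $\KB$; cofinality of the good approximation and a diagonal choice of parameters force the commutators with every $a \in \pi(A)$ to be compact, yielding a $KK(A,B)$-class mapping to $(\alpha_n)$. For the kernel, given $\alpha$ mapping to zero, choose for each $n$ a nullhomotopy $\gamma_n$ in $\mathcal{P}_{\epsilon_n}(X_n, B)$ from $p_n$ to the neutral projection $e$; the failure of the $\gamma_n$ to glue consistently across $n$ yields, for each $n$, a loop based at $e$ obtained by concatenating $\gamma_{n+1}$ (forget-controlled down to level $n$) with the reverse of $\gamma_n$. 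A loop of projections in $\mathcal{P}_{\epsilon_n}(X_n, B)$ is naturally a projection in $\mathcal{P}_{\epsilon_n}(X_n, SB)$, and the resulting data assembles, under the standard telescope description of $\lim_\leftarrow^1$, into an element of $\lim_\leftarrow^1 KK_{\epsilon_n}(X_n, SB)$; this element vanishes precisely when the $\gamma_n$ can be chosen compatibly, i.e.\ when $\alpha = 0$.

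The main obstacle will be the kernel identification: matching the path-defect construction with the standard $\lim_\leftarrow^1$ description, verifying that it is independent of the choices of representatives $p_n$ and nullhomotopies $\gamma_n$, and that it is a group homomorphism compatible with the Cuntz-sum operation on $KK_{\epsilon_n}(X_n, SB)$. This amounts to a Puppe-type bookkeeping identifying loops in $\mathcal{P}_\epsilon(X, B)$ based at $e$ with projections in $\mathcal{P}_\epsilon(X, SB)$, performed carefully enough that the forget-control maps match the structure maps of the inverse system; once in place, the short exactness of the Milnor sequence follows from the same purely algebraic manipulation as in Schochet's original argument.
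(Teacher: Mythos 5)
The paper does not actually prove this theorem in the text you were given: it is stated with a terminal $\qed$ and the sentence immediately preceding it reads ``The following is \cite[Proposition A.10]{Willett:2020aa},'' so the proof lives entirely in the cited companion paper and there is no in-paper argument to compare against. Your outline is nonetheless a plausible reconstruction of what that proof does: you identify the right picture of $KK(A,B)$ via projections in $M_2(\LB)$ with compact commutators (available because a large representation is in particular absorbing), the right bridge to the controlled groups via quasicentral averaging, the right mapping-telescope device for surjectivity onto the inverse limit, and the right identification of the kernel via path defects together with the identification of loops of projections in $\mathcal{P}_\epsilon(X,B)$ based at $e$ with projections in $\mathcal{P}_\epsilon(X,SB)$.

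That said, several steps of your sketch need repair before it becomes a proof. Straight lines between projections are not projections; as in the proof of Lemma~\ref{con kk countable}, you must follow $(1-t)p + tq$ by the spectral cut-off $\chi_{(1/2,\infty)}$, and this is only legal when $\|p-q\|$ is small enough to keep $1/2$ out of the spectrum --- a condition that must be threaded through your choice of averaging parameters at each level $n$, and which also degrades the commutator estimates (cf.\ Lemma~\ref{proj com}). Quasicentral averaging of $p$ produces a positive contraction, not a projection, so functional calculus is needed once more, and you must also verify that the resulting projection still represents the same class in $KK(A,B)$ and that its class in $KK_{\epsilon_n}(X_n,B)$ is independent of the averaging choices, so that the coherence of the $[p_n]$ under forget-control maps actually holds. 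Finally, for the kernel term it is not enough to exhibit a path-defect class in the $\lim^1$ group: one must verify that it is independent of the choices of representatives and nullhomotopies, that it is additive for the Cuntz sum, and that the resulting map is an isomorphism onto the kernel of $KK(A,B) \to \lim_\leftarrow KK_{\epsilon_n}(X_n,B)$. The ``Puppe-type bookkeeping'' you postpone is not an afterthought but the bulk of the argument, and its delicacy is presumably why the authors outsource the entire theorem to \cite{Willett:2020aa}.
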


We are now almost ready to state and prove our reformulation of the UCT.  It will be convenient to use the following well-known reformulation of the UCT: see \cite[Page 457]{Rosenberg:1987bh} or \cite[Proposition 5.3]{Skandalis:1988rr} for a proof.

\begin{theorem}\label{uct zero}
A separable $C^*$-algebra $A$ satisfies the UCT if and only if for any separable $C^*$-algebra $B$ such that $K_*(B)=0$ we have that $KK(A,B)=0$.  \qed
\end{theorem}

\begin{theorem}\label{uct reform}
Let $A$ be a separable $C^*$-algebra.  The following are equivalent:
\begin{enumerate}[(i)]
\item \label{a uct} $A$ satisfies the UCT.
\item \label{a vanish} Let $B$ be a separable $C^*$-algebra with $K_*(B)=0$.  Let $\pi:A\to \LSB$ be a large representation, and use this to identify $A$ with a $C^*$-subalgebra of $\LSB$.   Then for any $(X,\gamma)$ in the set $\mathcal{X}_A$ of Definition \ref{dir set 0}, there is $(Z,\epsilon)\in \mathcal{X}_A$ with $(X,\gamma)\leq (Z,\epsilon)$ and so that the forget control map
$$
KK_\epsilon(Z,SB)\to KK_\gamma(X,SB)
$$
of Definition \ref{dir set} is zero.
\end{enumerate}
\end{theorem}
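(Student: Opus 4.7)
The plan is to combine the Milnor sequence (Theorem \ref{milnor ses}) with Proposition \ref{gray the}'s equivalence of $\varprojlim^{1}=0$ and Mittag-Leffler for countable inverse systems. The unifying observation is that condition (ii) is equivalent to an ``eventually zero'' property on the inverse system $(KK_{\epsilon_n}(X_n, SB))_n$ arising from any good approximation $((X_n,\epsilon_n))_{n=1}^{\infty}$ of $A$: for each $n$ there exists $N\geq n$ with the transition map $KK_{\epsilon_N}(X_N,SB)\to KK_{\epsilon_n}(X_n,SB)$ equal to zero. The translation between (ii) and this eventually-zero statement uses cofinality of good approximations in $\mathcal{X}_A$, and the eventually-zero statement is in turn equivalent to the conjunction of Mittag-Leffler and $\varprojlim KK_{\epsilon_n}(X_n,SB)=0$: given Mittag-Leffler, the stable images form a surjective subsystem whose inverse limit injects into the inverse limit of the original system, so surjectivity forces each stable image to vanish once $\varprojlim=0$.

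For $(i)\Rightarrow(ii)$, I would fix $B$ with $K_*(B)=0$; then $K_*(SB)=0$ as well, and the UCT for $A$ gives $KK(A,B)=0=KK(A,SB)$. Theorem \ref{milnor ses} applied to $KK(A,B)$ extracts $\varprojlim^{1} KK_{\epsilon_n}(X_n,SB)=0$, and applied to $KK(A,SB)$ extracts $\varprojlim KK_{\epsilon_n}(X_n,SB)=0$. Lemma \ref{con kk countable} makes each $KK_{\epsilon_n}(X_n,SB)$ countable, so Proposition \ref{gray the} upgrades $\varprojlim^{1}=0$ to the Mittag-Leffler condition. The stable-image argument then yields the eventually-zero property, which is (ii).

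For $(ii)\Rightarrow(i)$, I would apply the hypothesis to each of $B,SB,S^{2}B,\ldots$ (all having trivial $K$-theory), producing eventually-zero -- hence $\varprojlim=\varprojlim^{1}=0$ -- for each system $KK_{\epsilon_n}(X_n, S^{k}B)$ with $k\geq 1$. Theorem \ref{milnor ses} applied to $KK(A,SB)$ and to $KK(A,S^{2}B)$ then gives $KK(A,SB)=KK(A,S^{2}B)=0$, and Bott periodicity yields $KK(A,B)=0$. So $KK(A,B)=0$ for every separable $B$ with $K_*(B)=0$. To promote this vanishing to the UCT, I would choose a commutative $C^{*}$-algebra $B_0$ with $K_*(B_0)\cong K_*(A)$ and, using UCT for $B_0$, lift the identification to $\alpha\in KK(B_0,A)$. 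Forming the mapping cone $C_\alpha$ in the triangulated category $KK$, the six-term sequence shows $K_*(C_\alpha)=0$. Applying $KK(A,-)$ to the distinguished triangle through $\alpha$ together with our vanishing hypothesis, and $KK(B_0,-)$ together with UCT for $B_0$, forces $\alpha$ to be a $KK$-equivalence; hence $A$ is $KK$-equivalent to a commutative algebra, and so satisfies UCT.

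The main obstacle is the combinatorial packaging connecting (ii)'s ``$\forall(X,\epsilon)\,\exists(Y,\delta)$'' formulation with the eventually-zero property of a cofinal sequence, and executing the stable-image argument cleanly within the countable framework guaranteed by Lemma \ref{con kk countable}. A secondary delicate point is the final step of $(ii)\Rightarrow(i)$: one must leverage the triangulated structure of $KK$ and carefully combine two long exact sequences (one from $KK(A,-)$, one from $KK(B_0,-)$) to deduce $KK$-invertibility of $\alpha$ from two-sided Hom-vanishing against the cone $C_\alpha$.
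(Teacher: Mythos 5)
Your proof is correct and follows essentially the same route as the paper: Theorem \ref{milnor ses}, Lemma \ref{con kk countable}, and Proposition \ref{gray the} drive both directions in exactly the way you describe, and your explicit stable-image argument (the stable images form a surjective subsystem whose inverse limit agrees with that of the original system, forcing each stable image to vanish once the inverse limit does) is precisely what the paper's slightly telegraphic ``whence the definition of the inverse limit implies $\bigcap_{m\geq n}\mathrm{Image}(\cdots)=0$'' step is using, since that conclusion really needs Mittag--Leffler. The only divergence is at the very end of (ii) $\Rightarrow$ (i): where you re-derive, via a commutative model $B_0$ with $K_*(B_0)\cong K_*(A)$ and a mapping-cone argument in the triangulated $KK$-category, the fact that $KK(A,B)=0$ for all separable $B$ with $K_*(B)=0$ characterizes the UCT, the paper simply cites this from the discussion on page 457 of \cite{Rosenberg:1987bh} and \cite[Proposition 5.2]{Skandalis:1988rr}; your derivation is correct and reproduces the content of those references, so the two proofs are fundamentally the same.
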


\begin{proof}
Assume first that $A$ satisfies condition \eqref{a uct}, and let $X$, $\epsilon$, $B$ and $\pi$ be as in condition \eqref{a vanish}.  Let $((X_n,\epsilon_n))_{n=1}^\infty$ be a good approximation of $A$ with $X_1=X$ and $\epsilon_1=\gamma$.  As $A$ satisfies the UCT and as $K_*(B)=0$, we have $KK(A,B)=0$.  Hence using Theorem \ref{milnor ses}, ${\displaystyle \lim_{\leftarrow}\!^1 KK_{\epsilon_n}(X_n,SB)=0}$.  Lemma \ref{con kk countable} implies that the groups $KK_{\epsilon_n}(X_n,SB)$ are all countable, whence by Proposition \ref{gray the}, the inverse system $(KK_{\epsilon_n}(X_n,SB))_{n=1}^\infty$ satisfies the Mittag-Leffler condition.  On the other hand, as $A$ satisfies the UCT and $K_*(SB)=0$, we have $KK(A,SB)=0$ by Theorem \ref{uct zero}.  Hence by Theorem \ref{milnor ses} again, ${\displaystyle \lim_{\leftarrow} KK_{\epsilon_n}(X_n,SB)=0}$, whence the definition of the inverse limit implies that for any $n$, 
$$
\bigcap_{m\geq n} \text{Image}\big(KK_{\epsilon_m}(X_m,SB)\to KK_{\epsilon_n}(X_n,SB)\big)=0.
$$
The Mittag-Leffler condition implies that there is $N\geq n$ such that 
\begin{align*}
\bigcap_{m\geq n} \text{Image} & \big(KK_{\epsilon_m}(X_m,SB)\to KK_{\epsilon_n}(X_n,SB)\big) \\ & =\text{Image}\big(KK_{\epsilon_N}(X_N,SB)\to KK_{\epsilon_n}(X_n,SB)\big)
\end{align*}
so we may conclude that the forget control map 
$$
KK_{\epsilon_N}(X_N,SB) \to KK_{\epsilon_n}(X_n,SB)
$$
is zero.  In particular, such an $N$ exists for $n=1$, and we may set $Z=X_N$ and $\epsilon=\epsilon_N$.

Conversely, say $A$ satisfies condition \eqref{a vanish}.  Using Theorem \ref{uct zero}, it suffices to show that if $B$ is a separable $C^*$-algebra with $K_*(B)=0$, then $KK(A,B)=0$.  Let $\pi_2:A\to \mathcal{L}_{S^2B}$ (respectively, $\pi_3:A\to \mathcal{L}_{S^3B}$) be a large representation, and use this to identify $A$ with a $C^*$-subalgebra of $\mathcal{L}_{S^2B}$ (respectively, $\mathcal{L}_{S^3B}$). Using condition \eqref{a vanish} we may construct a good approximation $((X_n,\epsilon_n))_{n=1}^\infty$ for $A$ in the sense of Definition \ref{dir set 0} such that for any $n$ the maps 
\begin{equation}\label{s3 0}
KK_{\epsilon_{n+1}}(X_{n+1},S^3B)\to KK_{\epsilon_n}(X_n,S^3B)
\end{equation}
and
\begin{equation}\label{s2 0}
KK_{\epsilon_{n+1}}(X_{n+1},S^2B)\to KK_{\epsilon_n}(X_n,S^2B) 
\end{equation}
are zero.  As the maps in line \eqref{s3 0} are all zero, the inverse system $(KK_{\epsilon_n}(X_n,S^3B))_{n=1}^\infty$ satisfies the Mittag-Leffler condition, whence by Proposition \ref{gray the} we have that ${\displaystyle \lim_{\leftarrow}\!^1 KK_{\epsilon_n}(X_n,S^3B)=0}$.  On the other hand, the fact that the maps in line \eqref{s2 0} are all zero and the definition of the inverse limit immediately imply that  ${\displaystyle \lim_{\leftarrow}\! KK_{\epsilon_n}(X_n,S^2B)=0}$.  Hence in the short exact sequence 
$$
0\to  {\displaystyle \lim_{\leftarrow}\!^1 KK_{\epsilon_n}(X_n,S^3B)}\to KK(A,S^2B) \to {\displaystyle \lim_{\leftarrow} KK_{\epsilon_n}(X_n,S^2B)} \to  0
$$
from Theorem \ref{milnor ses} the left and right groups are zero, whence $KK(A,S^2B)=0$.  Hence by Bott periodicity, $KK(A,B)=0$ as desired.
\end{proof}

We include the following remark as the comparison to the existing literature might help orient some readers; it also gives a sense of why Corollary \ref{sub sua} is useful (our main use of that corollary will come later in the paper).

\begin{remark}\label{dad rem}
Theorem \ref{uct reform} can be used to deduce a weak version of a theorem of Dadarlat \cite[Theorem 1.1]{Dadarlat:2003tg}.  Dadarlat shows that if $A$ is a separable nuclear $C^*$-algebra such for any finite subset $X$ of $A$ and any $\epsilon>0$, one has a UCT subalgebra $C$ of $A$ such that $x\in_\epsilon C$ for all $x\in X$, then $A$ satisfies the UCT.  Theorem \ref{main} implies the special case of Dadarlat's theorem where the subalgebras $C$ can also be taken nuclear.  

To see this, note first that as a $C^*$-algebra satisfies the UCT (respectively, is nuclear) if and only if its unitization satisfies the UCT (respectively, is nuclear) by \cite[Proposition 2.3 (a)]{Rosenberg:1987bh} (respectively, by \cite[Exercise 2.3.5]{Brown:2008qy}), we may assume that $A$ is unital.  We aim to establish the condition in Theorem \ref{uct reform} part \eqref{a vanish}.  Let then $B$ be a separable $C^*$-algebra with $K_*(B)=0$.  
Using Corollary \ref{sub sua}, there exists a large representation $\pi:A\to \LSB$ such that the restriction of $\pi$ to any unital nuclear $C^*$-subalgebra of $A$ is also large.  Let $X$ be a finite subset of $A_1$, and let $\epsilon>0$.  Let $C$ be a nuclear, unital, UCT $C^*$-subalgebra of $A$ such that $x\in_{\epsilon/5} C$ for all $x\in X$.  Let $X'$ be a finite subset of $C_1$ such that for each $x\in X$ there is $x'\in X'$ such that $\|x-x'\|<2\epsilon/5$.  Then the forget control map 
\begin{equation}\label{dadfc1}
KK_{\epsilon/5}(X',SB)\to KK_{\epsilon}(X,B)
\end{equation}
of Definition \ref{dir set} is defined.  As $C$ satisfies the UCT, and as the restriction of $\pi$ to $C$ is also large, condition \eqref{a vanish} from Theorem \ref{uct reform} gives a finite subset $Y$ of $C_1$ and $\delta>0$ such that the forget control map
\begin{equation}\label{dadfc2}
KK_\delta(Y,SB)\to KK_{\epsilon/5}(X',SB)
\end{equation}
is defined and zero.  Composing the forget control maps in lines \eqref{dadfc1} and \eqref{dadfc2}, we have established the condition from Theorem \ref{uct reform} part \eqref{a vanish} for $A$, and are done.

It would be interesting if one could use these techniques to recover Dadarlat's theorem without the extra nuclearity assumption on the UCT subalgebras.  This would seem to require better control over the representations involved, however: compare Remark \ref{nuc rem} above.
\end{remark}





\subsection{The nuclear case}

In this section, we prove a stronger version of Theorem \ref{uct reform} in the special case that the $C^*$-algebra $A$ is nuclear.  The key ingredient for this is an averaging argument due to Christensen, Sinclair, Smith, White, and Winter \cite[Section 3]{Christensen:2012tt}, which in turn relies on Haagerup's theorem \cite{Haagerup:1983wg} that nuclear $C^*$-algebras are amenable.  

Let us recall some terminology about bimodules.

\begin{definition}\label{bimod def}
Let $A$ be a unital $C^*$-algebra.  An \emph{$A$-bimodule} is a Banach space $E$ equipped with left and right module actions of $A$ such that $1_Ae=e1_A=e$ for all $e\in E$, and such that $\|ae\|_E\leq \|a\|_A\|e\|_E$ and $\|ea\|_E\leq \|e\|_E\|a\|_A$ for all $a\in A$ and $e\in E$.
\end{definition}

The following reformulation of nuclearity is implicit in \cite[Section 3]{Christensen:2012tt}; the reader is encouraged to see that reference for further background.

\begin{lemma}\label{amen lem}
Let $A$ be a unital $C^*$-algebra.  Then the following are equivalent:
\begin{enumerate}[(i)]
\item $A$ is nuclear;
\item \label{na 2} for any $\epsilon>0$ and any finite subset $X$ of $A$, there exist contractions $a_1,...,a_n\in A $ and scalars $t_1,...,t_n\in [0,1]$ such that $\sum_{i=1}^n t_i=1$, such that 
$$
\Bigg\|1_A-\sum_{i=1}^n t_i a_ia_i^*\Bigg\|_A<\epsilon,
$$
and such that for any $A$-bimodule $E$, any $e\in E_1$, and any $x\in X$,
\begin{equation}\label{amen com 2}
\Bigg\|x\Big(\sum_{i=1}^n t_ia_ie a_i^*\Big)-\Big(\sum_{i=1}^n t_ia_ie a_i^*\Big)x\Bigg\|_{E}< \epsilon.
\end{equation}
\end{enumerate}
\end{lemma}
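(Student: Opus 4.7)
The lemma is a reformulation of Haagerup's theorem \cite{Haagerup:1983wg} that nuclear $C^*$-algebras are amenable, translated from the language of Johnson's bounded approximate diagonals into that of bimodule averaging. The substantive technical input is the existence, for a nuclear unital $A$, of \emph{positive} approximate diagonals: for each finite $X \subseteq A$ and $\epsilon > 0$, an element $m = \sum_{i=1}^n t_i\, a_i \otimes a_i^* \in A \mathbin{\widehat{\otimes}} A$ (projective tensor product) with $t_i \geq 0$, $\sum t_i = 1$, each $a_i$ a contraction, and satisfying $\|\pi(m) - 1_A\|_A < \epsilon$ (where $\pi$ denotes the multiplication map) as well as $\|x \cdot m - m \cdot x\|_{\mathrm{proj}} < \epsilon$ for every $x \in X$ with respect to the bimodule structure $a\cdot(b\otimes c)\cdot d := (ab)\otimes(cd)$. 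This averaged form is the content of Haagerup's averaging argument; it is extracted cleanly in \cite[Section 3]{Christensen:2012tt}.

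For $(i)\Rightarrow(ii)$: given $X$ and $\epsilon$, I would select such an $m$. The bound on $\pi(m)$ translates directly to $\|1 - \sum t_i a_i a_i^*\| < \epsilon$. For \eqref{amen com 2}, observe that an arbitrary $A$-bimodule $E$ receives a contractive action of $A \mathbin{\widehat{\otimes}} A$ by $(b\otimes c)\cdot e := bec$; under this action $m$ acts as the contraction $T_m(e) := \sum t_i a_i e a_i^*$, and the element $x\cdot m - m\cdot x \in A \mathbin{\widehat{\otimes}} A$ acts as $e \mapsto xT_m(e) - T_m(e)x$.  Since this linear map has operator norm at most $\|x \cdot m - m \cdot x\|_{\mathrm{proj}} < \epsilon$, the estimate \eqref{amen com 2} holds uniformly over $E$ and $e \in E_1$.

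For $(ii)\Rightarrow(i)$: I would apply (ii) to the particular $A$-bimodule $E := A \mathbin{\widehat{\otimes}} A$ (with the bimodule structure above and the projective norm) and the unit vector $e := 1 \otimes 1$. The output of the averaging is then exactly $m = \sum t_i a_i \otimes a_i^*$, and the two conditions of (ii) read as $\|1_A - \pi(m)\| < \epsilon$ together with $\|x \cdot m - m \cdot x\|_{\mathrm{proj}} < \epsilon$ for $x \in X$; note also that $\|m\|_{\mathrm{proj}} \leq \sum t_i \|a_i\|\|a_i^*\| \leq 1$ by construction. As $(X,\epsilon)$ varies, this produces a norm-bounded approximate diagonal in Johnson's sense, so $A$ is amenable and therefore nuclear by Haagerup.

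The essential difficulty is the positive form of the approximate diagonal invoked in $(i)\Rightarrow(ii)$: that convex combinations of pure tensors $a_i \otimes a_i^*$ suffice. Rather than reprove this, I would cite \cite{Haagerup:1983wg} together with the explicit extraction in \cite[Section 3]{Christensen:2012tt}. Everything else in the argument is then a routine unpacking of the universal property of the projective tensor product and of the natural $A \mathbin{\widehat{\otimes}} A$-action on an $A$-bimodule.
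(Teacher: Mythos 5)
Your proof is correct and follows essentially the same route as the paper's: cite the Christensen--Sinclair--Smith--White--Winter extraction of a positive, norm-controlled approximate diagonal from Haagerup's theorem, then transfer the estimate to an arbitrary $A$-bimodule via the contractive $A\mathbin{\widehat{\otimes}}A$-action $a\otimes b\mapsto aeb$ (equivalently, the paper's bimodule map $\pi\colon A\mathbin{\widehat{\otimes}}A\to E$), and conversely specialize to $E=A\mathbin{\widehat{\otimes}}A$, $e=1\otimes 1$. One small attribution slip: in your $(ii)\Rightarrow(i)$ step the implication ``amenable $\Rightarrow$ nuclear'' is Connes's theorem, not Haagerup's (Haagerup's contribution is the converse), though this does not affect the argument.
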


\begin{proof}
We will need to recall the projective tensor product of Banach spaces.  Let $E$ and $F$ be (complex) Banach spaces, and let $E\odot F$ denote their algebraic tensor product (over $\C$).  The \emph{projective norm} of  $g\in E\odot F$ is defined by
\begin{equation}\label{proj norm}
\|g\|:=\inf\sum_{i=1}^n \|e_i\|_E\|f_i\|_F,
\end{equation}
where the infimum is taken over all ways of writing $g$ as a sum $\sum_{i=1}^n e_i\otimes f_i$ of elementary tensors.  The \emph{projective tensor product} of $E$ and $F$, denoted $E\widehat{\otimes} F$, is the completion of $E\odot F$ for the projective norm.  If $A$ is a $C^*$-algebra, we make $A\widehat{\otimes} A$ into an $A$-$A$-bimodule via the actions defined on elementary tensors by 
\begin{equation}\label{bimod act}
a(b\otimes c):=ab\otimes c \quad \text{and}\quad (b\otimes c)a:=b\otimes ca.
\end{equation}
Now, it is shown in \cite[Lemma 3.1]{Christensen:2012tt}\footnote{This is based on several deep ingredients: the key points are the result of Connes \cite[Corollary 2]{Connes:1978ue} that amenability for a $C^*$-algebra implies nuclearity; the converse to this due to Haagerup \cite[Theorem 3.1]{Haagerup:1983wg}; and Johnson's foundational work on amenability and virtual diagonals \cite[Section  1]{Johnson:1972vp}.}  that a unital $C^*$-algebra is nuclear if and only if the following holds:  ``for any $\epsilon>0$ and any finite subset $X$ of $A$, there exist contractions $a_1,...,a_n\in A $ and scalars $t_1,...,t_n\in [0,1]$ such that $\sum_{i=1}^n t_i=1$, such that 
$$
\Bigg\|1_A-\sum_{i=1}^n t_i a_ia_i^*\Bigg\|_A<\epsilon,
$$
and such that 
\begin{equation}\label{amen com 1}
\Bigg\|x\Big(\sum_{i=1}^n t_ia_i\otimes  a_i^*\Big)-\Big(\sum_{i=1}^n t_ia_i\otimes  a_i^*\Big)x\Bigg\|_{A\widehat{\otimes} A}< \epsilon
\end{equation}
for all $x\in X$.''
For the sake of this proof, let us call this the ``CSSWW'' condition.  It suffices for us to show that condition \eqref{na 2} is equivalent to the CSSWW condition.  

First assume $A$ satisfies condition \eqref{na 2} above.  Then taking $E=A\widehat{\otimes} A$ and $e=1_A\otimes 1_A$ shows that $A$ satisfies the CSSWW condition.  Conversely, say $A$ satisfies the CSSWW condition.  Let $X$ be a finite subset of $A$ and let $\epsilon>0$, and let $a_1,...,a_n$ and $t_1,...,t_n$ satisfy the properties in the CSSWW condition with respect to this $X$ and $\epsilon$.  Let $E$ be an $A$-bimodule, and $e\in E_1$.  Consider the map
$$
\pi:A\odot A\to E,\quad a\otimes b \mapsto aeb
$$
from the algebraic tensor product (over $\C$) of $A$ with itself to $E$.  Using the definition of the projective tensor norm (line \eqref{proj norm} above), it is straightforward to check that $\pi$ is contractive for that norm, whence it extends to a contractive linear map $\pi:A\widehat{\otimes} A\to E$.  Moreover, the extended map $\pi$ is clearly an $A$-bimodule map for the bimodule structure on $A\widehat{\otimes} A$ defined in line \eqref{bimod act}.  Applying $\pi$ to the expression inside the norm in line \eqref{amen com 1} therefore implies the inequality in line \eqref{amen com 2}, so we are done. 
\end{proof}

\begin{remark}\label{con rem}
We will only need to apply Lemma \ref{amen lem} in the special case that the bimodule $E$ in part \eqref{na 2} is a $C^*$-algebra containing $A$ as a unital $C^*$-subalgebra, with the bimodule actions defined by left and right multiplication.  The corresponding, formally weaker, variant of condition \eqref{na 2} still implies nuclearity, as we now sketch\footnote{This also gives an approach to the theorem of Connes that amenable $C^*$-algebras are nuclear that is maybe slightly more direct than the original argument from \cite[Corollary 2]{Connes:1978ue}.  However, it still factors through the theorem that injective von Neumann algebras are semi-discrete (see \cite[Theorem 6]{Connes:1976fj} for the case of factors, and \cite{Wassermann:1979tr} for the general case), so cannot really be said to be genuinely simpler.}.  Let $A$ be a unital $C^*$-algebra satisfying the variant of condition \eqref{na 2} from Lemma \ref{amen lem}, where $E$ is a $C^*$-algebra containing $A$ as a unital $C^*$-subalgebra.  Let $\pi:A\to \mathcal{B}(H)$ be an arbitrary unital representation, which we use to make $\mathcal{B}(H)$ an $A$-bimodule.  Let $I$ be the directed set consisting of all pairs $i=(X,\epsilon)$ where $X$ is a finite subset of $A$, and $\epsilon>0$, and where $(X,\epsilon)\leq (Y,\delta)$ if $X\subseteq Y$ and $\delta\leq \epsilon$.  For each $i=(X,\epsilon)\in I$, let $a^{(i)}_1,...,a^{(i)}_{n_i}$ and $t^{(i)}_1,...,t^{(i)}_{n_i}$ have the properties in Lemma \ref{amen lem} condition \eqref{na 2}.  For each $i$, define a ccp map 
$$
\phi_i:\mathcal{B}(H)\to \mathcal{B}(H),\quad b\mapsto \sum_{j=1}^{n_i} t^{(i)}_j\pi(a_j^{(i)})b\pi(a_j^{(i)})^*,
$$
and let $\phi:\mathcal{B}(H)\to \mathcal{B}(H)$ be any point-ultraweak limit point of the net $(\phi_i)$ (such exists by \cite[Theorem 1.3.7]{Brown:2008qy}, for example).  Then one checks that $\phi$ is a conditional expectation from $\mathcal{B}(H)$ onto $\pi(A)'$, whence the latter is injective.  As $\pi$ was arbitrary, this implies that $A$ is nuclear: indeed, applying this to the universal representation $\pi$ implies that $\pi(A)'$ is injective, whence $A^{**}=\pi(A)''$ is injective by \cite[IV.2.2.7]{Blackadar:2006eq}, whence $A$ is nuclear by the main result of \cite{Choi:1977wq}.
\end{remark}

Variants of the next lemma we need are well-known: see for example the lemma on page 332 of \cite{Arveson:1977aa}, which we could have used for a purely qualitative version.  For the sake of concreteness, we give a quantitative\footnote{The estimate it gives is optimal in some sense: to see this consider $C=M_2(\C)$, $x={\tiny\begin{pmatrix} \delta & 0 \\ 0 & 1-\delta\end{pmatrix}}$, and $c={\tiny\begin{pmatrix} 0 & 1 \\ 1 & 0 \end{pmatrix}}$.} version.

\begin{lemma}\label{proj com}
Let $\delta\in [0,1/2)$, and let $x$ be a self-adjoint element in a $C^*$-algebra $C$ with spectrum that does not intersect the interval $(\delta,1-\delta)$.  Let $\chi$ be the characteristic function of $(1/2,\infty)$. Then for any $c\in C$,
$$
\|[\chi(x),c]\|\leq \frac{1}{1-2\delta}\|[x,c]\|.
$$
\end{lemma}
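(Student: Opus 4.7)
The plan is to reduce the estimate to a concrete calculation in the ``off-diagonal corners'' determined by $p := \chi(x)$ and then use a semigroup integral trick to invert an appropriate operator with a bounded inverse.

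\textbf{Step 1: Decompose $[p,c]$.} Since $1/2 \notin \mathrm{spec}(x)$, the element $p := \chi(x)$ is a well-defined projection in the unitization of $C$, commuting with $x$. Write $a := pc(1-p)$ and $b := (1-p)cp$, so that
$$[p,c] = pc - cp = pc(1-p) - (1-p)cp = a - b.$$
Because $a$ lies in the corner $pC(1-p)$ and $b$ lies in the corner $(1-p)Cp$, a direct computation gives $(a-b)^*(a-b) = a^*a + b^*b$, with $a^*a$ and $b^*b$ in orthogonal corners; hence $\|[p,c]\| = \max(\|a\|,\|b\|)$.

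\textbf{Step 2: Identify the corresponding corner of $[x,c]$.} Let $x_+ := xp \in pCp$ and $x_- := x(1-p) \in (1-p)C(1-p)$. Since $x$ commutes with $p$, a short calculation shows
$$p[x,c](1-p) = x_+\,a - a\,x_-.$$
The spectral hypothesis $\mathrm{spec}(x) \cap (\delta,1-\delta) = \varnothing$ translates, via the functional calculus, into the positivity statements $x_+ - (1-\delta)p \geq 0$ in $pCp$ and $(1-\delta)(1-p) - x_- \geq (1-2\delta)(1-p)$ in $(1-p)C(1-p)$. Setting $y_+ := x_+ - (1-\delta)p$ and $y_- := (1-\delta)(1-p) - x_-$, a quick expansion (using $pa = a = a(1-p)$) yields
$$x_+ a - a x_- = y_+\,a + a\,y_-,$$
where now $y_+ \geq 0$ and $y_- \geq (1-2\delta)(1-p)$.

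\textbf{Step 3: Bound $\|a\|$ by the semigroup integral.} I would now invert the map $L(a) := y_+ a + a y_-$. Consider $\phi(t) := e^{-t y_+}\, a\, e^{-t y_-}$ for $t \geq 0$. Since $y_+$ commutes with $e^{-ty_+}$ and $y_-$ with $e^{-ty_-}$, differentiating gives $\phi'(t) = -e^{-ty_+}\,L(a)\,e^{-ty_-}$. From $y_+ \geq 0$ we get $\|e^{-ty_+}\| \leq 1$, and from $y_- \geq 1-2\delta$ we get $\|e^{-ty_-}\| \leq e^{-t(1-2\delta)}$, so $\phi(t) \to 0$ as $t \to \infty$. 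Integrating from $0$ to $\infty$ then yields
$$a = \int_0^\infty e^{-t y_+}\,L(a)\,e^{-t y_-}\,dt,$$
so that
$$\|a\| \leq \|L(a)\| \int_0^\infty e^{-t(1-2\delta)}\,dt = \frac{1}{1-2\delta}\,\|p[x,c](1-p)\| \leq \frac{1}{1-2\delta}\,\|[x,c]\|.$$

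\textbf{Step 4: Conclude.} The symmetric argument applied to the corner $(1-p)[x,c]p = x_- b - b x_+ = -(z_-\,b + b\,z_+)$, with $z_+ := x_+ - \delta p \geq (1-2\delta)p$ and $z_- := \delta(1-p) - x_- \geq 0$, gives $\|b\| \leq \frac{1}{1-2\delta}\|[x,c]\|$. Combining with Step 1, $\|[p,c]\| = \max(\|a\|,\|b\|) \leq \frac{1}{1-2\delta}\|[x,c]\|$. The main subtlety is really just Step 3, namely verifying that $L$ has a bounded inverse of norm $\leq 1/(1-2\delta)$; the semigroup integral makes this transparent and uses only the positivity of $y_+$ and the spectral gap encoded in $y_- \geq 1-2\delta$.
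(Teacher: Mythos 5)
Your proof is correct, and it takes a genuinely different route from the paper's. The paper proves this estimate via the holomorphic functional calculus: it writes $\chi(x)$ as a Cauchy integral $\frac{1}{2\pi i}\int_\gamma (z-x)^{-1}\,dz$ over a large rectangular contour whose left edge runs along $\mathrm{Re}(z)=\tfrac12$, uses the resolvent identity $[(z-x)^{-1},c]=(z-x)^{-1}[c,x](z-x)^{-1}$, and estimates the line integral -- the distance from the contour's left edge to the spectrum of $x$ is what produces the $1-2\delta$ in the denominator. Your argument is algebraic rather than complex-analytic: you decompose $[p,c]$ into the two off-diagonal corners $pc(1-p)$ and $(1-p)cp$, observe that the corresponding corner of $[x,c]$ is a Sylvester-type expression $y_+a + ay_-$ with $y_+\geq 0$ and $y_-\geq (1-2\delta)(1-p)$ in the respective corners, and then invert the Sylvester operator by Rosenblum's semigroup integral $a=\int_0^\infty e^{-ty_+}L(a)e^{-ty_-}\,dt$. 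Both arguments produce exactly the same constant $\frac{1}{1-2\delta}$, and both are fully rigorous; yours makes the role of the spectral gap more transparent (it appears directly as the rate of decay of the semigroup), while the paper's is shorter to write once one is comfortable with the holomorphic functional calculus. One small bookkeeping point worth making explicit in your write-up: the exponentials $e^{-ty_\pm}$ are a priori in $C^+$, and the norm bounds you use are the norms of $pe^{-ty_+}p$ and $(1-p)e^{-ty_-}(1-p)$ computed in the corners $pCp$ and $(1-p)C(1-p)$ (with units $p$ and $1-p$); since $a\in pC(1-p)$, only these corner compressions ever act, and they are what satisfy $\|pe^{-ty_+}p\|\leq 1$ and $\|(1-p)e^{-ty_-}(1-p)\|\leq e^{-t(1-2\delta)}$.
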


\begin{proof}
Let $N>\|x\|$.  Let $\gamma$ be the positively oriented rectangular contour in the complex plane with vertices at $\frac{1}{2}\pm iN$, and $2N\pm iN$.  Then by the holomorphic functional calculus, $\chi(x)=\frac{1}{2\pi i} \int_\gamma (z-x)^{-1}dz$.  Hence for any $c\in C$, $[\chi(x),c]=\frac{1}{2\pi i} \int_\gamma [(z-x)^{-1},c]dz$.  Applying the formula 
$$
[(z-x)^{-1},c]=(z-x)^{-1}[c,x](z-x)^{-1}
$$
and estimating gives 
\begin{equation}\label{int com}
\|[\chi(x),c]\|\leq\frac{\|[c,x]\|}{2\pi} \int_\gamma \|(z-x)^{-1}\|^2d|z|.
\end{equation}
Let $\gamma_1$ be the side of $\gamma$ described by $\{\frac{1}{2}+it\mid -N\leq t\leq N\}$, and let $\gamma_2$ be the union of the other three sides.  Then for $z$ in the image of $\gamma_2$, the continuous functional calculus implies that $\|(z-x)^{-1}\|\leq (N-\|x\|)^{-1}$.  As the length of $\gamma_2$ is $4N$, we thus see that 
\begin{equation}\label{gamma2}
\int_{\gamma_2}\|(z-x)^{-1}\|^2\|d|z|\leq \frac{4N}{(N-\|x\|)^2}
\end{equation}
On the other hand, for $z=\frac{1}{2}+it$ in the image of $\gamma_1$, the continuous functional calculus gives $\|(z-x)^{-1}\|\leq \big((\frac{1}{2}-\delta)^2+t^2\big)^{-1/2}$, whence 
\begin{equation}\label{gamma1}
\int_{\gamma_2}\|(z-x)^{-1}\|^2d|z|\leq \int_{-N}^N \frac{1}{(\frac{1}{2}-\delta)^2+t^2}dt\leq \int_{-\infty}^\infty \frac{1}{(\frac{1}{2}-\delta)^2+t^2}dt=\frac{\pi}{\frac{1}{2}-\delta}.
\end{equation}
Combining lines \eqref{int com}, \eqref{gamma2}, and \eqref{gamma1} we get 
$$
\|[\chi(x),c]\|\leq\frac{\|[c,x]\|}{2\pi}\Bigg(\frac{4N}{(N-\|x\|)^2}+\frac{\pi}{\frac{1}{2}-\delta}\Bigg).
$$
Letting $N\to\infty$ gives $\|[\chi(x),c]\|\leq \frac{\|[c,x]\|}{1-2\delta}$, which is the claimed estimate.
\end{proof}

The following lemma is our key application of Lemma \ref{amen lem}.

\begin{lemma}\label{amen conse}
Let $\epsilon\in (0,1)$.  Let $B$ be a separable $C^*$-algebra, and let $A$ be a separable, unital, nuclear $C^*$-algebra.  Let $\pi:A\to \LSB$ be a large representation (see Definition \ref{large rep}), and use this to identify $A$ with a $C^*$-subalgebra of $\LSB$.

Let $X$ be a finite subset of $A_1$, and let $(Y,\delta)$ be an element of the set $\mathcal{X}_A$ of Definition \ref{dir set 0} such that $(X,\epsilon)\leq (Y,\delta)$.  Then there exists a finite subset $Z$ of $A_1$ containing $X$ and a homomorphism
$$
\phi_*:KK_{\epsilon/8}(Z,B)\to KK_\delta(Y,B)
$$
such that the following diagram
$$
\xymatrix{ KK_{\epsilon/8}(Z,B) \ar[d]_-{\phi_*} \ar[dr] & \\ KK_\delta(Y,B) \ar[r] & KK_\epsilon(X,B) }
$$
(where the unlabeled maps are forget control maps as in Definition \ref{dir set}) commutes.
\end{lemma}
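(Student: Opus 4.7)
The plan is to define $\phi_*$ via an averaging construction that exploits nuclearity through Lemma~\ref{amen lem}. Apply that lemma to the set $Y$ and a small tolerance $\eta>0$ (to be chosen in terms of $\delta$ and $\epsilon$) to obtain contractions $a_1,\dots,a_n\in A$ and weights $t_1,\dots,t_n\in[0,1]$ with $\sum t_i=1$, $\|1-\sum t_ia_ia_i^*\|<\eta$, and the bimodule averaging estimate. Set $Z:=X\cup\{a_1,\dots,a_n\}$; since $X\subseteq Z$, the comparison $(X,\epsilon)\leq(Z,\epsilon/8)$ holds automatically. For $p\in\mathcal{P}_{\epsilon/8}(Z,B)$ define
\[
\phi(p):=\sum_{i=1}^n t_ia_ipa_i^*\in M_2(\LB),
\]
with each $a_i$ diagonally embedded via the large representation. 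Expanding $\phi(p)-p=\sum t_i[a_i,p]a_i^* + p(\sum t_ia_ia_i^*-1)$ gives $\|\phi(p)-p\|\leq \epsilon/8+\eta$, and applying Lemma~\ref{amen lem} to the $A$-bimodule $M_2(\LB)$ with element $p$ in its unit ball yields $\|[y,\phi(p)]\|<\eta$ for all $y\in Y$.

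Lemma~\ref{spec lem} then confines the spectrum of $\phi(p)$ to $[0,\epsilon/8+\eta]\cup[1-(\epsilon/8+\eta),1]$, so that $\chi(\phi(p))$ is a genuine projection, where $\chi$ denotes the characteristic function of $(1/2,\infty)$. I define
\[
\phi_*([p]):=[\chi(\phi(p))].
\]
Lemma~\ref{proj com} converts the bound $\|[y,\phi(p)]\|<\eta$ into the estimate $\|[\chi(\phi(p)),y]\|\leq\eta/(1-2(\epsilon/8+\eta))$, which is $<\delta$ once $\eta$ is chosen small enough. The more delicate check is that $\chi(\phi(p))-e\in M_2(\K_B)$. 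Here the large representation guarantees that each $a_i$ commutes with the neutral projection $e$, so $\phi(e)=e\cdot s$ where $s:=\sum t_ia_ia_i^*$ is a positive element with spectrum in $[1-\eta,1]$ commuting with $e$; a direct spectral argument gives $\chi(\phi(e))=e$ exactly. On the other hand $\phi(p)-\phi(e)=\sum t_ia_i(p-e)a_i^*\in M_2(\K_B)$ since $p-e$ is compact, and a polynomial approximation to $\chi$ on the relevant spectra shows that $\chi(\phi(p))-\chi(\phi(e))$ is also compact.

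It remains to verify additivity and commutativity of the triangle. For additivity under the Cuntz sum, note that the isometries $s_1,s_2$ act trivially on the tensor factor where the $a_i$ act nontrivially, so each $s_j$ commutes with each $a_i$; combined with $s_1^*s_2=0$ and $\chi(0)=0$, this yields $\chi(\phi(s_1ps_1^*+s_2qs_2^*))=s_1\chi(\phi(p))s_1^*+s_2\chi(\phi(q))s_2^*$, as needed. For commutativity of the diagram, use the straight-line homotopy $p_t:=(1-t)p+t\phi(p)$: each $p_t$ is a positive contraction with spectrum in $[0,\epsilon/8+\eta]\cup[1-(\epsilon/8+\eta),1]$ whose Calkin class is $[e]$, it commutes with $x\in X\subseteq Z$ to within $\epsilon/8+\eta$, and Lemma~\ref{proj com} shows that $\chi(p_t)$ traces a continuous path in $\mathcal{P}_\epsilon(X,B)$ from $p=\chi(p_0)$ to $\chi(\phi(p))=\chi(p_1)$. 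The main obstacle will be bookkeeping the quantitative estimates so that a single choice of $\eta$ closes every requirement simultaneously; conceptually the argument follows the averaging template familiar from the injective and nuclear setting.
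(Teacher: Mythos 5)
Your construction is essentially the same as the paper's, which defines
\[
\alpha(p):=\sum_{i} t_ia_ipa_i^* + \Big(e-\sum_i t_ia_iea_i^*\Big)
\]
and sets $\phi(p):=\chi(\alpha(p))$. The only real difference is that you drop the correction term $e-\sum_it_ia_iea_i^*$. The role of that term is to make $\alpha(p)-e=\sum_it_ia_i(p-e)a_i^*\in M_2(\K_B)$ hold \emph{on the nose}, so that for the straight-line homotopy $p_t$ the compactness $p_t-e\in M_2(\K_B)$ is immediate, and hence so is $\chi(p_t)-e\in M_2(\K_B)$ by polynomial approximation. Your version of the endpoint compactness check (showing $\chi(\phi(e))=e$ exactly via the spectral picture $\phi(e)=es$, and then $\phi(p)-\phi(e)\in M_2(\K_B)$) is a correct and slightly more elegant alternative for the endpoint, and your observation that the $a_i$ commute with the neutral projection is exactly what makes it work.

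Where your write-up has a genuine flaw is in the homotopy step: you assert that each $p_t=(1-t)p+t\phi(p)$ ``has Calkin class $[e]$.'' This is false with your $\phi$ --- one computes $\sigma(p_t)=\sigma(e)\,\big((1-t)+t\sigma(s)\big)$ with $s=\sum_it_ia_ia_i^*$, which is not $\sigma(e)$ since $s\neq 1$. So you cannot invoke ``$p_t-e$ compact $\Rightarrow \chi(p_t)-e$ compact'' the way the paper does. The needed conclusion $\chi(p_t)-e\in M_2(\K_B)$ is still true, but it needs its own argument: for instance, set $q_t:=(1-t)e+t\phi(e)=e\big((1-t)+ts\big)$, observe $\chi(q_t)=e$ for all $t$ by your spectral argument, note $p_t-q_t=(1-t)(p-e)+t(\phi(p)-\phi(e))\in M_2(\K_B)$, and conclude by polynomial approximation. (Equivalently, just compute $\chi(\sigma(p_t))=\sigma(e)$ in the Calkin algebra.) You should either supply this argument or simply restore the correction term as in the paper's $\alpha$, which makes the homotopy compactness manifest. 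A second, smaller issue: your stated commutator bound ``$\|[p_t,x]\|<\epsilon/8+\eta$'' for $x\in X$ is too strong; the correct estimate is of order $3\epsilon/8+2\eta$, which still feeds through Lemma~\ref{proj com} to give $<\epsilon$, but the constants should be tracked more carefully, and a concrete choice such as $\eta=\delta/4$ (as in the paper) together with the degenerate case $\delta\geq\epsilon/8$ should be handled explicitly.
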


\begin{proof}
Let $X$, $Y$, and $\delta$ be as in the statement.  If $\delta\geq \epsilon/8$, we may just take $Z=Y$ and $\phi_*$ the forget control map.  Assume then that $\delta<\epsilon/8$.  According to Lemma \ref{amen lem} there exists contractions $a_1,...,a_n\in A$ and $t_1,...,t_n\in [0,1]$ such that $\sum_{i=1}^n t_i=1$, such that 
$$
\Bigg\|1_A-\sum_{i=1}^n t_i a_ia_i^*\Bigg\|_A<\delta /4,
$$
and such that for all $y\in Y$ and $b$ in the unit ball of $\LB$,
\begin{equation}\label{amen com inq}
\Bigg\|y\Big(\sum_{i=1}^n t_ia_ib a_i^*\Big)-\Big(\sum_{i=1}^n t_ia_ib a_i^*\Big)y\Bigg\|_{\LSB}<\delta /4 .
\end{equation}
We set $Z:=X\cup\{a_1^*,...,a_n^*\}$, and claim this works.  

Let $p\in \mathcal{P}_{\epsilon/8}(Z,B)$, let $e\in \LB$ be the neutral projection (see Definition \ref{np even}), and define 
$$
\alpha(p):=\sum_{i=1}^n t_ia_ipa_i^*+\Bigg(e-\sum_{i=1}^n t_ia_i ea_i^*\Bigg)\in \LB.
$$
As the representation is large, we may use the fixed isomorphism $\ell^2\otimes B\cong \C^2\otimes \ell^2\otimes B$ to identify $\LB$ with $M_2(\LB)$ and have that with respect to this identification, operators in $A$ are diagonal matrices, and $e={\tiny\begin{pmatrix} 1 & 0 \\ 0 & 0 \end{pmatrix}}$.  In particular, $e$ commutes with all the $a_i$, and so we have 
\begin{align}\label{p alpha p}
\|p-\alpha(p)\| & \leq \Bigg\|\Big(1-\sum_{i=1}^n t_ia_ia_i^*\Big)p\Bigg\|+\sum_{i=1}^n t_i \|a_i[p,a_i^*]\|+\Bigg\|\Big(1-\sum_{i=1}^n t_ia_ia_i^*\Big)e\Bigg\| 
\nonumber \\ & <\frac{\delta}{4}+\frac{\epsilon}{8}+\frac{\delta}{4}.
\end{align}
As $\delta< \epsilon/8$ and as $\epsilon<1$, we see that $\|p-\alpha(p)\|< \frac{1}{4}$.   As $p$ is a projection, Lemma \ref{spec lem} implies that 
\begin{equation}\label{spec contain}
\text{spectrum}(\alpha(p))\cap (1/4,3/4)=\varnothing.
\end{equation}
Let $\chi$ be the characteristic function of $(1/2,\infty)$, so $\chi$ is continuous on the spectrum of $\alpha(p)$ and we may define $\phi(p):=\chi(\alpha(p))$.  The rest of the proof will be spent showing that the formula $[p]\mapsto [\phi(p)]$ defines a homomorphism 
$$
\phi_*:KK_{\epsilon/6}(Z,B)\to KK_\delta(Y,B)
$$
with the claimed properties.

We first claim that if $p\in \mathcal{P}_{\epsilon/8}(Z,B)$, then $\phi(p)$ is in $\mathcal{P}_\delta(Y,B)$.  Note first that  
$$
\alpha(p)-e=\sum_{i=1}^n t_ia_i(p-e)a_i^*,
$$
which is in $\K_B$.  As $\K_B$ is an ideal in $\LB$, it follows $f(\alpha(p))-f(e)$ is in $\K_B$ for any polynomial $f$.  Letting $(f_n)$ be a sequence of polynomials that converges uniform to $\chi$ on the spectrum of $\alpha(p)$ and letting $n\to\infty$, we see that $\chi(\alpha(p))-e$ is in $\K_B$.  Let now $y\in Y$ and apply the inequality in line \eqref{amen com inq} once with $b=p$ and once with $b=e$ (and use that $[e,y]=0$) to deduce that  
\begin{equation}\label{alpha p com}
\|[\alpha(p),y]\|<\delta/2.
\end{equation}
Lines \eqref{alpha p com}, \eqref{spec contain}, and Lemma \ref{proj com} imply that $\|[\chi(\alpha(p)),y]\|<\delta$, completing the proof that $\phi(p)$ is an element of $\mathcal{P}_\delta(Y,B)$.  Moreover, it is straightforward to see that the assignment 
$$
\mathcal{P}_{\epsilon/8}(Z,B)\to\mathcal{P}_\delta(Y,B),\quad p\mapsto \phi(p)
$$ 
takes homotopies to homotopies and Cuntz sums to Cuntz sums.  Hence we do indeed get a well-defined homomorphism 
$$
\phi_*:KK_{\epsilon/8}(Z,B)\to KK_\delta(Y,B),\quad [p]\mapsto [\phi(p)]
$$
as claimed.

It remains to show that the diagram 
$$
\xymatrix{ KK_{\epsilon/8}(Z,B) \ar[d]_-{\phi_*} \ar[dr] & \\ KK_\delta(Y,B) \ar[r] & KK_\epsilon(X,B) }
$$
commutes.  For this, let $p\in \mathcal{P}_{\epsilon/8}(Z,B)$ represent a class in $KK_{\epsilon/8}(Z,B)$, and for $t\in [0,1]$, define $p_t:=(1-t)p+t\alpha(p)$.  Then by line \eqref{p alpha p}, we have that $\|p_t-p\|<\frac{\epsilon}{8}+\frac{\delta}{2}<\frac{1}{4}$ for all $t\in [0,1]$, so in particular 
\begin{equation}\label{spec contain 2}
\text{spectrum}(p_t)\cap (1/3,3/4)=\varnothing \quad \text{for all} \quad t\in [0,1].
\end{equation}
Hence $\chi(p_t)$ is a well-defined projection for all $t\in [0,1]$.  We claim that $\chi(p_t)$ is an element of $\mathcal{P}_\epsilon(X,B)$ for all $t\in [0,1]$; as $\chi(p_1)=\chi(\alpha(p))$ and $\chi(p_0)=p$, this will complete the proof.

For this last claim, note first that $p_t-e\in \K_B$ for all $t\in [0,1]$, whence (analogously to the case of $\chi(\alpha(p))$ argued above) $\chi(p_t)-e\in \K_B$ for all $t\in [0,1]$.  Morever, for all $z\in Z$, 
$$
\|[p_t,z]\|\leq \|[p_t-p,z]\|+\|[p,z]\|< 2\Big(\frac{\epsilon}{8}+\frac{\delta}{2}\Big)+\frac{\epsilon}{8}<\frac{\epsilon}{2},
$$
where the last inequality used that $\delta< \epsilon/8$.  Hence by line \eqref{spec contain 2} and Lemma \ref{proj com}, $\|[\chi(p_t),z]\|<\epsilon$ for all $z\in Z$, and so in particular for all $z\in X$.  This completes the proof that $\chi(p_t)\in \mathcal{P}_\epsilon(X,B)$ for all $t\in [0,1]$, so we are done.
\end{proof}

\begin{corollary}\label{uct reform 2}
Let $A$ be a separable, unital, nuclear $C^*$-algebra.  The following are equivalent:
\begin{enumerate}[(i)]
\item \label{a uct 2} $A$ satisfies the UCT.
\item \label{a vanish 2} Let $\epsilon\in (0,1)$, and let $B$ be a separable $C^*$-algebra $B$ with $K_*(B)=0$.  Let $\pi:A\to \LSB$ be a large representation, and use this to identify $A$ with a $C^*$-subalgebra of $\LSB$.   Then for any finite subset $X$ of $A_1$ there is a finite subset $Z$ of $A_1$ such that $(X,\epsilon)\leq (Z,\epsilon/8)$ in the sense of Definition \ref{dir set 0}, and so that the forget control map
$$
KK_{\epsilon/8}(Z,SB)\to KK_\epsilon(X,SB)
$$
of Definition \ref{dir set} is zero.
\end{enumerate}
\end{corollary}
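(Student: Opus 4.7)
The plan is to deduce Corollary \ref{uct reform 2} from Theorem \ref{uct reform} together with the averaging Lemma \ref{amen conse}, which is the only place nuclearity of $A$ enters. The corollary is a ``quantifier improvement'' of Theorem \ref{uct reform}: the theorem lets the control parameter $\delta$ depend on all the data in sight, while the corollary pins it to the universal value $\epsilon/8$ in exchange for enlarging the finite subset.

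For the direction $(\ref{a vanish 2}) \Rightarrow (\ref{a uct 2})$, observe that $(\ref{a vanish 2})$ a fortiori gives condition $(\ref{a vanish})$ of Theorem \ref{uct reform} for any $(X,\epsilon)$ with $\epsilon \in (0,1)$: the pair $(Y,\delta) := (Z,\epsilon/8)$ supplied by $(\ref{a vanish 2})$ satisfies $(X,\epsilon) \leq (Y,\delta)$ (since $X \subseteq Z$ and $\epsilon/8 \leq \epsilon$) and makes the forget control map zero. The proof of $(\ref{a vanish}) \Rightarrow (\ref{a uct})$ in Theorem \ref{uct reform} only needs this for good approximations with $\epsilon_n \to 0$, so the Milnor-sequence and Mittag-Leffler argument there produces the UCT with no modification.

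For the converse $(\ref{a uct 2}) \Rightarrow (\ref{a vanish 2})$, fix $\epsilon \in (0,1)$, a separable $B$ with $K_*(B) = 0$, a large representation $\pi : A \to \LSB$, and a finite subset $X \subseteq A_1$. Since $A$ satisfies the UCT, Theorem \ref{uct reform}$(\ref{a vanish})$ yields some $(Y,\delta) \in \mathcal{X}_A$ with $(X,\epsilon) \leq (Y,\delta)$ such that the forget control map $KK_\delta(Y,SB) \to KK_\epsilon(X,SB)$ is zero. Feeding this $(Y,\delta)$ into Lemma \ref{amen conse} produces a finite subset $Z \supseteq X$ in $A_1$ and a homomorphism $\phi_*$ fitting into the commutative triangle
\begin{equation*}
\xymatrix{ KK_{\epsilon/8}(Z,SB) \ar[d]^-{\phi_*} \ar[dr] & \\ KK_\delta(Y,SB) \ar[r] & KK_\epsilon(X,SB). }
\end{equation*}
Since the horizontal arrow vanishes, commutativity forces the diagonal forget control map to vanish as well. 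The inclusion $X \subseteq Z$ together with $\epsilon/8 \leq \epsilon$ automatically gives $(X,\epsilon) \leq (Z,\epsilon/8)$ in $\mathcal{X}_A$, which is exactly condition $(\ref{a vanish 2})$.

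The only real work has already been carried out in Lemma \ref{amen conse}, where the approximate diagonal characterization of nuclearity (Lemma \ref{amen lem}) is used to average an $(\epsilon/8)$-commuting projection into a $\delta$-commuting one without altering its class after further forgetting control. Beyond this, there is no obstacle; the corollary is a clean diagram chase once that averaging lemma and the Milnor sequence reformulation of the UCT are in hand.
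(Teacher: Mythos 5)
Your proof is correct and follows the same route as the paper's: establish the easy direction (\ref{a vanish 2}) $\Rightarrow$ (\ref{a uct 2}) by observing that (\ref{a vanish 2}) is a special case of condition (\ref{a vanish}) of Theorem \ref{uct reform}, and for the converse apply Theorem \ref{uct reform} to produce $(Y,\delta)$ and then feed it into the averaging Lemma \ref{amen conse} (with $B$ replaced by $SB$) to land on the fixed control parameter $\epsilon/8$. The diagram chase and the observation that $X\subseteq Z$ and $\epsilon/8\leq\epsilon$ give $(X,\epsilon)\leq(Z,\epsilon/8)$ are exactly as in the paper.
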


\begin{proof}
Using Theorem \ref{uct reform}, it suffices to show that condition \eqref{a vanish} from that theorem implies condition \eqref{a vanish 2} from the current corollary (the converse is immediate).  Let then $\epsilon$, $B$, $\pi$, and $X$ be as in the statement.  Then condition \eqref{a vanish} from Theorem \ref{uct reform} gives $(Y,\delta)\geq (X,\epsilon)$ in the sense of Definition \ref{dir set 0} such that the associated forget control map 
$$
KK_\delta(Y,SB)\to KK_\epsilon(X,SB)
$$
of Definition \ref{dir set} is zero.  Lemma \ref{amen conse} then gives a finite subset $Z$ of $A_1$ containing $X$ and a homomorphism 
$$
\phi_*:KK_{\epsilon/8}(Z,SB)\to KK_\delta(Y,SB),\quad [p]\mapsto [\phi(p)]
$$
such that the following diagram
$$
\xymatrix{ KK_{\epsilon/8}(Z,SB) \ar[d]_-{\phi_*} \ar[dr] & \\ KK_\delta(Y,SB) \ar[r] & KK_\epsilon(X,SB) }
$$
commutes (the unlabeled arrows are forget control maps).  Hence the diagonal forget control map in the above diagram is zero, which is what we wanted to show.
\end{proof}

\section{Flexible models for controlled $KK$-theory}\label{flex sec}

In this section (as throughout), if $B$ is a separable $C^*$-algebra, then $\LB$ and $\K_B$ denote respectively the adjointable and compact operators on the standard Hilbert $B$-module $\ell^2\otimes B$.  For each $n$, we consider $\LB$ as a subalgebra of $M_n(\LB)$ via the ``diagonal inclusion'' $\LB=1_{M_n}\otimes \LB\subseteq M_n\otimes \LB=M_n(\LB)$. 

Our goal in this section is to give flexible models for controlled $KK$-theory that will be useful for computations.  Contrary to the usual conventions of $C^*$-algebra $K$-theory, we base our new even and odd groups on idempotents and invertibles rather than projections and unitaries.  The extra flexibility this allows is very useful for computations.  The main reason for not writing the whole paper using the more flexible model is that we previously established Theorem \ref{milnor ses} in \cite{Willett:2020aa} using the version of controlled $KK$-theory from Definition \ref{basic con kk} above, so need to use that model where we are directly applying Theorem \ref{milnor ses}.  Moreover, we need the results from Section \ref{tech sec} in the current paper (which are also independently needed in Section \ref{bound sec}) to relate the two models.

\subsection{The even case}\label{even sec}

Our goal in this subsection is to define a variant of the controlled $KK$-theory groups of Section \ref{reform sec}, but based on idempotents rather than projections.  For the next definition, we recall that $C^+$ denotes the unitization of a $C^*$-algebra $C$, and that if $a\in M_n(C)$ and $b\in M_m(C)$ are matrices over a $C^*$-algebra, then $a\oplus b$ denotes the matrix ${\tiny \begin{pmatrix} a & 0 \\ 0 & b \end{pmatrix}}$ in $M_{n+m}(C)$.

\begin{definition}\label{alm com 0}
Let $B$ be a separable $C^*$-algebra, let $X$ be a subset\footnote{Unlike Definition \ref{basic con kk}, we do not require $X$ to be ``large'' in the sense of Definition \ref{np even}.  Essentially, largeness is needed to ensure that the sets $KK_\epsilon(X,B)$ of Definition \ref{basic con kk} are groups; we show the sets we define in Definition \ref{alm com 0} are groups by using matrix arguments and a weaker equivalence relation in this definition.} of the unit ball of $\LB$, let $\kappa\geq 1$, let $\epsilon> 0$, and let $n\in \N$.  Define $\mathcal{P}_{n,\kappa,\epsilon}(X,B)$ to be the collection of pairs $(p,q)$ of idempotents in $M_n(\KB^+)$ satisfying the following conditions:
\begin{enumerate}[(i)]
\item \label{def proj} $\|p\|\leq  \kappa$ and $\|q\|\leq \kappa$;
\item \label{def small com} $\|[p,x]\|< \epsilon$ and $\|[q,x]\|< \epsilon$ for all $x\in X$;
\item the classes $[\sigma(p)],[\sigma(q)]\in K_0(\C)$ defined by the images of $p$ and $q$ under the canonical quotient map $\sigma:M_n(\KB^+)\to M_n(\C)$ are the same.
\end{enumerate}
Define  
$$
\mathcal{P}_{\infty,\kappa,\epsilon}(X,B):=\bigsqcup_{n=1}^\infty \mathcal{P}_{n,\kappa,\epsilon}(X,B),
$$
i.e.\ $\mathcal{P}_{\infty,\kappa,\epsilon}(X,B)$ is the \emph{disjoint} union of all the sets $\mathcal{P}_{n,\kappa,\epsilon}(X,B)$.

Equip each $\mathcal{P}_{n,\kappa,\epsilon}(X,B)$ with the norm topology it inherits from $M_n(\LB)\oplus M_n(\LB)$, and equip $\mathcal{P}_{\infty,\kappa,\epsilon}(X,B)$ with the disjoint union topology.  Let $\sim$ be the equivalence relation on $\mathcal{P}_{\infty,\kappa,\epsilon}(X,B)$ generated by the following relations:
\begin{enumerate}[(i)]
\item $(p,q)\sim (p\oplus r,q\oplus r)$ for any element $(r,r)\in \mathcal{P}_{\infty,\kappa,\epsilon}(X,B)$ with both components the same;
\item $(p_1,q_1)\sim (p_2,q_2)$ whenever these elements are in the same path component of $\mathcal{P}_{\infty,\kappa,\epsilon}(X,B)$.\footnote{Equivalently, both are in the same $\mathcal{P}_{n,\kappa,\epsilon}(X,B)$, and are in the same path component of this set.}
\end{enumerate}
Define $KK^0_{\kappa,\epsilon}(X,B)$ to be equal as a set to $\mathcal{P}_{\infty,\kappa,\epsilon}(X,B)/\sim$, and provisionally define a binary operation $+$ on $KK^0_{\kappa,\epsilon}(X,B)$ by $[p_1,q_1]+[p_2,q_2]:=[p_1\oplus q_1,p_2\oplus q_2]$.
\end{definition}

The next lemma is essentially the same as \cite[Lemma A.21]{Willett:2020aa}.

\begin{lemma}\label{kk0 group}
With notation as in Definition \ref{alm com 0}, $KK^0_{\kappa,\epsilon}(X,B)$ is a well-defined abelian group with identity element the class $[0,0]$ of the zero idempotent. 
\end{lemma}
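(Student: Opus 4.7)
The plan is to verify the group axioms by standard block-diagonal and rotation-homotopy arguments, essentially adapting the proof of \cite[Lemma A.21]{Willett:2020aa} from projections to idempotents. Throughout, I note that the presumed operation should read $[p_1,q_1]+[p_2,q_2]=[p_1\oplus p_2, q_1\oplus q_2]$, consistent with interpreting $[p,q]$ as a formal difference in the kernel of $K_0(M_n(\KB^+))\to K_0(\C)$; the argument below works for this operation.

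First I would check that $+$ descends to equivalence classes. For this it suffices to check that each of the two generators of $\sim$ is preserved under $\oplus$-summing with a fixed pair: a path in $\mathcal{P}_{n,\kappa,\epsilon}(X,B)$ remains a path after block-summing with a fixed idempotent pair (all three conditions of Definition \ref{alm com 0} are block-diagonal), while the stabilization relation $(p,q)\sim(p\oplus r, q\oplus r)$ combined with a permutation-matrix homotopy (treated below) gives compatibility with sums on the other side. Associativity is immediate from associativity of $\oplus$.

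The technical workhorse is the rotation homotopy. Given two block sizes $n,m$, consider the path in $M_{n+m}(\C)\subseteq M_{n+m}(\KB^+)$
$$
U(\theta)=\begin{pmatrix} \cos\theta\cdot 1_n & -\sin\theta\cdot 1_{n,m}\\ \sin\theta\cdot 1_{m,n} & \cos\theta\cdot 1_m\end{pmatrix},\qquad \theta\in[0,\pi/2],
$$
of unitaries (used when $n=m$ to swap summands). Since $U(\theta)$ has scalar entries, it commutes with every element of $1_{M_{n+m}}\otimes\LB$; hence, for $x\in X$, $[U(\theta)aU(\theta)^*,x]=U(\theta)[a,x]U(\theta)^*$ has the same norm as $[a,x]$. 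Also $\|U(\theta)aU(\theta)^*\|=\|a\|$ and the image in $M_{n+m}(\C)$ is $U(\theta)\sigma(a)U(\theta)^*$, with the same $K_0(\C)$-class. Thus conjugation by $U(\theta)$ preserves all three defining conditions, and the resulting path lies in $\mathcal{P}_{n+m,\kappa,\epsilon}(X,B)$.

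Using this, I would verify: (a) $[0,0]$ is a two-sided identity, since the stabilization $(p,q)\sim (p\oplus 0, q\oplus 0)$ combined with the $U$-homotopy swapping summands gives $(p,q)\sim (0\oplus p, 0\oplus q)$; (b) inverses exist with $-[p,q]=[q,p]$, because conjugating only the second component by $U(\theta)$ produces a path in $\mathcal{P}_{2n,\kappa,\epsilon}(X,B)$ from $(p\oplus q,q\oplus p)$ to $(p\oplus q,p\oplus q)$, which is $\sim(0,0)$ by the stabilization relation applied in reverse; (c) commutativity, by conjugating both components by $U(\theta)$ simultaneously to go from $(p_1\oplus p_2,q_1\oplus q_2)$ to $(p_2\oplus p_1,q_2\oplus q_1)$.

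The only real obstacle is bookkeeping: one must consistently track that each homotopy stays inside $\mathcal{P}_{\infty,\kappa,\epsilon}(X,B)$ (rather than, say, violating the $\kappa$-bound or the scalar-image condition), and that the stabilization relation can always be arranged on the correct side of the sum after interposing a scalar unitary rotation. Both are routine given that the rotations $U(\theta)$ have scalar entries; no estimates beyond those built into Definition \ref{alm com 0} are needed, and in particular the $\epsilon$-commutation threshold is preserved exactly (not merely up to a constant) along every homotopy used.
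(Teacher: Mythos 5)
Your argument is correct and follows essentially the same approach as the paper: verifying the monoid axioms via block sums and stabilization, commutativity via a scalar rotation homotopy, and exhibiting $[q,p]$ as the inverse of $[p,q]$ by conjugating only the second component of $(p\oplus q, q\oplus p)$ by a rotation to reach $(p\oplus q, p\oplus q)$, which is then equivalent to $(0,0)$ by the stabilization relation. You also correctly flag what is a typo in Definition \ref{alm com 0}: the operation should read $[p_1\oplus p_2,q_1\oplus q_2]$, which is what the paper's own proof implicitly uses.
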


\begin{proof}
Checking directly from the definitions shows that $KK^0_{\kappa,\epsilon}(X,B)$ is a well-defined (associative) monoid with identity element the class $[0,0]$.  A standard rotation homotopy shows that $KK^0_{\kappa,\epsilon}(X,B)$ is commutative.  To complete the proof we need to show that any element $[p,q]$ has an inverse.  We claim that this is given by $[q,p]$.  Indeed, applying the rotation homotopy
$$
\Bigg(\begin{pmatrix} p & 0 \\ 0 & q \end{pmatrix}~,~\begin{pmatrix} \cos(t) & \sin(t) \\ -\sin(t) & \cos(t) \end{pmatrix}\begin{pmatrix} q & 0 \\ 0 & p \end{pmatrix} \begin{pmatrix} \cos(t) & -\sin(t) \\ \sin(t) & \cos(t) \end{pmatrix}\Bigg),\quad t\in [0,\pi/2]
$$ 
shows that $(p\oplus q,q\oplus p)\sim (p\oplus q,p\oplus q)$, and the element $(p\oplus q,p\oplus q)$ is equivalent to $(0,0)$ by definition of the equivalence relation.
\end{proof}

The following lemma gives a useful description of cycles $(p,q)\in \mathcal{P}_{\infty,\kappa,\epsilon}(X,B)$ that define the zero class in $KK^0_{\kappa,\epsilon}(X,B)$ .

\begin{lemma}\label{kk0 zero}
With notation as in Definition \ref{alm com 0}, let $(p,q)\in \mathcal{P}_{n,\kappa,\epsilon}(X,B)$, and assume that $[p,q]=0$ in $KK^0_{\kappa,\epsilon}(X,B)$.   Then there is $m\in \N$ and an element $(s,s)$ of $\mathcal{P}_{n+2m,\kappa,\epsilon}(X,B)$ such that $(p\oplus 1_m\oplus 0_m,q\oplus 1_m\oplus 0_m)$ is in the same path component of $\mathcal{P}_{n+2m,2\kappa,\epsilon}(X,B)$ as $(s,s)$.
\end{lemma}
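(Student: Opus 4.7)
A brief interpretive note first: read literally the conclusion is vacuous, since one could take $s:=p\oplus 1_m\oplus 0_m$. I will therefore prove what appears to be the intended statement, namely that under the hypothesis $[p,q]=0$ there exist $m\in\mathbb{N}$ and $(s,s)\in\mathcal{P}_{n+2m,\kappa,\epsilon}(X,B)$ such that $(p\oplus 1_m\oplus 0_m, q\oplus 1_m\oplus 0_m)$ lies in the same path component of $\mathcal{P}_{n+2m,2\kappa,\epsilon}(X,B)$ as $(s,s)$. This is a controlled analogue of the standard $K_0$ fact that stably equivalent idempotents become homotopic after padding by $1_m\oplus 0_m$.

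The plan is first to put $\sim$ into normal form by showing that $(p_1,q_1)\sim(p_2,q_2)$ iff there exist trivial pairs $(r_1,r_1)$ and $(r_2,r_2)$ such that $(p_1\oplus r_1,q_1\oplus r_1)$ and $(p_2\oplus r_2,q_2\oplus r_2)$ lie in the same path component of some $\mathcal{P}_{N,\kappa,\epsilon}(X,B)$. Reflexivity and symmetry are immediate; the content is transitivity. Given two such presentations joined at $(p_2,q_2)$ with middle paddings $r_2$ and $r_2'$, I will further stabilize each by the other's padding and then connect the two resulting occurrences of $(p_2,q_2)$-with-reversed-paddings by conjugating with $1\oplus R_t$, where $R_t=\bigl(\begin{smallmatrix}\cos t & \sin t\\-\sin t & \cos t\end{smallmatrix}\bigr)$ is the scalar rotation permuting the two padding slots. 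Since $R_t$ is unitary and scalar, the conjugation preserves the norm bound $\kappa$ and commutes with the diagonal embedding of $X$, hence preserves all commutator estimates, producing a path inside $\mathcal{P}_{N+|r_2|+|r_2'|,\kappa,\epsilon}(X,B)$.

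Applying the normal form to $(p,q)\sim(0_n,0_n)$ will produce an integer $k\ge 0$, trivial pairs $(r,r)\in\mathcal{P}_{k,\kappa,\epsilon}(X,B)$ and $(r',r')\in\mathcal{P}_{n+k,\kappa,\epsilon}(X,B)$, and a path in $\mathcal{P}_{n+k,\kappa,\epsilon}(X,B)$ from $(p\oplus r,q\oplus r)$ to $(r',r')$. I will then block-sum this with the trivial pair $(1_k-r,1_k-r)$, which lies in $\mathcal{P}_{k,\kappa,\epsilon}(X,B)$ since $\|1_k-r\|=\|r\|\le\kappa$ for nontrivial idempotents and $\|[1_k-r,x]\|=\|[r,x]\|<\epsilon$, obtaining a path in $\mathcal{P}_{n+2k,\kappa,\epsilon}(X,B)$ from $(p\oplus r\oplus(1_k-r), q\oplus r\oplus(1_k-r))$ to $(r'\oplus(1_k-r), r'\oplus(1_k-r))$. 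The technical crux will be to homotope the ``doubled'' idempotent $r\oplus(1_k-r)\in M_{2k}(\KB^+)$ to the standard projection $1_k\oplus 0_k$ inside the controlled framework. Set $U:=\bigl(\begin{smallmatrix}r & 1_k-r\\ 1_k-r & -r\end{smallmatrix}\bigr)$; using $r(1_k-r)=(1_k-r)r=0$ one checks $U^2=I_{2k}$. Defining $V_t:=\cos(t/2)I+i\sin(t/2)U$, so that $V_t^{-1}=\cos(t/2)I-i\sin(t/2)U$, and $h_t:=V_t(1_k\oplus 0_k)V_t^{-1}$, yields an idempotent-valued path with $h_0=1_k\oplus 0_k$ and $h_\pi=U(1_k\oplus 0_k)U=r\oplus(1_k-r)$. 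Expanding $h_t$ into $\sin/\cos$-weighted combinations of $1_k$, $r$, $1_k-r$ and using $|\cos(t/2)\sin(t/2)|\le 1/2$ will give $\|h_t\|\le (3/2)\kappa\le 2\kappa$; expanding $[h_t,x]$ and using $[1_k,x]=0$ together with $[1_k-r,x]=-[r,x]$ will simplify to $\|[h_t,x]\|\le|\sin(t/2)|\cdot\|[r,x]\|<\epsilon$. Hence the path $(p\oplus h_t,q\oplus h_t)$ lies in $\mathcal{P}_{n+2k,2\kappa,\epsilon}(X,B)$ and joins $(p\oplus r\oplus(1_k-r), q\oplus r\oplus(1_k-r))$ to $(p\oplus 1_k\oplus 0_k, q\oplus 1_k\oplus 0_k)$; concatenating its reverse with the earlier path delivers the required path from $(p\oplus 1_m\oplus 0_m, q\oplus 1_m\oplus 0_m)$ to $(s,s)$ with $m:=k$ and $s:=r'\oplus(1_k-r)$.

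The main obstacle will be the quantitative bookkeeping for the homotopy $h_t$: both the factor of $2$ in the allowed norm bound $2\kappa$ and the strict preservation of the commutator bound $\epsilon$ (as opposed to some multiple of $\epsilon$) depend on the specific structure of the chosen formula, and the ``doubling'' $r\oplus(1_k-r)$ --- as opposed to working with $r$ alone --- is precisely what makes these estimates achievable without inflating constants.
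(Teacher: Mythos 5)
Your reading of the (obvious) typo in the lemma statement is correct, and your proof is sound and takes essentially the same route as the paper's. The paper also proceeds by (1) reducing the generated equivalence to the normal form ``pad, homotope, unpad,'' (2) adding the complementary idempotent $1_k-r$ to both entries, and (3) connecting $r\oplus(1_k-r)$ to $1_k\oplus 0_k$ by a rotation homotopy with the same norm bound $1+\kappa\leq 2\kappa$ and the same commutator bound $<\epsilon$; your $V_t=\cos(t/2)I+i\sin(t/2)U$ path is a minor cosmetic variant (a complex conjugation differing from the paper's real rotation of the second block by a fixed diagonal unitary), and the estimates you sketch work out.
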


\begin{proof}
For elements $(p_1,q_1)$ and $(p_2,q_2)$ in $\mathcal{P}_{\infty,\kappa,\epsilon}(X,B)$ let us write: $(p_1,q_1)\to (p_2,q_2)$ if $(p_2,q_2)=(p_1\oplus r,q_1\oplus r)$ for some $(r,r)\in \mathcal{P}_{\infty,\kappa,\epsilon}(X,B)$; $(p_1,q_1)\stackrel{h}{\sim} (p_2,q_2)$ if there is a path connecting these elements; and $(p_1,q_1)\leftarrow (p_2,q_2)$ if $(p_2,q_2)\to (p_1,q_1)$.  Then $[p,q]=0$ means that there is some sequence of moves from $\{\to,\leftarrow,\stackrel{h}{\sim}\}$ starting at $(p,q)$ and finishing at $(0,0)$.  It is not difficult to see the following: any time a move from $\{\to,\leftarrow,\stackrel{h}{\sim}\}$ is consecutively repeated we may replace it by a single move of the same type; any occurrence of ``$\stackrel{h}{\sim} \to$'' may be replaced by an occurrence of ``$\to \stackrel{h}{\sim}$''; any occurrence of ``$\leftarrow \stackrel{h}{\sim}$'' may be replaced by an occurrence of ``$\stackrel{h}{\sim}\leftarrow$''; any occurrence of ``$\leftarrow \to$'' or ``$\leftarrow \stackrel{h}{\sim} \to$'' may be replaced by ``$\to \stackrel{h}{\sim}\leftarrow$'' (we leave the details to the reader in each case).  Using these replacements, we see that our moves relating $(p,q)$ to $(0,0)$ may be assumed to be of the form $(p,q)\to \stackrel{h}{\sim} \leftarrow (0,0)$, or in other words that there are elements $(r,r)$ and $(t,t)$ in $\mathcal{P}_{\infty,\kappa,\epsilon}(X,B)$ such that $(p\oplus r,q\oplus r)$ is homotopic to $(t,t)$.  

To complete the proof, note then that $(p\oplus r \oplus 1-r,q\oplus r\oplus 1-r)$ is homotopic to $(t\oplus 1-r,t\oplus 1-r)$.  For $t\in [0,\pi/2]$, define  
$$
r_t:=\begin{pmatrix} r & 0 \\ 0 & 0 \end{pmatrix} +\begin{pmatrix} \cos(t) & -\sin(t) \\ \sin(t) & \cos(t)\end{pmatrix} \begin{pmatrix} 0 & 0 \\ 0 & 1-r \end{pmatrix}\begin{pmatrix} \cos(t) & \sin(t) \\ -\sin(t) & \cos(t)\end{pmatrix}
$$
so $(r_t)_{t\in [0,\pi/2]}$ is a path connecting $r\oplus 1-r$ and $1\oplus 0$.  One computes that $\|r_t\|\leq 1+\kappa\leq 2\kappa$ for all $t$, and that $\|[r_t,x]\|<\epsilon$ for all $x\in X$.  Hence with $s=t\oplus 1-r$ we get the claimed result.
\end{proof}

We will need a more general variation of Definitions \ref{dir set 0} and \ref{dir set}.

\begin{definition}\label{dir set 2}
Let $C$ be a $C^*$-algebra.  Let $\mathcal{X}'_C$ consist of all triples of the form $(X,\kappa,\epsilon)$ where $X$ is a finite subset of the unit ball of $C$, $\kappa\geq 1$, and $\epsilon>0$.   Put a partial order on $\mathcal{X}_C'$ by $(X,\kappa,\epsilon)\leq (Y,\lambda,\delta)$ if $\delta\leq \epsilon$, $\lambda\leq \kappa$ and if for all $x\in X$ there exists $y\in Y$ with $\|x-y\|\leq \frac{1}{2\lambda}(\epsilon-\delta)$.

Let now $B$ be a separable $C^*$-algebra.  Then if $(X,\kappa,\epsilon)\leq (Y,\lambda,\delta)$ in $\mathcal{X}_{\LB}'$, one checks that for each $n$ we have
\begin{equation}\label{p inclusion 2}
\mathcal{P}_{n\lambda,\delta}(Y,B)\subseteq \mathcal{P}_{n,\kappa,\epsilon}(X,B).
\end{equation}
We call the canonical map 
$$
KK^0_{\lambda,\delta}(Y,B)\to KK_{\kappa,\epsilon}^0(X,B)
$$
induced by the inclusions in line \eqref{p inclusion 2} above a \emph{forget control map}.  
\end{definition}

\subsection{The odd case}\label{odd kk sec}

Our goal in this subsection is to introduce an odd parity version of the controlled $KK$-theory groups of the previous section.  For the statement, recall that $C^+$ denotes the unitization of a $C^*$-algebra $C$.

\begin{definition}\label{alm com 1}
Let $B$ be a separable $C^*$-algebra, let $X$ be a subset of the unit ball of $\LB$, let $\kappa\geq 1$, let $\epsilon> 0$, and let $n\in \N$.  Define $\mathcal{U}_{n,\kappa,\epsilon}(X,B)$ to be the subset of those invertible elements $u$ in $M_n(\K_B^+)$ satisfying the following conditions:
\begin{enumerate}[(i)]
\item $\|u\|\leq  \kappa$ and $\|u^{-1}\|\leq \kappa$;
\item $\|[u,x]\|< \epsilon$ and $\|[u^{-1},x]\|< \epsilon$ for all $x\in X$.
\end{enumerate}
Define  
$$
\mathcal{U}_{\infty,\kappa,\epsilon}(X,B):=\bigsqcup_{n=1}^\infty \mathcal{U}_{n,\kappa,\epsilon}(X,B),
$$
i.e.\ $\mathcal{U}_{\infty,\kappa,\epsilon}(X,B)$ is the \emph{disjoint} union of all the sets $\mathcal{U}_{n,\kappa,\epsilon}(X,B)$.

Equip each $\mathcal{U}_{n,\kappa,\epsilon}(X,B)$ with the norm topology it inherits from $M_n(\LB)$, and equip $\bigsqcup_{n=1}^\infty \mathcal{U}_{n,\kappa,\epsilon}(X,B)$ with the disjoint union topology.     Define an equivalence relation on $\mathcal{U}_{\infty,\kappa,\epsilon}(X,B)$ to be generated by the following relations:
\begin{enumerate}[(i)]
\item for any $k\in \N$, if $1_k\in\mathcal{U}_{k,\kappa,\epsilon}(X,B)$ is the identity element, then $u\sim u\oplus 1_k$;
\item $u_1\sim u_2$ if both are elements of the same path component of $\mathcal{U}_{\infty,2\kappa,\epsilon}(X,B)$.\footnote{Equivalently, both are in the same $\mathcal{U}_{n,2\kappa,\epsilon}(X,B)$, and are in the same path component of this set.  Notice also the switch from $\kappa$ to $2\kappa$ here, which is needed for our proof that $KK^1_{\kappa,\epsilon}(X,B)$ is a group.}
\end{enumerate}
Define $KK^1_{\kappa,\epsilon}(X,B)$ to be $\mathcal{U}_{\infty,\kappa,\epsilon}(X,B)/\sim$, and provisionally define a binary operation $+$ on $KK^1_{\kappa,\epsilon}(X,B)$ by $[u_1]+[u_2]:=[u_1\oplus u_2]$.
\end{definition}

\begin{lemma}\label{kk1 group}
With notation as in Definition \ref{alm com 1}, $KK^1_{\kappa,\epsilon}(X,B)$ is a well-defined abelian group with identity element the class $[1_B]$ of the unit of $B$.
\end{lemma}

\begin{proof}
It is straightforward to check that $KK^1_{\kappa,\epsilon}(X,B)$ is a monoid, and the class $[1]$ is neutral by definition.  A standard rotation homotopy shows that $KK^1_{\kappa,\epsilon}(X,B)$ is commutative.  It remains to show that inverses exist.  We claim that for $u\in \mathcal{U}_{n,\kappa,\epsilon}(X,B)$, the inverse of the class $[u]$ is given by $[u^{-1}]$.  Indeed, consider the homotopy
$$
u_t:=\begin{pmatrix} u & 0 \\ 0 & 1 \end{pmatrix} \begin{pmatrix} \cos(t) & -\sin(t) \\ \sin(t) & \cos(t) \end{pmatrix} \begin{pmatrix} 1 & 0 \\ 0 & u^{-1} \end{pmatrix} \begin{pmatrix} \cos(t) & \sin(t) \\ -\sin(t) & \cos(t) \end{pmatrix}, \quad t\in [0,\pi/2].
$$
This connects $u\oplus u^{-1}$ and $1_{2k}$, so it suffices to show that this passes through $\mathcal{U}_{2n,2\kappa,\epsilon}(X,B)$.  For the commutator condition, we compute that for $a\in X$ and $t\in [0,2\pi]$
$$
[a,u_t]=\begin{pmatrix} [a,u] & 0 \\ 0 & [u^{-1},a]\end{pmatrix} \begin{pmatrix} \cos^2(t) & \cos(t)\sin(t) \\ \cos(t)\sin(t) & -\cos^2(t) \end{pmatrix}.
$$
The scalar matrix on the right has norm $|\cos(t)|$, and the matrix on the left has norm at most $\max\{\|[a,u]\|,\|[a,u^{-1}]\|\}< \epsilon$, so $\|[a,u_t]\|< \epsilon$.  For the norm condition, we compute that 
$$
u_t=\begin{pmatrix} u & 0 \\ 0 & -u^{-1} \end{pmatrix} \begin{pmatrix} \cos^2(t) & \cos(t)\sin(t) \\ \cos(t)\sin(t) & -\cos^2(t) \end{pmatrix} +\begin{pmatrix} \sin^2(t) & -\cos(t)\sin(t) \\ \cos(t)\sin(t) & \sin^2(t) \end{pmatrix}.
$$
The first scalar matrix appearing above has norm $|\cos(t)|$, and the second has norm $|\sin(t)|$.  We thus have that $\|u_t\|\leq \kappa|\cos(t)|+|\sin(t)|$, which is at most\footnote{We suspect the optimal estimate is $\kappa$ -- this is the case if $u$ is normal, for example -- but were unable to do better than $\sqrt{1+\kappa^2}$ in general.} $2\kappa$ as required.
\end{proof}

\begin{definition}\label{dir set 3}
Let $C$ be a $C^*$-algebra, and let $\mathcal{X}'_C$ be the directed set of Definition \ref{dir set 2} above.  Let $B$ be a separable $C^*$-algebra.  Then if $(X,\kappa,\epsilon)\leq (Y,\lambda,\delta)$ in $\mathcal{X}_{\LB}'$, one checks that for each $n$ we have
\begin{equation}\label{u inclusion 2}
\mathcal{U}_{n,\lambda,\delta}(Y,B)\subseteq \mathcal{U}_{n,\kappa,\epsilon}(X,B)
\end{equation}
for all $n$.  We call the canonical map 
$$
KK^1_{\lambda,\delta}(Y,B)\subseteq KK_{\kappa,\epsilon}^1(X,B)
$$
induced by the inclusions in line \eqref{u inclusion 2} above a \emph{forget control map}.  
\end{definition}

\section{Homotopies, similarities, and normalization}\label{tech sec}

In this section (as throughout), if $B$ is a separable $C^*$-algebra, then $\LB$ and $\K_B$ denote respectively the adjointable and compact operators on the standard Hilbert $B$-module $\ell^2\otimes B$.  For each $n$, we consider $\LB$ as a subalgebra of $M_n(\LB)$ via the ``diagonal inclusion'' $\LB=1_{M_n}\otimes \LB\subseteq M_n\otimes \LB=M_n(\LB)$. 

Our goal is to establish some technical lemmas about the controlled $KK$-groups $KK^0_{\kappa,\epsilon}(X,B)$ and $KK^1_{\kappa,\epsilon}(X,B)$ and the underlying sets of cycles $\mathcal{P}_{\infty,\kappa,\epsilon}(X,B)$ and $\mathcal{U}_{\infty,\kappa,\epsilon}(X,B)$ from Definitions \ref{alm com  0} and \ref{alm com 1} respectively.  These are all variants of standard facts from $C^*$-algebra $K$-theory, but the arguments are more involved as we need to do extra work to control commutator estimates.  Some of the material is adapted from the foundational work of Oyono-Oyono and the second author on controlled $K$-theory \cite{Oyono-Oyono:2011fk}; those authors work in the `dual' setting to us in some sense, and similar techniques are often useful.

Most of the results in this section come with explicit estimates.  We have generally not tried to get optimal estimates, but as it might be useful for future work we have tried to point out where one might expect the estimates to be optimal where this is simple to do. 

\subsection{Background on idempotents}\label{idem sec}

In this subsection we look at idempotents in $C^*$-algebras and their relationship to projections.  Most of this is well-known; nonetheless, we give proofs for the sake of completeness where we could not find a good reference.

To establish notation, let us first note that if $p\in \mathcal{B}(H)$ is an idempotent, then with respect to the decomposition $H=\text{Image}(p)\oplus \text{Image}(p)^\perp$, $p$ has a matrix representation  
\begin{equation}\label{idem rep}
p=\begin{pmatrix} 1 & a \\ 0 & 0 \end{pmatrix}
\end{equation}
for some $a\in \mathcal{B}(\text{Image}(p)^\perp,\text{Image}(p))$; conversely, any operator admitting a matrix of this form with respect to some orthogonal direct sum decomposition of the underlying Hilbert space defines an idempotent.

\begin{lemma}\label{id norm}
If $p$ is an idempotent bounded operator on a Hilbert space that is neither zero nor the identity, then $\|1-p\|=\|p\|$ and $\|p-p^*\|\leq \|p\|$.
\end{lemma}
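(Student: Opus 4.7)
The plan is to invoke the block matrix representation in equation~\eqref{idem rep}. Since $p$ is neither zero nor the identity, both $\operatorname{Image}(p)$ and $\operatorname{Image}(p)^\perp$ are nonzero, and with respect to the orthogonal decomposition $H = \operatorname{Image}(p)\oplus \operatorname{Image}(p)^\perp$ we may write
$$
p = \begin{pmatrix} 1 & a \\ 0 & 0 \end{pmatrix}
$$
for some bounded operator $a$. The strategy is then to compute everything in terms of this block decomposition and to extract the required norms via the $C^*$-identity.

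For the equality $\|1-p\| = \|p\|$, I would first compute $pp^*$ in block form: it is block-diagonal with the single nonzero block $1 + aa^*$, so the $C^*$-identity gives $\|p\|^2 = 1 + \|a\|^2$. Next, writing $1-p$ in block form and computing $(1-p)^*(1-p)$, I would observe that this too is block-diagonal, with the single nonzero block $1 + a^*a$, so $\|1-p\|^2 = 1 + \|a\|^2$. Combining these yields the claimed equality.

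For the inequality $\|p - p^*\| \leq \|p\|$, the plan is to compute $p - p^*$ in block form and then compute $(p-p^*)^*(p-p^*)$; this comes out block-diagonal with diagonal blocks $aa^*$ and $a^*a$. The $C^*$-identity then gives $\|p - p^*\|^2 = \|a\|^2$, which is at most $1 + \|a\|^2 = \|p\|^2$.

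There is no real obstacle here: the entire argument is a straightforward unpacking of the block-matrix form, and the only care required is in keeping track of signs and adjoints when writing out the products.
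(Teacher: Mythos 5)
Your proposal is correct and takes essentially the same approach as the paper: write $p$ in the block form of line \eqref{idem rep}, compute $pp^*$, $(1-p)^*(1-p)$, and $(p-p^*)^*(p-p^*)$, and read off the norms via the $C^*$-identity (the paper uses $(p-p^*)(p-p^*)^*$ instead of $(p-p^*)^*(p-p^*)$, but this is immaterial).
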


\begin{proof}
Writing $p$ as in line \eqref{idem rep} (and using that neither $\text{Image}(p)$ nor $\text{Image}(p)^\perp$ are the zero subspace), we compute that 
\begin{equation}\label{idem norm}
\|p\|^2=\|pp^*\|=\|1+aa^*\|=1+\|a\|^2
\end{equation}
and moreover that
$$
\|1-p\|^2=\|(1-p)^*(1-p)\|=\|1+a^*a\|=1+\|a\|^2=\|p\|^2. 
$$
Looking now at $p-p^*$, we see that 
$$
(p-p^*)(p-p^*)^*=\begin{pmatrix} 0 & a \\ -a^* & 0 \end{pmatrix}\begin{pmatrix} 0 & -a \\ a^* & 0 \end{pmatrix}  =\begin{pmatrix} aa^* & 0 \\ 0 & a^*a \end{pmatrix}, 
$$
whence $\|p-p^*\|^2=\|a\|^2\leq \|p\|^2$.  
\end{proof}

\begin{corollary}\label{id norm cor}
If $\kappa\geq 1$, and $p$ is any idempotent in a $C^*$-algebra with $\|p\|\leq \kappa$, then $\|1-p\|\leq \kappa$, $\|p-p^*\|\leq \kappa$, and $\|2p-1\|\leq 2\kappa$.
\end{corollary}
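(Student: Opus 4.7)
The plan is to reduce Corollary \ref{id norm cor} to Lemma \ref{id norm} by a faithful representation. Embed the ambient $C^*$-algebra (or its unitization, to house the element $1$) faithfully into $\mathcal{B}(H)$ for some Hilbert space $H$. A faithful $*$-homomorphism is isometric, so the norms $\|1-p\|$, $\|p-p^*\|$, and $\|2p-1\|$, as well as $\|p\|$, are preserved. Thus it suffices to prove the three estimates for an idempotent $p\in\mathcal{B}(H)$ with $\|p\|\le\kappa$.

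Next, dispose of the two degenerate cases not covered by Lemma \ref{id norm}. If $p=0$, then $\|1-p\|=1\le\kappa$ (using $\kappa\ge 1$), $\|p-p^*\|=0\le\kappa$, and $\|2p-1\|=1\le 2\kappa$. If $p=1$, then $\|1-p\|=0$, $\|p-p^*\|=0$, and $\|2p-1\|=1\le 2\kappa$. In both cases the required bounds hold.

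For the remaining case $p\notin\{0,1\}$, Lemma \ref{id norm} immediately gives $\|1-p\|=\|p\|\le\kappa$ and $\|p-p^*\|\le\|p\|\le\kappa$. The third inequality then follows from the triangle inequality applied to the decomposition $2p-1=p-(1-p)$:
\[
\|2p-1\|\le\|p\|+\|1-p\|\le\kappa+\kappa=2\kappa.
\]
There is no real obstacle here; the only mild point to be careful about is ensuring the unit ``$1$'' is interpreted in a $C^*$-algebra that actually contains both $p$ and $1$ (such as the unitization or multiplier algebra, if the original $C^*$-algebra is non-unital), which is handled uniformly by passing to the Hilbert space via a faithful unital representation of that enlarged algebra.
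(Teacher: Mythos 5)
Your proof is correct and follows essentially the same route as the paper: reduce to Lemma \ref{id norm} (after a faithful representation into $\mathcal{B}(H)$), check the degenerate cases $p=0,1$ directly, and get the bound on $\|2p-1\|$ from the decomposition $2p-1=p-(1-p)$ and the triangle inequality. The only difference is that you spell out the passage to a concrete Hilbert space, which the paper leaves implicit.
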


\begin{proof}
The estimates for $\|1-p\|$ and $\|p-p^*\|$ are immediate from Lemma \ref{id norm} (and direct checks for the degenerate cases $p=0$ and $p=1$).  The estimate for $2p-1$ follows as $2p-1=p-(1-p)$.
\end{proof}

It will be convenient to formalize a standard construction in $C^*$-algebra $K$-theory for turning idempotents into projections (compare for example \cite[Proposition 4.6.2]{Blackadar:1998yq}).

\begin{definition}\label{idem to proj}
Let $p$ be an idempotent in a $C^*$-algebra $C$.  Define $z:=1+(p-p^*)(p^*-p)\in C^+$, and note that $z\geq 1_{C^+}$ so $z$ is invertible.  Define $r:=pp^*z^{-1}$, which is an element of $C$.  We call $r$ the \emph{projection\footnote{It will be shown to be a projection in the next lemma.} associated to $p$}.
\end{definition}

\begin{remark}\label{ass pro rem}
If $C$ is a concrete $C^*$-algebra and $p$ is an idempotent with matrix representation as in line \eqref{idem rep}, then one computes that the associated projection has matrix representation 
\begin{equation}\label{ass pro rep}
r=\begin{pmatrix} 1 & 0 \\ 0 & 0 \end{pmatrix}
\end{equation}
with respect to the same decomposition of the underlying Hilbert space.  In particular, $r$ is the projection with the same image as the idempotent $p$.
\end{remark}

\begin{lemma}\label{idem to proj props}
Let $p$ be an idempotent in a $C^*$-algebra $C$, and assume that $\|p\|\leq \kappa$ for some $\kappa\geq 1$.  Let $r$ be the projection associated to $p$ as in Definition \ref{idem to proj}, and for $t\in [0,1]$ define $r_t:=(1-t)p+tr$.  Then the following hold:
\begin{enumerate}[(i)]
\item \label{ass pro} The element $r$ is a projection in $C$, and there is an invertible $u\in C^+$ such that $upu^{-1}=r$.  Moreover, $u$ and its inverse have norm at most $1+\|p\|$, and $u$ is connected to the identity through a path of invertibles such that all the invertibles in the path and all of their inverses have norm at most $1+\|p\|$.
\item \label{ip com 1} Each $r_t$ is an idempotent such that $\|r_t\|\leq \kappa$ for all $t$, and the map $t\mapsto r_t$ is $\kappa$-Lipschitz. 
\item \label{ip com 2} For any $c\in C$ and $t\in [0,1]$ we have $\|[r_t,c]\|\leq (1+2t)\|[p,c]\|+t\|[p,c^*]\|$.
\item \label{ass pro lip} The map 
$$
\{p\in C\mid p=p^2\}\to \{p\in C\mid p=p^2=p^*\}
$$
that takes an idempotent to its associated projection is $1$-Lipschitz.
\end{enumerate}
\end{lemma}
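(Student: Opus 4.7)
The plan is to prove the four parts in order, combining explicit matrix computations in a faithful Hilbert-space representation of $C^+$ with abstract algebraic identities in $C$. The linchpin throughout is the quartet of identities $pr=r$, $rp=p$, $p^*r=p^*$, $rp^*=r$, which one can either read off from the matrix form $p=\begin{pmatrix} 1 & a \\ 0 & 0 \end{pmatrix}$ of Lemma \ref{id norm} (where $r$ turns out to be the diagonal projection $\begin{pmatrix} 1 & 0 \\ 0 & 0 \end{pmatrix}$) or verify abstractly using $p^2=p$ together with the alternative expression $z=pp^*+(1-p^*)(1-p)$.

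For part \eqref{ass pro} I would first verify that $r$ is a projection: in the matrix picture this is visible, and abstractly one notes that $pp^*$ commutes with $z$ (since both $pp^*(1-p^*)(1-p)$ and $(1-p^*)(1-p)pp^*$ vanish), which gives $r^*=r$, and then $(pp^*)^2=pp^*z$ gives $r^2=r$. For the conjugation, the natural candidate is $u:=1+(p-r)\in C^+$; the four identities immediately give $(p-r)^2=0$, so $u^{-1}=1-(p-r)$, and a short expansion yields $upu^{-1}=r$. Part \eqref{ip com 1} then follows quickly: $r_t^2=r_t$ is an algebraic expansion using only $p^2=p$, $r^2=r$, $pr=r$, $rp=p$; the bound $\|r_t\|\leq \kappa$ holds by convexity since $\|r\|\leq 1\leq \kappa$; and the Lipschitz estimate reduces to bounding $\|r-p\|=\|a\|$ by $\sqrt{\|p\|^2-1}\leq \sqrt{\kappa^2-1}\leq \kappa$, using the identity $\|p\|^2=1+\|a\|^2$ from Lemma \ref{id norm}.

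Part \eqref{ip com 2} requires the main algebraic work. The plan is to establish the off-diagonal decomposition
$$[r,c]=r[p^*,c](1-r)+(1-r)[p,c]\,r.$$
To get this, I would commute $c$ against each of the relations $rp=p$ and $rp^*=r$ and rearrange to produce the two off-diagonal formulas $[r,c]r=(1-r)[p,c]r$ and $r[r,c]=r[p^*,c](1-r)$; for the two diagonal corners, I would use $[r^2-r,c]=0$ to derive $[r,c]=r[r,c]+[r,c]r$ and combine with the off-diagonal formulas to force $r[r,c]r=0$ and $(1-r)[r,c](1-r)=0$. Once the decomposition is in hand, the triangle inequality together with $\|[p^*,c]\|=\|[p,c^*]\|$ gives $\|[r,c]\|\leq \|[p,c]\|+\|[p,c^*]\|$, and the claimed bound follows from $\|[r_t,c]\|\leq (1-t)\|[p,c]\|+t\|[r,c]\|$.

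The main obstacle is part \eqref{ass pro lip}: the formula $r=pp^*z^{-1}$ depends nonlinearly on $p$ (through $z^{-1}$), so a direct manipulation of $r-s$ in terms of $p-q$ looks unpromising. The plan is to bypass the explicit formulas for $r$ and $s$ entirely by first establishing the standard identity
$$\|r-s\|=\max(\|(1-s)r\|,\|(1-r)s\|)$$
for any two projections, which comes from the observation that $(r-s)(r-s)^*=r(1-s)r+(1-r)s(1-r)$ decomposes as a sum in the orthogonal hereditary subalgebras $rCr$ and $(1-r)C(1-r)$, whose $C^*$-norms separate. The key manipulation is then that $pr=r$ for the first projection and $sq=q$ for the second, so $(1-s)q=q-sq=0$ and hence
$$(1-s)r=(1-s)\,p\,r=(1-s)(p-q)\,r,$$
giving $\|(1-s)r\|\leq \|p-q\|$ since $r$ and $1-s$ have norm at most $1$; the bound $\|(1-r)s\|\leq \|p-q\|$ follows by symmetry.
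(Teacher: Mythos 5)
Your proposal is correct, and for the two non-trivial parts it takes a genuinely different route from the paper.

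For parts (i) and (ii) you and the paper are doing essentially the same thing; the paper reads $r$ and $r_t$ off the matrix representation $p=\left(\begin{smallmatrix} 1 & a \\ 0 & 0 \end{smallmatrix}\right)$, $r=\left(\begin{smallmatrix} 1 & 0 \\ 0 & 0 \end{smallmatrix}\right)$, whereas you supplement this with a concrete invertible $u=1+(p-r)$ and a convexity argument for the norm bound. Both work, and both ultimately lean on the matrix picture for the Lipschitz constant.

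For part (iii), the paper uses the matrix representation and the contractivity of the diagonal conditional expectation to reach $\|[r,c]\|\leq 3\|[p,c]\|+\|[p,c^*]\|$, which then gives the stated $(1+2t)$ coefficient. Your intrinsic derivation $[r,c]=r[p^*,c](1-r)+(1-r)[p,c]\,r$ (obtained by commuting $c$ across $rp=p$, $rp^*=r$, and $r^2=r$, and then checking the diagonal corners $r[r,c]r$ and $(1-r)[r,c](1-r)$ vanish) is cleaner and actually produces the sharper bound $\|[r,c]\|\leq\|[p,c]\|+\|[p,c^*]\|$, hence $\|[r_t,c]\|\leq\|[p,c]\|+t\|[p,c^*]\|$; this of course implies the weaker bound asserted in the lemma. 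You could even note that the two summands live in the complementary corners $r\cdot(1-r)$ and $(1-r)\cdot r$, so $\|[r,c]\|$ is actually the \emph{maximum} of the two corner norms, but the triangle inequality already suffices.

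For part (iv), the paper is content to cite Kato's Chapter One, Theorem 6.35 after reducing to a concrete picture. Your argument is self-contained: the projection norm formula $\|r-s\|=\max(\|(1-s)r\|,\|(1-r)s\|)$ (from the orthogonal decomposition $(r-s)^2=r(1-s)r+(1-r)s(1-r)$), combined with $pr=r$ and $(1-s)q=0$, immediately gives $(1-s)r=(1-s)(p-q)r$ and hence $\|(1-s)r\|\leq\|p-q\|$, with the other corner by symmetry. This is a nice replacement for the external reference and I would consider it an improvement.

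One small caution: in part (iv) the norm formula for two projections requires that both $r$ and $s$ be genuine projections (self-adjoint idempotents), which is the case here, and the argument silently uses $\|r\|\leq 1$, $\|1-s\|\leq 1$; you should state these explicitly since the ambient idempotents $p,q$ need not be contractions. Also, in part (iii), as stated the phrase "commute $c$ against the relation $rp^*=r$" produces $r[p^*,c]=[r,c](1-p^*)$, and one still has to multiply by $(1-r)$ on the right and simplify $(1-p^*)(1-r)=1-r$ (using $p^*r=p^*$) before arriving at $r[r,c](1-r)=r[p^*,c](1-r)$; this is a routine simplification, but you should record it.
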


\begin{proof}
Part \eqref{ass pro} as in line \eqref{idem rep}, we may write $p={\tiny\begin{pmatrix} 1 & a \\ 0 & 0 \end{pmatrix}}$, and note as in line \eqref{idem norm} that $\|p\|=\sqrt{1+\|a\|^2}$, so in particular $\|a\|\leq \|p\|$.  Using the discussion in Remark \ref{ass pro rem} we see that $u={\tiny\begin{pmatrix} 1 & a \\ 0 & 1 \end{pmatrix}}$ satisfies $upu^{-1}=r$, and that the path $u_t={\tiny\begin{pmatrix} 1 & ta \\ 0 & 1 \end{pmatrix}}$ connects $u$ to the identity through invertibles of norm at most $1+\|ta\|\leq 1+\|p\|$.  The claims on the norms of the inverses follow as ${\tiny\begin{pmatrix} 1 & ta \\ 0 & 1 \end{pmatrix}^{-1}=\begin{pmatrix} 1 & -ta \\ 0 & 1 \end{pmatrix}}$.

(or see for example the proof of \cite[Proposition 4.6.2]{Blackadar:1998yq}).

For part \eqref{ip com 1}, we write $p$ as in line \eqref{idem rep}, note that $\|a\|\leq \kappa$, and also that $r$ has the matrix representation as in line \eqref{ass pro rep}.  This implies the claimed properties.

For part \eqref{ip com 2}, we again write $p$ as a matrix as in line \eqref{idem rep}.  Let $c\in C$, and with respect to the same decomposition of the underlying Hilbert space, let us write 
$$
c=\begin{pmatrix} c_{11} & c_{12} \\ c_{21} & c_{22} \end{pmatrix}.$$
Then one computes that 
\begin{equation}\label{[p,c]}
[p,c]=\begin{pmatrix} ac_{21} & c_{12}+ac_{22}-c_{11}a \\ -c_{21} & -c_{21}a\end{pmatrix}.
\end{equation}
As the conditional expectation that sends a matrix to its diagonal is contractive, we have 
$$
\Bigg\|\begin{pmatrix} ac_{21} & 0 \\ 0 & -c_{21}a\end{pmatrix}\Bigg\|\leq \|[p,c]\|
$$
and combining this with line \eqref{[p,c]} gives
\begin{equation}\label{mat com}
\Bigg\|\begin{pmatrix} 0 & c_{12}+ac_{22}-c_{11}a \\ -c_{21} & 0 \end{pmatrix}\Bigg\|\leq 2\|[p,c]\|.
\end{equation}
One computes that the top right entry of $[p-p^*,c]$ is $ac_{22}-c_{11}a$, whence 
$$
\|ac_{22}-c_{11}a\|\leq \|[p-p^*,c]\|\leq \|[p,c]\|+\|[p,c^*]\|.
$$
This and line \eqref{mat com} together imply that
\begin{equation}\label{mat com 2}
\Bigg\|\begin{pmatrix} 0 & c_{12} \\ -c_{21} & 0 \end{pmatrix}\Bigg\|\leq 3\|[p,c]\|+\|[p,c^*]\|.
\end{equation}
As $r$ has the matrix representation from line \eqref{ass pro rep}, the left hand side of the inequality in line \eqref{mat com 2} equals $\|[r,c]\|$, and so line \eqref{mat com 2} can be rewritten as the inequality $\|[r,c]\|\leq 3\|[p,c]\|+\|[p,c^*]\|$.  As $r_t=(1-t)p+tr$, this implies the claimed estimate.

For part \eqref{ass pro lip} we may assume that $C$ is a concrete $C^*$-algebra.  As noted in Remark \ref{ass pro rem}, the projection $r$ associated to an idempotent $p$ is then simply the orthogonal projection with the same image as $p$.  In this language, part \eqref{ass pro lip} is \cite[Chapter One, Theorem 6.35]{Kato:1980wl}.
\end{proof}

\subsection{From similarities to homotopies}

Our goal in this short subsection is to establish an analogue of the standard $K$-theoretic fact that similar idempotents are homotopic, at least up to to increasing matrix sizes.  Compare for example \cite[Proposition 4.4.1]{Blackadar:1998yq}.

\begin{proposition}\label{sim to hom}
Let $B$ be a separable $C^*$-algebra, let $X$ be a subset of the unit ball of $\LB$, and let $\kappa\geq 1$ and $\epsilon>0$.  Let $(p_0,q)$ and $(p_1,q)$ be elements of $\mathcal{P}_{n,\kappa,\epsilon}(X,B)$, and let $u\in\mathcal{U}_{n,\kappa,\epsilon}(X,B)$ be such that $up_0u^{-1}=p_1$.  Then the elements $(p_0\oplus 0_n,q\oplus 0_n)$ and $(p_1\oplus 0_n,q\oplus 0_n)$ are in the same path component of $\mathcal{P}_{2n,\kappa^3,3\kappa^2\epsilon}(X,B)$, and in particular, $(p_0,q)$ and $(p_1,q)$ define the same class in $KK^0_{\kappa^3,3\kappa^2\epsilon}(X,B)$.

The analogous statement holds with the roles of the first (``\,$p$'') and second (``\,$q$'') components reversed.
\end{proposition}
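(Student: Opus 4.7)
The plan is to imitate the classical $K$-theoretic fact that similar idempotents become homotopic after a single stabilization, while tracking norm and commutator estimates.  Concretely, I will exhibit an explicit continuous idempotent path in $M_{2n}(\KB^+)$ from $p_0\oplus 0_n$ to $p_1\oplus 0_n$ that stays inside the bounds of $\mathcal{P}_{2n,\kappa^3,3\kappa^2\epsilon}(X,B)$, while the second coordinate is held constant at $q\oplus 0_n$.  The path proceeds in two stages: first from $p_0\oplus 0_n$ to $0_n\oplus p_1$ via a ``quasi-diagonal'' homotopy that directly uses the similarity $p_1 = up_0u^{-1}$, and then from $0_n\oplus p_1$ to $p_1\oplus 0_n$ via a standard scalar $2\times 2$ rotation.

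The main construction is the first leg.  Set, for $t\in[0,\pi/2]$,
\[
V_t = \begin{pmatrix}\cos t\cdot 1_n \\ \sin t\cdot u\end{pmatrix}, \qquad W_t = \bigl(\cos t\cdot 1_n,\; \sin t\cdot u^{-1}\bigr), \qquad e_t := V_t\,p_0\,W_t.
\]
The key identity $W_tV_t = \cos^2 t\cdot 1_n + \sin^2 t\cdot u^{-1}u = 1_n$ makes $e_t^2 = V_tp_0^2W_t = e_t$, and the endpoints work out to $e_0 = p_0\oplus 0_n$ and $e_{\pi/2} = 0_n\oplus up_0u^{-1} = 0_n\oplus p_1$.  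The block-column/row shape gives $V_t^*V_t = \cos^2 t\cdot 1_n + \sin^2 t\cdot u^*u \leq \kappa^2\cdot 1_n$, and analogously $W_tW_t^* \leq \kappa^2\cdot 1_n$, so $\|V_t\|,\|W_t\|\leq\kappa$ and hence $\|e_t\|\leq\kappa^3$.  A direct calculation shows $[V_t,x] = \bigl(\begin{smallmatrix}0 \\ \sin t\cdot[u,x]\end{smallmatrix}\bigr)$ and $[W_t,x] = (0,\sin t\cdot[u^{-1},x])$, each of norm strictly less than $\epsilon$; then the Leibniz expansion
\[
[e_t,x] = [V_t,x]\,p_0\,W_t + V_t\,[p_0,x]\,W_t + V_t\,p_0\,[W_t,x]
\]
together with the bounds above yields $\|[e_t,x]\| < 3\kappa^2\epsilon$, exactly as required.

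The second leg is the routine rotation $f_t := R_t(0_n\oplus p_1)R_t^*$ with $R_t = \bigl(\begin{smallmatrix}\cos t & -\sin t \\ \sin t & \cos t\end{smallmatrix}\bigr)$ acting block-wise; since $R_t$ is a scalar unitary commuting with the diagonally embedded $X$, the path $f_t$ stays inside $\mathcal{P}_{2n,\kappa,\epsilon}(X,B) \subseteq \mathcal{P}_{2n,\kappa^3,3\kappa^2\epsilon}(X,B)$.  Concatenation of the two legs, keeping $q\oplus 0_n$ fixed on the second slot, yields the claimed path; the $K_0(\C)$-compatibility of the pair is automatic because $[\sigma(\cdot)]\in K_0(\C)$ is locally constant along a continuous idempotent path in $M_{2n}(\C)$ and $[\sigma(p_0)] = [\sigma(q)] = [\sigma(p_1)]$ by hypothesis.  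The reversed-roles statement follows by running the same construction in the second coordinate.  The main obstacle is producing estimates tight enough for the specific constants $\kappa^3$ and $3\kappa^2\epsilon$: a naive conjugation $v_t(p_0\oplus 0_n)v_t^{-1}$ by a path of invertibles from $1_{2n}$ to $u\oplus u^{-1}$ (such as the one in the proof of Lemma \ref{kk1 group}) yields strictly worse bounds, so the factorization $e_t = V_tp_0W_t$---with each of $V_t$, $p_0$, and $W_t$ having norm at most $\kappa$---is what makes the argument go through.
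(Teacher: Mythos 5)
Your proof is correct and achieves the stated bounds, but it takes a slightly different route from the paper's. The paper handles the whole homotopy in a single step, conjugating $p_0\oplus 0_n$ by the path of invertibles
\[
v_t:=\begin{pmatrix} \cos t & -\sin t \\ \sin t & \cos t \end{pmatrix} \begin{pmatrix} 1 & 0 \\ 0 & u \end{pmatrix} \begin{pmatrix} \cos t & \sin t \\ -\sin t & \cos t \end{pmatrix},
\]
which runs from $\mathrm{diag}(1,u)$ to $\mathrm{diag}(u,1)$. Since this $v_t$ is a scalar rotation sandwiching $\mathrm{diag}(1,u)$, one gets $\|v_t\|,\|v_t^{-1}\|\leq\kappa$ and $\|[v_t,x]\|,\|[v_t^{-1},x]\|<\epsilon$, so $\|v_t(p_0\oplus 0)v_t^{-1}\|\leq\kappa^3$ and the Leibniz rule gives the bound $3\kappa^2\epsilon$ directly. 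You remark that ``a naive conjugation by a path of invertibles from $1_{2n}$ to $u\oplus u^{-1}$ yields strictly worse bounds''; that is true for the specific path you cite (from Lemma~\ref{kk1 group}, whose norm bound is $2\kappa$), but the paper sidesteps it with the cleverer choice of conjugating path above, whose norm is $\kappa$ rather than $2\kappa$. Your rectangular factorization $e_t=V_tp_0W_t$ with $W_tV_t=1_n$ is a clean alternative: it makes the idempotency of $e_t$ transparent, reaches $0_n\oplus p_1$ in one leg with the same norm and commutator bounds, and then a scalar rotation swaps the blocks. In effect you decouple the two rotations that the paper interleaves; the intermediate point $0_n\oplus p_1$ is the same, and the estimates land in exactly the same place. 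Both proofs are valid, and the difference is essentially one of packaging.
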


\begin{proof}
Define 
$$
v_t:=\begin{pmatrix} \cos(t) & -\sin(t) \\ \sin(t) & \cos(t) \end{pmatrix} \begin{pmatrix} 1 & 0 \\ 0 & u \end{pmatrix} \begin{pmatrix} \cos(t) & \sin(t) \\ -\sin(t) & \cos(t) \end{pmatrix} \in M_{2n}(\K_B^+).
$$
Then the path
$$
t\mapsto (v_t(p_0\oplus 0_n)v_t^{-1},q\oplus 0_n),\quad t\in [0,\pi/2]
$$
connects $(p_0\oplus 0_n,q\oplus 0_n)$ to $(p_1\oplus 0_n,q\oplus 0_n)$ through $\mathcal{P}_{2n,\kappa^3,3\kappa^2\epsilon}(X,B)$.  We leave the direct checks involved to the reader.
\end{proof}

\subsection{Normalization}

Our goal in this subsection is to show that cycles for $KK^0_{\kappa,\epsilon}(X,B)$ and $KK^1_{\kappa,\epsilon}(X,B)$ can be assumed to have prescribed ``scalar part'', at least up to some deterioration of $\kappa$ and $\epsilon$.

The following lemma is well-known without the Lipschitz condition\footnote{The constant $3$ appearing in the statement is not optimal: one can see from the proof that $3$ can be replaced with $2+\epsilon$, for any $\epsilon>0$.  We do not know what the optimal constant is.}: see for example \cite[Theorem 4.6.7]{Blackadar:1998yq} or \cite[Corollary 4.1.8]{Higson:2000bs}.

\begin{lemma}\label{lip path}
Let $L>0$.  Then if $(p_t)_{t\in [0,1]}$ is an $L$-Lipschitz path of projections in a unital $C^*$-algebra $C$, there is a $(3L)$-Lipschitz path $(u_t)_{t\in [0,1]}$ of unitaries in $C$ such that $u_0=1$, and such that $p_t=u_tp_0u_t^*$ for all $t\in [0,1]$.
\end{lemma}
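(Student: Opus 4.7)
The plan is to construct the unitary path via a flow-type construction inspired by the $C^1$ case, handling the non-differentiability of a general Lipschitz input either by mollification or by partition-and-concatenation using the local close-projections formula.

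First I would handle the smooth case. If $(p_t)$ were $C^1$, differentiating $p_t^2 = p_t$ gives $\dot{p}_t = \dot{p}_t p_t + p_t\dot{p}_t$, so $p_t\dot{p}_tp_t = 0$ and dually $(1-p_t)\dot{p}_t(1-p_t) = 0$, meaning $\dot{p}_t$ has only off-diagonal components with respect to the decomposition by $p_t$. Setting $a_t := [\dot{p}_t, p_t]$ produces a skew-adjoint element which, by the off-diagonal structure, actually satisfies $\|a_t\| = \|\dot{p}_t\| \leq L$ and $[a_t, p_t] = \dot{p}_t$. The ODE $\dot{u}_t = a_tu_t$, $u_0 = 1$ has a unique solution consisting of unitaries (since $a_t$ is skew-adjoint), and comparing the equations satisfied by $p_t$ and $u_tp_0u_t^*$, both satisfy $\dot{x}_t = [a_t, x_t]$ with $x_0 = p_0$, so by uniqueness $u_tp_0u_t^* = p_t$. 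The bound $\|\dot{u}_t\|\leq L$ then gives an $L$-Lipschitz unitary path in the smooth case.

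For a general Lipschitz path I would partition $[0,1]$ finely, say $0 = t_0 < t_1 < \dots < t_N = 1$ with $L/N < 1$, and apply the classical close-projections formula on each subinterval: for $\|p - q\| < 1$, set $z := qp + (1-q)(1-p)$, so that $z - 1 = (p-q)(1-2p)$, hence $\|z - 1\| \leq \|p - q\|$, and $zp = qz$. Because $z^*z$ commutes with $p$, the polar decomposition $z = w|z|$ produces a unitary with $wpw^* = q$, and routine estimates (using $\|z^*z - 1\| \leq 3\|p - q\|^2$ together with a functional-calculus bound $\||z| - 1\| \leq 2\|p - q\|^2$ for $\|p - q\|$ small enough) yield $\|w - 1\|\leq \|p - q\| + C\|p - q\|^2$. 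Applying this on each subinterval between $p_{t_i}$ and $p_t$ for $t \in [t_i, t_{i+1}]$ produces local unitaries $w_t^{(i)}$ with $w^{(i)}_{t_i} = 1$; I would then concatenate, setting $u_t := w_t^{(i)} w_{t_{i}}^{(i-1)} \cdots w_{t_1}^{(0)}$ on $[t_i, t_{i+1}]$, and verify the $3L$-Lipschitz bound by tracking the per-piece estimates.

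The main obstacle will be obtaining the constant $3L$ uniformly across the partition, since both the quadratic error $C\|p - q\|^2$ from the polar decomposition and the step needed to guarantee $\|p_{t_i} - p_t\|<1$ must be absorbed into the slack between $L$ and $3L$; this forces the partition to be chosen carefully in terms of $L$. A cleaner alternative would be to mollify $(p_t)$ into $C^1$ paths with uniformly controlled Lipschitz constants, apply the smooth-case ODE construction on each, and extract a Lipschitz unitary limit, but this route introduces its own subtleties since arbitrary $C^*$-algebras lack the Radon–Nikodym property, so one has to justify the approximation and the passage to the limit without appealing to almost-everywhere differentiability. Either route should yield the claimed $3L$ Lipschitz bound once the estimates are carried out carefully.
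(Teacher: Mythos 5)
Your core construction is the same as the paper's: partition $[0,1]$ finely, on each subinterval use the ``close projections'' element $z := qp + (1-q)(1-p)$ (your $z$ with $p=p_{t_i}$, $q=p_t$ is exactly the paper's $x_t = p_tp_{t_1}+(1-p_t)(1-p_{t_1})$) together with its polar decomposition to produce a local unitary path $w^{(i)}_t$ with $w^{(i)}_{t_i}=1$ and $w^{(i)}_t p_{t_i} (w^{(i)}_t)^* = p_t$, and then concatenate. The ODE motivation in your first paragraph is not used by the paper and your mollification alternative is correctly flagged by you as problematic, so the comparison really rests on the partition route.

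There is, however, a genuine gap in the estimate. What you actually need to prove is that the map $t \mapsto w^{(i)}_t$ is $(3L)$-Lipschitz as a function on $[t_i,t_{i+1}]$, i.e.\ that $\|w^{(i)}_t - w^{(i)}_s\| \leq 3L\,|t-s|$ for \emph{all} $s,t$ in the subinterval. The estimate you derive, $\|w-1\| \leq \|p-q\| + C\|p-q\|^2$, only bounds the distance of each $w^{(i)}_t$ from the identity. Applying the triangle inequality to two such bounds gives $\|w^{(i)}_t - w^{(i)}_s\| \lesssim L(|t-t_i| + |s-t_i|)$, and the ratio $(|t-t_i| + |s-t_i|)/|t-s|$ is unbounded as $s,t$ both move away from $t_i$; so the bound does not give Lipschitz continuity of the path, only a modulus anchored at one endpoint. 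Moreover, the canonical unitaries $w^{(i)}_t$ and $w^{(i)}_s$ do not satisfy a cocycle relation in $t$, so you cannot simply apply your formula to the pair $(p_s,p_t)$ and identify the result with $w^{(i)}_t (w^{(i)}_s)^{-1}$. What is missing is the Lipschitz continuity of $t\mapsto w^{(i)}_t$ itself, which the paper obtains by writing $w^{(i)}_t = x_t (x_t^* x_t)^{-1/2}$, noting that $t\mapsto x_t$ is $L$-Lipschitz (since $x_t - x_s = (p_s - p_t)(2p_{t_i}-1)$), and then invoking a quantitative Lipschitz estimate for $c\mapsto c^{-1/2}$ on the set $\{c \geq \eta^{-1}\}$ (the paper's Lemma~\ref{lip root}) to control the second factor, so that the product rule gives the desired bound with a constant that can be pushed down to $3L$ by taking $\epsilon$ small. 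Your quadratic bound $\|z^*z - 1\| = O(\|p-q\|^2)$ is a nice observation and could be used to sharpen the constant in $\epsilon$, but it does not substitute for the missing Lipschitz-in-$t$ estimate on the path of unitaries.
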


We need a preliminary lemma.

\begin{lemma}\label{lip root}
Let $\eta\geq 1$, and let $C$ be a unital $C^*$-algebra.  Then the map 
$$
\{c\in C\mid c\geq \eta^{-1}\}\to C,\quad c\mapsto c^{-1/2}
$$
is $\frac{1}{2}\eta^{3/2}$-Lipschitz\footnote{The constant is optimal in some sense: this follows as the absolute value if the derivative of the function $t\mapsto t^{-1/2}$ on $[\eta^{-1},\infty)$ has maximum value $\frac{1}{2}\eta^{3/2}$.}.
\end{lemma}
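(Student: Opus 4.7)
The plan is to use the integral representation
\[
t^{-1/2} = \frac{1}{\pi}\int_0^\infty \frac{ds}{\sqrt{s}\,(t+s)}, \qquad t>0,
\]
which, via continuous functional calculus, gives $c^{-1/2} = \frac{1}{\pi}\int_0^\infty s^{-1/2}(c+s)^{-1}\,ds$ (as a norm-convergent improper integral) for any self-adjoint $c\geq \eta^{-1}$ in $C$. This turns the problem into estimating a difference of resolvents.

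Given $a,b\geq \eta^{-1}$, the resolvent identity $(a+s)^{-1}-(b+s)^{-1}=(a+s)^{-1}(b-a)(b+s)^{-1}$ yields
\[
a^{-1/2}-b^{-1/2} \;=\; \frac{1}{\pi}\int_0^\infty \frac{1}{\sqrt{s}}\,(a+s)^{-1}(b-a)(b+s)^{-1}\,ds.
\]
Using that $a,b\geq \eta^{-1}$ forces $\|(a+s)^{-1}\|,\|(b+s)^{-1}\|\leq (\eta^{-1}+s)^{-1}$ (by the continuous functional calculus), the triangle inequality gives
\[
\|a^{-1/2}-b^{-1/2}\| \;\leq\; \frac{\|a-b\|}{\pi}\int_0^\infty \frac{ds}{\sqrt{s}\,(\eta^{-1}+s)^2}.
\]

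The remaining step is the scalar integral, which I would compute by the substitution $s=\eta^{-1}u$ to get
\[
\int_0^\infty \frac{ds}{\sqrt{s}\,(\eta^{-1}+s)^2} \;=\; \eta^{3/2}\int_0^\infty \frac{du}{\sqrt{u}\,(1+u)^2} \;=\; \eta^{3/2}\cdot\frac{\pi}{2},
\]
where the last equality follows from the substitution $u=\tan^2\theta$, reducing the integrand to $2\cos^2\theta$ on $[0,\pi/2]$. Substituting back gives the desired bound $\|a^{-1/2}-b^{-1/2}\|\leq \tfrac{1}{2}\eta^{3/2}\|a-b\|$.

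There is no real obstacle; the only point requiring a small amount of care is justifying the norm-convergence of the integral representation of $c^{-1/2}$ in the $C^*$-algebra (which follows from applying continuous functional calculus to the pointwise identity on $[\eta^{-1},\|c\|]$, together with uniform integrability of the dominating function $s^{-1/2}(\eta^{-1}+s)^{-1}$). As the footnote in the statement already indicates, optimality of the constant $\tfrac{1}{2}\eta^{3/2}$ can be seen by specializing to $C=\mathbb{C}$ and using that the derivative of $t\mapsto t^{-1/2}$ on $[\eta^{-1},\infty)$ attains its maximum absolute value $\tfrac{1}{2}\eta^{3/2}$ at $t=\eta^{-1}$.
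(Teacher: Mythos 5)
Your proof is correct and is essentially the paper's own argument: the paper uses the representation $t^{-1/2}=\tfrac{2}{\pi}\int_0^\infty(\lambda^2+t)^{-1}\,d\lambda$, which is your $\tfrac{1}{\pi}\int_0^\infty s^{-1/2}(s+t)^{-1}\,ds$ after the substitution $s=\lambda^2$, and both then apply the resolvent identity, the functional-calculus bound $\|(\cdot+s)^{-1}\|\leq(\eta^{-1}+s)^{-1}$, and an elementary scalar integral.
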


\begin{proof}
For any positive real number $t$, one has
$$
t^{-1/2}=\frac{2}{\pi}\int_0^\infty (\lambda^2+t)^{-1}d\lambda,
$$
whence for any positive invertible elements $c,d\in C$ 
\begin{equation}\label{gk int}
c^{-1/2}-d^{-1/2}=\frac{2}{\pi}\int_0^\infty \big((\lambda^2+c)^{-1}-(\lambda^2+d)^{-1}\big) d\lambda.
\end{equation}
Using the formula
$$
(\lambda^2+c)^{-1}-(\lambda^2+d)^{-1}=(\lambda^2+c)^{-1}(d-c)(\lambda^2+d)^{-1}
$$
and assuming that $c\geq \eta^{-1}$ and $d\geq \eta^{-1}$, the continuous functional calculus implies that
$$
\|(\lambda^2+c)^{-1}-(\lambda^2+d)^{-1}\|\leq \|c-d\|(\lambda^2+\eta^{-1})^{-2}.
$$
This inequality and line \eqref{gk int} imply that
$$
\|c^{-1/2}-d^{-1/2}\|\leq \frac{2\|c-d\|}{\pi} \int_0^\infty (\lambda^2+\eta^{-1})^{-2}d\lambda.
$$
The integral on the right hand side equals $(\pi \eta^{3/2})/4$, whence the result.
\end{proof}

\begin{proof}[Proof of Lemma \ref{lip path}]
We first claim that it suffices to show we can choose a $\delta>0$ such that if $[t_1,t_2]$ is a sub-interval of $[0,1]$ of length at most $\delta$, and $t\mapsto p_t$ is a projection-valued $L$-Lipschitz function on $[t_1,t_2]$, then there is a unitary-valued $(3L)$-Lipschitz function $t\mapsto u_t$ on $[t_1,t_2]$ such that $u_0=1$ and $p_t=u_tp_0u_t^*$ for all $t\in [t_1,t_2]$.  Indeed, if we can do this, then let $0=t_0<t_1<\cdots <t_N=1$ be a partition of the interval $[0,1]$ such that each subinterval has length at most $\delta$, and for each $i\in \{0,...,N-1\}$ choose a unitary-valued $(3L)$-Lipschitz function $t\mapsto u^{(i)}_t$ on $[t_i,t_{i+1}]$ such that $u^{(i)}_{t_i}=1$ and $p_t=u_t^{(i)}p_{t_i}(u_t^{(i)})^*$ for all $t\in [t_i,t_{i+1}]$.  The function on $[0,1]$ defined on each subinterval $[t_i,t_{i+1}]$ by 
$$
t\mapsto u^{(i)}_tu^{(i-1)}_{t_{i}}u^{(i-2)}_{t_{i-1}}\cdots u^{(0)}_{t_1}
$$
then has the right properties to establish the lemma.

Let us then establish the statement in the claim.  Let $\epsilon>0$ be small enough that $(1-(2+\epsilon)\epsilon)^{-1/2}+(1+\epsilon)^2(1-(2+\epsilon)\epsilon)^{-3/2}\leq 3$, and let $\delta>0$ be such that if $t,s\in [0,1]$ satisfy $|t-s|\leq \delta$, then $\|p_s-p_t\|<\epsilon$.  Let $[t_1,t_2]$ be an interval of length at most $\delta$.  For $t\in [t_1,t_2]$, define $x_t:=p_tp_{t_1}+(1-p_t)(1-p_{t_1})$ and note that
$$
\|x_t-1\|=\|(2p_t-1)(p_{t_1}-p_t)\|\leq \|2p_t-1\|\|p_{t_1}-p_t\|< \epsilon,
$$
and so each $x_t$ is invertible, $\|x_t\|< 1+\epsilon$, and also $\|x_t^{-1}\|< (1-\epsilon)^{-1}$ by the Neumann series formula for the inverse.  One computes that $x_tp_{t_1}=p_tp_{t_1}=p_tx_t$, and so $x_tp_{t_1}x_t^{-1}=p_t$.  Moreover, $p_{t_1}x_t^*=x_t^*p_t$, and so $p_{t_1}x_t^*x_t=x_t^*p_tx_t=x_t^*x_tp_{t_1}$, i.e.\ $x_t^*x_t$ commutes with $p_{t_1}$.  If we define $w_t:=x_t(x_t^*x_t)^{-1/2}$, we have that $w_t$ is unitary and moreover  
$$
w_tp_{t_1}w_t^{-1}=x_t(x_t^*x_t)^{-1/2}p_{t_1}(x_t^*x_t)^{1/2}x_t^{-1}=x_tp_{t_1}x_t^{-1}=p_t.
$$
It remains to show that the path defined on $[t_1,t_2]$ by $t\mapsto w_t$ is $(3L)$-Lipschitz.

We first note that for $s,t\in [t_1,t_2]$, we have that 
\begin{equation}\label{xs and xt}
\|x_s-x_t\|=\|(p_t-p_s)(2p_{t_1}-1)\|\leq \|p_t-p_s\|\leq L|s-t|
\end{equation}
by assumption that $(p_t)$ is $L$-Lipschitz.  Using that $\|x_t\|< 1+\epsilon$, this implies that for any $s,t\in [t_1,t_2]$
$$
\|x_t^*x_t-x_s^*x_s\|\leq \|x_t^*-x_s^*\|\|x_t\|+\|x_s^*\|\|x_t-x_s\|< 2(1+\epsilon)L|s-t|.
$$ 
Moreover, $\|1-x_t^*x_t\|< (2+\epsilon)\epsilon$, whence $1-(2+\epsilon)\epsilon\leq x_t^*x_t$ and so in particular
\begin{equation}\label{xtxt}
\|(x_t^*x_t)^{-1/2}\|\leq  (1-(2+\epsilon)\epsilon)^{-1/2} \quad \text{for all} \quad t\in [t_1,t_2].
\end{equation}
Hence moreover Lemma \ref{lip root} (with $\eta=(1-(2+\epsilon))^{-1}$) implies that for any $s,t\in [t_1,t_2]$ 
\begin{equation}\label{xxs and xxt}
\|(x_t^*x_t)^{-1/2}-(x_s^*x_s)^{-1/2}\|\leq (1-(2+\epsilon)\epsilon)^{-3/2}(1+\epsilon)L|s-t|.
\end{equation}
Lines \eqref{xs and xt}, \eqref{xxs and xxt}, and \eqref{xtxt} combined with the fact that $\|x_t\|< 1+\epsilon$ for all $t\in [t_1,t_2]$ implies that for any $s,t\in [t_1,t_2]$
\begin{align*}
\|w_t-w_s\|\leq &  \|x_t-x_s\|\|(x_t^*x_t)^{-1/2}\|+\|x_s\|\|(x_t^*x_t)^{-1/2}-(x_s^*x_s)^{-1/2}\| \\
\leq & (1-(2+\epsilon)\epsilon)^{-1/2}L|s-t|+(1+\epsilon)^2(1-(2+\epsilon)\epsilon)^{-3/2}L|s-t|
\end{align*}
which implies the desired estimate by choice of $\epsilon$.
\end{proof}

For the statement of the next definition, recall that for $l\in \{1,...,n\}$, we let $1_l\in M_n(\C)$ be the rank $l$ projection with $l$ ones in the top-left part of the diagonal and zeros elsewhere.

\begin{definition}\label{nize proj}
With notation as in Definition \ref{alm com 0}, define 
$$
\mathcal{P}^1_{n,\kappa,\epsilon}(X,B):=\left\{\begin{array}{l|l} (p,q)\in \mathcal{P}_{n,\kappa,\epsilon}(X,B) &  \exists l\in \N \text{ such that }  (p,q)-(1_l,1_l) \\ & \text{is in } M_n(\K_B)\oplus M_n(\K_B) \end{array}\right\}.
$$
Define $\mathcal{P}^1_{\infty,\kappa,\epsilon}(X,B)$ to be the disjoint union of these sets as $n$ ranges over $\N$.
\end{definition}

Here is the first of our main goals for this subsection: it allows control of the ``scalar part'' of cycles for $KK^0_{\kappa,\epsilon}(X,B)$.

\begin{proposition}\label{proj same image}
Let $B$ be a separable $C^*$-algebra.  Let $X$ be a self-adjoint\footnote{We mean here that $X=X^*$, not the stronger assumption that every $x\in X$ is self-adjoint.} subset of the unit ball of $\LB$, let $\epsilon> 0$, let $\kappa\geq 1$, and let $n\in \N$.
\begin{enumerate}[(i)]
\item \label{same np} Any element $\mathcal{P}_{n,\kappa,\epsilon}(X,B)$ is in the same path component of  $\mathcal{P}_{n,4\kappa^3,\epsilon}(X,B)$ as an element of $\mathcal{P}^1_{n,4\kappa^3,\epsilon}(X,B)$\footnote{If $\kappa=1$, one can replace $4\kappa^3$ with $1$ in the statement: we leave the details to the reader.}.
\item \label{hom np}  If two elements $(p_0,q_0)$ and $(p_1,q_1)$ of $\mathcal{P}^1_{n,\kappa,\epsilon}(X,B)$ are connected by a path in $\mathcal{P}_{n,\kappa,\epsilon}(X,B)$, then they are connected by a path in $\mathcal{P}^1_{n,\kappa,4\epsilon}(X,B)$.  Moreover, if $L\geq 1$ is such that there is an $L$-Lipschitz path in $\mathcal{P}_{n,\kappa,\epsilon}(X,B)$ connecting $(p_0,q_0)$ and $(p_1,q_1)$, then there is a $(20\kappa L)$-Lipschitz  path in $\mathcal{P}^1_{n,\kappa,4\epsilon}(X,B)$ connecting $(p_0,q_0)$ and $(p_1,q_1)$.
\end{enumerate}
\end{proposition}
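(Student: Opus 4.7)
For part (i), the plan is to rotate the scalar parts of $p$ and $q$ to $1_l$ using unitary paths in $M_n(\C)$. Since $\kappa = 1$ forces any element of $\mathcal{P}_{n,1,\epsilon}(X,B)$ to be a pair of projections, the images $\sigma(p), \sigma(q) \in M_n(\C)$ are projections of the same rank $l$. I will choose unitaries $u, v \in U(n)$ with $u\sigma(p)u^* = 1_l = v\sigma(q)v^*$, connect each to the identity in the path-connected group $U(n)$, and conjugate $p$ and $q$ independently. The resulting path stays in $\mathcal{P}_{n,1,\epsilon}(X,B)$ because unitary conjugation preserves being a norm-one projection, and because the scalar unitaries, lying in $M_n(\C) \subseteq M_n(\LB)$, commute with each $x \in X$ identified with $1_{M_n}\otimes x$, so the commutator bound is preserved exactly.

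For part (ii), given a path $(p_t, q_t)$ in $\mathcal{P}_{n,\kappa,\epsilon}(X,B)$ with endpoints in $\mathcal{P}^1$, my plan is to correct the scalar parts of each component in three stages, carried out independently for the first and second coordinates of the pair. Stage A self-adjointifies: let $R_t$ denote the projection in $M_n(\K_B^+)$ associated to $p_t$ as in Definition \ref{idem to proj}. Parts \eqref{ip com 1}--\eqref{ip com 2} of Lemma \ref{idem to proj props} yield a $\kappa$-norm-bounded straight-line homotopy $(1-s)p_t + sR_t$ through idempotents whose commutator with any $c\in X$ is bounded by $3\|[p_t, c]\| + \|[p_t, c^*]\| \leq 4\epsilon$, using self-adjointness of $X$. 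Crucially, at the endpoints $t = 0, 1$ the scalar part $\sigma(p_t) = 1_l$ is already a projection, so $\sigma(R_t) = 1_l$ and the $s$-homotopy stays inside $\mathcal{P}^1_{n,\kappa,4\epsilon}$.

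Stage B straightens the scalar part of $R_t$. The path $t \mapsto \sigma(R_t)$ is a loop of rank-$l$ projections in $M_n(\C)$ based at $1_l$, to which I will apply Lemma \ref{lip path} to obtain a continuous unitary path $v_t \in U(n)$ with $v_0 = 1$ and $v_t 1_l v_t^* = \sigma(R_t)$. Setting $\tilde{R}_t := v_t^* R_t v_t$ gives a path of projections with $\sigma(\tilde{R}_t) = 1_l$, and the commutator bound is preserved since $v_t \in M_n(\C)$ commutes with $X$. Because $v_1$ lies in the stabilizer $U(l) \times U(n-l)$ of $1_l$, we will have $\tilde{R}_1 = v_1^* R_1 v_1 \neq R_1$ in general; Stage C will glue endpoints by concatenating in order: Stage A at $t = 0$ (which stays in $\mathcal{P}^1$ since the scalar part is constantly $1_l$), the Stage B path, a conjugation of $R_1$ along a path from $v_1$ to $1$ inside the path-connected group $U(l) \times U(n-l)$ (which again stays in $\mathcal{P}^1$ since such unitaries commute with $1_l$), and finally Stage A in reverse at $t = 1$. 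Running this simultaneously and independently for $q_t$ will produce the desired path in $\mathcal{P}^1_{n,\kappa,4\epsilon}(X,B)$ from $(p_0, q_0)$ to $(p_1, q_1)$.

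The main obstacle is that $\sigma(p_t)$ and $\sigma(q_t)$ cannot in general be simultaneously conjugated to $1_l$ by a single unitary, so the corrections must be done independently for each coordinate of the pair; furthermore, since the norm bound $\kappa$ must be preserved, only unitary (not merely invertible) conjugations are allowed, which forces the indirect route through the associated projection before applying Lemma \ref{lip path}. For the Lipschitz statement in part \eqref{hom np}, the $L$-Lipschitz bound on $p_t$ transports to $R_t$ by part \eqref{ass pro lip} of Lemma \ref{idem to proj props}, Lemma \ref{lip path} contributes a factor of $3$ to the Lipschitz bound on $v_t$, the product rule controls $v_t^* R_t v_t$ up to a factor of $O(L)$, and the fourfold concatenation combined with the $\kappa$-Lipschitz bound from part \eqref{ip com 1} of Lemma \ref{idem to proj props} contributes the remaining reparametrization overhead, yielding the advertised $20\kappa L$ bound after routine bookkeeping.
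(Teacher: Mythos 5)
Your proposal is correct and follows essentially the same route as the paper's proof. Part (i) is identical: exploiting that $\kappa=1$ forces projections, then conjugating by paths of scalar unitaries. Part (ii) uses the same three-step structure the paper uses: (a) straight-line homotopies $p_i \leadsto R_i$ to the associated projections at the endpoints $i=0,1$, with the commutator blow-up of $4\epsilon$ coming from Lemma \ref{idem to proj props}\eqref{ip com 2} plus self-adjointness of $X$; (b) the conjugated path $v_t^*R_tv_t$ where $v_t$ is the $3L$-Lipschitz scalar unitary path supplied by Lemma \ref{lip path} applied to $\sigma(R_t)$; (c) a correction loop from $v_1$ to $1$ inside the stabilizer $U(l)\times U(n-l)$ of $1_l$. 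Your stages A, B, C are exactly the paper's paths, glued in the same order, and the Lipschitz bookkeeping you sketch (factor $\kappa$ from the straight-line homotopies, factor $3L$ from Lemma \ref{lip path}, and a constant from the stabilizer loop, then rescale the four pieces) is the bookkeeping the paper carries out to arrive at $20\kappa L$. No gap; this is the same argument.
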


\begin{proof}[Proof of Proposition \ref{proj same image}]
For part \eqref{same np}, assume that $(p,q)$ is an element of $\mathcal{P}_{n,\kappa,\epsilon}(X,B)$.  Hence by definition of $\mathcal{P}_{n,\kappa,\epsilon}(X,B)$, if $\K_B^+$ is the unitization of $\K_B$ and $\sigma:M_n(\K_B^+)\to M_n(\C)$ is the canonical quotient map then the classes $[\sigma(p)]$ and $[\sigma(q)]$ in $K_0(\C)$ are the same, so in particular the idempotents $\sigma(p)$ and $\sigma(q)$ have the same rank. Using Lemma \ref{idem to proj props} part \eqref{ass pro}, there are paths of invertibles $(u_t)_{t\in [0,1]}$ and $(v_t)_{t\in [0,1]}$ in $M_n(\C)$ and projections $r,s$ such that $u_1=v_1$ is the identity, such that $u_0ru_0^{-1}=\sigma(p)$, such that $v_0sv_0^{-1}=\sigma(q)$, and such that the norms of all the $u_t$, all the $v_t$ and their inverses are all at most $1+\kappa\leq 2\kappa$.  On the other hand, $r$ and $s$ have the same rank, whence there are paths of unitaries $(u_t)_{t\in [1,2]}$ and $(v_t)_{t\in [0,1]}$ in $M_n(\C)$ such that $u_1=v_1$ is the identity, and such that $u_2ru_2^*=1_l$, and $v_2sv_2^*=1_l$.  As scalar matrices commute with $X$, the path $((u_tpu_t^{-1},v_tqv_t^{-1}))_{t\in [0,2]}$ passes through $\mathcal{P}_{n,4\kappa^3,\epsilon}(X,B)$, and connects $(p,q)$ to an element of $\mathcal{P}^1_{n,4\kappa^3,\epsilon}(X,B)$ as required.



For part \eqref{hom np}, we just look at the statement involving Lipschitz paths; the case of general continuous paths follows (in a simpler way) from the same arguments, and is left to the reader.  Assume that $(p_0,q_0)$ and $(p_1,q_1)$ are elements of $\mathcal{P}^1_{n,\kappa,\epsilon}(X,B)$ that are connected by an $L$-Lipschitz path that passes through $\mathcal{P}_{n,\kappa.\epsilon}(X,B)$.  In particular there exists $l\in \N$ such that $\sigma(p_0)=\sigma(q_0)=1_l=\sigma(p_1)=\sigma(q_1)$.  Let $r_0$ be the projection associated to $p_0$ as in Definition \ref{idem to proj}.  As in Lemma \ref{idem to proj props}, part \eqref{ip com 1}, the path defined for $t\in [0,1]$ by $t\mapsto (1-t)p_0+tr_0$ is $\kappa$-Lipschitz and connects $p_0$ and $r_0$ through idempotents of norm at most $\kappa$.  Moreover, Lemma \ref{idem to proj props}, part \eqref{ip com 2} implies that for all $x\in X$ and all $t\in [0,1]$
$$
\|[(1-t)p_0+tr_0,x]\|\leq (1+2t)\|[p_0,x]\|+t\|[p_0,x^*]\|.
$$ 
As $X=X^*$, this implies that $\|[(1-t)p_0+tr_0,x]\|<4\epsilon$ for all $x\in X$, and all $t\in [0,1]$.  Note also that $\sigma((1-t)p_0+tr_0)=1_l$ for all $t$.  Similarly, we get $s_0$ which has the same properties with respect to $q_0$.  We have thus shown that $(p_0,q_0)$ is connected to the element $(r_0,s_0)$ via a $\kappa$-Lipschitz path in $\mathcal{P}^1_{n,\kappa,4\epsilon}(X,B)$.  Completely analogously, $(p_1,q_1)$ is connected to its associated projection $(r_1,s_1)$ via a $\kappa$-Lipschitz path in $\mathcal{P}^1_{n,\kappa,4\epsilon}(X,B)$.  Moreover, using Lemma \ref{idem to proj props}, part \eqref{ass pro lip}, we have that $(r_0,s_0)$ and $(r_1,s_1)$ are connected by an $L$-Lipschitz path of projections in $\mathcal{P}_{n,1,4\epsilon}(X,B)$, say $((r_t,s_t))_{t\in [0,1]}$.

Now, consider the path $(\sigma(r_t),\sigma(s_t))_{t\in [0,1]}$ in $M_n(\C)\oplus M_n(\C)$, which is also $L$-Lipschitz.  Lemma \ref{lip path} gives $(3L)$-Lipschitz paths $(u_t)_{t\in [0,1]}$ and $(v_t)_{t\in [0,1]}$ of unitaries in $M_n(\C)$ such that $\sigma(r_t)=u_t\sigma(r_0)u_t^*$ and $\sigma(s_t)=v_t\sigma(s_0)v_t^*$ for all $t\in [0,1]$.  The path $((u_t^*r_tu_t,v_t^*s_tv_t))_{t\in [0,1]}$ then passes through $\mathcal{P}^1_{n,1,4\epsilon}(X,B)$, is $(6L)$-Lipschitz, and connects $(r_0,s_0)$ to $(u_1^*r_1u_1,v_1^*s_1v_1)$.

Summarizing where we are, we have the following paths 
\begin{enumerate}[(i)]
\item A $\kappa$-Lipschitz path through $\mathcal{P}^1_{n,\kappa,4\epsilon}(X,B)$, parametrized by $[0,1]$, and that connects $(p_0,q_0)$ and $(r_0,s_0)$.
\item A $(6L)$-Lipschitz path through $\mathcal{P}^1_{n,1,4\epsilon}(X,B)$, parametrized by $[0,1]$, and that connects $(r_0,s_0)$ and $(u_1^*r_1u_1,v_1^*s_1v_1)$.
\item A $\kappa$-Lipschitz path through $\mathcal{P}^1_{n,\kappa,4\epsilon}(X,B)$, parametrized by $[0,1]$, and that connects $(p_1,q_1)$ and $(r_1,s_1)$.
\end{enumerate}  
We claim that there is a $2\pi$-Lipschitz path passing through $\mathcal{P}^1_{n,1,4\epsilon}(X,B)$, parametrized by $[0,1]$ and connecting $(u_1^*r_1u_1,v_1^*s_1v_1)$ and $(r_1,s_1)$.  Concatenating this new path with the three paths above (and using that $\kappa\geq 1$ and that $L\geq 1$), and rescaling the two $\kappa$-Lipschitz paths by $1/12$, the $6L$-Lipschitz path by $4/12$, and the $6\pi$-Lipschitz by $6/12$, this will give us a $(20\kappa L)$-Lipschitz path connecting $(p_0,q_0)$ and $(p_1,q_1)$ through $\mathcal{P}^1_{n,1,4\epsilon}(X,B)$, which will complete the proof.

To establish the claim note that $u_1$ commutes with $1_l$, and is therefore connected to the identity in $M_n(\C)$ via a $\pi$-Lipschitz path of unitaries that all commute with $1_l$, say $(u_t)_{t\in [1,2]}$.  Similarly, we get a $\pi$-Lipschitz path $(v_t)_{t\in [1,2]}$ with the same properties with respect to $v_1$.  The path $((u_t^*r_1u_t,v_t^*s_1v_t))_{t\in [1,2]}$ then passes through $\mathcal{P}^1_{n,1,4\epsilon}(X,B)$, is $2\pi$-Lipschitz, and connects $(u_1^*r_1u_1,v_1^*s_1v_1)$ to $(r_1,s_1)$, so we are done.
\end{proof}

We now move on to results that let us prescribe the ``scalar part'' of cycles for $KK^1$, which is much simpler.

\begin{definition}\label{nize uni}
With notation as in Definition \ref{alm com 1}, define 
$$
\mathcal{U}^1_{n,\kappa,\epsilon}(X,B):=\{u\in \mathcal{U}_{n,\kappa,\epsilon}(X,B)\mid u-1\in M_n(\K_B)\}.
$$
Define $\mathcal{U}^1_{\infty,\kappa,\epsilon}(X,B)$ to be the disjoint union of these sets as $n$ ranges over $\N$.
\end{definition}

We need a slight variant of the well-known fact that the group of invertibles in a $C^*$-algebra deform retracts onto the group of unitaries.

\begin{lemma}\label{ukappa con lem}
Let $\kappa\geq 1$, let $C$ be a unital $C^*$-algebra, and let $C^{-1}_\kappa$ be the set of invertible elements $u\in C$ such that $\|u\|\leq \kappa$ and $\|u^{-1}\|\leq \kappa$.  Then the unitary group of $C$ is a deformation retract of $C^{-1}_\kappa$.  In particular, $M_n(\C)_\kappa^{-1}$ is connected.
\end{lemma}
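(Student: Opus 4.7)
The plan is to use polar decomposition to produce an explicit deformation retraction. For any $u \in C^{-1}_\kappa$, the element $|u| := (u^*u)^{1/2}$ is positive and invertible (since $u$ is invertible), and the spectrum of $|u|$ lies in $[\kappa^{-1}, \kappa]$ because $\||u|\| = \|u\| \leq \kappa$ and $\||u|^{-1}\| = \|u^{-1}\| \leq \kappa$. Define
$$
H: C^{-1}_\kappa \times [0,1] \to C, \qquad H(u,t) := u|u|^{-t}.
$$
At $t = 0$ this is $u$, and at $t = 1$ this is $u|u|^{-1}$, which is unitary since $(u|u|^{-1})^*(u|u|^{-1}) = |u|^{-1}|u|^2|u|^{-1} = 1$.

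First I would verify that $H(u,t) \in C^{-1}_\kappa$ for all $t \in [0,1]$. Writing $v := u|u|^{-1}$ for the unitary factor, we have $H(u,t) = v|u|^{1-t}$ and $H(u,t)^{-1} = |u|^{t-1} v^*$. Continuous functional calculus on the spectrum of $|u|$ gives $\||u|^{1-t}\| \leq \kappa^{1-t} \leq \kappa$ and $\||u|^{t-1}\| \leq \kappa^{1-t} \leq \kappa$, so both norms stay bounded by $\kappa$ throughout the homotopy.

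Next I would check continuity of $H$. Joint continuity in $(u,t)$ follows from the fact that functional calculus by the (jointly continuous) family of functions $(x,t) \mapsto x^{-t}$ is continuous on positive elements with spectrum in the fixed compact set $[\kappa^{-1},\kappa]$, combined with continuity of the maps $u \mapsto u^*u$ and $u \mapsto u \cdot (\cdot)$. Finally, if $u$ is already unitary then $u^*u = 1$, so $|u| = 1$ and $H(u,t) = u$ for every $t$; thus $H$ is a strong deformation retraction of $C^{-1}_\kappa$ onto $U(C)$. For the ``in particular'' statement, apply this to $C = M_n(\C)$: $M_n(\C)^{-1}_\kappa$ deformation retracts onto the unitary group $U(n)$, and $U(n)$ is path-connected (each unitary can be diagonalized and its eigenvalues continuously rotated to $1$ along the circle), so $M_n(\C)^{-1}_\kappa$ is connected.

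The argument is essentially routine once polar decomposition is invoked; the only mild point to check is that the norm bounds $\kappa$ are preserved along the homotopy, which is the reason for the specific form $u|u|^{-t}$ (rather than, say, a linear interpolation). No serious obstacle is anticipated.
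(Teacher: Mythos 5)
Your proof is correct and uses essentially the same construction as the paper: the paper defines $u_t := u(u^*u)^{-t}$ for $t \in [0,1/2]$, which is your $H(u,t) = u|u|^{-t}$ after the reparametrization $t \mapsto t/2$. You go somewhat further than the paper in explicitly verifying that the homotopy stays in $C^{-1}_\kappa$ and is jointly continuous, which the paper leaves implicit.
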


\begin{proof}
Let $u\in C^{-1}_\kappa$, and for $t\in [0,1/2]$ define $u_t:=u(u^*u)^{-t}$.  This is a homotopy between the identity $u\mapsto u_0$ on $C^{-1}_\kappa$ and the map $u\mapsto u_{1/2}$; the latter is a retraction of $C^{-1}_\kappa$ onto the unitary group of $C$, giving the first part.  In particular, it follows that $C_\kappa^{-1}$ is connected if and only if $C^{-1}_1$ is connected; as the unitary group of $M_n(\C)$ is connected, this gives the last statement.
\end{proof}

\begin{proposition}\label{uni lem}
Let $B$ be a separable $C^*$-space, let $X$ be a subset of the unit ball of $\LB$, let $\epsilon> 0$, let $\kappa\geq 1$, and let $n\in \N$.
\begin{enumerate}[(i)]
\item \label{same nu} Any element $v\in \mathcal{U}_{n,\kappa,\epsilon}(X,B)$ is in the same path component of  $\mathcal{U}_{n,\kappa^2,\kappa\epsilon}(X,B)$ as an element of $\mathcal{U}^1_{n,\kappa^2,\kappa\epsilon}(X,B)$.
\item \label{hom nu} If two elements $v_0,v_1\in \mathcal{U}^1_{n,\kappa,\epsilon}(X,B)$ are in the same path component of $\mathcal{U}_{n,\kappa,\epsilon}(X,B)$, then they are in the same path component of $\mathcal{U}^1_{n,\kappa^2,\kappa\epsilon}(X,B)$.
\end{enumerate}
\end{proposition}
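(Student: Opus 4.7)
The plan is to prove both parts by left-multiplying by a suitable scalar matrix (i.e.\ an element of $M_n(\C)$, identified with $M_n(\C)\otimes 1_{\LB}$ inside $M_n(\LB)$) that kills the ``scalar part'' $\sigma(v)\in M_n(\C)$ of the given invertible. The key observation driving the estimates is that since $\sigma:M_n(\K_B^+)\to M_n(\C)$ is a $*$-homomorphism, for any $v\in \mathcal{U}_{n,\kappa,\epsilon}(X,B)$ the scalar matrix $\sigma(v)$ lies in $M_n(\C)^{-1}_\kappa$ (in the notation of Lemma \ref{ukappa con lem}), and scalar matrices commute with every $x\in X$ (since $x$ is identified with $1_{M_n}\otimes x$).

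For part \eqref{same nu}, let $v\in \mathcal{U}_{n,\kappa,\epsilon}(X,B)$. Since $M_n(\C)^{-1}_\kappa$ is connected by Lemma \ref{ukappa con lem}, and both $1$ and $\sigma(v)^{-1}$ lie in this set, we may choose a continuous path $(w_t)_{t\in[0,1]}$ inside $M_n(\C)^{-1}_\kappa$ with $w_0=1$ and $w_1=\sigma(v)^{-1}$. Define $u_t:=w_tv$; I would check that $\|u_t\|\leq \|w_t\|\|v\|\leq \kappa^2$ and $\|u_t^{-1}\|=\|v^{-1}w_t^{-1}\|\leq \kappa^2$, and that $[u_t,x]=w_t[v,x]$ since $w_t$ commutes with $x$, so $\|[u_t,x]\|\leq \kappa\epsilon$ (and similarly $\|[u_t^{-1},x]\|\leq \kappa\epsilon$). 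Thus $(u_t)$ is a path in $\mathcal{U}_{n,\kappa^2,\kappa\epsilon}(X,B)$ from $v$ to $\sigma(v)^{-1}v$, and $\sigma(\sigma(v)^{-1}v)=1$ shows $u_1\in \mathcal{U}^1_{n,\kappa^2,\kappa\epsilon}(X,B)$.

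For part \eqref{hom nu}, let $(v_t)_{t\in[0,1]}$ be a path in $\mathcal{U}_{n,\kappa,\epsilon}(X,B)$ with $v_0,v_1\in \mathcal{U}^1_{n,\kappa,\epsilon}(X,B)$. Applying $\sigma$ we get a continuous path $(\sigma(v_t))$ in $M_n(\C)^{-1}_\kappa$ with $\sigma(v_0)=\sigma(v_1)=1$. Set $w_t:=\sigma(v_t)^{-1}$ and $u_t:=w_tv_t$. By $*$-homomorphism bounds, $\|w_t\|\leq \|v_t^{-1}\|\leq \kappa$ and $\|w_t^{-1}\|\leq \|v_t\|\leq \kappa$, so exactly as in the previous paragraph $u_t\in \mathcal{U}_{n,\kappa^2,\kappa\epsilon}(X,B)$ for every $t$. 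Since $\sigma(u_t)=w_t\sigma(v_t)=1$, we actually have $u_t\in \mathcal{U}^1_{n,\kappa^2,\kappa\epsilon}(X,B)$, and the endpoint conditions $w_0=w_1=1$ give $u_0=v_0$ and $u_1=v_1$, as required.

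There is no substantial obstacle; the only thing to be careful about is the order of multiplication relative to commutators (so that the scalar factor always pulls out cleanly), and the bookkeeping $\|\sigma(v)\|\leq \|v\|$, $\|\sigma(v)^{-1}\|\leq \|v^{-1}\|$, which explains both the $\kappa^2$ in the norm bound and the factor $\kappa\epsilon$ in the commutator bound in the target group.
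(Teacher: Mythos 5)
Your proof is correct and follows essentially the same argument as the paper's: in both parts one left-multiplies by (a path of) scalar matrices with norm bounds $\leq\kappa$ coming from $\sigma(v_t^{-1})$, using Lemma \ref{ukappa con lem} for part (i) and the endpoint condition $\sigma(v_0)=\sigma(v_1)=1$ for part (ii), and the commutator estimates go through because scalar matrices commute with $X$. The only cosmetic point is that the commutator bounds should be recorded as strict inequalities (they are, since $\|[v_t,x]\|<\epsilon$ strictly), to match Definition \ref{alm com 1}.
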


\begin{proof}
For part \eqref{same nu}, let $\K_B^+$ be the unitization of $\K_B$, let $\sigma:M_n(\K_B^+)\to M_n(\C)$ be the canonical quotient map, and set $w=\sigma(u^{-1})$.  Using Lemma \ref{ukappa con lem}, there is a path $(w_t)_{t\in [0,1]}$ of invertibles connecting $w=w_1$ to the identity and all with norm at most $\kappa$.  Then the path $(w_tv)_{t\in [0,1]}$ is in $\mathcal{U}_{n,\kappa^2,\kappa\epsilon}(X,B)$ and connects $v$ to the element $u:=w_1v$, which satisfies $\sigma(u)=1$, and so $1-u\in M_n(\K_B)$.

For part \eqref{hom nu}, let $(v_t)_{t\in [0,1]}$ be a path in $\mathcal{U}_{n,\kappa,\epsilon}(X,B)$ connecting $v_0$ and $v_1$.  Let $w_t=\sigma(v_t^{-1})$, and note that $w_0=w_1=1$.  Moreover, $\|w_t\|\leq \kappa$ for all $t$.  Then $u_t:=w_tv_t$ is a path connecting $v_0$ and $v_1$ in $\mathcal{U}^1_{n,\kappa^2,\kappa\epsilon}(X,B)$ as required.
\end{proof}

\subsection{From homotopies to similarities}

Our goal in this subsection is to establish a controlled variant of the fact that homotopic idempotents are similar: compare for example \cite[Proposition 4.3.2]{Blackadar:1998yq}.  This requires some work, as we need to control the ``speed'' of the homotopy in order to control the commutator estimates for the invertible element appearing in the similarity.  The final target is Proposition \ref{hom to sim} below; the other results build up to it.

\begin{lemma}\label{lip lem}
Let $\kappa\geq 1$, and let $p_0$ and $p_1$ be idempotents in a $C^*$-algebra $C$ with norm at most $\kappa$, and such that $\|p_0-p_1\|\leq 1/(12\kappa^2)$.  Then there is a path $(p_t)_{t\in [0,1]}$ of idempotents connecting $p_0$ and $p_1$, and with the following properties:
\begin{enumerate}[(i)]
\item \label{norm} each $p_t$ is an idempotent in $C$ of norm at most $2\kappa$;
\item \label{commute} for all $c\in C$ and $t\in [0,1]$, 
$$\|[c,p_t]\|\leq 21\kappa^2\max_{i=0,1}\|[c,p_i]\|;$$
\item \label{lip} the function $t\mapsto p_t$ is $1$-Lipschitz.
\end{enumerate}
\end{lemma}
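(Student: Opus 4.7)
The plan is to produce the path by a ``rotating similarity'' that conjugates $p_0$ to $p_1$, rather than by functional calculus on linearly interpolated quasi-idempotents. Set
$$
u := p_1 p_0 + (1-p_1)(1-p_0) = 1 + (p_1-p_0)(2p_0-1),
$$
and observe, via direct multiplication using $p_0^2=p_0$ and $p_1^2=p_1$, that $u p_0 = p_1 p_0 = p_1 u$. Using the second form together with Corollary \ref{id norm cor} we get $\|u-1\| \le \|p_1-p_0\|\,\|2p_0-1\| \le 2\kappa\cdot\frac{1}{12\kappa^2} = \frac{1}{6\kappa}\le \tfrac16$, so a Neumann series shows $u$ is invertible with $\|u^{-1}\|\le \tfrac65$, and hence $u p_0 u^{-1}=p_1$. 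Then set $u_t := 1 + t(u-1)$ for $t\in[0,1]$ (so $u_0=1$, $u_1=u$), and define
$$
p_t := u_t\,p_0\,u_t^{-1}.
$$
Each $u_t$ is invertible with $\|u_t\|\le\tfrac76$ and $\|u_t^{-1}\|\le\tfrac65$ by the same Neumann series argument, and $p_t$ is an idempotent in $C$ (it is conjugate to $p_0$, and $C$ is an ideal in $C^+$), with $p_0$ and $p_1$ as its endpoints.

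The three conclusions then follow from routine product estimates. For (i), $\|p_t\|\le \|u_t\|\,\|p_0\|\,\|u_t^{-1}\|\le\tfrac76\cdot\tfrac65\cdot \kappa=\tfrac75\kappa<2\kappa$. For (iii), $\frac{d}{dt}p_t = (u-1)p_0u_t^{-1} - u_tp_0u_t^{-1}(u-1)u_t^{-1}$, whose norm is bounded by $\|u-1\|(\|p_0\|\|u_t^{-1}\| + \|u_t\|\|p_0\|\|u_t^{-1}\|^2)\le \tfrac{1}{6\kappa}\cdot(\tfrac65\kappa + \tfrac76\kappa\cdot\tfrac{36}{25})$, which is well under $1$. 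For (ii), writing $M:=\max_i\|[c,p_i]\|$ and expanding the Leibniz rule on the product $(p_1-p_0)(2p_0-1)$ gives
$$
\|[c,u_t]\|\le t\big(2M\cdot 2\kappa + \|p_1-p_0\|\cdot 2M\big)\le \tfrac{25}{6}\kappa M,
$$
and then $\|[c,u_t^{-1}]\|\le \|u_t^{-1}\|^2\|[c,u_t]\|\le 6\kappa M$. Finally expanding
$$
[c,p_t] = [c,u_t]\,p_0\,u_t^{-1} + u_t\,[c,p_0]\,u_t^{-1} + u_t\,p_0\,[c,u_t^{-1}]
$$
and plugging in the bounds above yields $\|[c,p_t]\|\le (5 + \tfrac75 + 7)\kappa^2 M \le 21\kappa^2 M$.

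The proof is essentially just bookkeeping once the formula is in hand; the only genuine decision is choosing the similarity. I would expect the arithmetic in (ii) to be the fiddliest part, because one must be careful to absorb the small $\eta=\|p_0-p_1\|$ terms into the main $\kappa M$ contribution; the quadratic factor of $\kappa^2$ in the final bound arises unavoidably from combining $\|[c,u_t]\|\sim \kappa M$ with the factor $\|p_0\|\le \kappa$ in the outer product. Using the cruder constant $21\kappa^2$ (rather than chasing an optimal one) keeps the estimate slack enough that all simplifications work uniformly for $\kappa\ge 1$.
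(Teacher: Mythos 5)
Your proposal is correct, and it is essentially the paper's own proof in disguise: the paper sets $r_t := (1-t)p_0 + tp_1$ and $u_t := (1-r_t)(1-p_0) + r_tp_0$, which upon expansion is exactly $1 + t(2p_1p_0 - p_1 - p_0) = 1 + t(u-1)$, i.e.\ the same linear path of invertibles you use; the conjugation $p_t = u_tp_0u_t^{-1}$ and the subsequent Leibniz-rule estimates are likewise the same. (The only cosmetic difference is that you describe the path directly as a linear interpolation from $1$ to $u$, whereas the paper packages it via the interpolated ``almost-idempotent'' $r_t$ before verifying the same norm, commutator, and Lipschitz bounds.)
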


\begin{proof}
For each $t\in [0,1]$, define $r_t:=(1-t)p_0+tp_1\in C$, and define $u_t:=(1-r_t)(1-p_0)+r_tp_0\in C^+$.  Corollary \ref{id norm cor} implies that $\|2p_0-1\|\leq 2\kappa$, whence 
$$
\|1-u_t\|=\|(2p_0-1)(p_0-r_t)\|\leq 2\kappa\|p_0-p_1\|\leq 1/6
$$
In particular, $u_t$ is invertible, $\|u_t\|\leq 7/6$, and $\|u_t^{-1}\|\leq 6/5$ by the Neumann series formula of the inverse.  Define $p_t:=u_tp_0u_t^{-1}$, which is an idempotent in $C$.  We claim that the path $(p_t)_{t\in [0,1]}$ has the desired properties.  Note first that $r_0=p_0$, whence $u_0=1$, and so the path $(p_t)_{t\in [0,1]}$ does start at the original $p_0$.  On the other hand, $u_1p_0=r_1p_0=p_1p_0=p_1u_1$, whence $u_1p_0u_1^{-1}=p_1$.  Thus the path $(p_t)$ does connect $p_0$ and $p_1$. 

For part \eqref{norm}, note that as $u_tp_0=r_tp_0$, we get 
$$
\|p_t\|=\|r_tp_0u_t^{-1}\|\leq \|(r_t-p_0)p_0u_t^{-1}\|+\|p_0u_t^{-1}\|\leq \frac{1}{12\kappa^2}\kappa\frac{6}{5}+\kappa\frac{6}{5}\leq 2\kappa.
$$
For part \eqref{commute}, let $\delta=\max_{i=0,1}\|[c,p_i]\|$.  We compute using the identity $1-u_t=(2p_0-1)(p_0-r_t)$ that 
\begin{align*}
\|[u_t,c]\|=\|[1-u_t,c]\| & \leq \|[2p_0-1,c]\|\|p_0-r_t\|+\|2p_0-1\|\|[p_0-r_t,c]\| \\ & \leq 2\|[p_0,c]\|\|p_0-r_t\|+\|2p_0-1\|(\|[p_0,c]\|+\|[r_t,c]\|).
\end{align*}
Using that $\|2p_0-1\|\leq 2\kappa$ again, this implies that
$$
\|[u_t,c]\|\leq 2\delta\frac{1}{12\kappa^2}+2\kappa\cdot 2\delta= \Big(4\kappa+\frac{1}{6\kappa^2}\Big)\delta.
$$
Hence also 
$$
\|[u_t^{-1},c]\|=\|u_t^{-1}[c,u_t]u_t^{-1}\|\leq \frac{36}{25}\Big(4\kappa+\frac{1}{6\kappa^2}\Big)\delta\|c\|
$$
and so 
\begin{align*}
\|[p_t,c]\| & =\|[u_tp_0u_t^{-1},c]\| \\ & \leq \|[u_t,c]\|\|p_0\|\|u_t^{-1}\|+\|u_t\|\|[p_0,c]\|\|u_t^{-1}\|+\|u_t\|\|p_0\|\|[u_t^{-1},c]\| \\
&  \leq \Big(4\kappa+\frac{1}{6\kappa^2}\Big)\delta\kappa\frac{6}{5}+\frac{7}{5}\delta+\frac{7}{6}\kappa \frac{36}{25}\Big(4\kappa+\frac{1}{6\kappa^2}\Big)\delta \\
& \leq 21\kappa^2\delta
\end{align*}
as claimed.  Finally, for part \eqref{lip}, we again use that $\|2p_0-1\|\leq 2\kappa$ to compute that for any $s,t\in [0,1]$, 
\begin{align*}
\|u_s-u_t\| & =\|(2p_0-1)(r_s-r_t)\|\leq \|2p_0-1\||s-t|\|p_0-p_1\|\leq2\kappa|s-t|\frac{1}{12\kappa^2} \\ & = \frac{1}{6\kappa}|s-t|
\end{align*}
and so 
$$
\|u_s^{-1}-u_t^{-1}\|=\|u_t^{-1}(u_t-u_s)u_s^{-1}\|\leq \frac{36}{25}\frac{1}{6\kappa}|s-t|=\frac{6}{25\kappa}|s-t|.
$$
Hence 
\begin{align*}
\|p_t-p_s\| & \leq \|(u_t-u_s)p_0u_t^{-1}\|+\|u_sp_0(u_t^{-1}-u_s^{-1})\| \\ & \leq \frac{1}{6\kappa}|s-t|\kappa\frac{6}{5}+\frac{7}{6}\kappa\frac{6}{25\kappa}|s-t| \\
& \leq |s-t|
\end{align*}
as claimed.
\end{proof}

The next lemma gives universal control over the ``speed'' of a homotopy between idempotents (at the price of moving to larger matrices).  The basic idea is not new: see for example \cite[Proposition 1.31]{Oyono-Oyono:2011fk}.  We give a complete proof, however, as we need to incorporate commutator estimates and work with idempotents rather than projections.

\begin{lemma}\label{short hom}
Let $B$ be a separable $C^*$-algebra, let $X$ be a subset of the unit ball of $\LB$, let $\epsilon> 0$, and let $n\in \N$.  Let $(p_0,q_0)$ and $(p_1,q_1)$ be elements of the same path component of $\mathcal{P}_{n,\kappa,\epsilon}(X,B)$.  Then there is $k\in \N$ and a homotopy $((r_t,s_t))_{t\in [0,1]}$ in $\mathcal{P}_{(2k+1)n,2\kappa,21\kappa^2\epsilon}(X,B)$ such that $(r_i,s_i)=(p_i\oplus 1_{nk}\oplus 0_{nk},q_i\oplus 1_{nk}\oplus 0_{nk})$ for $i\in \{0,1\}$, and such that the map $t\mapsto (r_t,s_t)$ is $(16\kappa)$-Lipschitz.
\end{lemma}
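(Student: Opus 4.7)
The strategy is to subdivide the given continuous path into many short pieces, apply Lemma \ref{lip lem} to each piece to obtain local $1$-Lipschitz paths with controlled norm and commutator estimates, and then combine the pieces via an Eilenberg-style swindle with rotation matrices. The enlargement to matrix size $(2k+1)n$ provides the scratch space needed to execute all $k$ local transitions \emph{in parallel} rather than sequentially, which is what keeps the Lipschitz constant bounded by a function of $\kappa$ alone.

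First I would use uniform continuity of the given continuous path $((p_t, q_t))_{t \in [0,1]}$ on the compact interval $[0,1]$ to choose $k \in \N$ and a uniform subdivision $t_i := i/k$ so that $\|p_{t_i} - p_{t_{i+1}}\| < 1/(12\kappa^2)$ and $\|q_{t_i} - q_{t_{i+1}}\| < 1/(12\kappa^2)$ for every $i \in \{0,\ldots,k-1\}$. Lemma \ref{lip lem} then produces, for each $i$, a $1$-Lipschitz path $(p^{(i)}_s)_{s\in [0,1]}$ of idempotents of norm at most $2\kappa$ connecting $p_{t_i}$ to $p_{t_{i+1}}$ with $\|[x, p^{(i)}_s]\| \leq 21\kappa^2 \max_{j=0,1}\|[x, p_{t_{i+j}}]\| < 21\kappa^2 \epsilon$ for all $x \in X$, and analogously a $1$-Lipschitz path $(q^{(i)}_s)_{s\in [0,1]}$ connecting $q_{t_i}$ and $q_{t_{i+1}}$.

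Next I would assemble these $k$ short Lipschitz paths into a single path in $M_{(2k+1)n}(\K_B^+)$ using a conveyor-belt construction, handling the $p$-coordinate and $q$-coordinate in parallel. The size $(2k+1)n$ decomposes as ``one payload slot'' plus $k$ slots initially holding $1_n$ plus $k$ slots initially holding $0_n$; the buffer $1_n$'s and $0_n$'s provide the room to move the payload through the intermediate idempotents $p_{t_1}, \ldots, p_{t_{k-1}}, p_1$ (respectively $q_{t_1},\ldots,q_1$) by alternating the Lipschitz paths $p^{(i)}$ and $q^{(i)}$ with pairwise rotations between adjacent slots. Because each rotation matrix is a scalar $2 \times 2$ block of the identity in the $\LB$-factor, rotations commute exactly with $X$, so commutator estimates pass through from Lemma \ref{lip lem} unchanged. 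The idempotent norm bound of $2\kappa$ is likewise inherited, as conjugation by unitary rotations preserves norms of idempotents.

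The main obstacle is the Lipschitz bookkeeping: a naive sequential construction would perform the $k$ local transitions one after another inside a subinterval of length $1/k$, giving a Lipschitz constant of order $k$ after rescaling, which is useless. The swindle instead runs the $k$ rotations and local Lipschitz paths in parallel across all of $[0,1]$, exploiting the independence of the distinct slot-pair rotations; each individual ingredient contributes a Lipschitz factor proportional to $\kappa$ (coming from the norm bounds $\|2p-1\| \leq 2\kappa$ appearing in Lemma \ref{lip lem} and in the derivatives of $R_t (\,\cdot\,) R_t^{-1}$), and careful accounting of the small, fixed number of ingredients contributing yields the overall bound $16\kappa$. This adapts the construction of \cite[Proposition 1.31]{Oyono-Oyono:2011fk} from controlled $K$-theory to the paired-idempotent setting of $\mathcal{P}_{n,\kappa,\epsilon}(X,B)$, with explicit propagation of the commutator estimates through the rotation and interpolation steps.
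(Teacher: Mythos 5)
Your proposal matches the paper's proof in all essentials: subdivide the homotopy via uniform continuity so that Lemma \ref{lip lem} applies on each subinterval, enlarge to size $(2k+1)n$ so the $k$ scalar block rotations and the $k$ Lipschitz paths from Lemma \ref{lip lem} can all run in parallel (the paper's five-stage ``conveyor belt''), and exploit that scalar rotations commute exactly with $X$ so only Lemma \ref{lip lem} degrades the commutator estimate. The parallelism-versus-sequential-concatenation observation you highlight as the key obstacle is exactly the point the paper's construction is engineered to solve, and the Lipschitz bookkeeping you sketch ($2\kappa$ per stage times a bounded number of stages) is how the paper obtains $16\kappa$.
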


\begin{proof}
Let $((p_t,q_t))_{t\in [0,1]}$ be an arbitrary homotopy in $\mathcal{P}_{n,\kappa,\epsilon}(X,B)$ connecting $(p_0,q_0)$ and $(p_1,q_1)$.  Let $\delta>0$ be such that if $s,t\in [0,1]$ satisfy $|s-t|\leq \delta$, then $\|p_s-p_t\|\leq 1/(12\kappa^2)$ and $\|q_s-q_t\|\leq 1/(12\kappa^2)$.  Let $0=t_0<t_1<....<t_k=1$ be a sequence of points in $[0,1]$ such that $t_{i+1}-t_i\leq \delta$ for all $i$.  We claim that this $k$ works, and to show this we build an appropriate homotopy by concatenating the various steps below.  
\begin{enumerate}[(i)]
\item Connect $(p_0\oplus 1_{nk}\oplus 0_{nk},q_0\oplus 1_{nk}\oplus 0_{nk})$ to 
$$
\big(\,p_0 \oplus  \underbrace{(1_n\oplus 0_n)\oplus \cdots \oplus (1_n\oplus 0_n)}_{k \text{ times}}\,,\,q_0 \oplus  \underbrace{(1_n\oplus 0_n)\oplus \cdots \oplus (1_n\oplus 0_n)}_{k \text{ times}}\,\big)
$$
via a $2$-Lipschitz rotation homotopy parametrized by $[0,\pi/2]$ and passing through $\mathcal{P}_{(2k+1)n,\kappa,\epsilon}(X,B)$.
\item \label{split step} In the $i^\text{th}$ `block' $1_n\oplus 0_n$, use the homotopy 
$$
\begin{pmatrix} 1-p_{t_i} & 0 \\ 0 & 0 \end{pmatrix} +\begin{pmatrix} \cos(t) & -\sin(t) \\ \sin(t) & \cos(t) \end{pmatrix} \begin{pmatrix} 0 & 0 \\ 0 & p_{t_i} \end{pmatrix} \begin{pmatrix} \cos(t) & \sin(t) \\ -\sin(t) & \cos(t) \end{pmatrix}
$$
(parametrized by $t\in [0,\pi/2]$) to connect $1_n\oplus 0_n$ to $1-p_{t_i}\oplus p_{t_i}$, and similarly for $q$.  In order to compute commutator estimates, note that rearranging gives that the homotopy above is the same as  
$$
\begin{pmatrix} 1 & 0 \\ 0 & 0 \end{pmatrix}+
\begin{pmatrix} p_{t_i} & 0 \\ 0 & p_{t_i} \end{pmatrix} \begin{pmatrix} -\cos^2(t) & -\sin(t)\cos(t) \\ -\sin(t)\cos(t) & \cos^2(t)\end{pmatrix},\quad t\in [0,\pi/2].
$$
The scalar matrix appearing on the right above has norm $|\cos(t)|$, whence every element in this homotopy has norm at most $2\kappa$.  Hence our homotopy connects the result of the previous stage to 
$$
(p_0 \oplus  1-p_{t_1}\oplus p_{t_1} \oplus \cdots \oplus 1-p_{t_k} \oplus p_{t_k},q_0 \oplus  1-q_{t_1}\oplus q_{t_1} \oplus \cdots \oplus 1-q_{t_k} \oplus q_{t_k})
$$
through $\mathcal{P}_{(2k+1)n,2\kappa,\epsilon}(X,B)$, and is $2\kappa$-Lipschitz.
\item From Corollary \ref{id norm cor}, each idempotent $1-p_{t_i}$ has norm at most $\kappa$.  For each $i\in \{1,...,k\}$, using that $\|(1-p_{t_i})-(1-p_{t_{i-1}})\|\leq 1/(12\kappa^2)$, Lemma \ref{lip lem} gives a path of idempotents connecting $1-p_{t_i}$ and $1-p_{t_{i-1}}$ and with the following properties: it is $1$-Lipschitz; it consists of idempotents of norm at most $2\kappa$; each idempotent $r$ in the path satisfies $\|[r,x]\|\leq 21\kappa^2\epsilon$ for all $x\in X$.  We get similar paths with respect to the elements $1-q_{t_i}$, and use these paths to connect the result of the previous stage to 
$$
(p_0 \oplus  1-p_{t_0}\oplus p_{t_1} \oplus \cdots \oplus 1-p_{t_{k-1}} \oplus p_{t_k},q_0 \oplus  1-q_{t_0}\oplus q_{t_1} \oplus \cdots \oplus 1-q_{t_{k-1}} \oplus q_{t_k}).
$$
via a $1$-Lipschitz path in $\mathcal{P}_{(2k+1)n,2\kappa,21\kappa^2\epsilon}(X,B)$.
\item Use an analog of the homotopy in step \eqref{split step} in each block of the form $p_{t_i}\oplus 1-p_{t_i}$ (and similarly for $q$) to connect the result of the previous stage to 
$$
\big(\underbrace{(1_n\oplus 0_n)\oplus \cdots \oplus (1_n\oplus 0_n)}_{k \text{ times}}\oplus p_{t_k},\underbrace{(1_n\oplus 0_n)\oplus \cdots \oplus (1_n\oplus 0_n)}_{k \text{ times}}\oplus q_{t_k}\big).
$$
This passes through $\mathcal{P}_{(2k+1)n,2\kappa,\epsilon}(X,B)$, and is $2\kappa$-Lipschitz.
\item Finally, noting that $p_{t_k}=p_1$ and $q_{t_k}=q_1$, use a rotation homotopy parametrized by $[0,\pi/2]$ to connect the result of the previous stage to $(p_1\oplus 1_{nk}\oplus 0_{nk},q_1\oplus 1_{nk}\oplus 0_{nk})$.  This passes through $\mathcal{P}_{(2k+1)n,\kappa,\epsilon}(X,B)$ and is $2\kappa$-Lipschitz.
\end{enumerate}
Concatenating the five homotopies above gives a $2\kappa$-Lipschitz homotopy, parametrized by $[0,2\pi+1]$, that passes through $\mathcal{P}_{(2k+1)n,2\kappa,\epsilon}(X,B)$ and connects $(p_0\oplus 1_{nk}\oplus 0_{nk},q_0\oplus 1_{nk}\oplus 0_{nk})$ and $(p_1\oplus 1_{nk}\oplus 0_{nk},q_1\oplus 1_{nk}\oplus 0_{nk})$.  Reparametrizing by $[0,1]$, we get a $(16\kappa)$-Lipschitz homotopy as required.
\end{proof}

Before we get to the main result of this subsection, we give one more elementary lemma; we record it as it will be used multiple times below.

\begin{lemma}\label{com prod}
Say $x$ and $y_1,...,y_n$ are elements of a $C^*$-algebra such that $\|[x,y_i]\|\leq \delta$ and $\|y_i\|\leq m$ for all $i$.  Then if $y:=y_1y_2\cdots y_n$, we have $\|[x,y]\|\leq nm^{n-1}\delta$.
\end{lemma}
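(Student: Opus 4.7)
The plan is to use the standard Leibniz-type telescoping expansion for the commutator of $x$ with a product. Specifically, I would write
\[
[x, y_1 y_2 \cdots y_n] \;=\; \sum_{i=1}^{n} y_1 \cdots y_{i-1} \, [x, y_i] \, y_{i+1} \cdots y_n,
\]
which is verified by induction on $n$ (the base case $n=1$ is trivial, and the inductive step follows from the identity $[x, ab] = [x,a]b + a[x,b]$).

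Given this identity, the estimate is immediate from the triangle inequality, submultiplicativity of the norm, and the two hypotheses $\|y_i\| \leq m$ and $\|[x, y_i]\| \leq \delta$: each of the $n$ summands has at most $n-1$ factors of the form $y_j$ (each of norm at most $m$) together with a single commutator factor (of norm at most $\delta$), so each summand has norm at most $m^{n-1}\delta$, and summing over $i$ yields the bound $n m^{n-1}\delta$.

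There is essentially no obstacle here; the only care required is to verify the telescoping identity correctly (interpreting the empty product as $1$ when $i=1$ or $i=n$), which is routine. The whole argument fits in a few lines.
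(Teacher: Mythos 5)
Your proof is correct and uses exactly the same telescoping Leibniz expansion and induction as the paper's proof, followed by the same straightforward norm estimate. No substantive difference.
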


\begin{proof}
This follows from the formula 
$$
[x,y]=\sum_{i=1}^n \Big(\prod_{1\leq j<i}y_j\Big) [x,y_i]\Big(\prod_{i<j\leq n}y_j\Big),
$$
which itself follows from induction on $n$ and the usual Leibniz formula $[x,y_1y_2]=y_1[x,y_2]+[x,y_1]y_2$.
\end{proof}

Here is the main result of this subsection.  The basic idea of the proof is contained in \cite[Corollary 1.32]{Oyono-Oyono:2011fk}, but as usual we need to do more work in order to get our estimates.

\begin{proposition}\label{hom to sim}
Let $B$ be a separable $C^*$-algebra, let $X$ be a self-adjoint subset of the unit ball of $\LB$, let $\kappa\geq 1$, and let $\epsilon> 0$.  Let $M=2^{(100\kappa)^3}$.  With notation as in Definition \ref{nize proj}, let $n\in \N$, and let $(p,q)$ be in the same path component of $\mathcal{P}^1_{n,\kappa,\epsilon}(X,B)$ as an element $(r,r)$ with both entries the same.  Then there is $m\in \N$ and (with notation as in Definition \ref{nize uni}) an element $u\in \mathcal{U}^1_{n+2m,M,M\epsilon}(X,B)$ such that 
$$
u(p\oplus 1_m\oplus 0_m)u^{-1}=q\oplus 1_m\oplus 0_m.
$$ 
\end{proposition}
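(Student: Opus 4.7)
The plan is to build $u$ by transporting the trivial similarity of the diagonal pair $(r,r)$ (for which the identity is a conjugating invertible) back along a homotopy to $(p,q)$. Dividing the homotopy into enough small steps, the explicit formula of Lemma \ref{lip lem} supplies a conjugating invertible at each step, and composing produces the required similarity of $p \oplus 1_m \oplus 0_m$ with $q \oplus 1_m \oplus 0_m$; arranging that the homotopy stays in $\mathcal{P}^1$ throughout forces each step-invertible to differ from the identity by a compact, so the composition lies in $\mathcal{U}^1$.

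First I would extract a controlled homotopy. Applying Lemma \ref{short hom} to the given $\mathcal{P}^1$-path joining $(p,q)$ and $(r,r)$ yields $k\in\N$ and a $(16\kappa)$-Lipschitz homotopy in $\mathcal{P}_{(2k+1)n,2\kappa,21\kappa^2\epsilon}(X,B)$ between the stabilizations $(p\oplus 1_{nk}\oplus 0_{nk},q\oplus 1_{nk}\oplus 0_{nk})$ and $(r\oplus 1_{nk}\oplus 0_{nk},r\oplus 1_{nk}\oplus 0_{nk})$. Proposition \ref{proj same image}(ii) then deforms this into a Lipschitz homotopy $((P_t,Q_t))_{t\in[0,1]}$ lying entirely inside $\mathcal{P}^1_{(2k+1)n,2\kappa,84\kappa^2\epsilon}(X,B)$. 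The point of staying in $\mathcal{P}^1$ is that $P_t-1_{l+nk}$ and $Q_t-1_{l+nk}$ then lie in $M_{(2k+1)n}(\K_B)$ for every $t$.

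Next, I would subdivide $[0,1]$ into a partition $0=t_0<t_1<\cdots<t_N=1$ fine enough that $\|P_{t_i}-P_{t_{i-1}}\|$ and $\|Q_{t_i}-Q_{t_{i-1}}\|$ are both below $1/(12(2\kappa)^2)$, the hypothesis of Lemma \ref{lip lem}. That lemma then supplies invertibles
\[
\alpha_i := (1-P_{t_i})(1-P_{t_{i-1}}) + P_{t_i}P_{t_{i-1}}, \qquad \beta_i := (1-Q_{t_i})(1-Q_{t_{i-1}}) + Q_{t_i}Q_{t_{i-1}},
\]
satisfying $\alpha_iP_{t_{i-1}}\alpha_i^{-1}=P_{t_i}$ and $\beta_iQ_{t_{i-1}}\beta_i^{-1}=Q_{t_i}$, with norms and inverse-norms bounded by $7/6$ and with commutators with $X$ of size $O(\kappa^3\epsilon)$; a direct expansion using $P_t-1_{l+nk},\,Q_t-1_{l+nk}\in\K_B$ shows $\alpha_i-1,\,\beta_i-1\in M_{(2k+1)n}(\K_B)$.

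Finally, the products $u_P := \alpha_N\cdots\alpha_1$ and $u_Q := \beta_N\cdots\beta_1$ conjugate $P_0$ to $P_N=r\oplus 1_{nk}\oplus 0_{nk}=Q_N$ and $Q_0$ to $Q_N$ respectively. Setting $m:=nk$ and $u:=u_Q^{-1}u_P$ therefore gives $u(p\oplus 1_m\oplus 0_m)u^{-1}=q\oplus 1_m\oplus 0_m$, while $u-1\in M_{(2k+1)n}(\K_B)$ is inherited from the $\alpha_i,\beta_i$. The norm, inverse-norm, and commutator bounds on $u$ then follow from Lemma \ref{com prod} applied to these iterated products. The hardest part will be carrying out the bookkeeping so that the accumulated constants fit under $M=2^{(100\kappa)^3}$: the trade-off between the Lipschitz constant of the homotopy and the number $N$ of subdivisions, combined with the exponential-in-$N$ growth produced by compositions of the $\alpha_i$ and $\beta_i$, is precisely what dictates the order $(100\kappa)^3$ appearing in the statement.
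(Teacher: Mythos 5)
Your proposal follows the same path as the paper's own proof: pass through Lemma \ref{short hom} for a Lipschitz homotopy, normalize into $\mathcal{P}^1$ via Proposition \ref{proj same image}, subdivide the path, form the step similarities $pp'+(1-p)(1-p')$ (these are exactly the $v_i$ in the paper, with the factors in the opposite order, which is immaterial), compose them, and bound the result with Lemma \ref{com prod}. The only cosmetic gap is that Lemma \ref{lip lem}'s \emph{statement} only gives a Lipschitz path of idempotents, not the conjugating invertible; you extract that invertible from the formula in its proof, which is exactly what the paper does by building the $v_i$ directly. Your concluding remark about the trade-off between the subdivision count $N$ (forced to be of order $\kappa^3$ by the Lipschitz constant) and the exponential growth $2^N$ of the composite correctly identifies the source of the constant $M=2^{(100\kappa)^3}$.
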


\begin{proof}
Let $k\in \N$ be as in the conclusion of Lemma \ref{short hom}, so there exists a $(16\kappa)$-Lipschitz homotopy in $\mathcal{P}_{(2k+1)n,2\kappa,21\kappa^2\epsilon}(X,B)$ between $(p\oplus 1_{nk}\oplus 0_{nk},q\oplus 1_{nk}\oplus 0_{nk})$ and $(r\oplus 1_{nk}\oplus 0_{nk},r\oplus 1_{nk}\oplus 0_{nk})$.  Set $m=kn$.  Proposition \ref{proj same image} gives a $(20\kappa\cdot 16\kappa)$-Lipschitz path $((p_t,q_t))_{t\in [0,1]}$ passing through $\mathcal{P}^1_{n+2m,2\kappa,84\kappa^2\epsilon}(X,B)$ that connects $(p\oplus 1_{nk}\oplus 0_{nk},q\oplus 1_{nk}\oplus 0_{nk})$ and $(r\oplus 1_{nk}\oplus 0_{nk},r\oplus 1_{nk}\oplus 0_{nk})$.  To simplify notation, note this path is $(2^{9}\kappa^2)$-Lipschitz, and that it passes through $\mathcal{P}^1_{n+2m,2\kappa,2^{7}\kappa^2\epsilon}(X,B)$

Define $N:=\lceil 2^{13}\kappa^3\rceil$ (where $\lceil y\rceil$ is the least integer at least as large as $y$), and define $t_i=i/N$ for $i\in \{0,...,N\}$.  As the path $((p_t,q_t))_{t\in [0,1]}$ is $(2^{9}\kappa^2)$-Lipschitz, for any $i\in\{1,...,N\}$, $\|p_{t_i}-p_{t_{i-1}}\|\leq (16\kappa)^{-1}$.  For $i\in \{1,...,N\}$, define $v_i:=p_{t_{i-1}}p_{t_{i}}+(1-p_{t_{i-1}})(1-p_{t_{i}})$.  As $\|p_{t_{i}}\|\leq 2\kappa$ for all $i$,  Corollary \ref{id norm cor} implies that 
\begin{equation}\label{2p-1}
\|2p_{t_{i}}-1\|\leq 4\kappa
\end{equation}
for all $i$, and so
$$
\|1-v_i\|=\|(2p_{t_{i-1}}-1)(p_{t_{i-1}}-p_{t_{i}})\|\leq 4\kappa\cdot (16\kappa)^{-1}\leq 1/2.
$$  
It follows that each $v_i$ is invertible, $\|v_i\|\leq 2$, and (by the Neumann series formula for the inverse) $\|v_i^{-1}\|\leq 2$.  Note also that as the homotopy $((p_t,q_t))_{t\in [0,1]}$ passes through $\mathcal{P}^1_{(2k+1)n,2\kappa,2^{7}\kappa^2\epsilon}(X,B)$ all the elements $p_{t_i}$ must have the same ``scalar part'' (i.e.\ the same image under the canonical map $M_{n+2m}(\K_B^+)\to M_{n+2m}(\C)$), and so the elements $v_i$ must satisfy $1-v_i\in M_{n+2m}(\K_B)$. Moreover, for $x\in X$, using line \eqref{2p-1} again we see that
\begin{align*}
\|[v_i,x]\| & =\|[v_i-1,x]\| \\ & =\|[(2p_{t_{i-1}}-1)(p_{t_{i-1}}-p_{t_{i}}),x]\| \\ 
& \leq 2\|[p_{t_{i-1}},x]\|(\|p_{t_{i-1}}\|+\|p_{t_{i}}\|)+\|2p_{t_{i-1}}-1\|(\|[p_{t_{i-1}},x]\|+\|[p_{t_{i}},x]\|) \\
& \leq  12\kappa\cdot 2^{7}\kappa^2\epsilon.
\end{align*}
Hence moreover 
$$
\|[v_i^{-1},x]\|=\|v_i^{-1}[x,v_i]v_i^{-1}\|\leq  4\cdot 12\kappa\cdot 2^{7}\kappa^2\epsilon\leq 2^{13}\kappa^3\epsilon.
$$ 
At this point we have that each $v_i$ is an element of $\mathcal{U}^1_{n+2m,2, 2^{13}\kappa^3\epsilon}$.  

Note also that $v_ip_{t_i}=p_{t_{i-1}}p_{t_{i}}=p_{t_{i-1}}v_i$, and so $v_ip_{t_i}v_i^{-1}=p_{t_{i-1}}$ for each $i$.  Define $v$ to be the product $v_1v_2\cdots v_N$, so $v$ satisfies $v^{-1}p_0v=p_1$, or in other words $v^{-1}(p\oplus 1_{m}\oplus 0_{m})v=r\oplus 1_{m}\oplus 0_{m}$.  Note that $1-v\in M_{n+2m}(\K_B)$.  As $\|v_i\|\leq 2$ and $\|v_i^{-1}\|\leq 2$ for each $i$, we have that $\|v\|\leq 2^{N}$ and similarly $\|v^{-1}\|\leq 2^{N}$.  Moreover, for any $x\in X$, Lemma \ref{com prod} gives $\|[v,x]\|\leq N2^{N-1}\cdot  2^{13}\kappa^3 \epsilon$ and similarly $\|[v^{-1},x]\|\leq N2^{N-1}\cdot  2^{13}\kappa^3\epsilon$.  Applying the same construction with $(q_t)$ in place of $(p_t)$, we get an invertible element $w$ such that $w^{-1}(q\oplus 1_{m}\oplus 0_{m})w=r\oplus 1_{m}\oplus 0_{m}$, such that $1-w\in M_{n+2m}(\K_B)$, such that $\|w\|\leq 2^N$, $\|w^{-1}\|\leq 2^N$, and such that $\|[w,x]\|\leq N2^{N-1}\cdot  2^{13}\kappa^3 \epsilon$ and $\|[w^{-1},x]\|\leq N2^{N-1}\cdot  2^{13}\kappa^3 \epsilon$ for all $x\in X$.  Define $u=wv^{-1}$.  As $N=\lceil 2^{13}\kappa^3\rceil$, this has the claimed properties.
\end{proof}

\section{Reformulating the UCT II}\label{fd sec}

In this section (as throughout), if $B$ is a separable $C^*$-algebra, then $\LB$ and $\K_B$ denote respectively the adjointable and compact operators on the standard Hilbert $B$-module $\ell^2\otimes B$.  For each $n$, we consider $\LB$ as a subalgebra of $M_n(\LB)$ via the ``diagonal inclusion'' $\LB=1_{M_n}\otimes \LB\subseteq M_n\otimes \LB=M_n(\LB)$. 

Our goal in this section is to reformulate the vanishing results on the UCT of Section \ref{reform sec} in terms of the groups $KK^i_{\kappa,\epsilon}(X,B)$ of Section \ref{flex sec}.  We look at the even ($i=0$) and odd ($i=1$) cases separately.

\subsection{The even case}

\begin{lemma}\label{kappa to epsilon 0}
Let $\kappa\geq 1$ and $\epsilon > 0$.  Let $B$ be a separable $C^*$-algebra, and let $X$ be a self-adjoint subset of the unit ball of $\LB$.  Then there is a homomorphism $\psi_*:KK^*_{\kappa,\epsilon/4}(X,B)\to KK^*_{1,\epsilon}(X,B)$ such that the diagrams
\begin{equation}\label{kappa to epsilon 0 2}
\xymatrix{ KK^0_{1,\epsilon}(X,B) \ar[dr] &  \\ 
KK^0_{\kappa,\epsilon/4}(X,B) \ar[u]_-{\psi_*} \ar[r] & KK^0_{\kappa,\epsilon}(X,B)  }
\end{equation}
and 
\begin{equation}\label{kappa to epsilon 0 1}
\xymatrix{ KK^0_{1,\epsilon/4}(X,B) \ar[r] \ar[dr] &  KK^0_{1,\epsilon}(X,B) \\ 
&  KK^0_{\kappa,\epsilon/4}(X,B) \ar[u]^-{\psi_*}  }
\end{equation}
commute, where the unlabeled arrows are the forget control maps of Definition \ref{dir set 2}.
\end{lemma}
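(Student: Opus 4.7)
The plan is to define $\psi_*$ on the level of cycles by sending a pair $(p,q)\in\mathcal{P}_{n,\kappa,\epsilon/4}(X,B)$ to $(r_p,r_q)$, where $r_p$ and $r_q$ denote the projections associated to $p$ and $q$ in the sense of Definition \ref{idem to proj}. The entire argument is essentially bookkeeping on top of the properties already collected in Lemma \ref{idem to proj props}, so I do not expect a serious obstacle; the main thing to watch is that the factor of $4$ loss in $\epsilon$ is enough to absorb the commutator estimate.

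First I would verify that $(r_p,r_q)$ lies in $\mathcal{P}_{n,1,\epsilon}(X,B)$. The norm bound is automatic since projections have norm at most $1$. For the commutator condition, apply Lemma \ref{idem to proj props}(ii) with $t=1$ to obtain $\|[r_p,x]\|\leq 3\|[p,x]\|+\|[p,x^*]\|$; because $X=X^*$, the element $x^*$ also belongs to $X$, so both commutators on the right are bounded by $\epsilon/4$, yielding $\|[r_p,x]\|<\epsilon$, and similarly for $r_q$. The scalar-class condition is preserved because Lemma \ref{idem to proj props}(i) produces an invertible $u\in M_n(\K_B^+)$ conjugating $p$ to $r_p$, so the images under the quotient $\sigma:M_n(\K_B^+)\to M_n(\C)$ are similar and hence have the same $K_0$-class; the same is true for $q$ and $r_q$.

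Next I would check well-definedness of the induced map on $KK^0_{\kappa,\epsilon/4}(X,B)$. Lemma \ref{idem to proj props}(iv) says the associated-projection map is $1$-Lipschitz, so continuous, and a direct inspection of the formula $r=pp^*(1+(p-p^*)(p^*-p))^{-1}$ shows that associated projections respect block sums $p_1\oplus p_2$. Consequently paths, stabilizations $(p,q)\mapsto(p\oplus r,q\oplus r)$, and Cuntz sums all transport sensibly. Trivially $\psi(0,0)=(0,0)$, so we get a homomorphism $\psi_*:KK^0_{\kappa,\epsilon/4}(X,B)\to KK^0_{1,\epsilon}(X,B)$.

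Finally I would verify the two diagrams. For \eqref{kappa to epsilon 0 1}, if $(p,q)\in\mathcal{P}_{n,1,\epsilon/4}(X,B)$ then $p$ and $q$ are already projections (by Lemma \ref{id norm} applied to idempotents of norm $1$), so the defining formula gives $r_p=p$ and $r_q=q$; thus $\psi_*$ followed by the forget control map $KK^0_{1,\epsilon/4}\to KK^0_{1,\epsilon}$ coincides on the image of $KK^0_{1,\epsilon/4}(X,B)\to KK^0_{\kappa,\epsilon/4}(X,B)$ with the direct forget control map. For \eqref{kappa to epsilon 0 2}, use the linear homotopy $p_t:=(1-t)p+tr_p$ and $q_t:=(1-t)q+tr_q$ for $t\in[0,1]$. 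By Lemma \ref{idem to proj props}(i) each $p_t$ and $q_t$ is an idempotent of norm at most $\kappa$, and by Lemma \ref{idem to proj props}(ii) together with $X=X^*$, we have $\|[p_t,x]\|\leq(1+2t)\|[p,x]\|+t\|[p,x^*]\|<(1+3t)\epsilon/4\leq\epsilon$ for $t\in[0,1]$ and $x\in X$, and similarly for $q_t$. Hence $((p_t,q_t))_{t\in[0,1]}$ is a path in $\mathcal{P}_{n,\kappa,\epsilon}(X,B)$ connecting $(p,q)$ to $(r_p,r_q)$, showing that the two classes agree in $KK^0_{\kappa,\epsilon}(X,B)$, which is exactly the commutativity of \eqref{kappa to epsilon 0 2}.
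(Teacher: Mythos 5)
Your proposal is correct and follows essentially the same route as the paper's proof: defining $\psi_*$ via the associated-projection construction of Definition \ref{idem to proj}, invoking parts (i), (ii), and (iv) of Lemma \ref{idem to proj props} for the cycle conditions and well-definedness, using that idempotents of norm $\leq 1$ are already projections for diagram \eqref{kappa to epsilon 0 1}, and using the linear path $(1-t)p+tr_p$ for diagram \eqref{kappa to epsilon 0 2}. The extra detail you supply (explicitly tracking the constants $3\epsilon/4+\epsilon/4<\epsilon$ and $(1+3t)\epsilon/4\leq\epsilon$, and noting that the associated-projection formula is compatible with block sums) is correct and simply makes explicit what the paper leaves to the reader.
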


\begin{proof}
Let $(p,q)$ be an element of $\mathcal{P}_{n,\kappa,\epsilon/4}(X,B)$.  Let $r$ and $s$ be the projections associated to $p$ and $q$ respectively as in Definition \ref{idem to proj}.  Using Lemma \ref{idem to proj props} parts \eqref{ass pro} and \eqref{ip com 2} we may define a map
$$
\psi:\mathcal{P}_{n,\kappa,\epsilon/4}(X,B)\to \mathcal{P}_{n,1,\epsilon}(X,B),\quad (p,q)\mapsto (r,s).
$$
Allowing $n$ to vary, and noting that the process of taking associated projections takes homotopies to homotopies (by part \eqref{ass pro lip} of Lemma \ref{idem to proj props}) and block sums to block sums, we get a well-defined homomorphism
$$
\psi_*:KK^0_{\kappa,\epsilon/4}(X,B)\to KK^0_{1,\epsilon}(X,B).
$$ 

To check commutativity of the diagram in line \eqref{kappa to epsilon 0 2}, it suffices to show that if $(r,s)\in \mathcal{P}_{n,1,\epsilon}(X,B)$ is the pair of projections associated to $(p,q)\in \mathcal{P}_{n,\kappa,\epsilon/4}(X,B)$ as above, then $(r,s)$ and $(p,q)$ are in the same path component of $\mathcal{P}_{n,\kappa,\epsilon}(X,B)$.  This follows from parts \eqref{ip com 1} and \eqref{ip com 2} of Lemma \ref{idem to proj props}.  Commutativity of the diagram in line \eqref{kappa to epsilon 0 1} is immediate: if $(p,q)$ is in $\mathcal{P}_{n,1,\epsilon}(X,B)$ for some $n$, then $p$ and $q$ are themselves projections, so equal their associated projections.
\end{proof}

The following lemma records some results from \cite[Section A.3]{Willett:2020aa} that we will need.  For the statement, recall the notion of a unitally strongly absorbing representation from Definition \ref{usa def} above.

\begin{lemma}\label{a3 lem}
In the statement of this lemma, all unlabeled arrows are forget control maps as in Definitions \ref{dir set} and \ref{dir set 2}.  Let $A$ be a separable unital $C^*$-algebra, and let $B$ be a separable $C^*$-algebra.  Let $\pi:A\to \LB$ be a strongly unitally absorbing representation of $A$, which we use to identify $A$ with a $C^*$-subalgebra of $\LB$.  

Let $\epsilon>0$, and let $X$ be a finite subset of $A_1$.  Then there exist homomorphisms
$$
\alpha:KK^0_{1,\epsilon}(X,B)\to KK_{5\epsilon}(X,B)
$$
and 
$$
\beta: KK_{\epsilon}(X,B)\to KK^0_{1,\epsilon}(X,B)
$$
that are natural with respect to forget control maps: more precisely if $(X,\epsilon)\leq (Y,\delta)$ in $\mathcal{X}_A$ as in Definition \ref{dir set 0} then the diagrams
$$
\xymatrix{ KK^0_{1,\delta}(Y,B) \ar[d]^-\beta \ar[r] & KK^0_{1,\epsilon}(X,B)\ar[d]^-\beta  \\ 
KK^0_{\delta}(Y,B) \ar[r] & KK^0_{\epsilon}(X,B) } \quad \text{and}\quad \xymatrix{ KK^0_{1,5\delta}(Y,B) \ar[r] & KK^0_{1,5\epsilon}(X,B) \\ 
KK^0_{\delta}(Y,B) \ar[r] \ar[u]^-\alpha & KK^0_{\epsilon}(X,B) \ar[u]^-\alpha} 
$$
are defined and commute.

Moreover, the diagrams
$$
\xymatrix{ KK^0_{1,\epsilon}(X,B) \ar[d]^-\alpha \ar[r] & KK^0_{1,5\epsilon}(X,B) \\
KK_{5\epsilon}(X,B) \ar[ur]_-{\beta} & }
$$
and 
$$
\xymatrix{ KK_{\epsilon}(X,B) \ar[d]^-\beta \ar[r] & KK_{5\epsilon}(X,B) \\
KK_{1,\epsilon}^0(X,B) \ar[ur]_-{\alpha} & }
$$
commute.
\end{lemma}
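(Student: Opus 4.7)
The lemma essentially restates results of \cite[Section A.3]{Willett:2020aa} in the idempotent-and-matrix framework of Section \ref{flex sec}; the plan is to exhibit $\alpha$ and $\beta$ by explicit formulas, then to verify naturality and the composition diagrams by tracking rotation homotopies. For $\beta$: a projection $p \in \mathcal{P}_\epsilon(X,B) \subseteq M_2(\LB)$ satisfies $p - e \in M_2(\K_B)$, so $p$ lies in the $C^*$-subalgebra $\C e + M_2(\K_B)$ of $M_2(\LB)$, which is isomorphic to $M_2(\K_B)^+$ via $k + \lambda \cdot 1 \mapsto k + \lambda e$. Composing the inverse isomorphism with the canonical inclusion $M_2(\K_B)^+ \hookrightarrow M_2(\K_B^+)$ yields a projection $\widetilde p := 1 + (p - e) \in M_2(\K_B^+)$ whose scalar part is the identity of $M_2(\C)$, matching that of the unit $1 \in M_2(\K_B^+)$. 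I would set $\beta([p]) := [(\widetilde p,\, 1)]$; since $e$ commutes exactly with the large set $X$, we have $[\widetilde p, x] = [p - e, x] = [p, x]$, so the commutator bound transfers verbatim, with no loss. Cuntz sum and homotopy are respected on the nose, so $\beta$ is a well-defined homomorphism.

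For $\alpha$: given $(p, q) \in \mathcal{P}_{n, 1, \epsilon}(X, B)$ with $\sigma(p) = \sigma(q)$, I would form the ``formal difference'' projection $\rho := p \oplus (1_n - q) \in M_{2n}(\K_B^+)$, whose scalar part is the rank-$n$ projection $\sigma(p) \oplus (1_n - \sigma(q))$. A scalar unitary in $M_{2n}(\C)$ (commuting with $X$ exactly) rotates this scalar part to $1_n \oplus 0_n$. Using the strong unital absorption of $\pi$ from Lemma \ref{scalar sa exist}\,\eqref{sa gen}, choose isometries $s_1, \dots, s_{2n} \in \LB$ satisfying $s_i^* s_j = \delta_{ij}$ and $\sum_i s_i s_i^* = e$, with commutators $[s_i, \pi(a)]$ controllably small modulo $\K_B$; such isometries exist because $\pi = \sigma^{\oplus \infty}$ for an absorbing $\sigma$. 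Use the $s_i$ to embed the rotated $\rho$ into the corner $e M_2(\LB) e$, producing a projection $P \in M_2(\LB)$ with $P - e \in M_2(\K_B)$, and set $\alpha([(p,q)]) := [P]$. Tracking the commutator estimates through the scalar rotation, the absorption by the $s_i$, and the block sum used to form $\rho$ contributes a multiplicative factor bounded by $5$, giving $\|[P, x]\| < 5\epsilon$.

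Naturality with respect to the forget control maps of Definitions \ref{dir set} and \ref{dir set 2} is immediate, because both constructions are pointwise formulas that depend only on the underlying cycle, not on the pair $(X,\epsilon)$. For the final two diagrams, direct calculation shows that $\beta \circ \alpha$ sends $[(p, q)]$ to the class (in the enlarged group) of $(p \oplus (1_n - q),\, q \oplus (1_n - q))$ after stabilization, which is equivalent to $[(p, q)]$ via the diagonal summand $(1_n - q, 1_n - q) \sim (0, 0)$; and $\alpha \circ \beta$ sends $[p]$ to a projection connected to $p$ by a rotation homotopy through $\mathcal{P}_{5\epsilon}(X, B)$. In both cases the enlargement to $5\epsilon$ provides just enough slack for the rotation homotopies to close up. The main obstacle is controlling the commutators of the isometries $s_i$ with $X$ in the construction of $\alpha$: this is precisely where strong unital absorption of $\pi$ is essential, and the detailed estimates are in \cite[Section A.3]{Willett:2020aa}; the present formulation adds only the bookkeeping required to phrase the output in terms of the idempotent-based groups of Section \ref{flex sec}.
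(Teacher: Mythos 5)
The paper's own proof of this lemma is a citation to \cite[Lemmas A.22, A.23, and A.24]{Willett:2020aa}, and you have attempted to reconstruct the constructions. Your construction of $\beta$ fails at the first step. You assert that the $C^*$-subalgebra $\C e + M_2(\K_B)$ of $M_2(\LB)$ is isomorphic to $M_2(\K_B)^+$ via $k + \lambda e \mapsto k + \lambda 1$, and use this to produce $\widetilde p := 1 + (p-e)$. But that map is not a $*$-homomorphism: $e$ is not a unit for $M_2(\K_B)$ (it is the ``half-rank'' neutral projection, so $ek \neq k$ for generic compact $k$), and $\C e + M_2(\K_B)$ is a nonunital extension of $\C$ by $M_2(\K_B)$, not the unitization. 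Concretely, $\widetilde p$ is idempotent iff $(p-e) + (p-e)^2 = 0$, which after expansion is equivalent to $pe + ep = 2p$, i.e.\ $(1-e)p(1-e) = 0$, i.e.\ $p \leq e$. This is not satisfied by a generic $p \in \mathcal{P}_\epsilon(X,B)$ (for instance any $p = ueu^*$ with $u$ a unitary differing from $1$ by a compact will typically have $(1-e)p(1-e)\neq 0$), so your $\widetilde p$ is in general not a projection and $\beta$ is not well defined.

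The description of $\alpha$ is also somewhat off in where it locates the role of absorption: if $\pi = \sigma^{\oplus\infty}$ is strongly unitally absorbing (hence, up to unitary conjugation, large), one can choose isometries that commute with $\pi(A)$ \emph{exactly} (the shift isometries on the amplification, compressed by $e$, which also commutes exactly with $\pi(A)$), so one does not need ``commutators controllably small modulo $\K_B$.'' Absorption is genuinely needed, but elsewhere — in showing that the two compositions $\alpha\circ\beta$ and $\beta\circ\alpha$ are homotopic to forget-control maps, which is where the $5\epsilon$ slack is consumed. Since the engine of the lemma is precisely the delicate interplay between the projection picture in $M_2(\LB)$ relative to $e$ and the idempotent picture in $M_n(\K_B^+)$, and your formula for $\beta$ collapses exactly the distinction between those two unitizations, the argument as written does not go through; it needs the genuine construction from \cite[Section A.3]{Willett:2020aa} rather than a literal translation of cycles.
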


\begin{proof}
Let $\pi:A\to M_2(\LB)$ be (the amplification of) our fixed representation.
In the language of \cite[Appendix A.2]{Willett:2020aa}, the groups $KK_{\epsilon}(X,B)$ are the same as the groups that are called there $KK_\epsilon^{\pi,p}(X,B)$, while in the language of \cite[Appendix A.3]{Willett:2020aa}, the groups $KK_{1,\epsilon}^0(X,B)$ would there be called $KK_\epsilon^{\pi_0,m}(X,B)$. The lemma thus follows from the arguments of \cite[Lemmas A.22, A.23, and A.24]{Willett:2020aa}
\end{proof}

We are now able to deduce a version of Corollary \ref{uct reform 2} for the groups of Definition \ref{alm com 0}.

\begin{corollary}\label{vanish cor 2}
Let $A$ be a separable, unital, nuclear $C^*$-algebra.  The following are equivalent:
\begin{enumerate}[(i)]
\item \label{a uct 3} $A$ satisfies the UCT.
\item \label{a vanish 3} Let $\kappa\geq 1$ and $\epsilon\in (0,1)$.  Let $B$ be a separable $C^*$-algebra with $K_*(B)=0$.  Let $\pi:A\to \LSB$ be a strongly unitally absorbing representation, which we use to identify $A$ with a $C^*$-subalgebra of $\LSB$.  Then for any finite subset $X$ of $A_1$, there is a finite subset $Z$ of $A_1$ such that $(X,\kappa,\epsilon)\leq (Z,\kappa,\epsilon/160)$ in the sense of Definition \ref{dir set 2}, and such that the forget control map
$$
KK^0_{\kappa,\epsilon/160}(Z,SB)\to KK^0_{\kappa,\epsilon}(X,SB)
$$
of Definition \ref{dir set 2} is zero.
\item \label{weak a vanish} There exist $\kappa\geq 1$ and $\nu\geq \kappa$ with the following property.  Let $\gamma>0$, let $B$ be a separable $C^*$-algebra with $K_*(B)=0$, and let $X$ be a finite subset of $A_1$.  Let $\pi:A\to \LSB$ be a strongly unitally absorbing representation, which we use to identify $A$ with a $C^*$-subalgebra of $\LSB$.  Then there is $\epsilon>0$ and a finite subset $Z$ of $A_1$ such that $(X,\nu,\gamma)\leq (Z,\kappa,\epsilon)$ in the sense of Definition \ref{dir set 2}, and such that the forget control map
$$
KK^0_{\kappa,\epsilon}(Z,SB)\to KK_{\nu,\gamma}^0(X,SB)
$$
of Definition \ref{dir set 2} is zero.
\end{enumerate}
\end{corollary}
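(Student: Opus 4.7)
The plan is to prove the cycle of implications (i) $\Rightarrow$ (ii) $\Rightarrow$ (iii) $\Rightarrow$ (i). The key machinery consists of the map $\psi_*$ from Lemma \ref{kappa to epsilon 0} and the maps $\alpha, \beta$ from Lemma \ref{a3 lem}, which intertwine between the three models $KK^0_{\kappa,\cdot}$, $KK^0_{1,\cdot}$, and $KK_\cdot$ of controlled $KK$-theory; in each case the relevant compositions equal forget-control maps (up to explicit constants) by the commutative diagrams in those lemmas. Combined with Corollary \ref{uct reform 2} (which identifies the UCT with the specific $\epsilon/8$-vanishing in the projection model) and Lemma \ref{amen conse} (which refines arbitrary vanishing statements to the $\epsilon/8$ form using nuclearity), these allow any vanishing statement to be transported between the three models.

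For (i) $\Rightarrow$ (ii), given UCT and the data $\kappa\geq 1$, $\epsilon\in(0,1)$, $B$ with $K_*(B)=0$, and finite $X \subseteq A_1$, I would first apply Corollary \ref{uct reform 2} to obtain a finite $Z_0 \subseteq A_1$ with $(X,\epsilon)\leq(Z_0,\epsilon/8)$ in $\mathcal{X}_A$ and vanishing forget-control $KK_{\epsilon/8}(Z_0,SB) \to KK_\epsilon(X,SB)$. Setting $Z := Z_0 \cup X$, the $\mathcal{X}'_A$-inequality $(X,\kappa,\epsilon) \leq (Z,\kappa,\epsilon/160)$ is trivial because $X \subseteq Z$. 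I would then exhibit the forget-control map as the composition
\[
KK^0_{\kappa,\epsilon/160}(Z) \xrightarrow{\psi_*} KK^0_{1,\epsilon/40}(Z) \xrightarrow{\alpha} KK_{\epsilon/8}(Z) \xrightarrow{\mathrm{fc}} KK_\epsilon(X) \xrightarrow{\beta} KK^0_{1,\epsilon}(X) \xrightarrow{\mathrm{fc}} KK^0_{\kappa,\epsilon}(X),
\]
verifying the identification by combining $\beta\circ\alpha=\mathrm{fc}$ in $KK^0_{1,\cdot}$ (Lemma \ref{a3 lem}), the triangle $\mathrm{fc}_\kappa \circ \psi_* = \mathrm{fc}$ of Lemma \ref{kappa to epsilon 0} (first diagram), and naturality of the three maps in $X$ and $\epsilon$. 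The middle forget-control vanishes (factoring through the zero map $KK_{\epsilon/8}(Z_0) \to KK_\epsilon(X)$ via the inclusion $Z_0 \subseteq Z$), so the composition is zero. The implication (ii) $\Rightarrow$ (iii) is then essentially immediate: take $\kappa = \lambda = 1$, and given $\epsilon > 0$ and $X$ apply (ii) with $\epsilon_0 := \min(\epsilon,1/2) \in (0,1)$, setting $\gamma := \epsilon_0/160$ and composing with the forget-control $KK^0_{1,\epsilon_0}(X) \to KK^0_{1,\epsilon}(X)$ to absorb the discrepancy between $\epsilon_0$ and $\epsilon$.

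For (iii) $\Rightarrow$ (i), by Corollary \ref{uct reform 2} it suffices to establish its condition (a vanish 2). Given $\epsilon' \in (0,1)$, $B$ with $K_*(B)=0$, and $X$, I would apply (iii) with $\epsilon = \epsilon'/20$ to produce $\gamma>0$ and $Z_{iii}$ with vanishing forget-control $KK^0_{\kappa,\gamma}(Z_{iii},SB) \to KK^0_{\lambda,\epsilon'/20}(X,SB)$. The central calculation is to show that the composition
\[
KK_\gamma(Z_{iii}) \xrightarrow{\beta} KK^0_{1,\gamma}(Z_{iii}) \xrightarrow{\mathrm{fc}} KK^0_{\kappa,\gamma}(Z_{iii}) \xrightarrow{\mathrm{fc}} KK^0_{\lambda,\epsilon'/20}(X) \xrightarrow{\psi_*} KK^0_{1,\epsilon'/5}(X) \xrightarrow{\alpha} KK_{\epsilon'}(X)
\]
equals the forget-control $KK_\gamma(Z_{iii},SB) \to KK_{\epsilon'}(X,SB)$. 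This uses the second diagram of Lemma \ref{kappa to epsilon 0} (with $\kappa \leftarrow \lambda$, $\epsilon \leftarrow \epsilon'/5$) to simplify $\psi_* \circ \mathrm{fc}_\lambda$ to a forget-control in $KK^0_{1,\cdot}$, then $\alpha\circ\beta=\mathrm{fc}$ in $KK_\cdot$ (Lemma \ref{a3 lem}), and naturality of $\beta$ and $\alpha$ to telescope the remaining forget-controls. The middle arrow vanishes by (iii), so $KK_\gamma(Z_{iii},SB) \to KK_{\epsilon'}(X,SB)$ is zero. Finally, I would apply Lemma \ref{amen conse} with $\epsilon = \epsilon'$ and $(Y,\delta)=(Z_{iii},\gamma)$ to produce $Z \supseteq X$ and a factoring $\phi_*\colon KK_{\epsilon'/8}(Z,SB) \to KK_\gamma(Z_{iii},SB)$ of the forget-control, yielding the required vanishing $KK_{\epsilon'/8}(Z,SB) \to KK_{\epsilon'}(X,SB) = 0$.

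The main obstacle will be the bookkeeping in (iii) $\Rightarrow$ (i): one must verify both that the $\mathcal{X}'_A$-approximation supplied by (iii) implies the weaker $\mathcal{X}_A$-approximation required by Lemma \ref{amen conse} (a routine check using $\kappa \geq 1$ and $\gamma \leq \epsilon'/20$), and that the six-term composition above really does evaluate to a single forget-control map despite the asymmetry between $\kappa$ and $\lambda$ and the interleaving of $\psi_*$, $\beta$, forget-control, and $\alpha$. Both verifications reduce to repeated application of naturality and the commutative triangles in Lemmas \ref{kappa to epsilon 0} (second diagram) and \ref{a3 lem}; no new estimates are needed beyond those already established.
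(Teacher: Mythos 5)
Your proof is correct and follows essentially the same path as the paper: a cycle (i)$\Rightarrow$(ii)$\Rightarrow$(iii)$\Rightarrow$(i) built from the interchange maps $\psi_*$ (Lemma \ref{kappa to epsilon 0}) and $\alpha,\beta$ (Lemma \ref{a3 lem}), plugged into Corollary \ref{uct reform 2}. The argument for (i)$\Rightarrow$(ii) is the same composition the paper constructs; (ii)$\Rightarrow$(iii) is routine in both. The one genuine deviation is in (iii)$\Rightarrow$(i): you aim at the stronger condition (a vanish~2) of Corollary \ref{uct reform 2}, which forces an extra invocation of Lemma \ref{amen conse} at the end to pass from the $(Y,\delta)$-vanishing you derive to the specific $\epsilon/8$ form. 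The paper instead targets condition (a vanish) of Theorem \ref{uct reform}, which only requires \emph{some} $(Y,\delta)\geq(X,\epsilon)$ with vanishing forget-control, so the chain of diagrams already lands on the goal and Lemma \ref{amen conse} is never needed in this implication. Your version is not wrong---the extra step is exactly a re-derivation of the strengthening already baked into Corollary \ref{uct reform 2}---but it is a slightly longer route to the same place. One small technical point you should flag: Lemma \ref{kappa to epsilon 0} is stated only for \emph{self-adjoint} $X$, since $\psi_*$ passes through associated projections; the paper therefore replaces the set it feeds to $\psi_*$ by its self-adjoint saturation ($Z\leftarrow Z\cup Z^*$, respectively $Y\leftarrow Y\cup Y^*$) before applying $\psi_*$, and you would need to do the same in both (i)$\Rightarrow$(ii) and (iii)$\Rightarrow$(i).
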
 

\begin{proof}
In the following proof, all unlabeled arrows are forget control maps as in Definition \ref{dir set}, or Definition \ref{dir set 2}.  Assume first that condition \eqref{a uct 3} from the statement holds, and let $\kappa\geq 1$ and $\epsilon>0$; we may assume moreover that $\epsilon<1$.  Let a finite subset $X$ be given as in condition \eqref{a vanish 3}.  Then by the equivalence from Corollary \ref{uct reform 2}, there is a finite subset $Z$ of $A_1$ such that the forget control map
$$
KK_{\epsilon/8}(Z,SB)\to KK_{\epsilon}(X,SB)
$$
is zero.  Replacing $Z$ by $Z\cup Z^*$ if necessary, we may assume that $Z$ is self-adjoint.   Lemma \ref{a3 lem} gives a commutative diagram
$$
\xymatrix{ KK_{\epsilon/8}(Z,SB)  \ar[r]^-0  & KK_{\epsilon}(X,SB)\ar[d]^-\beta  \\
KK^0_{1,\epsilon/40}(Z,SB) \ar[r] \ar[u]^-\alpha & KK^0_{1,\epsilon}(X,SB) },
$$
whence the bottom horizontal map is zero.  On the other hand, Lemma \ref{kappa to epsilon 0} (see in particular line \eqref{kappa to epsilon 0 2}) gives a map $\psi_*$ such that the bottom triangle in the diagram below 
$$
\xymatrix{ KK^0_{1,\epsilon/40}(Z,SB) \ar[r]^-0 \ar[dr] & KK^0_{1,\epsilon}(X,SB) \ar[d] \\
KK^0_{\kappa,\epsilon/160}(Z,SB) \ar[u]^-{\psi_*} \ar[r] & KK_{\kappa,\epsilon}(X,SB)} 
$$
commutes.  The top triangle also commutes as all the maps involved are forget control maps, whence the bottom horizontal map is zero.  This gives us condition \eqref{a vanish 3} from the statement.  

Condition \eqref{a vanish 3} clearly implies condition \eqref{weak a vanish}, so it remains to show that condition \eqref{weak a vanish} implies condition \eqref{a uct 3}.  For this, it suffices to establish condition \eqref{a vanish} from Theorem \ref{uct reform}, so let $\gamma>0$ and a finite subset $X$ of $A_1$ be given.  Then according to condition \eqref{weak a vanish} there are $\nu\geq \kappa\geq 1$, $\epsilon>0$ and a finite subset $Z$ of $A_1$ such that the forget control map 
$$
KK^0_{\kappa,\epsilon}(Z,SB)\to KK^0_{\nu,\gamma/20}(X,SB)
$$ 
is defined and zero. Replacing $Z$ with $Z\cup Z^*$ if necessary, we may assume $Z$ is self-adjoint.  Using Lemma \ref{kappa to epsilon 0} (see in particular line \eqref{kappa to epsilon 0 1}) there is a map $\psi_*$ such that the top right triangle in the diagram below comutes
$$
\xymatrix{ KK^0_{1,\epsilon}(Z,SB) \ar[r] \ar[d] & KK_{1,\gamma/20}(X,SB)  \ar[d] \ar[dr] \ar[r] & KK_{1,\gamma/5}(X,SB) \\
KK^0_{\kappa,\epsilon}(Z,SB) \ar[r]_-0 & KK_{\nu,\gamma/20}(X,SB) \ar[r] &  KK_{\nu,\gamma/20}^0(X,SB) \ar[u]_-{\psi_*} }.
$$
The rest of the diagram also commutes, as all the arrows are forget control maps, whence the composition 
$$
\xymatrix{ KK^0_{1,\epsilon}(Z,SB) \ar[r] & KK_{1,\gamma/20}(X,SB)  \ar[r] & KK_{1,\gamma/5}(X,SB)}
$$
of the two top horizontal maps is zero.  Using Lemma \ref{a3 lem}, there is a commutative diagram
$$
\xymatrix{ KK_{\epsilon}(Z,SB) \ar[r] \ar[d]_-\beta  & KK_{\gamma}(X,SB) \\
KK^0_{1,\epsilon}(Z,SB)  \ar[r]_-0 & KK_{1,\gamma/5}(X,SB) \ar[u]_-\alpha }.
$$
The top horizontal map is therefore zero; this is the conclusion we need for Theorem \ref{uct reform}, condition \eqref{a vanish} so we are done.
\end{proof}

\subsection{The odd case}

For the statement of the next lemma, consider the Hilbert module $\ell^2\otimes SB$ associated to the suspension $SB=C_0((0,1),B)$ of a separable $C^*$-algebra $B$.   Let $C_{sb}(X,M(C))$ denote the $C^*$-algebra of bounded and strictly continuous functions from a locally compact space 
$X$ to the multiplier algebra $M(C)$ of a $C^*$-algebra $C$.  For any $C^*$-algebra $C$ there are canonical identifications $\mathcal{L}_C=M(C\otimes \K)$ (see for example \cite[Theorem 2.4]{Lance:1995ys}) and $M(C_0(X,C))=C_{sb}(X,M(C))$ (see for example \cite[Corollary 3.4]{Akemann:1973aa}).  Hence there is a canonical identification 
\begin{equation}\label{lsb}
\mathcal{L}_{SB}=C_{sb}((0,1),\LB).
\end{equation}
We identify $\LB=\mathcal{L}(\ell^2\otimes B)$ with a $C^*$-subalgebra of $\LSB=\mathcal{L}(\ell^2\otimes B\otimes C_0(0,1))$ via the $*$-homomorphism $a\mapsto a\otimes 1_{C_0(0,1)}$.  We recall also that $\K_B^+$ denotes the unitization of $\K_B$.

\begin{lemma}\label{susp b lem}
Let $B$ be a separable $C^*$-algebra.  Let $\kappa\geq 1$, $\epsilon>0$, and let $X$ be a subset of the unit ball of $\LB$.  Then:
\begin{enumerate}[(i)]
\item \label{proj susp} Elements of $\mathcal{P}_{n,\kappa,\epsilon}(X,SB)$ (see Definition \ref{alm com 0} identify canonically with continuous paths $(p_t,q_t)_{t\in [0,1]}$ of idempotents in $M_n(\K_B^+)\oplus M_n(\K_B^+)$ satisfying the following conditions:
\begin{enumerate}[(a)]
\item for all $t\in [0,1]$, $\|p_t\|\leq \kappa$ and $\|q_t\|\leq \kappa$;
\item for all $t\in [0,1]$ and all $x\in X$, $\|[p_t,x]\|<\epsilon$ and $\|[q_t,x]\|<\epsilon$,
\item there are $p,q\in M_n(\C)$ such that $p_0=p_1=p$, $q_0=q_1=q$ and such that if $\sigma:M_n(\K_B^+)\to M_n(\C)$ is the canonical quotient map then $\sigma(p_t)=p$ and $\sigma(q_t)=q$ for all $t\in [0,1]$.
\end{enumerate}
Moreover, the element $(p,q)$ is in the subset $\mathcal{P}_{n,\kappa,\epsilon}^1(X,SB)$ of Definition \ref{nize proj} if and only if $p$ and $q$ are equal to $1_l$ for some $l\in \N$.
\item \label{uni susp} Elements of $\mathcal{U}_{n,\kappa,\epsilon}(X,SB)$ (see Definition \ref{alm com 1}) identify with continuous paths $(u_t)_{t\in [0,1]}$ of invertibles in $M_n(\K_B^+)$ satisfying the following conditions:
\begin{enumerate}[(a)]
\item for all $t\in [0,1]$, $\|u_t\|\leq \kappa$ and $\|u_t^{-1}\|\leq \kappa$;
\item for all $t\in [0,1]$ and all $x\in X$, $\|[u_t,x]\|<\epsilon$ and $\|[u_t^{-1},x]\|<\epsilon$;
\item there is $u\in M_n(\C)$ such that $u_0=u_1=u$ and such that if $\sigma:M_n(\K_B^+)\to M_n(\C)$ is the canonical quotient map then $\sigma(u_t)=u$ for all $t\in [0,1]$.
\end{enumerate}
Moreover, the element is in the subset $\mathcal{U}_{n,\kappa,\epsilon}^1(X,SB)$ of Definition \ref{nize uni} if and only if $u$ is the identity.
\end{enumerate}
\end{lemma}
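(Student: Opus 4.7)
The plan is to unpack the identification of suspension algebras with function algebras and show that all defining conditions translate pointwise in a transparent way; there is no genuine obstacle, but the bookkeeping around the unitization needs care.

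First I would establish the underlying algebraic identification. Using the tensor-product structure of Hilbert modules, $\ell^2\otimes SB\cong (\ell^2\otimes B)\otimes C_0(0,1)$, so
\[
\mathcal{K}_{SB}=\mathcal{K}(\ell^2\otimes SB)\cong \mathcal{K}_B\otimes C_0(0,1)\cong C_0\bigl((0,1),\mathcal{K}_B\bigr),
\]
whence $M_n(\mathcal{K}_{SB})\cong C_0((0,1),M_n(\mathcal{K}_B))$. Passing to unitizations (as vector spaces) $M_n(\mathcal{K}_{SB}^+)=M_n(\mathcal{K}_{SB})\oplus M_n(\C)$, so every $P\in M_n(\mathcal{K}_{SB}^+)$ decomposes uniquely as $P=K+\Lambda$ with $K\in C_0((0,1),M_n(\mathcal{K}_B))$ and $\Lambda\in M_n(\C)$. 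Setting $p_t:=K(t)+\Lambda$ on $(0,1)$ and $p_0=p_1=\Lambda$ produces a continuous map $[0,1]\to M_n(\mathcal{K}_B^+)$ with $\sigma(p_t)=\Lambda$ constant and with $p_0=p_1$ a scalar matrix; conversely any such path arises from a unique $P$.

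Next I would translate the defining conditions. Using the inclusion $M_n(\mathcal{K}_{SB}^+)\hookrightarrow M_n(\mathcal{L}_{SB})=C_{sb}((0,1),M_n(\mathcal{L}_B))$ of line \eqref{lsb}, the $C^*$-norm satisfies $\|P\|=\sup_t\|p_t\|$. Because multiplication is pointwise in this picture, $P^2=P$ is equivalent to $p_t^2=p_t$ for every $t$, so idempotency is preserved. For $x\in X\subseteq \mathcal{L}_B$ embedded as $x\otimes 1\in \mathcal{L}_{SB}$, one has $[P,x]_t=[p_t,x]$, hence $\|[P,x]\|=\sup_t\|[p_t,x]\|$; this converts condition (ii) of Definition \ref{alm com 0} to condition (b) of the lemma. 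The identification $\sigma(P)=\Lambda$ shows condition (iii) of Definition \ref{alm com 0} corresponds to the $K_0$-constraint on the common scalar part, which is built into condition (c).

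For the \emph{moreover} clause of part \eqref{proj susp}, observe that $(P,Q)-(1_l,1_l)\in M_n(\mathcal{K}_{SB})\oplus M_n(\mathcal{K}_{SB})$ holds exactly when the scalar parts of $P$ and $Q$ both equal $1_l$, which under the above correspondence is precisely the requirement $p_0=p_1=q_0=q_1=1_l$. Part \eqref{uni susp} proceeds by an entirely parallel argument, with the only additional point being the handling of invertibility: if $U\in M_n(\mathcal{K}_{SB}^+)$ is invertible with $\|U^{-1}\|\le\kappa$, then writing $U^{-1}=L+\Omega$ analogously and evaluating $UU^{-1}=1=U^{-1}U$ pointwise gives $(U^{-1})_t=(u_t)^{-1}$, with the uniform bound on the inverse immediate from the sup-norm identification. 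Conversely, a continuous path of invertibles with a uniform bound on their inverses assembles into a strictly continuous element of $C_{sb}((0,1),M_n(\mathcal{L}_B))$ whose difference from the scalar part lies in $C_0((0,1),M_n(\mathcal{K}_B))$, giving an element of $M_n(\mathcal{K}_{SB}^+)$ whose inverse is obtained the same way. The final statement about $\mathcal{U}^1$ follows by the same scalar-part check as above.
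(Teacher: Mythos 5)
Your proof is correct and fleshes out, with full detail, the canonical identification $\K_{SB}^+\cong\{f\in C([0,1],\K_B^+)\mid\sigma(f(t))=f(0)=f(1)\}$ that the paper states and then leaves to the reader. The only wrinkle is that condition (iii) of Definition \ref{alm com 0} translates under this identification to the additional requirement $[p]=[q]$ in $K_0(\C)$, which is not literally part of condition (c) in the lemma as stated --- a small imprecision in the lemma itself that your phrase ``built into condition (c)'' echoes rather than flags.
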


\begin{proof}
We have a canonical identification 
$$
\K_{SB}^+=\{f\in C([0,1],\K_B^+)\mid \sigma(f(t))=f(0)=f(1) \text{ for all }t\in [0,1]\}.
$$
Part \eqref{proj susp} follows directly by comparing this with Definitions \ref{alm com 0} and \ref{nize proj}; similarly, part \eqref{uni susp} follows from comparing this with Definitions \ref{alm com 1} and \ref{nize uni}.  We leave the details to the reader.
\end{proof}

\begin{lemma}\label{uct reform odd case}
For any $\kappa\geq 1$ there exists a positive constant $M_1$ with the following property.  Let $\epsilon>0$, let $A$ be a separable, unital, nuclear $C^*$-algebra that satisfies the UCT, and let $B$ be a separable $C^*$-algebra with $K_*(B)=0$.  Let $\pi:A\to \LSB$ be a strongly unitally absorbing representation that factors through the subalgebra $\mathcal{B}(\ell^2)$ (such exists by Lemma \ref{scalar sa exist}), and use this to identify $A$ with a $C^*$-subalgebra of $\LSB$.

Then for any finite subset $X$ of $A_1$ there exists a finite subset $Z$ of $A_1$ such that the forget control map 
$$
KK_{\kappa,\epsilon}^1(Z,SB) \to KK_{M_1,M_1\epsilon}^1(X,SB)
$$
of Definition \ref{dir set 3} is defined and zero.  
\end{lemma}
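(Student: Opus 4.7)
The plan is to reduce the odd case to the even vanishing of Corollary \ref{vanish cor 2} via a controlled version of Bott periodicity. Classical Bott periodicity gives $K_*(SB) = K_{*+1}(B) = 0$, so Corollary \ref{vanish cor 2} applies with $SB$ in place of $B$; since $S(SB) = S^2 B$, it yields constants $\kappa_0 \geq 1$ and $\lambda_0 \geq \kappa_0$ (depending only on the chosen $\kappa$) such that for every $\eta > 0$ and every finite $X' \subseteq A_1$ there exist $\gamma > 0$ and a finite $Z' \subseteq A_1$ making the forget control map
$$
KK^0_{\kappa_0, \gamma}(Z', S^2 B) \to KK^0_{\lambda_0, \eta}(X', S^2 B)
$$
vanish. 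A scalar strongly unitally absorbing representation of $A$ into $\mathcal{L}_{SB}$ extends canonically to one into $\mathcal{L}_{S^2 B}$, so the hypotheses on $\pi$ are preserved.

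Next I would build a controlled Bott map $\beta\colon KK^1_{\kappa, \epsilon}(X, SB) \to KK^0_{\kappa_1, C\epsilon}(X, S^2 B)$, with $\kappa_1, C$ depending only on $\kappa$. First normalize an arbitrary cycle $u \in \mathcal{U}_{n, \kappa, \epsilon}(X, SB)$ using Proposition \ref{uni lem}(i), obtaining $\tilde u \in \mathcal{U}^1_{n, \kappa^2, \kappa\epsilon}(X, SB)$; by Lemma \ref{susp b lem}(ii) this corresponds to a continuous loop $(u_t)_{t \in [0,1]}$ of invertibles in $M_n(\K_B^+)$ with $u_0 = u_1 = 1$. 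Then apply the standard Bott formula: for each $t$, form the $s$-loop of idempotents obtained by conjugating $\begin{pmatrix} 1 & 0 \\ 0 & 0 \end{pmatrix}$ by a rotation-and-similarity expression built from $u_t, u_t^{-1}$, and $2 \times 2$ scalar rotation matrices. This gives a two-parameter family $p(s,t)$ of idempotents which, paired with the constant neutral idempotent, constitutes a cycle in $\mathcal{P}_{2n, \kappa_1, C \epsilon}(X, S^2 B)$ via Lemma \ref{susp b lem}(i) applied with coefficient algebra $SB$. Norm and commutator estimates follow from Lemma \ref{com prod} applied to the explicit product formula. The assignment $\beta$ respects homotopies and block sums, so descends to a homomorphism of controlled $KK$-groups compatible with forget control maps.

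To conclude, given $\kappa$, $\epsilon$, and $X$, apply Step 1 with $\eta$ and $X'$ chosen to match the output of $\beta$; this yields a finite $Z$ such that $\beta$ of any cycle over $Z$ is trivialized in $KK^0_{\lambda_0, \eta}(X, S^2 B)$. The remaining step, which I expect to be the main obstacle, is the descent: transferring vanishing of $\beta(u)$ into vanishing of $u$ itself in $KK^1_{M_1, M_1 \epsilon}(X, SB)$. Classically this is the inverse direction of Bott periodicity; the cleanest controlled approach is to construct a partial inverse $\alpha\colon KK^0_{\kappa', \epsilon'}(X, S^2 B) \to KK^1_{\kappa'', \epsilon''}(X, SB)$ such that $\alpha \circ \beta$ agrees with the forget control map on normalized $\mathcal{U}^1$-cycles. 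A natural candidate extracts an invertible loop from a two-parameter idempotent family via its monodromy, tracking how the image of a fixed vector rotates as one traverses the boundary. Verifying that all norm and commutator estimates survive the full chain of normalizations (Propositions \ref{uni lem} and \ref{proj same image}), the homotopy-to-similarity passages (Proposition \ref{hom to sim}), and the Bott/inverse-Bott composition is delicate and constitutes the bulk of the technical work. Once $\alpha$ is in place with explicit constants, combining it with $\beta$ and Step 1 yields $M_1$ depending only on $\kappa$, as required.
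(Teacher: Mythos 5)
Your plan is on the right track conceptually — reduce to even vanishing via Bott — but it contains a genuine gap exactly where you flag one, and the paper closes that gap by a different and considerably lighter route than the one you sketch.

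The gap: you propose to construct a standalone controlled ``inverse Bott'' homomorphism
$$
\alpha\colon KK^0_{\kappa',\epsilon'}(X,S^2B)\longrightarrow KK^1_{\kappa'',\epsilon''}(X,SB)
$$
with explicit constants, and to verify $\alpha\circ\beta\approx\text{forget}$. You acknowledge this is ``the main obstacle'' and ``constitutes the bulk of the technical work,'' but give only a vague monodromy description with no estimates. That is the whole lemma: without a concrete $\alpha$ (or a substitute), nothing has been proved. Building a controlled $\alpha$ as a \emph{map} would require, among other things, controlled versions of spectral-flow or winding-number arguments with all the $\kappa$- and $\epsilon$-bookkeeping that the rest of Sections~\ref{tech sec}--\ref{fd sec} of the paper is devoted to, applied at a new level of suspension. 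Nothing in the paper supplies this.

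What the paper does instead avoids $\alpha$ entirely, and avoids $S^2B$. It applies the even vanishing (Corollary~\ref{vanish cor 2}) one level of suspension lower than your plan: from an odd cycle it builds a \emph{loop of idempotents over the base coefficient}, i.e.\ an even cycle over the \emph{single} suspension (via the Bott rotation path $v_t$ in line~\eqref{v path} together with the identification of Lemma~\ref{susp b lem}), and then applies the even vanishing to kill that class. The crucial step is then not a general inverse map but an \emph{extraction}: once the even class dies, Lemma~\ref{kk0 zero} produces an explicit homotopy witnessing this, and Proposition~\ref{hom to sim} (homotopy $\Rightarrow$ similarity, with controlled constants) converts it into a concrete invertible $\underline w$ conjugating the Bott idempotent loop to the constant one. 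Because the Bott path has an explicit algebraic form, one can read off the endpoints of $\underline w\cdot(\text{Bott path})$ and directly obtain a controlled homotopy from the identity to (a stabilization of) the original invertible $u$; this trivializes $[u]$ in the odd group with constants $M_1$ depending only on $\kappa$. In short: the paper only needs the ``inverse Bott'' in the special case where the even image is zero, and there it is supplied for free by the controlled homotopy-to-similarity machinery already developed, rather than by a new general-purpose map. If you want to salvage your plan, the cleanest fix is to abandon the standalone $\alpha$, work at the $SB$-level of suspension rather than $S^2B$, and replace the descent step with the Lemma~\ref{kk0 zero} $+$ Proposition~\ref{hom to sim} extraction.
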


\begin{proof}
We claim $M_1=2^{(200\kappa^8)^3}\cdot 320\kappa^7$ works.  Using Corollary \ref{vanish cor 2} there is a finite subset $Z$ of $A_1$ such that the forget control map
\begin{equation}\label{0 forget}
KK^0_{\kappa^8,2\kappa^6\epsilon}(Z,SB)\to KK^0_{\kappa^8,320\kappa^6\epsilon}(X,SB)
\end{equation}
of Definition \ref{dir set 2} is zero.  We claim this set $Z$ works.  

Let $u$ be an arbitrary element of $\mathcal{U}_{n,\kappa,\epsilon}(Z,B)$.  Using Proposition \ref{uni lem} part \eqref{same nu}, and with notation as in Definition \ref{nize uni}, there is an element $v$ of $\mathcal{U}^1_{n,\kappa^2,\kappa\epsilon}(Z,B)$ in the same path component of $\mathcal{U}_{n,\kappa^2,\kappa\epsilon}(Z,B)$ as $u$.  Define now a path $(v_t)_{t\in [0,1]}$ by 
\begin{equation}\label{v path}
v_t:=\begin{pmatrix} \cos(\pi t/2) & -\sin(\pi t/2) \\ \sin(\pi t/2) & \cos(\pi t/2) \end{pmatrix} \begin{pmatrix} 1 & 0 \\ 0 & v \end{pmatrix} \begin{pmatrix} \cos(\pi t/2) & \sin(\pi t/2) \\ -\sin(\pi t/2) & \cos(\pi t/2) \end{pmatrix}\begin{pmatrix} v^{-1} & 0 \\ 0 & 1 \end{pmatrix}.
\end{equation}
Note that each $v_t$ is an element of $\mathcal{U}^1_{2n,\kappa^4,2\kappa^3\epsilon}(Z,B)$.  Define  
$$
p_t:=v_t\begin{pmatrix} 1 & 0 \\ 0 & 0 \end{pmatrix} v_t^{-1}.
$$
Write $\underline{p}$ for the path $(p_t)$, and note that according to Lemma \ref{susp b lem} part \eqref{proj susp}, we may identify the pair $(\underline{p},1_n\oplus 0_n)$ with (using the notation of Definition \ref{nize proj}) an element of $\mathcal{P}_{2n,\kappa^8,2\kappa^7\epsilon}^1(Z,SB)$, and therefore also a class $[\underline{p},1_n\oplus 0_n]\in KK^0_{\kappa^8,2\kappa^7\epsilon}(Z,SB)$.  By assumption, the forget control map in line \eqref{0 forget} is zero, and therefore the image of $[\underline{p},1_n\oplus 0_n]$ in $KK^0_{\kappa^8,320\kappa^7\epsilon}(X,SB)$ is zero.  For notational simplicity, at this point let us define $\epsilon_1:=320\kappa^7\epsilon$.  

Now, Lemma \ref{kk0 zero} gives $m\in \N$ and $(s,s)\in \mathcal{P}_{2(n+m),2\kappa^8,\epsilon_1}(X,SB)$ such that $(\underline{p}\oplus 1_m\oplus 0_m,1_n\oplus 0_n\oplus 1_m\oplus 0_m)$ and $(s,s)$ are in the same path component of the set $\mathcal{P}_{2(n+m),2\kappa^8,\epsilon_1}(X,SB)$.   Let $x$ be a unitary matrix in $M_{2(n+m)}(\C)$ such that $x(1_n\oplus 0_n\oplus 1_m\oplus 0_m)x^*=1_{n+m}\oplus 0_{n+m}$.  As $x$ is connected to the identity through unitaries, the element $(x(\underline{p}\oplus 1_m\oplus 0_m)x^*,1_{n+m}\oplus 0_{n+m})$ is also homotopic to $(s,s)$ in $\mathcal{P}_{2(n+m),2\kappa^8,\epsilon_1}(X,SB)$; moreover (with notation as in Definition \ref{nize proj}), it is in $\mathcal{P}_{2(n+m),2\kappa^8,\epsilon_1}^1(X,SB)$.  We may now apply Proposition \ref{hom to sim} to see that if $M=2^{(200\kappa^8)^3}$ then there is $k\in \N$ and an element $\underline{w}$ of $\mathcal{U}^1_{2(n+m+k),M,M\epsilon_1}(X,SB)$ such that 
$$
\underline{w}(x(\underline{p}\oplus 1_m\oplus 0_m)x^*\oplus 1_k\oplus 0_k)\underline{w}^{-1}=1_{n+m}\oplus 0_{n+m}\oplus 1_k\oplus 0_k.
$$
Write $\underline{v}$ for the path defined in line \eqref{v path} above, which naturally defines an element of $\mathcal{L}_{SB}$ using the identification in line \eqref{lsb}.  Then if we define 
$$
\underline{y}:=\underline{w}(x\oplus 1_{2k})(\underline{v}\oplus 1_{2(m+k)})\in \mathcal{L}_{SB},
$$
we have 
$$
\underline{y}(1_n\oplus 0_n\oplus 1_m\oplus 0_m\oplus 1_k\oplus 0_k)\underline{y}^{-1}=1_n\oplus 0_n\oplus 1_m\oplus 0_m\oplus 1_k\oplus 0_k.
$$
In other words, the element $\underline{y}$ commutes with $1_n\oplus 0_n\oplus 1_m\oplus 0_m\oplus 1_k\oplus 0_k$.  Define 
$$
\underline{z}:=(1_n\oplus 0_n\oplus 1_m\oplus 0_m\oplus 1_k\oplus 0_k)\underline{y}(1_n\oplus 0_n\oplus 1_m\oplus 0_m\oplus 1_k\oplus 0_k).
$$
Using Lemma \ref{susp b lem} part \eqref{uni susp}, we may think of $\underline{z}$ as a path $(z_t)_{t\in [0,1]}$ in $\mathcal{U}_{n+m+k,M,M\epsilon_1}(X,B)$.  Now, write $\underline{w}$ as a path $(w_t)_{t\in [0,1]}$, and note that as $\underline{w}$ is in $\mathcal{U}^1_{2(n+m+k),M,M\epsilon_1}(X,SB)$, then by Lemma \ref{susp b lem} part \eqref{uni susp}, $w_0=w_1=1_{2(n+m)}$.  Moreover, $v_0=1_{2n}$ by definition.  Hence $z_0=x\oplus 1_k$.  On the other hand $v_1=u\oplus u^{-1}\oplus 1_{2(m+k)}$ and so $z_1=(x\oplus 1_k)(u\oplus 1_{m+k})$.   Hence $(x\oplus 1_k)^*\underline{z}$ defines a homotopy in $\mathcal{U}_{n+m+k,M,M\epsilon_1}(X,B)$ between $1_{n+m+k}$ and $u\oplus 1_{m+k}$.  This implies $[u]$ maps to zero in $KK^1_{M,M\epsilon_1}(X,SB)$, which completes the proof.
\end{proof}

\section{A Mayer-Vietoris boundary map}\label{bound sec}

In this section (as throughout), if $B$ is a separable $C^*$-algebra, then $\LB$ and $\K_B$ denote respectively the adjointable and compact operators on the standard Hilbert $B$-module $\ell^2\otimes B$.  For each $n$, we consider $\LB$ as a subalgebra of $M_n(\LB)$ via the ``diagonal inclusion'' $\LB=1_{M_n}\otimes \LB\subseteq M_n\otimes \LB=M_n(\LB)$. 

Our goal in this section is to construct and analyse a``Mayer-Vietoris boundary map'' in controlled $KK$-theory.  The main results of the section prove the existence of this boundary map (Proposition \ref{boundary map 2}) and show it has an exactness property (Proposition \ref{bound exact}).  These results are the technical heart of the paper.

\subsection{Existence}

Here is the construction of the boundary map.

\begin{proposition}\label{boundary map 2}
Define an increasing function $N_0:[1,\infty)\to[0,\infty)$ by the formula $N_0(\kappa)=2^{27}\kappa^{24}$.  This function has the following properties.

Let $\kappa\geq 1$, let $N_0=N_0(\kappa)$, let $\epsilon> 0$, let $B$ be a separable $C^*$-algebra, and let $X$ be a subset of the unit ball of $\LB$.    Let $h\in \LB$ be a positive contraction such that $\|[h,x]\|< \epsilon$ for all $x\in X$.  Then there is a homomorphism 
$$
\partial: KK_{\kappa,\epsilon}^1(h(1-h)X\cup\{h\},B)\to KK^0_{N_0,N_0\epsilon}(X\cup\{h\},B)
$$
defined by applying the following process to a class from $KK_{\kappa,\epsilon}^1(h(1-h)X\cup\{h\},B)$: 
\begin{enumerate}[(i)]
\item Choose a representative $w\in \mathcal{U}_{n,\kappa,\epsilon}(h(1-h)X\cup\{h\},B)$ for the class, and use Proposition \ref{uni lem} part \eqref{same nu} to find an element  $u\in \mathcal{U}^1_{n,\kappa^2,\kappa\epsilon}(h(1-h)X\cup\{h\},B)$ that is in the same path component as $w$ in $\mathcal{U}_{n,\kappa^2,\kappa\epsilon}(h(1-h)X\cup\{h\},B)$.
\item Define 
\begin{equation}\label{c and d}
c=c(u,h):=hu+(1-h),\quad d=d(u,h):=hu^{-1}+(1-h)
\end{equation}
in $M_n(\LB)$, and 
\begin{equation}\label{v def}
v=v(u,h):=\begin{pmatrix} 1 & c \\  0 & 1 \end{pmatrix}\begin{pmatrix} 1 & 0 \\  -d & 1 \end{pmatrix}\begin{pmatrix} 1 & c \\  0 & 1 \end{pmatrix}\begin{pmatrix} 0 & 1 \\  -1 & 0 \end{pmatrix}\in M_{2n}(\LB).
\end{equation}
\item Define  
$$
\partial[w]:= \Bigg[ v\begin{pmatrix} 1 & 0 \\ 0 & 0 \end{pmatrix} v^{-1}\,,\,\begin{pmatrix} 1 & 0 \\ 0 & 0 \end{pmatrix}\Bigg].
$$
\end{enumerate}
Moreover, the boundary map is ``natural with respect to forget control maps": precisely, if for some $\kappa\leq \lambda$ and $\epsilon\leq \delta$, the boundary maps 
$$
\partial: KK_{\kappa,\epsilon}^1(h(1-h)X\cup\{h\},B)\to KK^0_{N_0(\kappa),N_0(\kappa)\epsilon}(X\cup\{h\},B)
$$
and 
$$
\partial: KK_{\lambda,\delta}^1(h(1-h)X\cup\{h\},B)\to KK^0_{N_0(\lambda),N_0(\lambda)\delta}(X\cup\{h\},B)
$$
both exist, then the diagram 
$$
\xymatrix{ KK_{\kappa,\epsilon}^1(h(1-h)X\cup\{h\},B)\ar[r]^-\partial \ar[d] &  KK^0_{N_0(\kappa),N_0(\kappa)\epsilon}(X\cup\{h\},B) \ar[d] \\
KK_{\lambda,\delta}^1(h(1-h)X\cup\{h\},B) \ar[r]^-\partial &  KK^0_{N_0(\lambda),N_0(\lambda)\delta}(X\cup\{h\},B) }
$$
(with vertical maps the forget control maps of Definitions \ref{dir set 2} and \ref{dir set 3}) commutes.
\end{proposition}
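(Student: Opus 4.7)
My plan is to verify the construction in three logical stages: first derive an explicit form for $e = v(1\oplus 0)v^{-1}$; second check that the pair $(e,1\oplus 0)$ is a valid cycle in $\mathcal{P}_{2n,N_0,N_0\epsilon}(X\cup\{h\},B)$ for a suitable polynomial $N_0(\kappa)$; third verify well-definedness on classes, additivity, and naturality.

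For the first stage, each factor defining $v$ is a unit-determinant elementary matrix, so $v^{-1}$ can be written down directly, and multiplying out yields the standard ``index map'' idempotent
$$
e \;=\; \begin{pmatrix} 1-(1-cd)^2 & c(2-dc)(1-dc) \\ (1-dc)d & (1-dc)^2 \end{pmatrix}.
$$
For the second stage, idempotency is automatic, and the bounds $\|c\|,\|d\|\leq 1+\kappa^2$ yield a polynomial bound on $\|e\|$. Each entry of $e - (1\oplus 0)$ contains a factor of $1-cd$ or $1-dc$, so $e - (1\oplus 0) \in M_{2n}(\K_B)$ follows from the compactness of $c-1 = h(u-1)$ and $d-1 = h(u^{-1}-1)$; this also equalises the scalar parts. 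I expect the main obstacle to be the commutator estimate $\|[e,x]\|$ for $x\in X$, since $u$ is only assumed to commute approximately with $h(1-h)X\cup\{h\}$, not with $X$ itself. By Lemma \ref{com prod} applied entrywise, it suffices to control $\|[c,x]\|$, $\|[d,x]\|$, $\|[1-cd,x]\|$, and $\|[1-dc,x]\|$; the case $x = h$ is easy because $[c,h] = h[u,h]$ and $[d,h] = h[u^{-1},h]$.

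The key algebraic identity, which I verify by direct computation (the non-obvious step being $h(1-h)(u-1) - h[u,h] = h(u-1)(1-h)$), is
$$
1-cd \;=\; h(u-1)(1-h)(u^{-1}-1),
$$
and exposes the factors $h$ and $(1-h)$ bracketing the ``noncommuting pieces.'' Combining this with the identity
$$
h(1-h)[u,x] \;=\; [u,h(1-h)x] + \bigl(h[u,h] - [u,h](1-h)\bigr)x,
$$
which yields $\|h(1-h)[u,x]\| = O(\kappa\epsilon)$, together with a Jacobi-type bound on $\|h[u,x](1-h) - h(1-h)[u,x]\| = \|h[[u,x],h]\|$ of order $\kappa^2\epsilon$, the Leibniz expansion of $[1-cd,x]$ gives a polynomial-in-$\kappa$ estimate of order $\epsilon$. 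The argument for $[1-dc,x]$ is symmetric (swap $u$ and $u^{-1}$), and $[c,x]$, $[d,x]$ are controlled directly by the same tools.

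For the third stage, independence of the choice of lift $u$ follows from Proposition \ref{uni lem} part \eqref{hom nu}: any two choices lie in the same path component of $\mathcal{U}^1_{n,\kappa^4,\kappa^3\epsilon}(h(1-h)X\cup\{h\},B)$, and any such path yields a homotopy of the associated idempotents in $\mathcal{P}_{2n,N_0,N_0\epsilon}$ after enlarging $N_0$ if necessary. Stabilisation $u \mapsto u\oplus 1_k$ produces $c \mapsto c\oplus 1_k$, $d \mapsto d\oplus 1_k$, and, after the obvious block rearrangement, $e \mapsto e\oplus(1_k\oplus 0_k)$, which is equivalent to $e$ in $KK^0$. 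Additivity $\partial[w_1\oplus w_2] = \partial[w_1]+\partial[w_2]$ and naturality with respect to forget control maps are immediate from the block-diagonal structure and the fact that the same $u$ represents classes at both the $(\kappa,\epsilon)$ and $(\lambda,\delta)$ levels, producing the same idempotent $e$.
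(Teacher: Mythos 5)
Your proposal is correct and follows the same overall architecture as the paper's proof (establish a usable formula for the idempotent, verify cycle membership, then handle well-definedness, additivity, and naturality). The main genuine difference is in how the compactness and commutator estimates for $1-cd$ are obtained. The paper's Lemma~\ref{ab lem} proves only the \emph{approximate} identity that $cd-1$ and $dc-1$ lie within $(\kappa+1)\epsilon$ of $h(1-h)(u+u^{-1}-2)$, and then combines this with norm bounds on the remaining matrix factors. You instead find the \emph{exact} factorization
$$
1-cd \;=\; h(u-1)(1-h)(u^{-1}-1),\qquad 1-dc \;=\; h(u^{-1}-1)(1-h)(u-1),
$$
which I have verified by direct expansion. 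This is cleaner: it makes compactness of $1-cd$ manifest at a glance (since $u-1$, $u^{-1}-1\in M_n(\K_B)$), it shows the scalar part is zero without any quotient computation, and it reduces the commutator estimate cleanly to $\|h(1-h)[u,x]\|$ and $\|h[[u,x],h]\|$, both of which are controlled by Leibniz/Jacobi manipulations and the hypotheses that $u$ approximately commutes with $h(1-h)X\cup\{h\}$. The two approaches give the same polynomial-in-$\kappa$ order and the same final result; your route arrives there with less bookkeeping because the error term of the paper's approximation is replaced by exact algebra.

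Two minor points worth tightening. First, your treatment of stabilization ($u\mapsto u\oplus 1_k$) via ``the obvious block rearrangement'' is correct but somewhat vague; the paper's route — prove additivity first and then just check $\partial[1_k]=0$, which follows because $v(1_k,h)=-1_{2k}$ so the resulting idempotent is $1_k\oplus 0_k$ — is shorter and avoids tracking permutations. Second, when you invoke Proposition~\ref{uni lem}(\ref{hom nu}) for independence of the lift, you should note explicitly that this forces the target group to be taken at parameters like $(\kappa^4,\kappa^3\epsilon)$, and then combine this with the norm/commutator estimates from the cycle computation to fix a single polynomial $N_0(\kappa)$ that dominates all the cases (lift independence, homotopy within $\mathcal{U}_{n,2\kappa,\epsilon}$, and stabilization); this is the bookkeeping that produces a specific $N_0$ in the paper's proof. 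Neither of these is a genuine gap, but both deserve a sentence of care.
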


In order to make the proof more palatable, we split off some computations as lemmas.  The proofs of these lemmas are elementary, but the second one is quite lengthy.  We record them for the sake of completeness, but recommend the reader skips the proofs.

\begin{lemma}\label{ab lem}
Let $B$ be a separable $C^*$-algebra.  Let $u\in M_n(\LB)$ be an invertible element such that $1-u\in M_n(\K_B)$, and let $h\in \LB$ be a positive contraction.   Then the elements $c=c(u,h)$ and $d=d(u,h)$ from line \eqref{c and d} above have the following properties.
\begin{enumerate}[(i)]
\item\label{in id} The elements $cd-1$ and $dc-1$ are in $M_n(\K_B)$.  
\item\label{close norm} If $\kappa\geq 1$ and $\epsilon> 0$  are such that $\|u\|\leq \kappa$, $\|u^{-1}\|\leq \kappa$, $\|[h,u]\|< \epsilon$, and $\|[h,u^{-1}]\|< \epsilon$, then $cd-1$ and $dc-1$ are both closer than $(\kappa+1)\epsilon$ to $h(1-h)(u+u^{-1}-2)$.  
\end{enumerate}
\end{lemma}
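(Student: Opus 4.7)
The plan is a direct computation: expand $cd$ and $dc$, compare with $h(1-h)(u+u^{-1}-2)$, and extract both the ideal containment of part (i) and the norm estimate of part (ii) from the same expression.

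First I would sanity-check the exact-commutation case. If $h$ were to commute with $u$, then $cd = (hu+(1-h))(hu^{-1}+(1-h))$ collapses to $h^2 + h(1-h)(u+u^{-1}) + (1-h)^2$, and using the identity $h^2 + (1-h)^2 = 1 - 2h(1-h)$ one gets exactly $cd = 1 + h(1-h)(u+u^{-1}-2)$. This is the ``target'' expression, and it suggests that in the general case one should push all $h$'s to the left, collecting the resulting commutator errors.

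Carrying this out in the general case, I would write $huhu^{-1} = h^2 + h[u,h]u^{-1}$ and $hu(1-h) = h(1-h)u - h[u,h]$, observe $(1-h)hu^{-1} = h(1-h)u^{-1}$ since $h$ commutes with itself, and assemble the pieces. After cancellation the error terms telescope to
\[
cd - 1 = h(1-h)(u+u^{-1}-2) + h[u,h](u^{-1}-1),
\]
and by the symmetric computation with $u$ and $u^{-1}$ interchanged,
\[
dc - 1 = h(1-h)(u+u^{-1}-2) + h[u^{-1},h](u-1).
\]

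Part (i) then follows by an ideal argument: since $u \in M_n(\K_B^+)$ with $u - 1 \in M_n(\K_B)$, also $u^{-1}-1 = -u^{-1}(u-1) \in M_n(\K_B)$, hence $u+u^{-1}-2 \in M_n(\K_B)$; moreover $[u,h] = [u-1,h] \in M_n(\K_B)$ since $M_n(\K_B)$ is an ideal in $M_n(\LB)$. Both summands in the formulas above therefore lie in $M_n(\K_B)$. Part (ii) is immediate from the submultiplicativity of the norm applied to the error terms: $\|h[u,h](u^{-1}-1)\| \leq \|h\|\,\|[u,h]\|\,\|u^{-1}-1\| \leq 1\cdot\epsilon\cdot(\kappa+1)$, using $\|u^{-1}-1\| \leq \|u^{-1}\|+1 \leq \kappa+1$, and analogously for the $dc$ error. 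The main obstacle is purely bookkeeping: the expansion has eight nontrivial terms and it is easy to drop a sign or a factor of $h$ when reorganizing, so I would rewrite each commutator substitution explicitly before collecting, rather than trying to manipulate the whole expression at once.
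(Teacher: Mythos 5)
Your proof is correct and follows the same direct-expansion strategy as the paper; the only difference is bookkeeping. Your consolidated identity $cd-1 = h(1-h)(u+u^{-1}-2) + h[u,h](u^{-1}-1)$ is in fact cleaner than the paper's two-error-term display (in which the summand written as $[h,u](1-h)$ appears to be a misprint for $h[h,u]$, though this does not affect the $(\kappa+1)\epsilon$ bound), and both the ideal-membership argument for part (i) and the norm estimate $\|h[u,h](u^{-1}-1)\| < \epsilon(\kappa+1)$ for part (ii) are exactly right.
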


\begin{proof}
We just look at the case of $cd-1$ for both parts \eqref{in id} and \eqref{close norm}; the case of $dc-1$ is similar.  Note first that because $1-u$ is in $M_n(\K_B)$ and $M_n(\K_B)$ is an ideal in $M_n(\LB)$, we must have that $1-u^{-1}$ is in $M_n(\K_B)$ also.   We compute that
\begin{align}
cd-1 & =huhu^{-1}+(1-h)hu^{-1}+hu(1-h) -2h+h^2 \nonumber \\
& =h^2+hu[h,u^{-1}]+h(1-h)u^{-1} \nonumber \\ & \quad \quad \quad+h(1-h)u +[h,u](1-h)-2h+h^2 \label{cd-1}.
\end{align}
Using that $u$ and $u^{-1}$ equal $1$ modulo the ideal $M_n(\K_B)$, we compute that this equals $0$ modulo $M_n(\K_B)$.  Hence $cd-1$ is in $M_n(\K_B)$

Looking at part \eqref{close norm}, note that the terms $hu[h,u^{-1}]$ and $[h,u](1-h)$ in line \eqref{cd-1} above have norms at most $\kappa\epsilon$ and $\epsilon$ respectively.  Hence $cd-1$ is within $(\kappa+1)\epsilon$ of $h^2+h(1-h)u^{-1}+h(1-h)u-2h+h^2$, which equals $h(1-h)(u+u^{-1}-2)$.  
\end{proof}

\begin{lemma}\label{boundary map}
Let $B$ be a separable $C^*$-algebra.  Let $\kappa\geq 1$, $\epsilon> 0$, and let $X$ be a subset of the unit ball of $\LB$.  Let $h\in \LB$ be a positive contraction such that $\|[h,x]\|< \epsilon$ for all $x\in X$, and let $u$ be an element of the set $\mathcal{U}^1_{n,\kappa,\epsilon}(h(1-h)X\cup\{h\},B)$ from Definition \ref{nize uni}.  Let $c=c(u,h)$ and $d=d(u,h)$ be as in line \eqref{c and d} above, and let $v=v(u,h)$ be as in line \eqref{v def}.  

Then $\|v\|\leq (\kappa+2)^3$, $\|v^{-1}\|\leq (\kappa+2)^3$, and the pair 
$$
\Bigg(\,v\begin{pmatrix} 1 & 0 \\ 0 & 0 \end{pmatrix} v^{-1}\,,\,\begin{pmatrix} 1 & 0 \\ 0 & 0 \end{pmatrix}\,\Bigg)
$$
is an element of $\mathcal{P}^1_{2n,3^6\kappa^6,2^{16}\kappa^5\epsilon}(X\cup\{h\},B)$ from Definition \ref{nize proj}.  
\end{lemma}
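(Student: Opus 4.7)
The plan is to verify the requirements for the pair $(vpv^{-1}, p)$ to lie in $\mathcal{P}^1_{2n, 3^6\kappa^6, 2^{16}\kappa^5\epsilon}(X\cup\{h\}, B)$ one by one, with the commutator estimate for elements of $X$ as the main difficulty. For the structural part: since $\|c\|, \|d\| \leq \kappa+1$, each of the four matrix factors defining $v$ in line \eqref{v def} has norm at most $\kappa+2$ (the rotation factor has norm $1$), so $\|v\| \leq (\kappa+2)^3$; writing $v^{-1}$ as the product in reverse order of the inverses of those factors and estimating identically yields $\|v^{-1}\| \leq (\kappa+2)^3$, hence $\|vpv^{-1}\| \leq (\kappa+2)^6 \leq (3\kappa)^6 = 3^6 \kappa^6$. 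Idempotency of $vpv^{-1}$ is immediate. Since $c-1 = h(u-1)$ and $d-1 = h(u^{-1}-1)$ are both in $M_n(\K_B)$, reducing $v$ modulo $M_{2n}(\K_B)$ collapses each of its four factors to an explicit scalar matrix, and their product works out to $-I_{2n}$; consequently $vpv^{-1} \equiv p \pmod{M_{2n}(\K_B)}$, so $vpv^{-1} \in M_{2n}(\K_B^+)$ and both $vpv^{-1}$ and $p$ differ from the scalar matrix $1_n \in M_{2n}(\C)$ by compacts, verifying the $\mathcal{P}^1$ condition of Definition \ref{nize proj}.

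For the commutator analysis I would multiply out
\[
v = \begin{pmatrix} cdc-2c & 1-cd \\ dc-1 & -d\end{pmatrix}, \qquad v^{-1} = \begin{pmatrix} -d & dc-1 \\ 1-cd & cdc-2c\end{pmatrix},
\]
and then obtain
\[
vpv^{-1} - p \;=\; \begin{pmatrix} -e^2 & e(1+e)c \\ de & f^2\end{pmatrix}, \qquad e := 1-cd, \ f := 1-dc,
\]
using the identity $fd = d - dcd = de$ to put the $(2,1)$-entry in the form $de$. Since $p$ lies in the scalar subalgebra $M_{2n}(\C)\cdot 1_{\LB}$ while each $y \in X \cup \{h\}$ sits diagonally in $M_{2n}(\LB)$, we have $[p, y] = 0$, so $[vpv^{-1}, y] = [vpv^{-1} - p, y]$, reducing the problem to bounding the commutators of the four entries $-e^2$, $e(1+e)c$, $de$, $f^2$ with $y$. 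For $y = h$ this is straightforward: $[c, h] = h[u, h]$ and $[d, h] = h[u^{-1}, h]$ both have norm $<\epsilon$, hence $\|[e, h]\|, \|[f, h]\| \leq 2(\kappa+1)\epsilon$ by Leibniz, and applying Lemma \ref{com prod} entry by entry (with all factors of norm $O(\kappa)$) gives the required estimate.

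The main obstacle is the case $y = x \in X$, because $u$ almost commutes with $h(1-h)x$ rather than with $x$ itself, so $\|[c, x]\|$ and $\|[d, x]\|$ are not directly controlled by $\epsilon$. The key identity is
\[
h(1-h)[u, x] \;=\; [u, h(1-h)x] - [u, h(1-h)]x,
\]
which together with $\|[u, h(1-h)]\| \leq 2\epsilon$ (an easy consequence of $\|[u, h]\| < \epsilon$) gives $\|h(1-h)[u, x]\| \leq 3\epsilon$, and the same with $u^{-1}$ in place of $u$. By Lemma \ref{ab lem}, each of $e$ and $f$ is within $(\kappa+1)\epsilon$ in norm of $-h(1-h)(u + u^{-1} - 2)$; consequently every term arising in the Leibniz expansion of $[-e^2, x]$, $[e(1+e)c, x]$, $[de, x]$, $[f^2, x]$ is either a norm-small remainder or admits a rearrangement in which each unavoidable $[u^{\pm 1}, x]$ is preceded on the left by $h(1-h)$ and absorbed via the bound above, at the cost of $O(\kappa\epsilon)$ correction terms coming from commuting $h(1-h)$ past elements of norm $O(\kappa)$. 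Tracking the polynomial-in-$\kappa$ multipliers through this routine but tedious calculation yields entry-wise commutator bounds of order $\kappa^5\epsilon$, comfortably inside $2^{16}\kappa^5\epsilon$.
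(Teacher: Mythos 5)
Your proposal is correct and follows essentially the same path as the paper's proof: the same explicit matrix computation of $vpv^{-1}-p$ (your $e,f$ form is an algebraic rewriting of the paper's, using $ec=cf$ and $fd=de$), the same norm bounds, and the same key mechanism for the commutator estimate, namely using Lemma \ref{ab lem} to replace $cd-1$, $dc-1$ by $\pm h(1-h)(u+u^{-1}-2)$ and then exploiting the controlled commutator of $u^{\pm 1}$ with $h(1-h)x$ via the identity $h(1-h)[u,x]=[u,h(1-h)x]-[u,h(1-h)]x$. The only cosmetic differences are that you establish $\K_B$-membership by reducing $v$ to the scalar matrix $-I_{2n}$ rather than invoking Lemma \ref{ab lem}(i), and that you organize the commutator estimate entry-by-entry where the paper factors out $\mathrm{diag}(cd-1,dc-1)$ globally.
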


\begin{proof}
From the definition of $v$ in line \eqref{v def} above,
\begin{equation}\label{v def 2}
v=\begin{pmatrix} c(dc-2) & 1-cd \\ dc-1 & -d \end{pmatrix}
\end{equation}
and 
$$
v^{-1}=\begin{pmatrix} 0 & -1 \\  1 & 0 \end{pmatrix}\begin{pmatrix} 1 & -c \\  0 & 1 \end{pmatrix}\begin{pmatrix} 1 & 0 \\  d & 1 \end{pmatrix}\begin{pmatrix} 1 & -c \\  0 & 1 \end{pmatrix}=\begin{pmatrix} -d & dc-1 \\ 1-cd & c(dc-2) \end{pmatrix}.
$$
Hence
$$
v\begin{pmatrix} 1 & 0 \\ 0 & 0 \end{pmatrix} v^{-1} = \begin{pmatrix} cd(2-cd) & c(dc-2)(dc-1) \\ (1-dc)d & (dc-1)^2 \end{pmatrix}
$$
and so 
\begin{equation}\label{bigg diff}
v\begin{pmatrix} 1 & 0 \\ 0 & 0 \end{pmatrix} v^{-1} - \begin{pmatrix} 1 & 0 \\ 0 & 0 \end{pmatrix} = \begin{pmatrix} -(cd-1)^2 & (cd-1)c(dc-2) \\ (1-dc)d & (dc-1)^2 \end{pmatrix}.
\end{equation}
This formula, part \eqref{in id} of Lemma \ref{ab lem}, and the fact that $M_n(\K_B)$ is an ideal in $M_n(\LB)$ imply that
$$
v\begin{pmatrix} 1 & 0 \\ 0 & 0 \end{pmatrix} v^{-1} - \begin{pmatrix} 1 & 0 \\ 0 & 0 \end{pmatrix}\in M_{2n}(\K_B),
$$ 
whence $v\begin{pmatrix} 1 & 0 \\ 0 & 0 \end{pmatrix} v^{-1}$ is in $M_{2n}(\K_B^+)$, and $v\begin{pmatrix} 1 & 0 \\ 0 & 0 \end{pmatrix} v^{-1}$ and $\begin{pmatrix} 1 & 0 \\ 0 & 0 \end{pmatrix}$ have the same image under the image of the canonical quotient map $\sigma: M_{2n}(\K_B^+)\to M_{2n}(\C)$.  Note moreover that $\|v\|\leq (\kappa+2)^3$ and $\|v^{-1}\|\leq (\kappa+2)^3$ from the formula for $v$ (whence also $v^{-1}$) as a product of four matrices in line \eqref{v def}.  As $\kappa\geq 1$, this implies that  
$$
\Bigg\|v\begin{pmatrix} 1 & 0 \\ 0 & 0 \end{pmatrix} v^{-1} \Bigg\|\leq (\kappa+2)^6\leq 3^6\kappa^6.
$$
To complete the proof that the pair 
$$
\Bigg(\,v\begin{pmatrix} 1 & 0 \\ 0 & 0 \end{pmatrix} v^{-1}\,,\,\begin{pmatrix} 1 & 0 \\ 0 & 0 \end{pmatrix}\,\Bigg)
$$
defines an element of $\mathcal{P}^1_{2n,3^6\kappa^6,2^{16}\kappa^5\epsilon}(X,B)$ it remains to check the relevant commutator estimates, i.e.\ condition \eqref{def small com} from Definition \ref{alm com 0} with $x$ in $X\cup \{h\}$ and $\epsilon$ replaced by $2^{16}\kappa^5\epsilon$.  As $\begin{pmatrix} 1 & 0 \\ 0 & 0 \end{pmatrix}$ (and indeed, any scalar matrix) commutes with elements of $X\cup \{h\}$ exactly, it suffices to show that 
\begin{equation}\label{v com x}
\Bigg\|\Bigg[x,v\begin{pmatrix} 1 & 0 \\ 0 & 0 \end{pmatrix} v^{-1} - \begin{pmatrix} 1 & 0 \\ 0 & 0 \end{pmatrix}\Bigg]\Bigg\| \leq 2^{16}\kappa^5\epsilon.
\end{equation}
for all $x\in X\cup \{h\}$.  We focus on the case when $x$ is in $X$: the case when $x=h$ follows from similar (and much simpler) estimates that we leave to the reader.

Working towards the estimate in line \eqref{v com x}, we compute that the element in line \eqref{bigg diff} equals 
\begin{equation}\label{bd prod}
\begin{pmatrix} cd-1 & 0 \\ 0 & dc-1\end{pmatrix} \begin{pmatrix} 1-cd & c(dc-2) \\ -d & dc-1 \end{pmatrix}.
\end{equation}
The second matrix above satisfies
\begin{align*}
\Bigg\| \begin{pmatrix} 1-cd & c(dc-2) \\ -d & dc-1 \end{pmatrix} \Bigg\| & \leq \|1-cd\|+\|c\|\|dc-2\|+\|d\|+\|dc-1\|  \\
& \leq ((\kappa+1)^2+1)+(\kappa+1)((\kappa+1)^2+2) \\ & \quad \quad \quad\quad\quad\quad\quad \quad +(\kappa+1)+((\kappa+1)^2+1) .
\end{align*}
As $\kappa+1\geq1$, we therefore see that 
\begin{equation}\label{mat norm}
\Bigg\| \begin{pmatrix} 1-cd & c(dc-2) \\ -d & dc-1 \end{pmatrix} \Bigg\|\leq 8(\kappa+1)^4.
\end{equation}
On the other hand, using part \eqref{close norm} of Lemma \ref{ab lem}, the first matrix in line \eqref{bd prod} above is closer than $\epsilon(\kappa+1)$ to $h(1-h)(u+u^{-1}-2)$ (we identify this as usual with the diagonal matrix with both entries equal to $h(1-h)(u+u^{-1}-2)$).  Hence the difference in line \eqref{bigg diff} is closer than $8(\kappa+1)^5\epsilon$ to 
$$
h(1-h)(u+u^{-1}-2)\begin{pmatrix} 1-cd & c(dc-2) \\ -d & dc-1 \end{pmatrix}.
$$
Hence for $x\in X$, 
\begin{align}\label{x com 1}
\Bigg\|\Bigg[x&,v\begin{pmatrix} 1 & 0 \\ 0 & 0 \end{pmatrix} v^{-1} - \begin{pmatrix} 1 & 0 \\ 0 & 0 \end{pmatrix}\Bigg]\Bigg\| \nonumber \\ & < 16(\kappa+1)^5\epsilon+\Bigg\|\Bigg[x,h(1-h)(u+u^{-1}-2)\begin{pmatrix} 1-cd & c(dc-2) \\ -d & dc-1 \end{pmatrix}\Bigg]\Bigg\|.
\end{align}
As $\|[x,h]\|< \epsilon$, we have $\|[x,h(1-h)]\|< 2\epsilon$; combining this with line \eqref{mat norm} gives
\begin{align*}
\Bigg\|\Bigg[x & ,h(1-h)(u+u^{-1}-2)\begin{pmatrix} 1-cd & c(dc-2) \\ -d & dc-1 \end{pmatrix}\Bigg]\Bigg\|   \\ & < 2\epsilon\cdot 8(\kappa+1)^5+\Bigg\|h(1-h)\Bigg[x,(u+u^{-1}-2)\begin{pmatrix} 1-cd & c(dc-2) \\ -d & dc-1 \end{pmatrix}\Bigg]\Bigg\|.
\end{align*}
Combining this with line \eqref{x com 1} gives 
\begin{align}\label{x com 2}
\Bigg\|\Bigg[x&,v\begin{pmatrix} 1 & 0 \\ 0 & 0 \end{pmatrix} v^{-1} - \begin{pmatrix} 1 & 0 \\ 0 & 0 \end{pmatrix}\Bigg]\Bigg\| \nonumber \\ & < 32(\kappa+1)^5\epsilon+\Bigg\|h(1-h)\Bigg[x,(u+u^{-1}-2)\begin{pmatrix} 1-cd & c(dc-2) \\ -d & dc-1 \end{pmatrix}\Bigg]\Bigg\|.
\end{align}
Every entry of the matrix $(u+u^{-1}-2)\begin{pmatrix} 1-cd & c(dc-2) \\ -d & dc-1 \end{pmatrix}$ can be written as a sum of at most $30$ terms, each of which is a product of at most $5$ elements from the set $\{u,u^{-1},h,1\}$, each of which has norm at most $\kappa$.  As $\|[h(1-h)x,y]\|< \epsilon$ for all $y\in \{u,u^{-1},h,1\}$, Lemma \ref{com prod} gives 
\begin{align}\label{x com 3}
\Bigg\|\Bigg[h(1-h)x,(u+u^{-1}-2)\begin{pmatrix} 1-cd & c(dc-2) \\ -d & dc-1 \end{pmatrix}\Bigg]\Bigg\|< 4\cdot 30\cdot 5\cdot \kappa^4\epsilon.
\end{align}
On the other hand, $\|[h(1-h),y]\|< 2\epsilon$ for all $y\in \{u,u^{-1},h,1\}$, whence 
\begin{align}\label{x com 4}
\Bigg\|\Bigg[h(1-h),(u+u^{-1}-2)\begin{pmatrix} 1-cd & c(dc-2) \\ -d & dc-1 \end{pmatrix}\Bigg]x\Bigg\|< 4\cdot 30\cdot 5\cdot \kappa^4\epsilon.
\end{align}
Finally, note that 
\begin{align*}
h(1-h) & \Bigg[x,(u+u^{-1}-2)\begin{pmatrix} 1-cd & c(dc-2) \\ -d & dc-1 \end{pmatrix}\Bigg] \\ & = \Bigg[h(1-h)x,(u+u^{-1}-2)\begin{pmatrix} 1-cd & c(dc-2) \\ -d & dc-1 \end{pmatrix}\Bigg] \\ & \quad \quad \quad +\Bigg[h(1-h),(u+u^{-1}-2)\begin{pmatrix} 1-cd & c(dc-2) \\ -d & dc-1 \end{pmatrix}\Bigg]x ,
\end{align*}
so combining lines \eqref{x com 2}, \eqref{x com 3}, and \eqref{x com 4} implies
\begin{align*}
\Bigg\|\Bigg[x&,v\begin{pmatrix} 1 & 0 \\ 0 & 0 \end{pmatrix} v^{-1} - \begin{pmatrix} 1 & 0 \\ 0 & 0 \end{pmatrix}\Bigg]\Bigg\| < 1232(\kappa+1)^5\epsilon.
\end{align*}
Recalling that $\kappa\geq 1$, this is enough for the estimate in line \eqref{v com x}.
\end{proof}

We are now ready for the proof of Proposition \ref{boundary map 2}.

\begin{proof}[Proof of Proposition \ref{boundary map 2}]
Assume that $w\in \mathcal{U}_{n,\kappa,\epsilon}(h(1-h)X\cup\{h\},B)$, and let $u\in \mathcal{U}^1_{n,\kappa^2,\kappa\epsilon}(h(1-h)X\cup\{h\},B)$ be in the same path component as $w$ in $\mathcal{U}_{n,\kappa^2,\kappa\epsilon}(h(1-h)X\cup\{h\},B)$; $u$ is guaranteed to exist by Proposition \ref{uni lem} part \eqref{same nu}.  Define $v:=v(u,h)$ as in line \eqref{v def}, so Lemma \ref{boundary map} gives an element
$$
\Bigg(\,v\begin{pmatrix} 1 & 0 \\ 0 & 0 \end{pmatrix} v^{-1}\,,\,\begin{pmatrix} 1 & 0 \\ 0 & 0 \end{pmatrix}\,\Bigg)\in \mathcal{P}_{2n,3^6\kappa^{12},2^{16}\kappa^{11}\epsilon}(X\cup\{h\},B).
$$  
Moreover, if $u_0:=u$, and $u_1$ is another choice of element in  $\mathcal{U}^1_{n,\kappa^2,\kappa\epsilon}(h(1-h)X\cup\{h\},B)$ that is connected to $w$ in $\mathcal{U}_{n,\kappa^2,\kappa\epsilon}(h(1-h)X\cup\{h\},B)$ then Proposition \ref{uni lem} part \eqref{hom nu} implies that there is a homotopy $(u_t)_{t\in [0,1]}$ that connects $u_0$ and $u_1$ through $\mathcal{U}^1_{n,\kappa^4,\kappa\epsilon}(h(1-h)X\cup\{h\},B)$.  Let $v_t:=v(u_t,h)$ be as in line \eqref{v def}.  Then Lemma \ref{boundary map} implies that the path 
$$
t\mapsto \Bigg(\,v_t\begin{pmatrix} 1 & 0 \\ 0 & 0 \end{pmatrix} v_t^{-1}\,,\,\begin{pmatrix} 1 & 0 \\ 0 & 0 \end{pmatrix}\,\Bigg),\quad t\in [0,1]
$$
has image in $\mathcal{P}^1_{2n,3^6\kappa^{24},2^{16}\kappa^{21}\epsilon}(X\cup\{h\},B)$.  In particular, the class $\partial[w]\in KK^0_{3^6\kappa^{24},2^{16}\kappa^{21}\epsilon}(X\cup\{h\},B)$ does not depend on the choice of $u$, so at this point we have a well-defined set map 
$$
\mathcal{U}_{n,\kappa,\epsilon}(h(1-h)X\cup\{h\},B)\to KK^0_{3^6\kappa^{24},2^{16}\kappa^{21}\epsilon}(X\cup\{h\},B).
$$ 
We next claim that this map sends block sums on the left to sums on the right.

For this, assume that $w_1$ and $w_2$ are elements of $\mathcal{U}_{n,\kappa,\epsilon}(h(1-h)X\cup\{h\},B)$.  Let $u_1$ and $u_2$ be elements of $\mathcal{U}^1_{n,\kappa^2,\kappa\epsilon}(h(1-h)X\cup\{h\},B)$ that are connected to $w_1$ and $w_2$ respectively in $\mathcal{U}^1_{n,\kappa^2,\kappa\epsilon}(h(1-h)X\cup\{h\},B)$.  For $i\in \{1,2\}$ let $v_i=v(u_i,h)$ be as in line \eqref{v def}, and let $v:=v(u_1\oplus u_2,h)\in M_{4n}(\LB)$.  Then the pairs 
$$
\Bigg( v_1\begin{pmatrix} 1_n & 0 \\ 0 & 0 \end{pmatrix} v_1^{-1} \oplus v_2\begin{pmatrix} 1_n & 0 \\ 0 & 0 \end{pmatrix} v_2^{-1} ~,~\begin{pmatrix} 1_n & 0 \\ 0 & 0 \end{pmatrix} \oplus \begin{pmatrix} 1_n & 0 \\ 0 & 0 \end{pmatrix}  \Bigg)
$$
and 
$$
\Bigg( v\begin{pmatrix} 1_{2n} & 0 \\ 0 & 0 \end{pmatrix} v^{-1}~,~\begin{pmatrix} 1_{2n} & 0 \\ 0 & 0 \end{pmatrix}\Bigg)
$$
in $M_{4n}(\K_B^+)\oplus M_{4n}(\K_B^+)$ differ by conjugation by the same (scalar) permutation matrix in each component, and so define the same class in $KK^0_{3^6\kappa^{24},2^{16}\kappa^{21}\epsilon}(X\cup\{h\},B)$.

At this point, we have a semigroup homomorphism 
$$
\mathcal{U}_{n,\kappa,\epsilon}(h(1-h)X\cup\{h\},B)\to KK^0_{3^6\kappa^{24},2^{16}\kappa^{21}\epsilon}(X\cup\{h\},B).
$$ 
We claim that it respects the equivalence relation defining $KK_{\kappa,\epsilon}^1(h(1-h)X\cup\{h\},B)$.  First, we check that $w\oplus 1_k$ goes to the same class as $w$.  As we already know we have a semigroup homomorphism, it suffices to show that $1_k$ goes to zero in $KK^0_{3^6\kappa^{24},2^{16}\kappa^{20}\epsilon}(X\cup\{h\},B)$.  For this, note that if $v:=v(1_k,h)$ is as in line \eqref{v def}, then $v=1_{2k}$, whence the image of $1_k$ in $KK^0_{3^6\kappa^{24},2^{16}\kappa^{21}\epsilon}(X\cup\{h\},B)$ is the class $[1_k\oplus 0_k,1_k\oplus 0_k]$, which is zero by definition.  

Let us now show that elements of $\mathcal{U}_{n,\kappa,\epsilon}(h(1-h)X\cup\{h\},B)$ that are homotopic through $\mathcal{U}_{n,2\kappa,\epsilon}(h(1-h)X\cup\{h\},B)$ go to the same class.  For this, say that $w_0$ and $w_1$ are homotopic through $\mathcal{U}_{n,2\kappa,\epsilon}(h(1-h)X\cup\{h\},B)$.  Choose $u_0$ and $u_1$ in $\mathcal{U}^1_{n,\kappa^2,\kappa\epsilon}(h(1-h)X\cup\{h\},B)$ that are connected to $w_0$ and $w_1$ respectively in $\mathcal{U}_{n,\kappa^2,\kappa\epsilon}(h(1-h)X\cup\{h\},B)$ as in Proposition \ref{uni lem} part \eqref{same nu}.  Using Proposition \ref{uni lem} part \eqref{hom nu}, $u_0$ and $u_1$ are connected by a homotopy $(u_t)_{t\in [0,1]}$ in $\mathcal{U}^1_{n,4\kappa^4,2\kappa\epsilon}(h(1-h)X\cup\{h\},B)$.   Let $v_t:=v(u_t,h)$ be as in line \eqref{v def}.  Then Lemma \ref{boundary map} implies that the path 
$$
\Bigg(\,v_t\begin{pmatrix} 1 & 0 \\ 0 & 0 \end{pmatrix} v_t^{-1}\,,\,\begin{pmatrix} 1 & 0 \\ 0 & 0 \end{pmatrix}\,\Bigg)
$$
defines a homotopy between the images of $w_0$ and $w_1$ in $\mathcal{P}^1_{2n,3^{14}\kappa^{24},2^{27}\kappa^{21}\epsilon}(X\cup\{h\},B)$.  We thus see that $N_0(\kappa):=2^{27}\kappa^{24}$ has the desired property, and we are done with the existence of $\partial$.  

As the formulas for the boundary map $\partial$ do not depend on the constants $\kappa$ and $\epsilon$ the naturality statement is clear.
\end{proof}

\subsection{Exactness}

We now turn to the exactness property of the boundary map.  In order to state this, we need two lemmas.

\begin{lemma}\label{cut proj lem}
Let $B$ be a separable $C^*$-algebra.  Let $X$ and $Y$ be subsets of the unit ball of $\LB$, $\epsilon> 0$ and $\kappa\geq 1$.  Let $h\in \LB$ be a positive contraction such that $\|[h,x]\|< \epsilon$ for all $x\in X$.  With notation as in Definition \ref{alm com 0}, let $(p,q)\in \mathcal{P}_{n,\kappa,\epsilon}(X\cup Y\cup\{h\},B)$ (respectively, with notation as in Definition \ref{nize proj}, let $(p,q)\in \mathcal{P}^{(1)}_{n,\kappa,\epsilon}(X\cup Y\cup\{h\},B)$).  Then 
$$
(p,q)\in \mathcal{P}_{n,\kappa,2\epsilon}(hX\cup Y\cup\{h\},B)
$$ 
(respectively, 
$$
(p,q)\in \mathcal{P}^1_{n,\kappa,2\epsilon}(hX\cup Y\cup\{h\},B) ) ~\text{)}.
$$
In particular, there are homomorphisms 
$$
\eta_h:KK^0_{\kappa,\epsilon}(X\cup Y\cup\{h\},B)\to KK^0_{\kappa,2\epsilon}(hX\cup Y\cup \{h\},B)
$$
and 
$$
\eta_{1-h}:KK^0_{\kappa,\epsilon}(X\cup Y\cup\{h\},B)\to KK^0_{\kappa,2\epsilon}((1-h)X\cup Y\cup \{h\},B)
$$
induced by the identity map on cycles $(p,q)$.
\end{lemma}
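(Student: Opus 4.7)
The plan is to verify the containment of sets of cycles by a direct Leibniz-rule calculation, and then note that the induced map on equivalence classes is automatic.

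First I would establish the containment $\mathcal{P}_{n,\kappa,\epsilon}(X\cup Y\cup\{h\},B)\subseteq \mathcal{P}_{n,\kappa,2\epsilon}(hX\cup Y\cup\{h\},B)$. The norm and scalar-part conditions (parts \eqref{def proj} and (iii) of Definition \ref{alm com 0}) do not involve $X$ and are automatic. For the commutator condition, one just needs to check $\|[p,hx]\|<2\epsilon$ and $\|[q,hx]\|<2\epsilon$ for $x\in X$, since elements of $Y\cup\{h\}$ already satisfy the required estimate with $\epsilon<2\epsilon$. By the Leibniz rule,
\[
[p,hx]=[p,h]x+h[p,x],
\]
and using $\|[p,h]\|<\epsilon$ (because $h$ lies in the original commuting set), $\|h\|\leq 1$, $\|x\|\leq 1$, and $\|[p,x]\|<\epsilon$, we get $\|[p,hx]\|<2\epsilon$. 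The same estimate applies to $q$, yielding the containment. For the analogous containment involving $(1-h)X$, the identical argument with $1-h$ in place of $h$ (using $\|[p,1-h]\|=\|[p,h]\|<\epsilon$ and $\|1-h\|\leq 1$, since $h$ is a positive contraction) works verbatim. The same two calculations show the corresponding inclusions of the normalized sets $\mathcal{P}^1_{n,\kappa,\epsilon}$, since the normalized subsets are cut out by the same scalar-part condition, which depends only on the idempotents and not on the commuting set.

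Next I would deduce the existence of $\eta_h$ and $\eta_{1-h}$ from these set-level inclusions. Since the defining equivalence relation on $\mathcal{P}_{\infty,\kappa,\epsilon}$ in Definition \ref{alm com 0} is generated by block-stabilization with pairs of the form $(r,r)$ and by path-connectedness, and both of these operations are manifestly preserved under the inclusions above (paths in the smaller commuting set are paths in the larger set at the cost of doubling $\epsilon$; block sums are preserved on the nose), the identity-on-cycles assignment descends to a well-defined map on the quotients. That this map is a group homomorphism is immediate from the fact that the group operation in Definition \ref{alm com 0} is just block sum of idempotent pairs, which is preserved by the inclusion.

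I do not anticipate any real obstacle: the whole lemma is a packaging result that bundles the Leibniz rule $[a,bc]=[a,b]c+b[a,c]$ together with the naturality of the quotient construction defining $KK^0_{\kappa,\epsilon}$. The only minor point to be careful about is bookkeeping the factor of $2$ in the commutator bound, but no attempt to optimize the constant is needed here since downstream uses of $\eta_h$ and $\eta_{1-h}$ tolerate this linear loss.
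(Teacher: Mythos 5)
Your proof is correct and follows essentially the same route as the paper: the Leibniz rule $[p,hx]=[p,h]x+h[p,x]$ together with the contraction bounds on $h$ and $x$, then the observation that set-level inclusion of cycle spaces is compatible with homotopies and block sums, so the identity descends to a homomorphism on $KK^0$-groups.
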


\begin{proof}
We compute that for $x\in X$,
$$
\|[p,hx]\|\leq \|h\|\|[p,x]\|+\|[p,h]\|\|x\|< \epsilon+\epsilon
$$
These estimates hold similarly for $q$ so $(p,q)\in \mathcal{P}^1_{n,\kappa,2\epsilon}(hX\cup Y\cup\{h\},B)$.  As the identity map on cycles takes homotopies to homotopies, and block sums to block sums, existence of the homomorphism $\eta_h$ is clear.  Existence of $\eta_{1-h}$ follows on noting that the assumptions on $h$ also holds for $1-h$.  
\end{proof}

We leave the direct checks needed for the proof of the next lemma for the reader.

\begin{lemma}\label{forget close}
Let $B$ be a separable $C^*$-algebra.  Let $X$ and $Y$ be subsets of the unit ball of $\LB$, $\epsilon> 0$ and $\kappa\geq 1$.  Assume moreover that there is $\delta>0$ such that for all $y\in Y$, $x\in_\delta X$.  Then for any $\gamma\geq \kappa\delta+\epsilon$ and $\lambda\geq \kappa$, the forget control map of Definition \ref{dir set 2}
$$
KK^0_{\kappa,\epsilon}(X,B)\to KK_{\lambda,\gamma}(Y,B)
$$
is well-defined. \qed
\end{lemma}

The next proposition is the exactness property of the Mayer-Vietoris boundary map that we are aiming for.   We refer the reader to Subsection \ref{strat subsec} for motivation behind the statement.  For the statement, recall that for an element $x$ and subset $Y$ of a metric space, and for $\epsilon>0$, we write ``$x\in_\epsilon S$'' to mean that there is $y\in Y$ with $d(x,y)<\epsilon$.  Moreover, in the statement below, all unlabeled arrows between controlled $KK$-groups are the forget control maps of Definition \ref{dir set 2} or Definition \ref{dir set 3}.

\begin{proposition}\label{bound exact}
The increasing functions $N_1,N_2:[1,\infty)\to[1,\infty)$ defined by 
$$
N_1(\lambda)=2^{9000000\lambda^3} \quad \text{and}\quad N_2(\mu)=2^37\mu^{25}.
$$
satisfy the following properties.

Let $\kappa\geq 1$, and let $\epsilon>0$.  Let $\lambda\geq \kappa$, and let $\delta\geq 3\kappa\epsilon$.  Let $N_1:=N_1(\lambda)$, and let $\mu\geq N_1$ and $\gamma\geq N_1\delta$.  With notation as in Proposition \ref{boundary map 2}, define $N_0:=N_0(\mu)$, and let $N_2:=N_2(\mu)$.  

Let $B$ be a separable $C^*$-algebra, and let $X$ be a self-adjoint subset of the unit ball of $\LB$.  Let $h\in \LB$ be a positive contraction such that $\|[h,x]\|< \epsilon$ for all $x\in X$.  Let $Y_h$, $Y_{1-h}$, and $Y$ be self-adjoint subsets of the unit ball of $\LB$ such that $y\in_\epsilon Y_h$ and $y\in_\epsilon Y_{1-h}$ for all $y\in Y$.  With notation as in Definition \ref{nize proj}, let $(p,q)$ be an element of $\mathcal{P}^1_{n,\kappa,\epsilon}(X\cup Y_h\cup Y_{1-h}\cup\{h\},B)$.  With $\eta_h$ and $\eta_{1-h}$ as in Lemma \ref{cut proj lem}, and suing Lemma \ref{forget close} to define the right hand vertical maps in each case, assume that the images of $[p,q]$ under the maps
\begin{equation}\label{eta h}
\xymatrix{ KK^0_{\kappa,\epsilon}(X\cup Y_h\cup Y_{1-h} \cup\{h\},B) \ar[d] & \\ KK^0_{\kappa,\epsilon}(X\cup Y_h\cup\{h\},B) \ar[r]^-{\eta_h} & KK^0_{\kappa,2\epsilon}(hX\cup Y_h\cup\{h\},B) \ar[d] \\
& KK^0_{\lambda,\delta}(hX\cup Y\cup\{h\},B)}
\end{equation}
and 
\begin{equation}\label{eta 1-h}
\xymatrix{ KK^0_{\kappa,\epsilon}(X\cup Y_h\cup Y_{1-h} \cup\{h\},B) \ar[d] &\\ KK^0_{\kappa,\epsilon}(X\cup Y_{1-h}\cup\{h\},B) \ar[r]^-{\eta_{1-h}} & KK^0_{\kappa,2\epsilon}((1-h)X\cup Y_{1-h}\cup\{h\},B) \ar[d] \\
& KK^0_{\lambda,\delta}(hX\cup Y\cup\{h\},B)}
\end{equation}
are zero.

Then with notation as in Definition \ref{nize uni}, there exists an element 
$$
u\in \mathcal{U}^1_{\infty,N_1,N_1\delta}(h(1-h)X\cup\{h\}\cup Y,B)
$$ 
such that in the diagram below
$$
\xymatrix{ KK^1_{N_1,N_1\delta}(h(1-h)X\cup\{h\}\cup Y,B) \ar[d] & KK^0_{\kappa,\epsilon}(X\cup Y_h\cup Y_{1-h}\cup\{h\},B)\ar[d] \\
 KK^1_{\mu,\gamma}(h(1-h)X\cup \{h\},B) \ar[r]^-\partial & KK^0_{N_0,N_0\gamma}(X\cup \{h\},B) \ar[d] \\
 & KK^0_{N_2,N_2\gamma}(X\cup \{h\},B)}
$$
the images of the classes $[u]\in KK^1_{N_1,N_1\delta}(h(1-h)X\cup\{h\}\cup Y)$ and $[p,q]\in KK^0_{\kappa,\epsilon}(X\cup Y_h\cup Y_{1-h}\cup\{h\},B)$ in the bottom right group $KK^0_{N_2,N_2\gamma}(X\cup \{h\},B)$ are the same.
\end{proposition}

Just as for Proposition \ref{boundary map 2}, to make the argument more palatable, we split off some computations as two technical lemmas.  As in that earlier case, the arguments we give for these lemmas are elementary, but quite lengthy (in fact, much longer than the earlier ones).  We record them for the sake of completeness, but again recommend that the reader skips the proofs.

\begin{lemma}\label{u exist}
Let $B$ be a separable $C^*$-algebra.  Let $\nu\geq 1$ and let $\gamma> 0$.  Let $X$ and $Y$ be self-adjoint subsets of the unit ball of $\LB$.  Let $h\in \LB$ be a positive contraction such that $\|[h,x]\|< \gamma$ for all $x\in X$.  Let $(p,q)\in \mathcal{P}^1_{n,\nu,\gamma}(X\cup Y\cup\{h\},B)$ (see Definition \ref{nize proj} for notation), and let $u_h\in \mathcal{U}^1_{n,\nu,\gamma}(hX\cup\{h\}\cup Y,B)$ and $u_{1-h}\in \mathcal{U}^1_{n,\nu,\gamma}((1-h)X\cup \{h\}\cup Y,B)$ (see Definition \ref{nize uni} for notation).

Then the element 
\begin{equation}\label{uhp def}
u:=u_{1-h}(1-p)+u_hp
\end{equation}
is in $\mathcal{U}^1_{n,2\nu^2,10\nu \gamma}(h(1-h)X\cup\{h\}\cup Y,B)$.
\end{lemma}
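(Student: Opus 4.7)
My plan is to verify the three defining conditions of membership in $\mathcal{U}^1_{n,2\nu^2,12\nu^2\gamma}(h(1-h)X\cup\{h\}\cup Y,B)$ in turn: compactness of $u-1$, the norm bounds $\|u\|,\|u^{-1}\|\le 2\nu^2$, and the commutator estimates.

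First, I would check that $u-1\in M_n(\mathcal{K}_B)$ using the $\mathcal{U}^1$-normalization of $u_h,u_{1-h}$: writing $u_h=1+k_h$ and $u_{1-h}=1+k_{1-h}$ with $k_h,k_{1-h}\in M_n(\mathcal{K}_B)$, the identity $u-1=k_hp+k_{1-h}(1-p)$ places $u-1$ in $M_n(\mathcal{K}_B)$ since the latter is an ideal. The norm bound $\|u\|\le 2\nu^2$ is then immediate from the triangle inequality once one uses Corollary \ref{id norm cor} to obtain $\|1-p\|\le\nu$.

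Next, I would handle the commutator estimates, as these are the most mechanical part. All three cases start from the expansion
\[
[u,a]=[u_h,a]p+[u_{1-h},a](1-p)+(u_h-u_{1-h})[p,a].
\]
For $a=h(1-h)x$ with $x\in X$, I would use the two factorizations $h(1-h)x=(1-h)\cdot hx=h\cdot(1-h)x$ together with the Leibniz rule to obtain $\|[u_h,h(1-h)x]\|\le 2\gamma$ (from the first factorization, using $\|[u_h,h]\|,\|[u_h,hx]\|<\gamma$) and $\|[u_{1-h},h(1-h)x]\|\le 2\gamma$ (from the second, using $\|[u_{1-h},h]\|,\|[u_{1-h},(1-h)x]\|<\gamma$); the commutator $\|[p,h(1-h)x]\|\le 3\gamma$ is similar. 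For $a\in Y$, one approximates $y$ by $y_h\in Y_h$ and $y_{1-h}\in Y_{1-h}$ within $\gamma$ and absorbs the approximation error into a bound of size $(1+2\nu)\gamma$ for each relevant commutator. For $a=h$ everything is direct. Combining with $\|p\|,\|1-p\|\le\nu$ and $\|u_h-u_{1-h}\|\le 2\nu$ yields the desired bound of $12\nu^2\gamma$ in each case.

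The main obstacle will be establishing invertibility of $u$ with $\|u^{-1}\|\le 2\nu^2$. The natural candidate inverse is $v:=pu_h^{-1}+(1-p)u_{1-h}^{-1}$, whose norm bound follows by the symmetric calculation. Expanding $uv$ and using $p^2=p$ together with $p(1-p)=(1-p)p=0$ collapses all four product terms to $uv=u_{1-h}(1-p)u_{1-h}^{-1}+u_hpu_h^{-1}$, and this equals $1$ precisely when $u_hpu_h^{-1}=u_{1-h}pu_{1-h}^{-1}$. This is the heart of the Mayer-Vietoris structure the lemma encodes: in the application within Proposition \ref{bound exact}, both $u_h$ and $u_{1-h}$ will be produced by Proposition \ref{hom to sim} as similarities witnessing the vanishing of $[p,q]$ under $\eta_h$ and $\eta_{1-h}$ respectively, so (after the stabilization built into Proposition \ref{hom to sim}) each conjugates $p$ to the common element $q$. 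Once invertibility is established, the commutator bound $\|[u^{-1},a]\|\le 12\nu^2\gamma$ follows from $[u^{-1},a]=-u^{-1}[u,a]u^{-1}$ and the norm bound on $u^{-1}$.
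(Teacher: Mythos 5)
Your argument for the first three conditions — $u-1\in M_n(\mathcal{K}_B)$, $\|u\|\le 2\nu^2$, and the commutator estimates on $u$ itself — follows essentially the same route the paper takes (the paper uses the four-term Leibniz expansion $\|[a,u_{1-h}]\|\|1-p\|+\|u_{1-h}\|\|[a,1-p]\|+\|[a,u_h]\|\|p\|+\|u_h\|\|[a,p]\|$, you use the three-term version $[u_h,a]p+[u_{1-h},a](1-p)+(u_h-u_{1-h})[p,a]$, both giving $12\nu^2\gamma$ or better), so there is no disagreement there.

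Where you part ways with the paper is precisely the part the paper's proof \emph{does not address at all}: invertibility of $u$, the norm bound $\|u^{-1}\|\le 2\nu^2$, and the commutator estimates on $u^{-1}$, all of which are required by Definition \ref{alm com 1}. You have correctly identified that these cannot be established from the hypotheses as literally stated. Indeed, if one takes $X,Y,Y_h,Y_{1-h}$ empty, $h$ a scalar, $p=q$ a rank-one projection, $u_{1-h}=1$, and $u_h$ the transposition swapping the range of $p$ with an orthogonal unit vector, every stated hypothesis is satisfied but $u=u_{1-h}(1-p)+u_hp$ annihilates a nonzero vector. So the unstated hypothesis $u_hpu_h^{-1}=q=u_{1-h}pu_{1-h}^{-1}$ is genuinely needed; your candidate inverse $v=pu_h^{-1}+(1-p)u_{1-h}^{-1}$ does the job once that is in place (you verify $uv=1$; you should also note $vu=1$ follows by the analogous cancellation using $pu_h^{-1}=u_h^{-1}q$ and $qu_{1-h}=u_{1-h}p$). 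You are also right that the conjugation hypothesis holds where the lemma is applied in Proposition \ref{bound exact}, because there $u_h$ and $u_{1-h}$ come from Proposition \ref{hom to sim}. The same unstated hypothesis is in fact also used silently in the subsequent Lemma \ref{vw lem}, where the ``direct computation'' of $w^{-1}$ requires $u_{1-h}pu_{1-h}^{-1}=q$.

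One flaw in your proposal: the estimate $[u^{-1},a]=-u^{-1}[u,a]u^{-1}$ yields $\|[u^{-1},a]\|\le\|u^{-1}\|^2\|[u,a]\|\le(2\nu^2)^2\cdot 12\nu^2\gamma=48\nu^6\gamma$, not the claimed $12\nu^2\gamma$. To get the stated constant you should instead estimate $[v,a]$ directly via the expansion $[v,a]=p[u_h^{-1},a]+(1-p)[u_{1-h}^{-1},a]+[p,a](u_h^{-1}-u_{1-h}^{-1})$ — the mirror image of what you did for $u$ — which yields $12\nu^2\gamma$ by the same term-by-term bounds, since $\|[u_h^{-1},a]\|$ and $\|[u_{1-h}^{-1},a]\|$ enjoy the same bounds as the corresponding commutators of $u_h$ and $u_{1-h}$ by Definition \ref{alm com 1}.
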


\begin{proof}
We split the computations into the points labeled \eqref{u where 1}, \eqref{u where 2}, \eqref{u where 3}, \eqref{u where 4}, and \eqref{u where 5} below.
\begin{enumerate}[(i)]
\item \label{u where 1}  As $u_h-1\in M_{n}(\K_B)$ and $u_{1-h}-1\in M_{n}(\K_B)$, we compute from line \eqref{uhp def} that $u-1\in M_{n}(\K_B)$.
\item \label{u where 2} Note that 
\begin{equation}\label{1-p norm}
\|1-p\|\leq \nu
\end{equation}
by Corollary \ref{id norm cor}.  Hence $\max\{\|u_h\|,\|u_{1-h}\|,\|p\|,\|1-p\|\}\leq \nu$, and so by line \eqref{uhp def}, $\|u\|\leq 2\nu^2$. 
\item \label{u where 3} Let $y\in Y$.  Then by definition, $\|[a,y]\|<\gamma$ for all $a\in \{u_h,u_{1-h},p,1-p\}$.  Hence the definition of $u$ from line \eqref{uhp def} implies that $\|[y,u]\|$ is bounded above by
\begin{align*}
& \|[y,u_{1-h}]\|\|1-p\|+\|u_{1-h}\|\|[y,1-p]\| + \|[y,u_{h}]\|\|p\|+\|u_{h}\|\|[y,p]\| \\
& < 4\nu\gamma. \\
\end{align*}
\item \label{u where 4} Using the definition of $u$ from line \eqref{uhp def} and the assumptions on $u_h$, $u_{1-h}$ and $p$ directly together with line \eqref{1-p norm} implies that 
\begin{align*}
\|[u,h]\| & \leq \|[h,u_{1-h}]\|\|1-p\|+\|u_{1-h}\|\|[h,1-p]\| + \|[h,u_{h}]\|\|p\|+\|u_{h}\|\|[h,p]\| \\
& <4\nu\gamma
\end{align*}
\item \label{u where 5} Let $x\in X$ and note that
$$
[h(1-h)x,u_h]=(1-h)[hx,u_h]+[h,u_h](1-h)x.
$$
As $\|[hx,u_h]\|< \gamma$, as $\|[h,u_h]\|< \gamma$, as $h$ is a positive contraction, and as $x$ is a contraction, we get 
\begin{equation}\label{h 1-h x 1}
\|[h(1-h)x,u_h]\|\leq \|[hx,u_h]\|\|1-h\|+\|hx\||\|[1-h,u_h]\|< 2\gamma.
\end{equation}
Completely analogously, we see that 
\begin{equation}\label{h 1-h x 2}
\|[h(1-h)x,u_{1-h}]\|< 2\gamma.
\end{equation}
We see also that 
\begin{align*}
\|[h(1-h)x,p]\| & \leq \|[x,p]\|\|h(1-h)\|+\|[1-h,p]\|\|hx\|+\|[h,p]\|\|(1-h)x\| \\ 
& <3\gamma.
\end{align*}
Combining this with lines \eqref{1-p norm}, \eqref{h 1-h x 1}, \eqref{h 1-h x 2}, we get
\begin{align*}
\|[h(1-h)x,u]\| & \leq \|[h(1-h)x,u_{1-h}]\|\|1-p\|+\|u_{1-h}\|\|[h(1-h)x,1-p\| \\ & \quad  \quad \quad  \quad \quad +\|[h(1-h)x,u_{h}]\|\|p\|+\|u_{h}\|\|[h(1-h)x,p\| \\
& < 2 \nu\gamma+3\nu\gamma +2 \nu\gamma+3\nu\gamma \\
& =10\nu\gamma.
\end{align*}
\end{enumerate}
Putting the points \eqref{u where 1}, \eqref{u where 2}, \eqref{u where 3}, \eqref{u where 4}, and \eqref{u where 5} above together (and using that $\nu\geq 1$) we conclude that, $u$ is an element of $\mathcal{U}^1_{n,2\nu^2,10\nu\gamma}(h(1-h)X\cup\{h\}\cup Y,B)$ as claimed. 
\end{proof}

\begin{lemma}\label{vw lem}
With assumptions as in Lemma \ref{u exist}, let    
$$
u:=u_{1-h}(1-p)+u_hp\in \mathcal{U}^1_{n,2\nu^2,10\nu \gamma}(h(1-h)X\cup\{h\}\cup Y,B)
$$
be the element considered there.  Let $v:=v(u,h)$ be as in line \eqref{v def} above, and define 
$$
w:=\begin{pmatrix} u_{1-h}(1-p) & -q \\ p & (1-p)u_{1-h}^{-1}\end{pmatrix}\in M_{2n}(\LB).
$$
Then $w$ is invertible, and $vw^{-1}$ is in $\mathcal{U}_{2n,(2\nu)^8,2^{37}\nu^{25}\gamma}(X\cup\{h\},B)$.
\end{lemma}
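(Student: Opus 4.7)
The proof is a direct computation, which I would organize into three steps: invertibility of $w$, norm bounds on $vw^{-1}$, and commutator bounds.

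For invertibility of $w$, I would first observe that under the canonical quotient $\sigma:M_{2n}(\KB^+)\to M_{2n}(\C)$, the hypotheses $(p,q)\in\mathcal{P}^1$ and $u_h,u_{1-h}\in\mathcal{U}^1$ give $\sigma(p)=\sigma(q)=1_l$ and $\sigma(u_h)=\sigma(u_{1-h})=1$, so that
\[
\sigma(w)=\begin{pmatrix} 1-1_l & -1_l \\ 1_l & 1-1_l\end{pmatrix},
\]
and a direct check gives $\sigma(w)^*\sigma(w)=\sigma(w)\sigma(w)^*=1$, i.e.\ $w$ is a compact perturbation of a unitary scalar matrix. To obtain an explicit inverse together with a quantitative norm bound, one must exploit the algebraic structure of $w$. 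I would try the ansatz $\tilde w=\bigl(\begin{smallmatrix}(1-q)u_{1-h}^{-1} & q \\ -p & u_{1-h}(1-p)\end{smallmatrix}\bigr)$ (or a close variant), compute $w\tilde w$ directly, and verify that the difference from the identity is small enough (bounded via commutators of $p,q$ with $u_{1-h}$ arising through the cycle data) for a Neumann series to produce $w^{-1}$ with $\|w^{-1}\|\lesssim\nu^2$.

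The norm bound on $vw^{-1}$ is then routine. Lemma~\ref{boundary map}, applied with $\kappa=2\nu^2$ (as supplied by Lemma~\ref{u exist}), gives $\|v\|,\|v^{-1}\|\leq(2\nu^2+2)^3\leq(2\nu)^6$, and combined with $\|w\|,\|w^{-1}\|\lesssim\nu^2$, submultiplicativity yields $\|vw^{-1}\|,\|(vw^{-1})^{-1}\|\leq(2\nu)^8$.

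For the commutator bound I would use $[vw^{-1},x]=[v,x]w^{-1}+vw^{-1}[x,w]w^{-1}$ and estimate $[v,x]$ and $[w,x]$ separately for $x\in X\cup\{h\}$. The case $x=h$ is immediate since every constituent of $v$ and $w$ almost commutes with $h$ by hypothesis. For $x\in X$ the naive estimates fail: $v$ involves $u$ through $c=hu+(1-h)$ and $d=hu^{-1}+(1-h)$, and $u$ only almost commutes with $h(1-h)X$, not $X$; similarly $w$ involves $u_{1-h}$ which only almost commutes with $(1-h)X$. The saving structural fact is that the ``bad'' pieces are shielded. By Lemma~\ref{ab lem}(ii), the off-diagonal entries $cd-1$ and $dc-1$ of $v$ are within $(2\nu^2+1)\gamma$ of $h(1-h)(u+u^{-1}-2)$, in which the $u$ factors are shielded by $h(1-h)$ so that $[x,u]$ is only accessed through the controlled quantity $[h(1-h)x,u]$. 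After expanding $u=u_{1-h}(1-p)+u_hp$, each remaining $u_h$ factor in $v$ appears multiplied by $h$ (so its $x$-commutator passes through $[hx,u_h]$, controlled by the hypothesis that $u_h$ almost commutes with $hX$), and each $u_{1-h}$ factor either similarly appears with a $(1-h)$ multiplier or is arranged to cancel against the matching $u_{1-h}$ factor in $w^{-1}$ (the specific form of $w$ is engineered precisely for this cancellation). Once the expanded form of $vw^{-1}$ is made explicit and this shielding pattern is identified, repeated application of the Leibniz rule and Lemma~\ref{com prod} produces the bound $\|[vw^{-1},x]\|<2^{37}\nu^{26}\gamma$.

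The main obstacle is the combinatorial complexity of the bookkeeping: both $v$ and $w$ are products of several matrices with multiple entries, so $vw^{-1}$ expands to many cross-terms, and one must carefully identify which of the hypotheses bounds the commutator for each piece, then accumulate the error factors through Lemma~\ref{com prod}. Each individual algebraic manipulation is elementary; the very large constant $2^{37}\nu^{26}$ in the final estimate reflects only this combinatorial overhead.
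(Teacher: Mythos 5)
Your invertibility argument for $w$ has a genuine gap. Observing that $\sigma(w)$ is a unitary scalar matrix shows $w$ is invertible modulo $M_{2n}(\KB)$, but that does not make $w$ itself invertible, and your proposed repair --- an approximate inverse $\tilde w$ plus a Neumann series, with the error ``bounded via commutators of $p,q$ with $u_{1-h}$'' --- cannot be made to work, because the hypotheses give no smallness at all for $[p,u_{1-h}]$ or $[q,u_{1-h}]$: the commutator controls in $\mathcal{P}^1$ and $\mathcal{U}^1$ are only against $X$, $Y_h$, $Y_{1-h}$, and $h$, not against the cycle data itself. Your specific ansatz also is not a near-inverse: the top-left entry of $w\tilde w$ is $u_{1-h}(1-p)(1-q)u_{1-h}^{-1}+qp$, which is generically a macroscopic distance from $1$ since $p$ and $q$ need not be close. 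The structural fact you are missing is that, in the one place the lemma is invoked (the proof of Proposition \ref{bound exact}), the invertibles $u_h$ and $u_{1-h}$ produced by Proposition \ref{hom to sim} satisfy $u_hpu_h^{-1}=q$ and $u_{1-h}pu_{1-h}^{-1}=q$ \emph{exactly}. With that relation (which the lemma's hypotheses arguably omit, but the paper's proof uses and which holds wherever the lemma is applied), $w$ has the \emph{exact} algebraic inverse
$$
w^{-1}=\begin{pmatrix} (1-p)u_{1-h}^{-1} & p \\ -q & u_{1-h}(1-p)\end{pmatrix},
$$
verified by direct block multiplication using $u_{1-h}p=qu_{1-h}$, equivalently $pu_{1-h}^{-1}=u_{1-h}^{-1}q$; no Neumann series is needed, and the norm bound $\|w^{-1}\|\leq (2\nu)^2$ falls out immediately.

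Your norm bound on $vw^{-1}$ is fine and matches the paper. On the commutator estimate, your eventual diagnosis --- that the ``bad'' $u$, $u_h$, $u_{1-h}$ factors are shielded by $h(1-h)$, $h$, and $1-h$ respectively --- is the right idea, but your opening step, to use Leibniz and ``estimate $[v,x]$ and $[w,x]$ separately,'' is in tension with it: neither $[v,x]$ nor $[w,x]$ is small for $x\in X$, and the shielding only becomes visible after multiplying out. The paper instead expands $vw^{-1}=y_1-y_2-y_3$ explicitly, where $y_1$ starts with a diagonal factor $O(\nu\gamma)$-close to $h(1-h)(u+u^{-1}-2)$ (Lemma \ref{ab lem}(ii)), $y_2=h\bigl(\begin{smallmatrix} 1-q & u_hp \\ -u_h^{-1}q & 1-p\end{smallmatrix}\bigr)$ carries the $h$ shield on the $u_h$ terms, and $y_3=(1-h)w^{-1}$ carries the $1-h$ shield on the $u_{1-h}$ terms; each is then handled term by term using Lemma \ref{com prod}. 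To discover this expansion and the attendant cancellations you need the exact formula for $w^{-1}$ above.
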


\begin{proof}
Using the assumptions on $\|p\|$, $\|u_{1-h}\|$, $\|u_{1-h}^{-1}\|$ and line \eqref{1-p norm} to estimate $\|1-p\|$, we have 
$$
\|w\|\leq \|u_{1-h}(1-p)\|+\|q\|+\|p\|+\|(1-p)u_{1-h}^{-1}\|\leq 4\nu^2.
$$ 
A direct computation shows that $w$ is invertible with inverse 
\begin{equation}\label{w inv def}
w^{-1}=\begin{pmatrix} (1-p)u_{1-h}^{-1} & p \\ -q & u_{1-h}(1-p)\end{pmatrix}.
\end{equation}
This satisfies the same norm estimate as $w$, and so we get the norm estimates 
\begin{equation}\label{w norm}
\|w\|\leq (2\nu)^2 \quad\text{and}\quad\|w^{-1}\|\leq (2\nu)^2.
\end{equation}
Lemma \ref{boundary map} and the fact that $\|u\|\leq 2\nu^2$ implies that $\|v\|\leq (2\nu^2+2)^3$ and $\|v^{-1}\|\leq (2\nu^2+2)^3$.   As $\nu\geq 1$, we thus see that 
\begin{equation}\label{v norm} 
\|v\|\leq (2\nu)^6 \quad\text{and}\quad  \|v^{-1}\|\leq (2\nu)^6.
\end{equation}
Lines \eqref{w norm} and \eqref{v norm} then imply 
\begin{equation}\label{vw norm} 
\|vw^{-1}\|\leq (2\nu)^8 \quad\text{and}\quad  \|wv^{-1}\|\leq (2\nu)^8.
\end{equation}

To complete the proof, we need to show that for all $x\in X\cup \{h\}$, we have $\|[vw^{-1},x]\|< 2^{37}\nu^{25}\gamma$ and $\|[wv^{-1},x]\|< 2^{37}\nu^{25}\gamma$.  We focus first on the case of $vw^{-1}$, and look first at $[h,vw^{-1}]$.  

Let $c:=hu+(1-h)$ and $d:=hu^{-1}+(1-h)$ be as in line \eqref{c and d}.  It will be technically convenient to define 
\begin{equation}\label{S def}
S:=\{h,1-h,p,q,1-p,1-q,u_h,u_{h}^{-1},u_{1-h},u_{1-h}^{-1},u,u^{-1},c,d\},
\end{equation}
and to define $S^n$ to be the set of all products of at most $n$ elements from $S$.  Note that for every $s\in S$ we have $\|s\|\leq (2\nu)^2$, and $\|[s,h]\|< 10\nu\gamma$.  Hence by Lemma \ref{com prod}, for all $n\in \N$ we have 
\begin{equation}\label{S est}
s\in S^n \quad \Rightarrow \quad \|[h,s]\|< n (2\nu)^{2(n-1)} 10\nu\gamma.
\end{equation}
Using the formula in line \eqref{v def 2} above, 
$$
[h,v]=\begin{pmatrix} [cdc,h]-2[c,h] & [cd,h] \\ [h,dc] & [d,h] \end{pmatrix}.
$$
and so 
$$
\|[h,v]\|\leq \|[cdc,h]\|+2\|[c,h]\|+\|[cd,h]\|+\|[h,dc]\|+\|[d,h]\|.
$$
Each summand on the right hand side above is of the form $\|[h,s]\|$ where $s\in S^3$ for $S$ as in line \eqref{S def}.  Hence line \eqref{S est} implies that 
\begin{equation}\label{[h,v]}
\|[h,v]\|< 6\cdot 3\cdot (2\nu)^{4} \cdot 10\nu\gamma\leq 2^{11}\nu^5 \gamma
\end{equation}
We also compute that 
$$
[h,w^{-1}]=\begin{pmatrix} [h,(1-p)u_{1-h}^{-1}] & [h,p] \\ [q,h] & [h,u_{1-h}(1-p)]\end{pmatrix},
$$
whence 
\begin{align*}
\|[h,w^{-1}]\|\leq   \|[h,(1-p)u_{1-h}^{-1}]\| +\|[h,p]\| +\|[q,h]\| +\|[h,u_{1-h}(1-p)]\|
\end{align*}
Each commutator appearing above is of the form $[h,s]$ for some $s\in S^2$ as in line \eqref{S def}, whence line \eqref{S est} gives
\begin{equation}\label{[h,w]}
\|[h,w^{-1}]\|< 4\cdot (2\nu)^2\cdot 10\nu\gamma\leq 2^7\nu^3\gamma.
\end{equation}
On the other hand,  
$$
\|[h,vw^{-1}]\|\leq \|[h,v]\|\|w^{-1}\|+\|v\|\|[h,w^{-1}]\|.
$$  
Combining this with lines \eqref{w norm}, \eqref{v norm}, \eqref{[h,v]}, and \eqref{[h,w]}, as well as that $\nu \geq 1$, we see that  
\begin{equation}\label{[h,vw]}
\|[h,vw^{-1}]\|< 2^{11}\nu^5\gamma\cdot (2\nu)^2+(2\nu)^6\cdot 2^7\nu^3\gamma\leq 2^{14}\nu^{9}\gamma.
\end{equation}

Now let us look at $[x,vw^{-1}]$ for $x\in X$.  The definition of $v$ from line \eqref{v def} gives
\begin{align*}
vw^{-1} & =\begin{pmatrix} c(dc-1) & 1-cd \\  dc-1 & 0 \end{pmatrix}w^{-1} -\begin{pmatrix} c & 0 \\ 0 & d \end{pmatrix}w^{-1} \\
& = \begin{pmatrix} cd-1 & 0 \\  0 & dc-1 \end{pmatrix}\begin{pmatrix} c & -1 \\  1 & 0 \end{pmatrix}w^{-1}-\begin{pmatrix} c & 0 \\ 0 & d \end{pmatrix}w^{-1}.
\end{align*}
Hence the formula for $w^{-1}$ from line \eqref{w inv def} gives
\begin{align}\label{y1 y2 y3}
vw^{-1} = &\underbrace{\begin{pmatrix} cd-1 & 0 \\  0 & dc-1 \end{pmatrix}\begin{pmatrix} c(1-p)u_{1-h}^{-1} & cp-u_{1-h}(1-p) \\  (1-p)u_{1-h}^{-1} & p \end{pmatrix}}_{y_1} \nonumber \\ &  \quad \quad-\underbrace{h\begin{pmatrix} 1-q & u_hp \\ -u_h^{-1}q & 1-p \end{pmatrix}}_{y_2} \nonumber \\ & \quad \quad \quad\quad- \underbrace{(1-h)\begin{pmatrix} (1-p)u_{1-h}^{-1} & p \\ -q & u_{1-h}(1-p)\end{pmatrix}}_{y_3}.
\end{align}
We now estimate $\|[vw^{-1},x]\|$ for each $x\in X$ by looking at each of the terms $y_1$, $y_2$, and $y_3$ separately.
\begin{enumerate}[(i)]
\item\label{y_1} First, we look at $y_1$ from line \eqref{y1 y2 y3}.  Let $x\in X$.  Lemma \ref{ab lem} implies that 
\begin{equation}\label{close nice}
\Bigg\|\begin{pmatrix} cd-1 & 0 \\  0 & dc-1 \end{pmatrix}-h(1-h)(u+u^{-1}-2)\Bigg\|< (\nu+1)\gamma
\end{equation}
(where, as usual, we identify $h(1-h)(u+u^{-1}-2)$ with the corresponding diagonal matrix).  Let 
\begin{equation}\label{z1 def}
z_1:=\begin{pmatrix} c(1-p)u_{1-h}^{-1} & cp-u_{1-h}(1-p) \\  (1-p)u_{1-h}^{-1} & p \end{pmatrix}.
\end{equation}
As in line \eqref{1-p norm}, $\|1-p\|\leq \nu$, whence using that $\nu\geq 1$,
\begin{align}\label{z1 norm}
\|z_1\| & \leq \|c\|\|1-p\|\|u_{1-h}^{-1}\|+\|c\|\|p\|+\|u_{1-h}\|\|1-p\|+\|1-p\|\|u_{1-h}^{-1}\|+\|p\| \nonumber \\
& \leq (2\nu^2+1)\nu^2+(2\nu^2+1)\nu+\nu^2+\nu^2+\nu \nonumber \\
& \leq 9\nu^4.
\end{align}
Combining this with line \eqref{close nice}, we see that 
\begin{align*}
\|y_1-& h(1-h)(u+u^{-1}-2)z_1\|  \\ & \leq \Bigg\|\begin{pmatrix} cd-1 & 0 \\  0 & dc-1 \end{pmatrix}-h(1-h)(u+u^{-1}-2)\Bigg\|\|z_1\| \\ & < 9\nu^4(\nu+1)\gamma \leq (2\nu)^5\gamma.
\end{align*}
As $\|x\|\leq 1$, this implies that 
\begin{align*}
\|[x,y_1]\| & \leq \|[x,y_1-h(1-h)(u+u^{-1}-2)z_1]\| \\ & \quad \quad \quad \quad \quad \quad +\|[x,h(1-h)(u+u^{-1}-2)z_1]\| \\
& < (2\nu)^5\gamma +\|[x,h(1-h)(u+u^{-1}-2)z_1]\|.
\end{align*}
Hence we see that 
\begin{align}\label{x y1}
\|[x,y_1]\| & < (2\nu)^5\gamma+\|[[x,h(1-h)],(u+u^{-1}-2)z_1]\| \nonumber\\ &\quad \quad\quad \quad \quad \quad+\|[h(1-h)x,(u+u^{-1}-2)z_1]\|\nonumber \\ & \quad \quad\quad \quad\quad \quad\quad\quad+\|[h(1-h),(u+u^{-1}-2)z_1]x\|.
\end{align}
Looking at line \eqref{z1 def}, every entry of the matrix $(u+u^{-1}-2)z_1$ is a sum of at most $8$ elements from the set $S^4$, where $S$ is as in line \eqref{S def}.  Hence by line \eqref{S est}, we see that 
\begin{equation}\label{h(1-h),(u+u^{-1}-2)z_1}
\|[h(1-h),(u+u^{-1}-2)z_1]\|< 4\cdot 2\cdot 8\cdot 4\cdot (2\nu)^6\cdot 12\nu^2\gamma\leq 2^{18}\nu^8\gamma.
\end{equation}
We have $\|[x,h(1-h)]\|< 2\gamma$, and line \eqref{z1 norm} implies 
$$
\|(u+u^{-1}-2)z_1\|\leq (4\nu^2+2)\cdot 9\nu^4\leq 2^6\nu^6,
$$ 
whence
\begin{equation}\label{x,h(1-h)],(u+u^{-1}-2)z_1}
\|[[x,h(1-h)],(u+u^{-1}-2)z_1]\| \leq 2^8\nu^6\gamma.
\end{equation}
Combining lines \eqref{x y1}, \eqref{h(1-h),(u+u^{-1}-2)z_1}, and \eqref{x,h(1-h)],(u+u^{-1}-2)z_1} thus implies that 
\begin{equation}\label{x y1 2}
\|[x,y_1]\|\leq 2^{19}\nu^8\gamma+\|[h(1-h)x,(u+u^{-1}-2)z_1]\|.
\end{equation}
Note now that for every element $s\in S$ we have that at least one of the following holds: (a) $\|[s,x]\|< 16\nu^2\gamma$ for all $x\in X$; or (b) $\|[s,(1-h)x]\|< 16\nu^2\gamma$ for all $x\in X$; or (c) $\|[s,(1-h)x]\|< 16\nu^2\gamma$ for all $x\in X$; or (d) $\|[s,h(1-h)x]\|< 16\nu^2\gamma$ for all $x\in X$.  In any of these cases, using that $\|[s,h]\|\leq 12\nu^2\gamma$ for any $s\in S$, we get that for any $s\in S$ and $x\in X$, $\|[s,h(1-h)x]\|< 40\nu^2\gamma$.  Applying Lemma \ref{com prod}, we therefore see that 
\begin{equation}\label{S est 2}
s\in S^n \quad \Rightarrow \quad \|[h(1-h)x,s]\|< n (2\nu)^{2(n-1)} 40\nu^2\gamma.
\end{equation}
As we have observed above already, every entry in the matrix $(u+u^{-1}-2)z_1$ is a sum of at most $8$ elements from the set $S^4$, where $S$ is as in line \eqref{S def}.  From line \eqref{S est 2} we therefore see that
$$
\|[h(1-h)x,(u+u^{-1}-2)z_1]\|< 4\cdot 4\cdot (2\nu)^4\cdot 40\nu^2\gamma\leq 2^{14}\nu ^6\gamma.
$$
Combining this with line \eqref{x y1 2} above therefore implies 
$$
\|[x,y_1]\|< 2^{20}\nu^8\gamma.
$$
\item \label{y_2} Now we look at the element $y_2$ from line \eqref{y1 y2 y3} above.  If $x\in X$, we see that 
\begin{equation}\label{x y_2}
[x,y_2]=\Bigg[xh,\begin{pmatrix} 1-q & u_hp \\ -u_h^{-1}q & 1-p \end{pmatrix}\Bigg]+\Bigg[h,\begin{pmatrix} 1-q & u_hp \\ -u_h^{-1}q & 1-p \end{pmatrix}\Bigg]x.
\end{equation}
We have that 
$$
\Bigg[h,\begin{pmatrix} 1-q & u_hp \\ -u_h^{-1}q & 1-p \end{pmatrix}\Bigg]=\begin{pmatrix} [q,h] & [h,u_hp] \\ [u_h^{-1}q,h] & [p,h] \end{pmatrix}.
$$
Each entry in the matrix on the right is the commutator of $h$ with an element of $S^2$, where $S$ is as in line \eqref{S def} above.  Hence by line \eqref{S est}, we see that 
$$
\Bigg\|\Bigg[h,\begin{pmatrix} 1-q & u_hp \\ -u_h^{-1}q & 1-p \end{pmatrix}\Bigg]\Bigg\|< 4\cdot 2\cdot (2\nu)^2\cdot 12\nu^2\gamma\leq 2^9\nu^4\gamma.
$$
Combining this with line \eqref{x y_2} gives 
\begin{equation}\label{x y_2 2}
\|[x,y_2]\|< \Bigg\|\Bigg[xh,\begin{pmatrix} 1-q & u_hp \\ -u_h^{-1}q & 1-p \end{pmatrix}\Bigg]\Bigg\|+2^9\nu^4\gamma.
\end{equation}
On the other hand 
\begin{align}\label{xh com}
\Bigg[xh,\begin{pmatrix} 1-q & u_hp \\ -u_h^{-1}q & 1-p \end{pmatrix}\Bigg] & =\Bigg[[x,h],\begin{pmatrix} 1-q & u_hp \\ -u_h^{-1}q & 1-p \end{pmatrix}\Bigg] \nonumber \\ & \quad \quad\quad\quad\quad +\Bigg[hx,\begin{pmatrix} 1-q & u_hp \\ -u_h^{-1}q & 1-p \end{pmatrix}\Bigg].
\end{align}
As $\|[h,x]\|< \gamma$, we have 
$$
\Bigg\|\Bigg[[x,h],\begin{pmatrix} 1-q & u_hp \\ -u_h^{-1}q & 1-p \end{pmatrix}\Bigg]\Bigg\|\leq 2\gamma\Bigg\| \begin{pmatrix} 1-q & u_hp \\ -u_h^{-1}q & 1-p \end{pmatrix}\Bigg\|.
$$
As $\|1-p\|\leq \nu$ and $\|1-q\|\leq \nu$ by Corollary \ref{id norm cor}, every entry of the matrix on the right has norm at most $\nu^2$, and so 
$$
\Bigg\|\Bigg[[x,h],\begin{pmatrix} 1-q & u_hp \\ -u_h^{-1}q & 1-p \end{pmatrix}\Bigg]\Bigg\|< 2^3\nu^2\gamma.
$$
Hence line \eqref{xh com} implies that 
\begin{equation}\label{xh com 2}
\Bigg\|\Bigg[xh,\begin{pmatrix} 1-q & u_hp \\ -u_h^{-1}q & 1-p \end{pmatrix}\Bigg]\Bigg\|< \Bigg\|\Bigg[hx,\begin{pmatrix} 1-q & u_hp \\ -u_h^{-1}q & 1-p \end{pmatrix}\Bigg]\Bigg\|+2^3\nu^2\gamma.
\end{equation}
The commutator appearing on the right above equals 
$$
\begin{pmatrix} [q,hx] & [hx,u_h]p+u_h[hx,p] \\ [u_h^{-1},hx]q-u_h^{-1}[hx,q] & [p,hx] \end{pmatrix}.
$$
Using that $u_h\in \mathcal{U}^1_{n,\nu,\gamma}(hX,B)$, and applying Lemma \ref{cut proj lem}, the norm of each entry above is at most $2\nu\gamma$, whence 
$$
\Bigg\|\Bigg[hx,\begin{pmatrix} 1-q & u_hp \\ -u_h^{-1}q & 1-p \end{pmatrix}\Bigg]\Bigg\|< 2^3\nu\gamma.
$$
Combining this with lines \eqref{x y_2 2} and \eqref{xh com 2} therefore implies that 
$$
\|[x,y_2]\|< 2^{10}\nu^4\gamma.
$$
\item \label{y_3} Finally, we look at $y_3$ from line \eqref{y1 y2 y3}.  This can be handled very similarly to the case of $y_2$, giving the estimate $\|[x,y_3]\|< 2^{10}\nu^4\gamma$ for all $x\in X$; we leave the details to the reader.
\end{enumerate}
Putting together the concluding estimates of points \eqref{y_1}, \eqref{y_2}, and \eqref{y_2} above, we see that $\|[x,vw^{-1}]\|< 2^{21}\nu^8\gamma$ for all $x\in X$.  Combining this with line \eqref{[h,vw]}, we see that 
\begin{equation}\label{[x,vw]}
\|[x,vw^{-1}]\|< 2^{21}\nu^{9}\gamma
\end{equation}
for all $x\in X\cup \{h\}$.  

To complete the proof, let us estimate $\|[x,wv^{-1}]\|$ for $x\in X\cup \{h\}$.  Using the formula $[x,wv^{-1}]=wv^{-1}[vw^{-1},x]wv^{-1}$, we see that 
$$
\|[x,wv^{-1}]\|\leq \|wv^{-1}\|\|[vw^{-1},x]\|\|wv^{-1}\|.
$$
Lines \eqref{[x,vw]} and \eqref{vw norm} therefore imply that 
$$
\|[x,wv^{-1}]\|\leq 2^{37}\nu^{25}\gamma
$$
and we are finally done.
\end{proof}

Finally, we are ready for the proof of Proposition \ref{bound exact}.

\begin{proof}[Proof of Proposition \ref{bound exact}]
With notation as in the statement, let $(p,q)\in \mathcal{P}^{1}_{n,\kappa,\epsilon}(X\cup Y_h\cup Y_{1-h}\cup \{h\},B)$, and assume that the images of $[p,q]$ in $KK^0_{\lambda,\delta}(hX\cup Y\cup \{h\},B)$ and $KK^0_{\lambda,\delta}((1-h)X\cup Y\cup \{h\},B)$ under the maps in lines \eqref{eta h} and \eqref{eta 1-h} are zero. 

Note first that the map in line \eqref{eta h} is induced by the identity map on cycles, so Lemma \ref{kk0 zero} applied to the cycle $(p,q)$ in $\mathcal{P}_{n,\lambda,\delta}(hX\cup Y\cup \{h\},B)$ implies that there exists $k\in \N$ such that $(p\oplus 1_k\oplus 0_k,q\oplus 1_k\oplus 0_k)$ is in the same path component of $\mathcal{P}_{n+2k,2\lambda,\delta}(hX\cup Y\cup \{h\},B)$ as an element of the form $(r,r)$.  Replacing $(r,r)$ with $(yry^*,yry^*)$ for some appropriate unitary $y\in M_{n+2k}(\C)$ and using that the unitary group of $M_{n+2k}(\C)$ is connected, we may assume that $(r,r)$ is in $\mathcal{P}^1_{n+2k,2\lambda,\delta}(hX\cup Y_h\cup \{h\},B)$ (see Definition \ref{nize proj} for notation).  Moreover, as $(p,q)\in \mathcal{P}^{1}_{n,\lambda,\delta}(X\cup Y_h\cup Y_{1-h}\cup \{h\},B)$ there is a unitary $z\in M_{n+2k}(\C)$ such that  $(z(p\oplus 1_k\oplus 0_k)z^*,z(q\oplus 1_k\oplus 0_k)z^*)$ is in $\mathcal{P}^1_{n,\lambda,\delta}(hX\cup Y\cup\{h\},B)$.   As the elements $(r,r)$ and $(z(p\oplus 1_k\oplus 0_k)z^*,z(q\oplus 1_k\oplus 0_k)z^*)$ of $\mathcal{P}^1_{n,2\lambda,\delta}(hX\cup Y\cup\{h\},B)$ are connected by a path $\mathcal{P}_{n,2\lambda,\delta}(hX\cup Y\cup\{h\},B)$, we may use Proposition \ref{proj same image} part \eqref{hom np} to connect them by a path in $\mathcal{P}^1_{n,2\lambda,4\delta}(hX\cup Y\cup\{h\},B)$.  Precisely analogously (increasing $k$ if necessary), we may assume that $(z(p\oplus 1_k\oplus 0_k)z^*,z(q\oplus 1_k\oplus 0_k)z^*)$ is in the same path component of $\mathcal{P}^1_{n,2\lambda,4\delta}((1-h)X\cup Y_{1-h}\cup\{h\},B)$ as an element of the form $(s,s)$.

For notational simplicity, write $m=n+2k$, and let $M:=4\cdot2^{(200\lambda)^3}$.  Then (with notation as in Definition \ref{nize uni}), Lemma \ref{hom to sim} gives $j\in \N$ and elements 
$$
u_h\in \mathcal{U}^1_{m+2j,M,M\delta}(hX\cup\{h\}\cup Y,B)
$$ 
and 
$$
u_{1-h}\in \mathcal{U}^1_{m+2j,M,M\delta}((1-h)X\cup \{h\}\cup Y,B)
$$ 
such that 
\begin{equation}\label{bigconj h}
u_h(z(p\oplus 1_k\oplus 0_k)z^* \oplus 1_j\oplus 0_j)u_h^{-1}=z(q\oplus 1_k\oplus 0_k)z^*\oplus 1_j\oplus 0_j
\end{equation}
and 
\begin{equation}\label{bigconj 1-h}
u_{1-h}(z(p\oplus 1_k\oplus 0_k)z^*\oplus 1_j\oplus 0_j)u_{1-h}^{-1}=z(q\oplus 1_k\oplus 0_k)z^*\oplus 1_j\oplus 0_j.
\end{equation}
For notational simplicity, rename $z(p\oplus 1_k\oplus 0_k)z^*\oplus 1_j \oplus 0_j$ and $z(p\oplus 1_k\oplus 0_k)z^*\oplus 1_j\oplus 0_j$ as $p$ and $q$ respectively and rewrite $m+2j$ as $n$: if the conclusion of the proposition holds for this new pair then it also holds for the original pair thanks to the definition of the controlled $KK^0$ groups (see Definition \ref{alm com 0}), so this makes no real difference.  In this new language, lines \eqref{bigconj h} and \eqref{bigconj 1-h} can be rewritten $u_hpu_h^{-1}=q$ and $u_{1-h}pu_{1-h}^{-1}=q$ respectively.  

Define now 
\begin{equation}\label{pre u}
u:=u_{1-h}(1-p)+u_hp,
\end{equation}
which we claim has the properties in the statement.  Using Lemma \ref{u exist} with $\nu=M$ and $\gamma=M\delta$, we see that (with notation as in Definition \ref{nize uni}), $u$ is an element of $\mathcal{U}^1_{n,2M^2,10M^2 \delta}(h(1-h)X\cup\{h\}\cup Y,B)$.  Recalling that $M=4\cdot 2^{(200\lambda)^3}$, we see that $N_1(\lambda)=2^{9000000\lambda^3}$ has the desired property.  

To complete the proof, it remains to show that if $N_2=N_2(\mu)=2^{252000000\mu^3}$, then $\partial[u]=[p,q]$ in $KK^0_{N_2,N_2\gamma}(X\cup \{h\},B)$.

Now, $v:=v(u,h)$ is as in line \eqref{v def}, we have
$$
\partial[u]=\Bigg[ v\begin{pmatrix} 1 & 0 \\ 0 & 0 \end{pmatrix} v^{-1}\,,\,\begin{pmatrix} 1 & 0 \\ 0 & 0 \end{pmatrix}\Bigg].
$$
Define now 
\begin{equation}\label{w def}
w:=\begin{pmatrix} u_{1-h}(1-p) & -q \\ p & (1-p)u_{1-h}^{-1}\end{pmatrix}\in M_{2n}(\LB).
\end{equation}
Applying Lemma \ref{vw lem} with $\nu=M$ and $\gamma=M\delta$, we see that $w$ is in $\mathcal{U}_{2n,(2M)^8,2^{37}M^{25}\delta}(X\cup\{h\},B)$.  For notational simplicity, set $M_1:=2^{37}M^{25}$.  Proposition \ref{sim to hom} implies that in $KK^0_{M_1^3, 3M_1^3\delta}(X\cup\{h\},B)$
\begin{align*}
\partial[u]& =\Bigg[ v\begin{pmatrix} 1 & 0 \\ 0 & 0 \end{pmatrix} v^{-1}\,,\,\begin{pmatrix} 1 & 0 \\ 0 & 0 \end{pmatrix}\Bigg] \\
& =\Bigg[ (vw^{-1})^{-1}v\begin{pmatrix} 1 & 0 \\ 0 & 0 \end{pmatrix} v^{-1}(vw^{-1})\,,\,\begin{pmatrix} 1 & 0 \\ 0 & 0 \end{pmatrix}\Bigg] \\
&= \Bigg[ w\begin{pmatrix} 1 & 0 \\ 0 & 0 \end{pmatrix} w^{-1}\,,\,\begin{pmatrix} 1 & 0 \\ 0 & 0 \end{pmatrix}\Bigg].
\end{align*}
Computing, we see that 
$$
w\begin{pmatrix} 1 & 0 \\ 0 & 0 \end{pmatrix} w^{-1}=\begin{pmatrix} 1-q & 0 \\ 0 & p \end{pmatrix},
$$
whence
\begin{equation}\label{pen u}
\partial[u]=\Bigg[ \begin{pmatrix} 1-q & 0 \\ 0 & p \end{pmatrix}\,,\,\begin{pmatrix} 1 & 0 \\ 0 & 0 \end{pmatrix}\Bigg]
\end{equation}
in the group $KK^0_{M_1^3, 3M_1^3\delta}(X\cup\{h\},B)$.  

Note now that the matrix $\begin{pmatrix} 1-q & q \\ q & 1-q\end{pmatrix}\in M_{2n}(\K_B^+)$ has norm at most $2\lambda$ (as $\|q\|\leq \kappa\leq \lambda$, and so $\|1-q\|\leq \lambda$ by Corollary \ref{id norm cor}), and that it satisfies 
$$
\Bigg\|\Bigg[ x,\begin{pmatrix} 1-q & q \\ q & 1-q\end{pmatrix}\Bigg]\Bigg\|<\epsilon<\delta
$$
for all $x\in X\cup\{h\}$.  Hence $\begin{pmatrix} 1-q & q \\ q & 1-q\end{pmatrix}\in \mathcal{U}_{2n,2\lambda,\delta}(X\cup\{h\},B)$.   Applying Proposition \ref{sim to hom} again and using that $\lambda\leq M_1$, the identity 
$$
\begin{pmatrix} 1-q & q \\ q & 1-q\end{pmatrix}\begin{pmatrix} 1 & 0 \\ 0 & 0 \end{pmatrix}\begin{pmatrix} 1-q & q \\ q & 1-q\end{pmatrix}=\begin{pmatrix} 1-q & 0 \\ 0 & q \end{pmatrix}
$$
shows that the class on the right hand side of line \eqref{pen u} is the same as the class
$$
\Bigg[ \begin{pmatrix} 1-q & 0 \\ 0 & p \end{pmatrix}\,,\,\begin{pmatrix} 1-q & 0 \\ 0 & q \end{pmatrix}\Bigg]
$$
in $KK^0_{M_1^6, 9M_1^9\delta}(X\cup\{h\},B)$.  Using a rotation homotopy, this is the same as $[p,q]$ by definition of $KK^0_{M_1^6, 9M_1^9\delta}(X\cup\{h\},B)$; recalling that $M_1:=2^{37}M^{25}$, $M=4\cdot 2^{(200\lambda)^3}$, and that $\mu\geq 2^{9000000\lambda^3}$ we see that $N_2(\mu)=2^{37}\mu^{25}$ indeed has the right properties.
\end{proof}

\section{Main theorems}\label{dec sec}

In this section (as throughout), if $B$ is a separable $C^*$-algebra, then $\LB$ and $\K_B$ are respectively the adjointable and compact operators on the standard Hilbert $B$-module $\ell^2\otimes B$.  We identify $\LB$ with the ``diagonal subalgebra'' $1_{M_n}\otimes \LB\subseteq M_n\otimes \LB=M_n(\LB)$ for each $n$.  

In this section we prove our main result: the class of separable and nuclear $C^*$-algebras with the UCT is closed under decomposability.

\subsection{Two technical `local' controlled vanishing results}

In order to make the structure of the proof of Theorem \ref{main} as clear as we can, in this subsection we split off two `local' technical results.  These are based on our work in Sections \ref{fd sec} and \ref{bound sec}; given the material in these earlier sections, at this point the proofs are essentially book-keeping. 



The next result is the first key technical ingredient we need: it is based on the material from Section \ref{fd sec}.  For the statement, recall that if $x$ and $S$ are respectively an element and subset of a metric space, and $\epsilon>0$, then ``$x\in_\epsilon S$'' means that there is $s\in S$ with $d(x,s)<\epsilon$.

\begin{proposition}\label{uct incl}
There exists a function $M:[1,\infty)\to [1,\infty)$ with the following property.  Let $\kappa\geq 1$, and let $M:=M(\kappa)$.  Let $B$ be a separable $C^*$-algebra such that $K_*(B)=0$.  Let $\epsilon>0$, and let $X$ be a finite subset of the unit ball of $\LSB$.  Let $F\subseteq \LSB$ be a separable, nuclear, unital $C^*$-subalgebra of $\LSB$ such that the identity representation $F\to \LSB$ is strongly unitally absorbing (see Definition \ref{usa def}), such that for all $x\in X$, $x\in_\epsilon F$, and such that $F$ satisfies the UCT.

Then for each $i\in \{0,1\}$ there exists a finite subset $Z$ of $F_1$ such that the forget control map 
$$
KK^i_{\kappa,\epsilon}(Z,B)\to KK^i_{M,M\epsilon}(X,B)
$$
of Definition \ref{dir set 2} (for $i=0$) or Definition \ref{dir set 3} (for $i=1$) is zero.
\end{proposition}

\begin{proof}
Let us focus on the case of $i=0$ first.  Let $Y$ be a finite subset of $F_1$ such that for all $x\in X$ there exists $y\in Y$ with $\|x-y\|< \epsilon$.  Then for any $n$, any $\delta>0$, we see that with notation in Definition \ref{alm com 0}
$$
\mathcal{P}_{n,\kappa,\delta}(Y,SB)\subseteq \mathcal{P}_{n,\kappa,\delta+2\kappa\epsilon}(X,SB).
$$
Hence the forget control map 
\begin{equation}\label{prelim forget}
KK_{\kappa,\delta}^0(Y,SB)\to KK^0_{\kappa,\delta+2\kappa\epsilon}(X,SB)
\end{equation}
is defined.  On the other hand, Corollary \ref{vanish cor 2} implies that there is a finite subset $Z$ of $F_1$ such that the forget control map 
$$
KK_{\kappa,\epsilon}^0(Z,SB)\to KK^0_{\kappa,160\epsilon}(Y,SB)
$$
is defined and zero.  Taking $\delta=160\epsilon$, and composing this with the forget control map in line \eqref{prelim forget} above, we see that the forget control map
$$
KK_{\kappa,\epsilon}^0(Z,SB)\to KK^0_{\kappa,(160+2\kappa)\epsilon}(Y,SB)
$$
is well-defined and zero.  We are therefore done in the case $i=0$: any function $M$ satisfying $M(\kappa)\geq 160+2\kappa$ will work.  The case of $i=1$ is similar (although requiring a much larger $M(\kappa)$), using Lemma \ref{uct reform odd case} in place of Corollary \ref{vanish cor 2}.
\end{proof}

The second key technical result we need is as follows: it is based on the material from Section \ref{bound sec}.  

\begin{proposition}\label{mv vanish technical}
Let $X$ be a finite subset of the unit ball of $\LB$, let $\epsilon>0$, and let $\kappa\geq 1$.  Assume there exists a positive contraction $h\in \LB$, finite self-adjoint subsets $Z_h$, $Z_{1-h}$, and $Z_{h(1-h)}$ of the unit ball of $\LB$, and $\lambda,\mu\geq 1$ and $\delta,\gamma> 0$ with the following properties:
\begin{enumerate}[(i)]
\item \label{mv final com} $\|[h,x]\|<\epsilon$ for all $x\in X$;
\item \label{mv final in} for each $z\in Z_{h(1-h)}$, $z\in_\epsilon Z_h$ and $z\in_\epsilon Z_{1-h}$;
\item \label{zh1-h ass} with $N_1:=N_1(\lambda)$ as in Proposition \ref{bound exact}, the forget control map 
$$
KK^1_{N_1,N_1\delta}(h(1-h)X\cup\{h\}\cup Z_{h(1-h)})\to KK^1_{\mu,\gamma}(h(1-h)X\cup\{h\},B)
$$
os Definition \ref{dir set 3} is defined and zero;
\item \label{zh ass} the forget control map
\begin{align*}
KK^0_{4\kappa^2,2\epsilon}(Z_h & \cup hX\cup\{h\},B) \\&   \to KK^0_{\lambda,\delta}(hX\cup\{h\}\cup Z_{h(1-h)},B)
\end{align*}
of Definition \ref{dir set 2} is defined and zero;
\item \label{z1-h ass} the forget control map
\begin{align*}
KK^0_{4\kappa^2,2\epsilon} (Z_{1-h}&\cup (1-h)X\cup\{h\},B) \\&   \to KK^0_{\lambda,\delta}((1-h)X\cup\{h\}\cup Z_{h(1-h)},B)
\end{align*}
of Definition \ref{dir set 2} is defined and zero.
\end{enumerate}
Then if $Z:=Z_h\cup Z_{1-h}\cup X\cup \{h\}$ and $N_2:=N_2(\mu)$ is as in Proposition \ref{bound exact}, we have that the forget control map 
$$
KK^0_{\kappa,\epsilon}(Z,B)\to KK^0_{N_2,N_2\gamma}(X,B)
$$
of Definition \ref{dir set 2} is zero.
\end{proposition}

\begin{proof}
We need to show that an arbitrary class $\alpha\in KK^0_{\kappa,\epsilon}(X,B)$ vanishes under the forget control map
$$
KK^0_{\kappa,\epsilon}(Z,B)\to KK^0_{N_2,N_2\gamma}(X,B).
$$
Using Proposition \ref{proj same image} part \eqref{same np}, with notation as in Definition \ref{nize proj}, we may assume that there is a cycle $(p,q)\in \mathcal{P}^1_{n,4\kappa^3,\epsilon}(Z,B)$ such that $[p,q]\in KK^0_{4\kappa^3,\epsilon}(Z,B)$ agrees with the image of $\alpha$ under the forget control map 
$$
KK^0_{\kappa,\epsilon}(Z,B)\to KK^0_{4\kappa^3,\epsilon}(Z,B).
$$
It thus suffices to show that $[p,q]\in KK^0_{4\kappa^3,\epsilon}(Z,B)$ vanishes under the forget control map
$$
KK^0_{4\kappa^2,\epsilon}(Z,B)\to KK^0_{N_2,N_2\gamma}(X,B)
$$
(we leave the check that this map is defined under our assumptions to the reader).  Now, with notation as in Proposition \ref{bound exact}, the composition 
$$
\xymatrix{ KK^0_{4\kappa^2,\epsilon}(X\cup Z_h\cup Z_{1-h} \cup\{h\},B) \ar[d] & \\ KK^0_{4\kappa^2,\epsilon}(X\cup Z_h\cup\{h\},B) \ar[r]^-{\eta_h} & KK^0_{4\kappa^2,2\epsilon}(hX\cup Z_h\cup\{h\},B) \ar[d] \\
& KK^0_{\lambda,\delta}(hX\cup Z_{h(1-h)}\cup\{h\},B)}
$$
(compare line \eqref{eta h}) is the zero map: indeed, the right-hand vertical map is zero by assumption \eqref{zh ass}.  Similarly, using assumption \eqref{z1-h ass}, we see that the composition 
$$
\xymatrix{ KK^0_{4\kappa^2,\epsilon}(X\cup Z_h\cup Z_{1-h} \cup\{h\},B) \ar[d] & \\ KK^0_{4\kappa^2,\epsilon}(X\cup Z_{1-h}\cup\{h\},B) \ar[r]^-{\eta_{1-h}} & KK^0_{4\kappa^2,2\epsilon}((1-h)X\cup Z_{1-h}\cup\{h\},B) \ar[d] \\
& KK^0_{\lambda,\delta}((1-h)X\cup Z_{h(1-h)}\cup\{h\},B)}
$$
(compare line \eqref{eta 1-h}) is zero.  Hence Proposition \ref{bound exact} gives us an element 
$$
u\in \mathcal{U}^1_{\infty,N_1,N_1\delta}(h(1-h)X\cup\{h\}\cup Z_{h(1-h)},B)
$$ 
such that in the diagram below (with $N_0=N_0(\mu)$ as in Proposition \ref{boundary map 2})
\begin{equation}\label{mv bound diag}
\xymatrix{ KK^1_{N_1,N_1\delta}(h(1-h)X\cup\{h\}\cup Z_{h(1-h)},B) \ar[d] & KK^0_{4\kappa^2,2\epsilon}(Z,B)\ar[d] \\
 KK^1_{\mu,\gamma}(h(1-h)X\cup \{h\},B) \ar[r]^-\partial & KK^0_{N_0,N_0\gamma}(X\cup \{h\},B) \ar[d] \\
 & KK^0_{N_2,N_2\gamma}(X\cup \{h\},B)}
\end{equation}
the images of the classes $[u]\in KK^1_{N_1,N_1\delta}(h(1-h)X\cup\{h\}\cup Z_{h(1-h)})$ and $[p,q]\in KK^0_{\kappa,\epsilon}(Z,B)$ in the bottom right group $KK^0_{N_2,N_2\gamma}(X\cup \{h\},B)$ are the same; a fortiori their images are also the same if we further compose with the forget control map 
$$
KK^0_{N_2,N_2\gamma}(X\cup \{h\},B)\to KK^0_{N_2,N_2\gamma}(X,B).
$$
Assumption \eqref{zh1-h ass} implies, however, that the left-hand vertical map in line \eqref{mv bound diag} is zero, however, so we are done.
\end{proof}

\subsection{Proof of the main theorems}

We are now ready for our main results.  For the statement of the first of these, we recall what it means for a $C^*$-algebra to decompose over a class of $C^*$-algebras from Definition \ref{ais} above.  After giving a proof of this, we will use it to establish the theorems from the introduction.

\begin{theorem}\label{uct the}
Let $\kappa\geq 1$ and $\gamma>0$.  Let $M_1:=M(4)$ be as in Proposition \ref{uct incl}.  Let $N_1:=N_1(M_1)$ be as in Proposition \ref{bound exact}.  Let $M_2:=M(N_1)$ be as in Proposition \ref{uct incl}.  Let $N_2:=N_2(M_2)$ be as in Proposition \ref{bound exact}.   Then any $\nu\geq N_2$ and $\epsilon\in (0,\gamma(2N_2M_2N_1M_1)^{-1})$ have the following property.

Let $A$ be a separable, unital $C^*$-algebra that decomposes over the class of nuclear $C^*$-algebras that satisfy the UCT.  Let $B$ be any separable $C^*$-algebra such that $K_*(B)=0$.  Then for any finite subset $X$ of $A_1$, and $\epsilon>0$, there is a finite subset $Z$ of $A_1$, such that the forget control map
$$
KK^0_{\kappa,\epsilon}(Z,SB)\to KK^0_{\nu,\gamma}(X,SB)
$$
of Definition \ref{dir set 2} is defined and zero.  

In particular, $A$ satisfies the UCT.
\end{theorem}

\begin{proof}
The claim that $A$ satisfies the UCT follows as the vanishing property in the statement of Theorem \ref{uct the} implies condition \eqref{weak a vanish} from Corollary \ref{vanish cor 2}.  It thus suffices to prove the vanishing property.  Let $\nu$ and $\epsilon$ satisfy the given assumptions.  

As $A$ is decomposable with respect to the family of nuclear $C^*$-subalgebras that satisfy the UCT, there are nuclear, UCT $C^*$-subalgebras $C$, $D$ and $E$ of $A$ and a positive contraction $h\in E$ such that for all $x\in X$, $\|[h,x]\|<\epsilon$, $hx\in_\epsilon C$, $(1-h)x\in_\epsilon D$, and $h(1-h)x\in_\epsilon E$, and such that all $e\in E$ we have that $e\in_\epsilon C$, and $e\in_\epsilon D$.  Replacing $C$, $D$, and $E$ by the $C^*$-subalgebra of $A$ spanned by the algebra and the unit of $A$, we may assume that $C$, $D$, and $E$ are unital subalgebras of $A$ (note that the unitization of a nuclear $C^*$-algebra that satisfies the UCT is nuclear and satisfies the UCT: see \cite[Exercise 2.3.5]{Brown:2008qy} for nuclearity and \cite[Proposition 2.3 (a)]{Rosenberg:1987bh} for the UCT).

Represent $A$ on $\LSB$ using a representation with the properties in Corollary \ref{sub sua} (with $B$ replaced by $SB$), and identify $A$ (therefore also $C$, $D$, and $E$) with unital $C^*$-subalgebras of $\LSB$ using this representation.  Note that the restrictions of this representation to each of $E$, $C$, $D$, (and the representation of $A$ itself) are strongly unitally absorbing.

Throughout the rest of the proof, all unlabeled arrows are forget control maps as in Definitions \ref{dir set 2} or \ref{dir set 3} as appropriate.

Using Proposition \ref{uct incl} there exists a finite self-adjoint subset $Z_{h(1-h)}$ of $E_1$ such that the forget control map 
\begin{align}\label{ze forget 0}
KK^1_{N_1,2N_1M_1\epsilon} & (h(1-h)X\cup Z_{h(1-h)}\cup\{h\},SB) \\ \nonumber & \to KK^1_{M_2,2M_2N_1M_1\epsilon}(h(1-h)X\cup \{h\},SB) 
\end{align}
is zero.  Similarly, Proposition \ref{uct incl} and the facts that for all $z\in Z_{h(1-h)}\subseteq E_1$, $z\in_{\epsilon}C$ and $z\in_{\epsilon}D$ gives finite self-adjoint subsets $Z_h$ and $Z_{1-h}$ of $C_1$ and $D_1$ respectively such that the forget control maps
\begin{align}\label{zc forget}
KK^0_{4,2\epsilon} & (hX\cup Z_h\cup\{h\},SB) \nonumber  \\  & \to KK^0_{M_1,2M_1\epsilon}(hX\cup Z_{h(1-h)}\cup\{h\},SB)
\end{align}
and 
\begin{align}\label{zd forget}
KK^0_{4,2\epsilon} & ((1-h)X\cup Z_{1-h}\cup\{h\},SB) \nonumber \\ &\to KK^0_{M_1,2M_1\epsilon}((1-h)X\cup Z_{h(1-h)}\cup\{h\},SB)
\end{align}
are defined and zero.  Expanding $Z_h$ and $Z_{1-h}$ if necessary (using that for all $e\in E$, $e\in_\epsilon C$, and $e\in_\epsilon D$), we may assume that , 
\begin{equation}\label{zs in} 
\text{for all } z\in Z,~ z\in_\epsilon Z_h \text{ and } z\in_\epsilon Z_{1-h}.
\end{equation}  

We are now in a position to apply Proposition \ref{mv vanish technical} with the given $\epsilon$ and $\kappa$, $\lambda=M_1$, $\delta=2M_1\epsilon$, $\mu=M_2$ and $\gamma$ as given: assumption \eqref{mv final com} follows by choice of $h$; assumption \eqref{mv final in} follows from line \eqref{zs in}; assumption \eqref{zh1-h ass} follows as the map in line \eqref{ze forget 0} is zero; assumption \eqref{zh ass} follows as the map in line \eqref{zc forget} is zero; and assumption \eqref{z1-h ass} follows as the map in line \eqref{zd forget} is zero.  Therefore Proposition \ref{mv vanish technical} implies that the forget control map 
$$
KK^0_{\kappa,\epsilon}(Z,SB)\to KK^0_{\nu,\gamma}(X,SB)
$$
is zero and we are done.
\end{proof}

To establish the main results as stated in the introduction, we need a basic lemma.

\begin{lemma}\label{nuc lem}
The class of unital, nuclear $C^*$-algebras is closed under decomposability.
\end{lemma}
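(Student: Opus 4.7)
The plan is to verify nuclearity of $A$ directly via the completely positive approximation property (CPAP). I first reduce to the unital case by passing to unitizations (which preserve both decomposability and nuclearity); enlarging $C$, $D$, $E$ to contain $1_A$ preserves their nuclearity, so I may assume $A$, $C$, $D$, $E$ are unital with a common unit. Given $X \subset A_1$ finite and $\epsilon > 0$, I fix a sufficiently small $\delta > 0$ and apply decomposability to obtain nuclear $C^*$-subalgebras $C$, $D$, $E$ of $A$ and a positive contraction $h \in E$ satisfying the hypotheses of Definition~\ref{ais}. A standard functional-calculus bound of $\|[x, h^{1/2}]\|$ by a constant multiple of $\|[x, h]\|^{1/2}$, combined with quasi-centrality, yields
\[
x \approx h^{1/2} x h^{1/2} + (1-h)^{1/2} x (1-h)^{1/2}
\]
on $X$, and moreover $h^{1/2} x h^{1/2} \approx hx \approx c_x$ for some $c_x \in C$, and symmetrically $(1-h)^{1/2} x (1-h)^{1/2} \approx d_x \in D$, with all errors controlled by some power of $\delta$.

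Next I exploit nuclearity of $C$ and $D$ to choose ucp CPAP factorizations $\alpha_C : C \to M_m$, $\beta_C : M_m \to C$ and $\alpha_D : D \to M_n$, $\beta_D : M_n \to D$ approximating the identities to within $\delta$ on $\{c_x\}$ and $\{d_x\}$ respectively. Arveson's extension theorem produces ccp extensions $\tilde\alpha_C : A \to M_m$ and $\tilde\alpha_D : A \to M_n$ of $\alpha_C$ and $\alpha_D$. I then assemble the ccp map
\[
\Phi : A \to M_m \oplus M_n, \qquad \Phi(a) := \bigl(\tilde\alpha_C(h^{1/2} a h^{1/2}),\ \tilde\alpha_D((1-h)^{1/2} a (1-h)^{1/2})\bigr),
\]
and tentatively set $\Psi(y_1, y_2) := \beta_C(y_1) + \beta_D(y_2)$. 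Tracing the approximations through the composition, one finds $\Psi\Phi(x) \approx c_x + d_x \approx x$ on $X$, with total error a controlled multiple of $\delta^{1/2}$.

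The main obstacle I anticipate is that $\Psi$ is only completely positive with norm at most $2$ (for instance, $\beta_C(1_m) + \beta_D(1_n) = 2 \cdot 1_A$ if the $\beta$'s are unital), so this is not yet a CPAP factorization. The resolution is to observe that the composition $T := \Psi \circ \Phi : A \to A$ is itself CP of finite rank and satisfies $T(1_A) = \beta_C(\tilde\alpha_C(h)) + \beta_D(\tilde\alpha_D(1-h)) \approx h + (1-h) = 1_A$, so the renormalization $T' := T / \|T(1_A)\|$ is a \emph{ccp} finite-rank map that still approximates the identity on $X$ within $\epsilon$ once $\delta$ is small enough. A ccp map of finite rank factors as a composition of two ccp maps through a finite-dimensional matrix algebra: embed the finite-dimensional operator system $T'(A) \subseteq A$ into some $M_k$ via a completely isometric ucp map $\iota$, extend $\iota^{-1}$ from the image back to $A$ via Arveson, and compose with $\iota \circ T' : A \to M_k$. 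This exhibits the CPAP factorization through $M_k$ at the level $(X, \epsilon)$, and letting $X$ and $\epsilon$ vary yields nuclearity of $A$. The bulk of the technical work is the H\"older-type functional-calculus estimate for $[x, h^{1/2}]$ and the careful bookkeeping of $\delta$-errors through the composition.
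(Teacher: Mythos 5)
Your argument tracks the paper's proof closely through all the main steps: unitization, the Pedersen bound $\|[h^{1/2},x]\|\leq\tfrac54\|[h,x]\|^{1/2}$, the approximate decomposition $x\approx h^{1/2}xh^{1/2}+(1-h)^{1/2}x(1-h)^{1/2}$ with errors absorbed into $C$ and $D$, CPAP factorizations of $C$ and $D$ followed by Arveson extension of $\alpha_C$, $\alpha_D$ to all of $A$, and renormalization by $\|T(1)\|$ to produce a finite-rank ccp map on $A$ approximating the identity on $X$. Up to that point you and the paper coincide.

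The gap is in your final ``re-factoring'' step, where you try to upgrade $T'$ to a CPAP factorization by embedding $T'(A)$ completely isometrically into some $M_k$ and extending $\iota^{-1}$ by Arveson. Two problems. First, Arveson's extension theorem extends cp maps \emph{into $\mathcal{B}(H)$}, using injectivity of the target; it does not yield a ccp extension $M_k\to A$ of $\iota^{-1}:\iota(T'(A))\to A$, since $A$ need not be injective, and the extension $M_k\to\mathcal{B}(H)$ has no reason to take values in $A$. Second, the existence of a completely isometric ucp embedding of the finite-dimensional operator space $T'(A)$ into a finite matrix algebra is not automatic (even approximate such embeddings are an exactness condition, not a given). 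The paper bypasses this by citing a formulation of nuclearity in terms of finite-rank ccp self-maps. But you do not need the re-factoring at all: you have already factored $T=\Psi\circ\Phi$ through $M_m\oplus M_n$ with $\Phi$ ccp and $\Psi$ cp of norm at most $2$. After dividing $\Psi$ by $\|T(1)\|$, perturb $\Phi$ slightly so that $q:=\Phi(1)$ is invertible, and apply the standard conjugation trick: replace $\Phi$ by $q^{-1/2}\Phi(\cdot)q^{-1/2}$ and $\Psi$ by $y\mapsto\Psi(q^{1/2}yq^{1/2})/\|T(1)\|$. The new first map is ucp, the new second map is cp and sends $1_{M_m\oplus M_n}$ to $T(1)/\|T(1)\|\leq 1$, hence is ccp, and their composition is unchanged. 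This exhibits the CPAP factorization through $M_m\oplus M_n$ directly and makes your final step superfluous.
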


\begin{proof}
Let $A$ be a unital $C^*$-algebra that decomposes over the class of unital nuclear $C^*$-algebras.  Let a finite subset $X$ of $A$ and $\epsilon\in (0,1)$ be given.  To show that $A$ is nuclear, it will suffice to construct a finite rank ccp map 
$$
\phi:A\to A
$$
such that $\phi(x)\approx_\epsilon x$ for all $x\in X$ (compare for example \cite[IV.3.1.6, (iii)]{Blackadar:2006eq}).  We may assume that $X$ contains the unit of $A$.

Let then $C$, $D$, $E$\footnote{One does not actually need $E$ at all in the proof.}, and $h$ be as in the definition of decomposability (Definition \ref{ais}) with respect to the finite set $X$ and the parameter $\delta=\frac{1}{18}(\epsilon/(1+\epsilon))^2$, and with $C$ and $D$ nuclear.  Note that for any $x\in X$, $\|[h^{1/2},x]\|\leq \frac{5}{4}\|[h,x]\|^{1/2}$ by the main result of \cite{Pedersen:1993vh}, whence 
\begin{equation}\label{ped com}
\|hx-h^{1/2}xh^{1/2}\|\leq \frac{5}{4}\|[h,x]\|^{1/2}<\frac{5}{4}\delta^{1/2}<2\delta^{1/2};
\end{equation}
as $hx\in_\delta C$, and as $\delta<1$, this implies that $h^{1/2}xh^{1/2}\in_{3\delta^{1/2}} C$.  Choose a finite subset $Y$ of $C$ such that for all $x\in X$ there is $y_x\in Y$ with 
\begin{equation}\label{yx est}
\|y_x-h^{1/2}xh^{1/2}\|<3\delta^{1/2}.
\end{equation}
Similarly, there is a finite subset $Z$ of $D$ such that for all $x\in X$ there is $z_x\in Z$ with $\|z_x-(1-h)^{1/2}x(1-h)^{1/2}\|<3\delta^{1/2}$.  

Now, as $C$ and $D$ are nuclear there are diagrams 
$$
\xymatrix{ C\ar[dr]^{\psi_C} & & C \\ & F_C \ar[ur]^-{\phi_C} & } \quad \text{and} \quad \xymatrix{ D\ar[dr]^{\psi_D} & & C \\ & F_D \ar[ur]^-{\phi_D} & } \quad
$$
where: all the arrows are ccp maps; $F_C$ and $F_D$ are finite dimensional $C^*$-algebras; and for all $y\in Y$, and all $z\in Z$,
\begin{equation}\label{y and z est}
\phi_C(\psi_C(y))\approx_{\delta^{1/2}}y \quad \text{and}\quad \psi_D(\phi_D(z))\approx_{\delta^{1/2}}z.
\end{equation}
Using Arveson's extension theorem (see for example \cite[Theorem 1.6.1]{Brown:2008qy}), extend $\psi_C$ and $\psi_D$ to ccp maps defined on all of $A$, which we keep the same notation for.  Define 
$$
\phi_0:A\to A,\quad a\mapsto \phi_C(\psi_C(h^{1/2}xh^{1/2}))+\phi_D(\psi_D((1-h)^{1/2}x(1-h)^{1/2})),
$$
and note that $\phi_0$ is completely positive.  For any $x\in X$, let $y_x$ have the property in line \eqref{yx est}.  As $\psi_C$ is contractive, this and lines \eqref{y and z est} and \eqref{ped com} imply that 
$$
\phi_C(\psi_C(h^{1/2}xh^{1/2}))\approx_{3\delta^{1/2}}\phi(\psi_C(y_x))\approx_{\delta^{1/2}} y_x \approx_{3\delta^{1/2}}h^{1/2}xh^{1/2}\approx_{2\delta^{1/2}} hx.
$$
Precisely analogously, for any $x\in X$,
$$
\phi_D(\psi_D((1-h)^{1/2}x(1-h)^{1/2}))\approx_{9\delta^{1/2}} (1-h)x
$$
and so for any $x\in X$, $\phi_0(x)\approx_{18\delta^{1/2}} x$.  Applying this to $x=1$ implies in particular that $\|\phi_0\|=\|\phi_0(1)\|\geq  1-18\delta^{1/2}$.  Hence if we define 
$$
\phi:A\to A,\quad a\mapsto \frac{\phi_0(a)}{\|\phi_0(1)\|}
$$
then $\phi$ is a ccp map such that 
$$
\|\phi(x)-x\|\leq \frac{18\delta^{1/2}}{1-18\delta^{1/2}}
$$
for all $x\in X$.  Using the choice of $\delta$, this completes the proof.
\end{proof}

The next corollary is Theorem \ref{main} from the introduction: it is an immediate consequence of Lemma \ref{nuc lem} and Theorem \ref{uct the}.

\begin{corollary}\label{nu cor}
If a separable, unital $C^*$-algebra decomposes over the class of nuclear, unital $C^*$-algebras that satisfies the UCT, then it is nuclear and satisfies the UCT.\qed
\end{corollary}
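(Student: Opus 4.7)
The plan is straightforward since the two heavy lifts, namely Theorem \ref{uct the} for the UCT conclusion and Lemma \ref{nuc lem} for nuclearity, are both already in hand. The only substantive issue is that Theorem \ref{uct the} is stated for unital $C^*$-algebras, whereas the corollary does not assume unitality, so I first need to reduce to the unital setting.

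First I would recall that, by Definition \ref{ais}, a non-unital $C^*$-algebra $A$ decomposes over the class $\mathcal{C}$ of nuclear UCT $C^*$-algebras precisely when its unitization $A^+$ does. So without loss of generality I may replace $A$ by $A^+$ and assume $A$ is a separable, unital $C^*$-algebra decomposing over the class of nuclear UCT $C^*$-algebras; it then suffices to show $A$ is nuclear and satisfies the UCT, since both properties pass between a $C^*$-algebra and its unitization (nuclearity by \cite[Exercise 2.3.5]{Brown:2008qy}, and the UCT by \cite[Proposition 2.3 (a)]{Rosenberg:1987bh}, as noted earlier in the paper around Remark \ref{dad rem}).

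Second, nuclearity of $A$ is immediate from Lemma \ref{nuc lem}: the class $\mathcal{C}$ is contained in the class of nuclear $C^*$-algebras, so a decomposition of $A$ over $\mathcal{C}$ is in particular a decomposition of $A$ over the nuclear $C^*$-algebras, and Lemma \ref{nuc lem} then gives that $A$ itself is nuclear.

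Third, for the UCT conclusion I would simply apply Theorem \ref{uct the} directly: its hypothesis is exactly that $A$ is separable, unital, and decomposes over the nuclear UCT $C^*$-algebras, and its conclusion includes (as noted in the last sentence of the statement) that $A$ satisfies the UCT. There is no obstacle here; Theorem \ref{uct the} is designed precisely to yield this corollary once combined with the unitization reduction and with Lemma \ref{nuc lem}. In short, the entire proof amounts to a one-line citation of Lemma \ref{nuc lem} and Theorem \ref{uct the} after reducing to the unital case via the definition of decomposability for non-unital algebras.
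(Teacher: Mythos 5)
Your proof is correct and follows the same route as the paper: reduce to the unital case via the definition of decomposability for non-unital algebras (with nuclearity and the UCT passing from the unitization to the algebra by \cite[Exercise 2.3.5]{Brown:2008qy} and \cite[Proposition 2.3]{Rosenberg:1987bh}), then invoke Lemma \ref{nuc lem} for nuclearity and Theorem \ref{uct the} for the UCT. This matches the paper's argument.
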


The next result is Theorem \ref{complex cor} from the introduction.  For the definition of finite complexity and the classes $\mathcal{D}_\alpha$ used below, see Definition \ref{f c}.

\begin{corollary}\label{fc cor}
Let $\mathcal{C}$ be a class of separable, unital, nuclear $C^*$-algebras that satisfy the UCT.  Then the class of separable unital $C^*$-algebras that have finite complexity relative to $\mathcal{C}$ consists of nuclear $C^*$-algebras that satisfy the UCT.  

In particular, every separable $C^*$-algebra of finite complexity is nuclear and satisfies the UCT.
\end{corollary}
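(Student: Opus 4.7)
The plan is to proceed by transfinite induction on the ordinal $\alpha$, showing that every separable $C^*$-algebra in $\mathcal{C}_\alpha$ is nuclear and satisfies the UCT. Once this is done, the ``in particular'' statement follows by specializing $\mathcal{C}$ to the class of finite-dimensional $C^*$-algebras, which are type I and hence nuclear and UCT. The elementary observation carrying the induction through is that every $C^*$-subalgebra of a separable $C^*$-algebra is separable; hence at every stage, the witness subalgebras appearing in Definition \ref{f c} are automatically separable whenever $A$ is.

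For the successor step $\alpha = \beta + 1$: suppose $A$ is separable, unital, and lies in $\mathcal{C}_{\beta + 1}$, so $A$ decomposes over $\mathcal{C}_\beta$. Any witness $(C, D, E, h)$ to the decomposability consists of $C^*$-subalgebras of $A$ lying in $\mathcal{C}_\beta$; these are separable by the observation above, and nuclear with UCT by the inductive hypothesis. Hence $A$ decomposes over the class of separable nuclear UCT $C^*$-algebras, and Corollary \ref{nu cor} (Theorem \ref{main}) concludes that $A$ is nuclear and satisfies the UCT. For the base case ($\alpha = 0$) and the limit cases, I would reduce to exactly the same situation by observing that ``$A$ is locally in $\mathcal{D}$'' is a special case of ``$A$ decomposes over $\mathcal{D}$'': given $C \in \mathcal{D}$ approximating a finite $X \subseteq A_1$ to tolerance $\epsilon$, one takes $C = D = E$ and $h = 0 \in E$ in Definition \ref{ais}, and conditions (i)--(iii) then hold trivially. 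Applied with $\mathcal{D} = \mathcal{C}$ or $\mathcal{D} = \bigcup_{\beta < \alpha} \mathcal{C}_\beta$, and invoking the hypothesis on $\mathcal{C}$ or the inductive hypothesis respectively, the same appeal to Corollary \ref{nu cor} closes the argument.

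Non-unital $A$ are handled uniformly by passing to the unitization: nuclearity and the UCT each pass between $A$ and $A^+$ by \cite[Exercise 2.3.5]{Brown:2008qy} and \cite[Proposition 2.3 (ii)]{Rosenberg:1987bh}, and ``$A$ locally in $\mathcal{C}$'' implies ``$A^+$ locally in $\{C^+ : C \in \mathcal{C}\}$'', a class that remains separable nuclear UCT. Since all the analytic heavy lifting is already packaged inside Corollary \ref{nu cor}, there is no substantive obstacle in this final step; the entire argument amounts to transfinite bookkeeping, made trivial by the observation that subalgebras of separable $C^*$-algebras are separable.
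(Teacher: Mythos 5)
Your proof is correct, and the transfinite induction scheme together with the observation that subalgebras of separable $C^*$-algebras are separable is exactly what the paper does. The genuine point of difference is in how the base case $\alpha=0$ and the limit cases are handled. You observe that \emph{locality is a degenerate instance of decomposability}: taking $C=D=E$ and $h=0\in E$ in Definition~\ref{ais} trivially verifies conditions (i)--(iii) whenever $C$ approximates $X$ to within $\epsilon$, and so ``$A$ locally in $\mathcal{D}$'' implies ``$A$ decomposes over $\mathcal{D}$''. This lets you route all three cases of the induction through Corollary~\ref{nu cor} uniformly. The paper instead treats the locality cases separately: it uses \cite[Exercise~2.3.7]{Brown:2008qy} to pass nuclearity to local limits, and quotes Dadarlat's theorem \cite[Theorem~1.1]{Dadarlat:2003tg} (or the weaker version recovered in Remark~\ref{dad rem}) to pass the UCT. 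Your reduction is structurally cleaner -- one hammer for all cases -- while the paper's argument invokes a comparatively lighter result (Remark~\ref{dad rem} needs only the Milnor-sequence machinery of Section~\ref{reform sec}, not the full Mayer--Vietoris apparatus behind Theorem~\ref{uct the}); of course, since the corollary is already downstream of Theorem~\ref{main}, nothing is lost either way. Your non-unital step is right as sketched: if $A$ is locally in $\mathcal{C}$ then $A^+$ is locally in $\{\mathrm{span}(C,1_{A^+}):C\in\mathcal{C}\}$, a class of separable nuclear UCT algebras (each element being either a unitization $C^+$ or a direct sum $C\oplus\C$), and the unital argument applies to $A^+$.
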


\begin{proof}
With notation as in Definition \ref{f c}, let $\mathcal{D}_0=\mathcal{C}$, and for each ordinal $\alpha$, let $\mathcal{D}_{\alpha,sep}$ consist of the separable $C^*$-algebras in the class $\mathcal{D}_\alpha$ from Definition \ref{f c}.  We proceed by transfinite induction to show that each $\mathcal{D}_{\alpha,sep}$ consists of nuclear, UCT $C^*$-algebras.  If $\alpha=0$, this is just the well-known fact that AF $C^*$-algebras satisfy the UCT.  If $\alpha>0$ (and either a successor or limit ordinal) then any $C^*$-algebra in $\mathcal{D}_{\alpha,sep}$ decomposes over $C^*$-algebras in $\bigcup_{\beta<\alpha}\mathcal{D}_{\beta,sep}$, and so is nuclear and UCT by Corollary \ref{nu cor} and the inductive hypothesis.  
\end{proof}

\appendix

\section{Examples}\label{examples app}

In this appendix we give some examples of $C^*$-algebras with finite complexity.   




\subsection{Cuntz algebras}\label{cuntz app}

The material in this section is based closely on work of Winter and Zacharias \cite[Section 7]{Winter:2010eb}\footnote{More specifically, it is based on the slightly different approach to the material in \cite[Section 7]{Winter:2010eb} suggested in \cite[Remark 7.3]{Winter:2010eb}.}.  Our aim is to establish the following result.

\begin{proposition}\label{ca d}
For any $n$ with $2\leq n<\infty$, the Cuntz algebra $\mathcal{O}_n$ has complexity rank one.
\end{proposition}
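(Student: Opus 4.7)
The plan is to adapt the construction from \cite[Section 7]{Winter:2010eb}, which establishes that $\dim_{\mathrm{nuc}}(\mathcal{O}_n) \leq 1$ for finite $n$, into the Mayer--Vietoris format required by Definition~\ref{ais}. As a preliminary observation, $\mathcal{O}_n$ is infinite-dimensional, so its complexity rank is at least $1$; we must show it is at most $1$, i.e.\ that $\mathcal{O}_n$ decomposes over the class of finite-dimensional $C^*$-algebras.

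First I would reduce to monomials: since the closed linear span of the words $s_\mu s_\nu^*$ (indexed by multi-indices $\mu, \nu \in \{1,\ldots,n\}^*$) is dense in $\mathcal{O}_n$, for a finite subset $X$ of the unit ball and a tolerance $\epsilon > 0$, it suffices to construct $C, D, E, h$ for a $\delta$-approximation of $X$ by a finite collection of monomials of length at most some $k$, with $\delta$ chosen small enough that all the estimates propagate.

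Next I would build the data. Fix $m$ much larger than $k$ and work inside the matrix subalgebra $\mathcal{F}_n^{(m)} := \overline{\mathrm{span}}\{s_\mu s_\nu^* : |\mu| = |\nu| = m\} \cong M_{n^m}$ of the UHF core $\mathcal{F}_n \subseteq \mathcal{O}_n$. The Winter--Zacharias construction produces a positive contraction $h \in \mathcal{F}_n^{(m)}$, built as a weighted sum of range projections $t_\mu \, s_\mu s_\mu^*$ with $|\mu|=m$, arranged in a ``staircase'' pattern so that its commutator with each generator $s_i$, and hence with each monomial of length $\leq k$, is of order $1/m$; this yields condition~(i). I would then let $C$ and $D$ be two finite-dimensional $C^*$-subalgebras inside $\mathcal{F}_n^{(m+k)} \cong M_{n^{m+k}}$ that contain, respectively, enough higher-level matrix units $s_\mu s_\nu^*$ (with $|\mu|, |\nu| \leq m+k$) to absorb the products $h \cdot s_\mu s_\nu^*$ on the $C$-side and $(1-h) \cdot s_\mu s_\nu^*$ on the $D$-side, together with a common copy of $\mathcal{F}_n^{(m)}$. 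The subalgebra $E$ will be a finite-dimensional unital subalgebra of $\mathcal{F}_n^{(m)}$ containing $h$ and the ``transition'' matrix units on which $h(1-h)$ is concentrated.

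The main obstacle, as I see it, is verifying condition~(iii) of Definition~\ref{ais}: every element of the unit ball of $E$ must be $\epsilon$-close to both $C$ and $D$. In the Winter--Zacharias construction this is arranged essentially for free by choosing $E$ as a subalgebra of $\mathcal{F}_n^{(m)}$ and ensuring that both $C$ and $D$ contain $\mathcal{F}_n^{(m)}$ as a $C^*$-subalgebra, so that the inclusions $E \subseteq C$ and $E \subseteq D$ hold exactly. Condition~(ii) then follows because, for any monomial $s_\mu s_\nu^*$ of length $\leq k$, the products $h \cdot s_\mu s_\nu^*$ and $(1-h) \cdot s_\mu s_\nu^*$ differ by order $\epsilon$ from elements of $C$ and $D$ respectively, using the commutator estimate for $h$ and the inclusion of the relevant matrix units in $C$ and $D$. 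Choosing $m$ and the staircase weights as explicit functions of $k$ and $\epsilon$ closes the argument and shows that $\mathcal{O}_n$ has complexity rank exactly one.
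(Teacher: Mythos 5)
Your proposal is modelled on the right source, but it misses the central difficulty that the paper's proof spends most of its effort resolving, and as written it contains an internal inconsistency.

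The problem is already visible in your choice of $C$ and $D$. You place them inside $\mathcal{F}_n^{(m+k)}$, which sits inside the gauge-invariant UHF core of $\mathcal{O}_n$, yet in the same sentence you want them to contain matrix units $s_\mu s_\nu^*$ with $|\mu|\neq|\nu|$; such elements are not gauge-invariant, so they cannot lie in $\mathcal{F}_n^{(m+k)}$. This is not a slip of wording but points at the real obstruction. If $h=\sum_{|\mu|=m}t_\mu s_\mu s_\mu^*\in\mathcal{F}_n^{(m)}$ and $x=s_\alpha s_\beta^*$ is a monomial of gauge degree $|\alpha|-|\beta|=d\neq 0$, then (up to $O(1/m)$ commutator error) $hx\approx s_\alpha h s_\beta^*=\sum_{|\mu|=m}t_\mu s_{\alpha\mu}s_{\beta\mu}^*$, which again has gauge degree $d\neq 0$ and norm $\approx 1$. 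Any finite-dimensional $C\subseteq\mathcal{F}_n^{(m+k)}$ is gauge-invariant, so $hx$ stays at distance $\approx 1$ from $C$ and condition (ii) of Definition~\ref{ais} fails. Trying instead to take $C$ to be the span of all $s_\mu s_\nu^*$ with $|\mu|,|\nu|\le m+k$ doesn't work either: that span is not closed under multiplication, hence not a $C^*$-algebra. So there is no obvious finite-dimensional $C$ absorbing the products $hX$ within the gauge-graded picture you set up.

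The paper's proof precisely does not try to build $h$, $C$, $D$, $E$ inside the UHF core of $\mathcal{O}_n$ or any $\mathcal{F}_n^{(m)}$. The Winter--Zacharias ``staircase'' operator $h_{0,k}$ lives naturally in the Cuntz--Toeplitz algebra $\mathcal{T}_n$ acting on the Fock space $\Gamma(n)$, where the Fock degree $l$ provides the staircase coordinate; after passing to the Calkin quotient and identifying appropriately, the finite-dimensional algebras $q(B_{0,k})$, $q(B_{1,k})$ and the positive contraction $q(h_{0,k})$ land in $M_{d_k}(\mathcal{O}_{n^k})$, which is \emph{not} the image of the unital embedding $\iota\colon\mathcal{O}_n\hookrightarrow M_{d_k}(\mathcal{O}_{n^k})$. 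The crucial step you omit entirely is pulling this data back into $\mathcal{O}_n$: one constructs (using the inclusion $\mathcal{O}_{n^k}\hookrightarrow\mathcal{O}_n$ together with $M_{d_k}(\mathcal{O}_n)\cong\mathcal{O}_n$, the latter via Kirchberg--Phillips since $d_k\equiv k\equiv 1\bmod(n-1)$) a unital embedding $\beta\colon M_{d_k}(\mathcal{O}_{n^k})\to\mathcal{O}_n$, and then invokes the uniqueness part of the Kirchberg--Phillips classification theorem to conclude that $\beta\circ\iota$ is approximately unitarily equivalent to $\mathrm{id}_{\mathcal{O}_n}$. Conjugating $h$, $C$, $D$, $E$ by the resulting unitaries $u_m$ is what finally produces decomposition data inside $\mathcal{O}_n$ that almost commute with (and almost absorb products with) the given finite set $X$. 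Without this ``out and back'' argument, I do not see how to certify condition (ii), and indeed your attempt to stay inside the UHF core runs into the gauge-degree obstruction above. So this is a genuine gap, not merely a missing estimate.
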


We should remark that the proof of Proposition \ref{ca d} uses classification results for Cuntz algebras, and so depends on prior knowledge of the UCT; it therefore cannot be said that Proposition \ref{ca d} gives a new proof of the UCT for Cuntz algebras (and even if it did, it would be quite a complicated one!).  Indeed, the main point of establishing Proposition \ref{ca d} for us is to use it as an ingredient in Theorem \ref{kirch the} from the introduction, not to establish the UCT.

We should also remark that Proposition \ref{ca d} was subsequently generalized in \cite[Theorem 1.5]{Jaime:2021vh}; nonetheless, we hope that the different argument given here still has some interest.

We now embark on the proof of Proposition \ref{ca d}.  We will follow the notation from \cite[Section 7]{Winter:2010eb}.  Fix $n\in \N$ with $n\geq 2$.  Let $H$ be an $n$-dimensional Hilbert space, with fixed orthonormal basis $\{e_1,...,e_n\}$.  Define 
\begin{equation}\label{gamman}
\Gamma(n):=\bigoplus_{l=0}^\infty H^{\otimes l},
\end{equation}
where $H^{\otimes l}$ is the $l^\text{th}$ tensor power of $H$ (and $H^{\otimes 0}$ is by definition a copy of $\C$).  Let $W_n$ be the set of all finite words based on the alphabet $\{1,...,n\}$.  In symbols 
$$
W_n:=\bigsqcup_{k=0}^\infty \{1,...,n\}^k
$$
(with $\{1,...,n\}^0$ by definition consisting only of the empty word).   For each $\mu=(i_1,....,i_k)\in W_n$, define $e_\mu:=e_{i_1}\otimes \cdots  \otimes e_{i_k}$, and define $e_\varnothing$ to be any unit-length element of $H^{\otimes 0}=\C$.  Then the set $\{e_{\mu}\mid \mu \in W_n\}$ is an orthonormal basis of $\Gamma(n)$.  For $\mu\in W_n$, write $|\mu|$ for the length of $\mu$, i.e.\ $|\mu|=k$ means that $\mu=(i_1,...,i_k)$ for some $i_1,..,i_k\in \{1,...,n\}$.  Then the canonical copy of $H^{\otimes k}$ inside $\Gamma(n)$ from line \eqref{gamman} has orthonormal basis $\{e_\mu\mid |\mu|=k\}$.

For each $i\in \{1,...,n\}$ let $T_i$ be the bounded operator on $\Gamma(n)$ that acts on basis elements via the formula 
$$
T_i:e_\mu\mapsto e_i\otimes e_\mu.
$$ 
The \emph{Cuntz-Toeplitz algebra} $\mathcal{T}_n$ is defined to be the $C^*$-subalgebra of $\mathcal{B}(\Gamma(n))$ generated by $T_1,...,T_n$.  We note that each $T_i$ is an isometry, and that $1-\sum_{i=1}^n T_iT_i^*$ is the projection onto the span of $e_\varnothing$.  It follows directly from this that $\mathcal{T}_n$ contains all matrix units with respect to the basis $\{e_\mu\}$ of $\Gamma(n)$, and therefore contains the compact operators $\mathcal{K}$ on $\Gamma(n)$.  Moreover, in the quotient $\mathcal{T}_n/\mathcal{K}$, the images $s_i$ of the generators $T_i$ satisfy the Cuntz relations $s_i^*s_i=1$ and $\sum_{i=1}^n s_is_i^*=1$, and therefore the quotient is a copy of the Cuntz algebra $\mathcal{O}_n$.

Now, for $x\in \R_+$, define $\lceil x\rceil:=\min\{n\in \N\mid n\geq x\}$, and define\footnote{In \cite[Section 7]{Winter:2010eb}, $\Gamma_{0,k}$ is written $\Gamma_{k,2k}$ and $\Gamma_{1,k}$ is written $\Gamma_{k+\lceil k/2\rceil,2k+\lceil k/2\rceil}$.}
\begin{equation}\label{gam01}
\Gamma_{0,k}:=\bigoplus_{l=k}^{2k-1} H^{\otimes l} \quad \text{and}\quad \Gamma_{1,k}:=\bigoplus_{l=k+\lceil k/2\rceil}^{2k+\lceil k/2\rceil}H^{\otimes l}.
\end{equation}
For $i\in \{0,1\}$, define $B_{i,k}^{(0)}:=\mathcal{B}(\Gamma_{i,k})$.  For each $l,m\in \N$, we identify $H^{\otimes l}\otimes H^{\otimes m}$ with $H^{\otimes (l+m)}$ via the bijection of orthonormal bases
$$
\big(e_{i_1}\otimes \cdots \otimes e_{i_l}\big)\otimes \big(e_{j_1}\otimes \cdots \otimes e_{j_m}\big)\leftrightarrow e_{i_1}\otimes \cdots \otimes e_{i_l}\otimes e_{j_1}\otimes \cdots \otimes e_{j_m}.
$$
Fix for the moment $k\in \N$ (it will stay fixed until Lemma \ref{alm com} below).  Then for each $j\in \N$ we get a canonical identification
$$
\Gamma_{0,k}\otimes H^{\otimes jk} = \bigoplus_{l=k}^{2k-1}H^{\otimes l}\otimes H^{\otimes jk} = \bigoplus_{l=jk}^{(j+1)k-1}H^{\otimes l}.
$$
Combining this with line \eqref{gamman} we get a canonical identification
$$
\Gamma(n)=\underbrace{\Big(\bigoplus_{l=0}^{k-1} H^{\otimes l}\Big)}_{=:H_0} \oplus \Big(\bigoplus_{j=0}^\infty \Gamma_{0,k}\otimes H^{\otimes jk}\Big).
$$
Let $\text{id}$ be the identity representation of $B_{0,k}^{(0)}$ on $\Gamma_{0,k}$ and write $B_{0,k}$ for the image of $B_{0,k}^{(0)}$ in the representation on $\Gamma(n)$ that is given by 
$$
0_{H_0}\oplus \Big(\bigoplus_{k=0}^\infty \text{id}\otimes 1_{H^{\otimes jk}}\Big)
$$
with respect to the above decomposition above.  Similarly, we get a decomposition 
$$
\Gamma(n)=\underbrace{\Big(\bigoplus_{l=0}^{k+\lceil k/2 \rceil-1} H^{\otimes l}\Big)}_{=:H_1} \oplus \Big(\bigoplus_{j=0}^\infty \Gamma_{1,k}\otimes H^{\otimes jk}\Big)
$$
and define $B_{1,k}$ to be the image of $B_{1,k}^{(0)}$ under the representation 
$$
0_{H_1}\oplus \Big(\bigoplus_{k=0}^\infty \text{id}\otimes 1_{H^{\otimes jk}}\Big).
$$

Now, let $f:[0,1]\to [0,1]$ be the function with graph pictured, where the non-differentiable points occur at the $x$ values $1/6$, $2/6$, $4/6$, and $5/6$.\\ \begin{center}\vspace{0cm} 
\includegraphics[width=6cm]{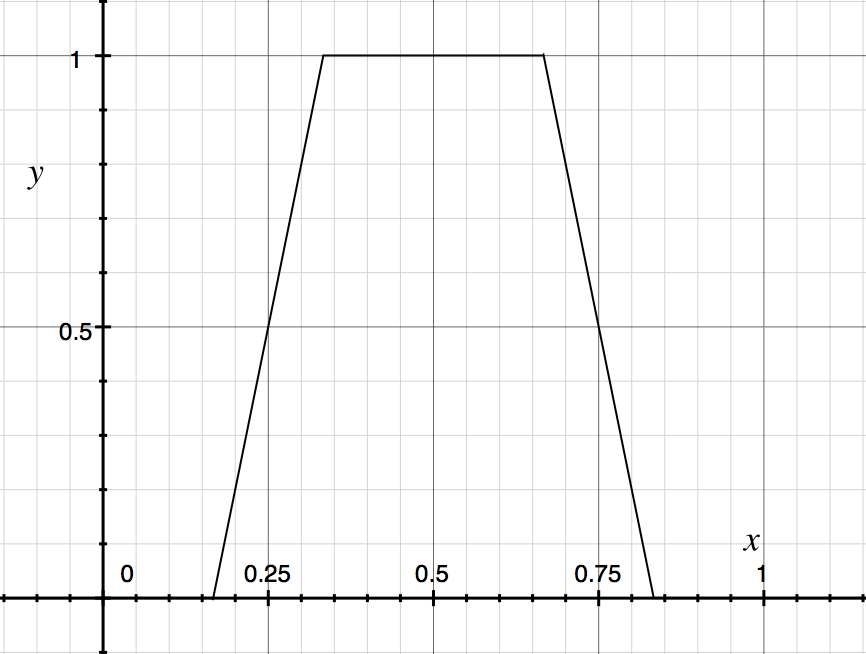} \vspace{0cm} \\\end{center}
Let $h_{0,k}^{(0)}\in B_{0,k}^{(0)}$ be the operator on $\Gamma_{0,k}$ that acts on the summand $H^{\otimes l}$ from line \eqref{gam01} by multiplication by the scalar $f((l-k)/(k-1))$.  Similarly, let $h_{1,k}^{(0)}\in B_{1,k}^{(1)}$ be the operator on $\Gamma_{1,k}$ that acts on the summand $H^{\otimes l}$ from line \eqref{gam01} by multiplication by the scalar $1-f((l-k-\lceil k/2\rceil)/(k-1))$.  Let $h_{0,k}$ and $h_{1,k}$ be the images of $h_{0,k}^{(0)}$ and $h_{1,k}^{(0)}$ in $B_{0,k}$ and $B_{1,k}$ respectively.  Note that the operator on $h_{0,k}+h_{1,k}$ on $\Gamma(n)$ acts on the summand on $H^{\otimes l}$ from line \eqref{gamman} by multiplication by $1$ as long as $l\geq k+\lceil k/2\rceil$.  In particular, 
\begin{equation}\label{alm id}
h_{0,k}+h_{1,k} \text{ equals the identity on $\Gamma(n)$ up to a finite rank perturbation.}
\end{equation}
We will need two technical lemmas about these operators.

\begin{lemma}\label{alm com}
For any $T$ in the Cuntz-Toeplitz algebra $\mathcal{T}_n$ and $i\in \{0,1\}$, we have that $\| [h_{i,k},T]\|\to0$ as $k\to\infty$.
\end{lemma}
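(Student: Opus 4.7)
My plan is to reduce to the generators $T_1, \ldots, T_n$ and then compute the commutator explicitly using the block decomposition $\Gamma(n) = \bigoplus_{l=0}^\infty H^{\otimes l}$.

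First, I would observe that because the commutator $[h_{i,k}, \cdot]$ is a derivation on $\mathcal{B}(\Gamma(n))$ and is contractive up to a factor of $2\|h_{i,k}\|\leq 2$, it suffices to prove the statement for $T$ in a dense $*$-subalgebra. Since $\mathcal{T}_n$ is the norm-closure of the $*$-algebra generated by $T_1,\ldots,T_n$, and since the Leibniz rule gives
\[
\|[h_{i,k},p(T_1,\ldots,T_n,T_1^*,\ldots,T_n^*)]\|\leq C(p)\max_j\|[h_{i,k},T_j]\|
\]
for any $*$-polynomial $p$ (with $C(p)$ depending only on $p$), it is enough to show that $\|[h_{i,k},T_j]\|\to 0$ as $k\to\infty$ for each generator $T_j$.

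Next I would unwind the construction of $h_{i,k}$ to see that it acts diagonally on the decomposition $\Gamma(n)=\bigoplus_{m=0}^\infty H^{\otimes m}$ by some scalars $\lambda_m^{(i,k)}\in [0,1]$. Concretely, for $h_{0,k}$: on $H^{\otimes m}$ with $m\in[0,k-1]$ the scalar is $0$, and on $H^{\otimes m}$ with $m\in[(j+1)k,(j+2)k-1]$ for some $j\geq 0$ the scalar is $f\bigl((m-(j+1)k)/(k-1)\bigr)$. Since $f(0)=f(1)=0$, these scalars agree at the block boundaries, and within each block, consecutive scalars $\lambda_m^{(0,k)}$ and $\lambda_{m+1}^{(0,k)}$ differ by at most $L/(k-1)$, where $L$ is the Lipschitz constant of $f$. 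An entirely analogous argument, using $1-f$ in place of $f$, handles $h_{1,k}$.

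Finally, the explicit computation: since $T_j$ sends $H^{\otimes m}$ into $H^{\otimes m+1}$ (isometrically into the $e_j$-summand), for any $v\in H^{\otimes m}$ we have
\[
[h_{i,k},T_j]v=h_{i,k}T_jv - T_j h_{i,k}v = \bigl(\lambda_{m+1}^{(i,k)}-\lambda_m^{(i,k)}\bigr)T_j v.
\]
As $T_j$ is an isometry and the subspaces $H^{\otimes m}$ are mutually orthogonal, this gives
\[
\|[h_{i,k},T_j]\|\leq \sup_{m\geq 0}\bigl|\lambda_{m+1}^{(i,k)}-\lambda_m^{(i,k)}\bigr|\leq \frac{L}{k-1},
\]
which tends to $0$ as $k\to\infty$. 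Combining with the reduction to generators above completes the proof.

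There is no real obstacle here: once the diagonal action of $h_{i,k}$ on the full Fock-space decomposition is correctly identified, the estimate falls out of the fact that $f$ is Lipschitz and its argument is rescaled by a factor $1/(k-1)$. The only bookkeeping point that requires a little care is checking that the scalars match up across the block boundaries (which uses $f(0)=f(1)=0$ for $h_{0,k}$, and the corresponding fact for $1-f$ for $h_{1,k}$), so that the estimate on $|\lambda_{m+1}^{(i,k)}-\lambda_m^{(i,k)}|$ holds uniformly in $m$ and not only within a single block.
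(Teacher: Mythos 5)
Your proof is correct and follows essentially the same route as the paper's: reduce to the generators $T_j$, observe that $h_{i,k}$ acts diagonally on the Fock-grading $\Gamma(n)=\bigoplus_m H^{\otimes m}$ while $T_j$ raises the grade by one, and bound the commutator norm by the largest jump $\sup_m|\lambda_{m+1}^{(i,k)}-\lambda_m^{(i,k)}|=O(1/k)$ between consecutive scalars, using the Lipschitz property of $f$ together with the matching $f(0)=f(1)=0$ at block boundaries. The only superficial differences are that the paper parametrizes basis elements by $|\mu|=jk+\ell$ rather than a single flat index $m$, compresses the reduction to generators into one sentence, and substitutes the specific Lipschitz constant of the chosen piecewise-linear $f$ to obtain the explicit estimate $\approx 6/k$.
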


\begin{proof}
We will focus on $h_{0,k}$: the case of $h_{1,k}$ is essentially the same.  It suffices to consider the case where $T$ is one of the canonical generators $T_i$ of the Cuntz-Toeplitz algebra.  Let $e_\mu$ be a basis element with $|\mu|=jk+l$ for some $j,l\in \N$ with $l\in \{0,...,k-1\}$.  Then we compute that $[h_{0,k},T_i]e_\mu=0$ if $j=0$, and that otherwise 
$$
[h_{0,k},T_i]e_\mu=  \big(f((l+1)/(k-1))-f(l/(k-1))\big)e_i\otimes e_\mu.
$$
As the elements $\{e_i\otimes e_\mu\mid \mu\in W_n\}$ are an orthonormal set, this implies that 
$$
\|[h_{0,k},T_i]\|\leq \max_{l\in \{0,...,k-1\}}|f((l+1)/(k-1))-f(l/(k-1))|.
$$
The choice of function $f$ implies that the right hand side above is approximately $6/k$, so we are done.
\end{proof}

\begin{lemma}\label{alm in}
For any $T$ in the Cuntz-Toeplitz algebra $\mathcal{T}_n$ we have that:
\begin{enumerate}[(i)]
\item for $i\in \{0,1\}$,  $d(h_{i,k}T,B_{i,k})\to 0$ as $k\to\infty$;
\item $d(h_{0,k}h_{1,k}T,B_{0,k}\cap B_{1,k})\to 0$ as $k\to\infty$.
\end{enumerate}
\end{lemma}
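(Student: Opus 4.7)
The proofs of parts (i) and (ii) share a common template which I describe first. By a density argument, combined with $\|h_{0,k}\|, \|h_{1,k}\| \leq 1$, it suffices to establish the claim when $T$ is a single ``word operator'' $T_\sigma T_\tau^*$ with $\sigma,\tau \in W_n$ and $m := |\sigma|+|\tau|$: the general case follows because $\mathcal{T}_n$ is the norm-closure of polynomials in the $T_i$. For such a word operator, the action on a basis vector $e_\mu \in H^{\otimes L}$ is either zero or sends $e_\mu$ to a basis vector $e_{\mu'} \in H^{\otimes L+d}$, where $d = |\sigma|-|\tau|$ satisfies $|d|\leq m$. Crucially, when $L \geq k$ and $|\tau| < k$, under the decomposition $e_\mu = e_\rho \otimes e_\omega$ coming from the $B_{0,k}$-block containing $\mu$ (so $|\rho|\in\{k,\ldots,2k-1\}$ and $|\omega|$ is a multiple of $k$), the word $T$ acts only on the leading factor: $T(e_\rho\otimes e_\omega) = (Te_\rho)\otimes e_\omega$. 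The analogous statement holds for $B_{1,k}$-blocks after the shift by $\lceil k/2\rceil$.

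For part (i) with $i=0$, I define $A \in \mathcal{B}(\Gamma_{0,k}) = B_{0,k}^{(0)}$ to be the compression of $h_{0,k}T$ to $\Gamma_{0,k}$: explicitly $A := P(h_{0,k}T)|_{\Gamma_{0,k}}$, where $P$ is the projection of $\Gamma(n)$ onto its summand $\Gamma_{0,k}$. Setting $B := \pi_0(A) \in B_{0,k}$, I claim that $B = h_{0,k}T$ whenever $k \geq 6m$, so that $h_{0,k}T \in B_{0,k}$ on the nose. To verify equality on each basis vector $e_\mu$, I split into two cases: when $T$ preserves the ambient $B_{0,k}$-block (i.e., $L$ and $L+d$ lie in the same interval $[(j+1)k,(j+2)k-1]$), the block-identity action of $T$ described above gives $Be_\mu = h_{0,k}Te_\mu$ directly; otherwise $L\bmod k \in \{0,\ldots,m-1\}\cup\{k-m,\ldots,k-1\}$ and similarly for $(L+d)\bmod k$, so that the scalar by which $h_{0,k}$ acts at the new level equals $f(y)$ for $y\in[0,1/6]\cup[5/6,1]$ and is therefore zero (as $f$ vanishes on this set), giving $h_{0,k}Te_\mu = 0$; meanwhile $Be_\mu=0$ as well since $PTe_\rho = 0$ when $Te_\rho$ exits $\Gamma_{0,k}$. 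The case $i=1$ is handled identically, replacing the vanishing of $f$ near $\{0,1\}$ with the vanishing of $1-f$ on the ``plateau'' interval $[2/6,4/6]$, which translates (after the shift by $\lceil k/2\rceil$) to vanishing of $h_{1,k}$ on a band of width $\approx k/6$ around each $B_{1,k}$-block boundary.

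For part (ii), I apply the same template with respect to both block structures simultaneously. Set $A_0 := P(h_{0,k}h_{1,k}T)|_{\Gamma_{0,k}} \in B_{0,k}^{(0)}$ and $A_1 := P'(h_{0,k}h_{1,k}T)|_{\Gamma_{1,k}} \in B_{1,k}^{(0)}$ (with $P'$ the projection onto $\Gamma_{1,k}$), and put $B_0 := \pi_0(A_0) \in B_{0,k}$, $B_1 := \pi_1(A_1) \in B_{1,k}$. The plan is to show that for $k$ sufficiently large, $B_0 = h_{0,k}h_{1,k}T = B_1$, placing $h_{0,k}h_{1,k}T$ directly in $B_{0,k}\cap B_{1,k}$. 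The equality $B_0 = h_{0,k}h_{1,k}T$ follows exactly as in the $i=0$ case of (i), since $h_{0,k}$ (being one factor of the product) still vanishes in a band around each $B_{0,k}$-block boundary and this kills any block-crossing contribution from $T$. The equality $B_1 = h_{0,k}h_{1,k}T$ follows by the same mechanism, now using the other factor $h_{1,k}$ and the $i=1$ case of (i): $h_{1,k}$ vanishes in a band around each $B_{1,k}$-block boundary, which suppresses $B_{1,k}$-boundary-crossings by $T$.

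The main obstacle is the joint verification in part (ii) that the product $h_{0,k}h_{1,k}$ inherits a sufficient ``boundary-suppression'' property relative to both block structures at once, even though the product is not required to have small norm. What saves us is that boundary-suppression for each structure is provided by a different factor in the product: $h_{0,k}$ handles the $B_{0,k}$-boundaries (via vanishing of $f$ near $\{0,1\}$), while $h_{1,k}$ handles the $B_{1,k}$-boundaries (via vanishing of $1-f$ on the central interval $[2/6,4/6]$, which is exactly what the kinks at $1/6,2/6,4/6,5/6$ are designed to produce after the shift by $\lceil k/2\rceil$). Once this joint suppression is in hand, the rest of the argument — constructing the compressions $A_0,A_1$ and verifying equality on each basis vector — is parallel to part (i). The final density step is identical to that in part (i): given $\epsilon>0$, approximate $T$ in norm within $\epsilon$ by a polynomial $T'$ in the generators, apply the just-established block-membership to each word constituent of $T'$ for $k \geq k_0(T')$, and conclude $\limsup_k d(h_{0,k}h_{1,k}T,\,B_{0,k}\cap B_{1,k}) \leq \epsilon$.
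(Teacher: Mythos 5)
Your approach --- showing exact membership $h_{i,k}T \in B_{i,k}$ for $k$ large by analyzing block crossings directly on basis vectors --- is genuinely different from the paper's, which reduces to single generators via the commutator estimate of Lemma~\ref{alm com} (writing $h_{0,k}S_1\cdots S_m \approx \prod_j h_{0,k}^{1/(2m)}S_jh_{0,k}^{1/(2m)}$) and then inserts the support projection $p_k$ of $h_{0,k}$, using that $p_kT_ip_k\in B_{0,k}$. The paper's argument yields only a distance tending to zero, not exact membership, and does not depend on the precise shape of $f$ beyond bounded slopes and a vanishing condition on the support at one endpoint.

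There is, however, a genuine gap in your proof of the $i=1$ case and, consequently, in the $B_{1,k}$-membership half of part~(ii). You claim that $h_{1,k}$ vanishes on a band around each $B_{1,k}$-block boundary. Unwinding the definitions, $h_{1,k}$ acts at level $L$ by the scalar $1-f\bigl(b/(k-1)\bigr)$, where $b=(L-\lceil k/2\rceil)\bmod k$ is precisely the residue within the $B_{1,k}$-block containing $L$. At a $B_{1,k}$-block boundary this residue is $0$ or $k-1$, so $h_{1,k}$ acts there by $1-f(0)$ or $1-f(1)$. For your $i=0$ argument you require $h_{0,k}$ to vanish near $B_{0,k}$-block boundaries, which forces $f(0)=f(1)=0$; but then $h_{1,k}=1-f(0)=1$ at $B_{1,k}$-boundaries, not $0$. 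The two vanishing requirements you invoke force $f(0)=0$ and $f(0)=1$ simultaneously and are therefore mutually exclusive. What the vanishing of $1-f$ on $[2/6,4/6]$ actually buys is that $h_{1,k}$ vanishes near the \emph{middle} of each $B_{1,k}$-block, which after the shift by $\lceil k/2\rceil$ is near the $B_{0,k}$-block boundaries --- you have the two block structures transposed.

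Relatedly, the ``bump function'' interpretation you posit for $f$ (vanishing on $[0,1/6]\cup[5/6,1]$, equal to $1$ on $[2/6,4/6]$) is incompatible with the key identity in line~\eqref{alm id}: at a level $L$ with $L\bmod k=0$ one would get $h_{0,k}(L)=f(0)=0$ and $h_{1,k}(L)=1-f(\lfloor k/2\rfloor/(k-1))\approx 1-f(1/2)=0$, so $h_{0,k}+h_{1,k}\approx 0\neq 1$ there. So the argument breaks at a structural level, not merely in an estimate. The paper's approximation route via Lemma~\ref{alm com} sidesteps all of these sensitivities to the shape of $f$.
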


\begin{proof}
We will focus on the case of $h_{0,k}$: the other cases are similar.  It suffices to consider $T$ a finite product $S_1...S_m$, where each $S_j$ is either one of the generators $T_i$ or its adjoint.  Using Lemma \ref{alm com}, we see that $[h_{0,k}^{1/l},S_j]\to 0$ as $k\to\infty$ for any $j$, and any $l\in \N$ with $l\geq 1$.  Hence the difference 
$$
h_{0,k}S_1...S_m-\big(h_{0,k}^{1/(2m)}S_1h_{0,k}^{1/(2m)}\big)\big(h_{0,k}^{1/(2m)}S_2h_{0,k}^{1/(2m)}\big)\cdots \big(h_{0,k}^{1/(2m)}S_mh_{0,k}^{1/(2m)}\big)
$$
tends to zero as $k\to\infty$.  It thus suffices to prove that the distance between each of the terms $h_{0,k}^{1/(2m)}S_jh_{0,k}^{1/(2m)}$ and $B_{0,k}$ tends to zero as $k\to\infty$.  Define $p_k$ to be the strong operator topology limit of $h_{0,k}^{1/l}$ as $l\to\infty$; in other words, $p_k$ is the support projection of $h_{0,k}$.  Then we have that $h_{0,k}^{1/(2m)}S_jh_{0,k}^{1/(2m)}=h_{0,k}^{1/(2m)}p_kS_jp_kh_{0,k}^{1/(2m)}$.  As $h_{0,k}^{1/(2m)}$ is in $B_{0,k}$, it suffices to prove that the distance between $p_kT_ip_k$ and $B_{0,k}$ tends to zero as $k\to\infty$.  However, $p_kT_ip_k$ is actually in $B_{0,k}$, so we are done.
\end{proof}

Now, as in the discussion on \cite[page 488]{Winter:2010eb}, define 
$$
\Gamma_k(n):=\bigoplus_{l=0}^{k-1} H^{\otimes l}.
$$
For a word $\mu\in W_n$ in $\{1,...,n\}$, we may uniquely write $\mu=\mu_0\mu_1$, where the lengths $|\mu_0|$ and $|\mu_1|$ satisfy $|\mu_0|\in \{0,...,k-1\}$, and $|\mu_1|\in k\N$.  Then the bijective correspondence of orthonormal bases
$$
e_\mu\leftrightarrow e_{\mu_0}\otimes e_{\mu_1}
$$
gives rise to a decomposition 
$$
\Gamma(n)=\Gamma_k(n)\otimes \Gamma(n^k).
$$
Identify the $C^*$-algebra $\mathcal{B}(\Gamma_k(n))\otimes \mathcal{T}_{n^k}$ with its image in the representation on $\Gamma(n)$ arising from the above decomposition.  The following is essentially part of \cite[Lemma 7.1]{Winter:2010eb}.

\begin{lemma}\label{fd incl}
With notation as above, $\mathcal{B}(\Gamma_k(n))\otimes \mathcal{T}_{n^k}$ contains the finite-dimensional $C^*$-algebras we have called $B_{0,k}$ and $B_{1,k}$, and in particular also contains $h_{0,k}$ and $h_{1,k}$.
\end{lemma}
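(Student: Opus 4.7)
The plan is to decompose each matrix unit spanning $B_{0,k}$ or $B_{1,k}$ as an elementary tensor in $\mathcal{B}(\Gamma_k(n))\otimes \mathcal{T}_{n^k}$, exploiting the fact that the two direct sum decompositions of $\Gamma(n)$ in play both respect the word-length filtration, and differ only by how they group letters into blocks of size $k$. Once every matrix unit lies in $\mathcal{B}(\Gamma_k(n))\otimes \mathcal{T}_{n^k}$, the finite-dimensional algebras $B_{0,k}$ and $B_{1,k}$ (and in particular the diagonal elements $h_{0,k}$, $h_{1,k}$) are contained there automatically.

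First I would identify the matrix units of $B_{0,k}$ concretely as operators on $\Gamma(n)$: if $\nu,\nu'\in W_n$ have lengths in $\{k,\ldots,2k-1\}$, then (chasing through the representation defining $B_{0,k}$) the matrix unit $E_{\nu,\nu'}$ sends $e_{\nu'\rho}\mapsto e_{\nu\rho}$ for every $\rho\in W_n$ with $|\rho|\in k\N$, and annihilates every other basis vector of $\Gamma(n)$. The matrix units of $B_{1,k}$ admit the identical description, now with $|\nu|,|\nu'|\in\{k+\lceil k/2\rceil,\ldots,2k+\lceil k/2\rceil\}$.

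Next I would factor each such $\nu$ uniquely as $\nu=\alpha\gamma$ with $|\alpha|<k$ and $|\gamma|\in k\N$; for the length ranges in question, this forces $|\gamma|\in\{k,2k\}$. Writing $\nu'=\alpha'\gamma'$ analogously, the defining identification $e_\mu\leftrightarrow e_{\mu_0}\otimes e_{\mu_1}$ of $\Gamma(n)$ with $\Gamma_k(n)\otimes\Gamma(n^k)$ yields $e_{\nu\rho}=e_\alpha\otimes e_{\gamma\rho}$ and $e_{\nu'\rho}=e_{\alpha'}\otimes e_{\gamma'\rho}$ (simply because $|\alpha|,|\alpha'|<k$ while $|\gamma\rho|,|\gamma'\rho|\in k\N$). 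Reading off the action of $E_{\nu,\nu'}$ through this identification, I get
$$
E_{\nu,\nu'}=F_{\alpha,\alpha'}\otimes S_{\gamma,\gamma'},
$$
where $F_{\alpha,\alpha'}\in\mathcal{B}(\Gamma_k(n))$ is the rank-one operator $e_{\alpha'}\mapsto e_\alpha$, and $S_{\gamma,\gamma'}$ is the operator on $\Gamma(n^k)$ sending $e_{\gamma'\rho}\mapsto e_{\gamma\rho}$ for all $\rho$ with $|\rho|\in k\N$ and annihilating every other basis vector.

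Finally I would observe that $S_{\gamma,\gamma'}$ lies in $\mathcal{T}_{n^k}$: viewing each length-$k$ block as a single letter of the $n^k$-letter alphabet, $S_{\gamma,\gamma'}$ is the product $\tilde T_\gamma \tilde T_{\gamma'}^*$ of (at most two) generators of $\mathcal{T}_{n^k}$ and their adjoints. This places every matrix unit of $B_{0,k}$ and $B_{1,k}$ inside $\mathcal{B}(\Gamma_k(n))\otimes\mathcal{T}_{n^k}$, and since these two subalgebras are finite-dimensional they are the linear span of their matrix units, giving the required containment. The only real subtlety is bookkeeping in the $B_{1,k}$ case, where the offset $\lceil k/2\rceil$ can push $|\nu|$ past $2k$ and so force $|\gamma|=2k$; but this merely means $\tilde T_\gamma$ is a product of two generators of $\mathcal{T}_{n^k}$ rather than one, so the argument is unaffected.
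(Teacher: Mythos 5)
Your proof is correct. The paper's own argument is a one-line citation to Winter and Zacharias: it recognizes $B_{0,k}$ and $B_{1,k}$ as images of full matrix algebras under the map called $\Lambda_k$ in \cite[Lemma 7.1]{Winter:2010eb}, and then invokes part (i) of that lemma, which states precisely that $\Lambda_k$ takes values in $\mathcal{B}(\Gamma_k(n))\otimes\mathcal{T}_{n^k}$. You instead make the containment explicit by computing with matrix units, exhibiting each $E_{\nu,\nu'}$ as the elementary tensor $F_{\alpha,\alpha'}\otimes\tilde{T}_\gamma\tilde{T}_{\gamma'}^*$; the factorization is correct since the supports as well as the actions agree ($e_{\nu'\rho}=e_{\alpha'}\otimes e_{\gamma'\rho}$ precisely because $|\alpha'|<k$ while $|\gamma'\rho|\in k\N$), and $\tilde{T}_\gamma\tilde{T}_{\gamma'}^*$ lies in $\mathcal{T}_{n^k}$ since $|\gamma|,|\gamma'|\in k\N$. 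This amounts to a self-contained reproof of the cited Winter--Zacharias fact, and what it buys is an explicit formula that the citation hides, at the price of a bit of bookkeeping. One minor inaccuracy: the parenthetical ``$|\gamma|\in\{k,2k\}$'' holds for $k\geq 2$, but with the range for $\Gamma_{1,k}$ as written in the paper one can have $|\gamma|=3k$ when $k=1$; as you observe yourself at the end, this is immaterial since the argument uses only that $|\gamma|\in k\N$.
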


\begin{proof}
In the notation of \cite[Lemma 7.1]{Winter:2010eb}, $B_{0,k}=\Lambda_k(\mathcal{B}(\Gamma_{k,2k}))$, and $B_{1,k}=\Lambda_k(\mathcal{B}(\Gamma_{k+\lceil k/2\rceil,2k+\lceil k/2\rceil}))$.  Part (i) of \cite[Lemma 7.1]{Winter:2010eb} says exactly that the image of $\Lambda_k$ is contained in $\mathcal{B}(\Gamma_k(n))\otimes \mathcal{T}_{n^k}$, however, so we are done.
\end{proof}

It is explained on \cite[page 488]{Winter:2010eb} that $\mathcal{B}(\Gamma_k(n))\otimes \mathcal{T}_{n^k}$ contains $\mathcal{T}_n$, so we get a canonical inclusion. 
\begin{equation}\label{tn incl}
\mathcal{T}_n \to \mathcal{B}(\Gamma_k(n))\otimes \mathcal{T}_{n^k}.
\end{equation}
The dimension of $\Gamma_k(n)$ is $d_k:=1+n+n^2+\cdots + n^{k-1}$, so we may make the identification $\mathcal{B}(\Gamma_k(n))\otimes \mathcal{T}_{n^k}= M_{d_k}(T_{n^k})$.  With respect to this identification, the inclusion in line \eqref{tn incl} takes the compact operators on $\Gamma(n)$ to $M_{d_k}(\K(\Gamma(n^k)))$.  Taking the quotient by the compacts on both sides of line \eqref{tn incl} thus gives rise to an inclusion 
\begin{equation}\label{iota map}
\iota:\mathcal{O}_n\to M_{d_k}(\mathcal{O}_{n^k}).
\end{equation}
In this language, we get the following immediate corollary of Lemmas \ref{alm com} and \ref{alm in}.  To state it, let $q:\mathcal{B}(\Gamma(n))\to \mathcal{Q}(\Gamma(n))$ be the quotient map from the bounded operators on $\Gamma(n)$ to the Calkin algebra.  

\begin{corollary}\label{comb lem}
For any $a\in \mathcal{O}_n$, we have that the following all tend to zero as $k\to\infty$: $\|[q(h_{0,k}),\iota(a)]\|$, $\|[q(h_{1,k}),\iota(a)]\|$, $d(q(h_{0,k})\iota(a),q(B_{0,k}))$, $d(q(h_{1,k})\iota(a),q(B_{1,k}))$, and $d(q(h_{0,k}h_{1,k})\iota(a),q(B_{0,k}\cap B_{1,k}))$. \qed
\end{corollary}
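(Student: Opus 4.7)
The plan is that Corollary \ref{comb lem} is essentially a direct quotient-by-compacts consequence of Lemmas \ref{alm com} and \ref{alm in}; all that is needed is to lift $a \in \mathcal{O}_n$ to the Cuntz--Toeplitz algebra and to track that the quotient map is contractive, hence distance-nonincreasing, on the distinguished subalgebras.

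First, recall that $\mathcal{T}_n \subseteq \mathcal{B}(\Gamma(n))$ contains the compact operators $\K$ on $\Gamma(n)$ and $\mathcal{T}_n/\K \cong \mathcal{O}_n$, so the restriction of $q\colon \mathcal{B}(\Gamma(n))\to \mathcal{Q}(\Gamma(n))$ to $\mathcal{T}_n$ realizes the quotient onto $\mathcal{O}_n$. By Lemma \ref{fd incl}, each finite-dimensional subalgebra $B_{i,k}$ of $\mathcal{B}(\Gamma(n))$ and the elements $h_{i,k}$ sit inside $M_{d_k}(\mathcal{T}_{n^k})\subseteq \mathcal{B}(\Gamma(n))$, so under $q$ they live inside $M_{d_k}(\mathcal{O}_{n^k})\subseteq \mathcal{Q}(\Gamma(n))$. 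Moreover, because $\iota\colon \mathcal{O}_n \to M_{d_k}(\mathcal{O}_{n^k})$ from line \eqref{iota map} is precisely the map induced on quotients by the inclusion $\mathcal{T}_n\hookrightarrow \mathcal{B}(\Gamma_k(n))\otimes \mathcal{T}_{n^k}$, we have $q(T) = \iota(a)$ in $\mathcal{Q}(\Gamma(n))$ whenever $T\in \mathcal{T}_n$ lifts $a\in \mathcal{O}_n$.

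Given then $a\in \mathcal{O}_n$, choose any lift $T\in \mathcal{T}_n$ with $q(T) = a$ (equivalently, $q(T) = \iota(a)$ via the inclusion above). For the commutator estimates, $[q(h_{i,k}), \iota(a)] = q([h_{i,k}, T])$, so contractivity of $q$ gives
$$
\|[q(h_{i,k}), \iota(a)]\| \leq \|[h_{i,k}, T]\|,
$$
and the right-hand side tends to zero as $k\to\infty$ by Lemma \ref{alm com}. For the distance estimates, any $S\in B_{i,k}$ satisfies $q(S)\in q(B_{i,k})$ and
$$
\|q(h_{i,k})\iota(a) - q(S)\| = \|q(h_{i,k}T - S)\| \leq \|h_{i,k}T - S\|,
$$
so infimizing over $S$ shows $d(q(h_{i,k})\iota(a), q(B_{i,k})) \leq d(h_{i,k}T, B_{i,k})$, which tends to zero by Lemma \ref{alm in}. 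The same reasoning with $h_{0,k}h_{1,k}$ and $B_{0,k}\cap B_{1,k}$ (noting $q(B_{0,k}\cap B_{1,k})\subseteq q(B_{0,k})\cap q(B_{1,k})$ is unneeded—we only need the former as a target) handles the last item.

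There is no genuine obstacle here: the only point worth being careful about is identifying $\iota(a)$ with $q(T)$ in $\mathcal{Q}(\Gamma(n))$, which is built into the definition of $\iota$ in line \eqref{iota map}. All of the convergence is then inherited from the Cuntz--Toeplitz statements because the quotient map is contractive and the relevant subalgebras on both sides are linked by $q$.
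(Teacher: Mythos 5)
Your proof is correct and takes exactly the route the paper intends: the corollary is stated with the \verb|\qed| already attached as an immediate consequence of Lemmas \ref{alm com} and \ref{alm in}, and you have correctly supplied the implicit argument, namely choosing a lift $T\in\mathcal{T}_n$ of $a$, observing that $\iota(a)=q(T)$ in $\mathcal{Q}(\Gamma(n))$ (since $\iota$ is induced by the set-theoretic inclusion $\mathcal{T}_n\subseteq\mathcal{B}(\Gamma_k(n))\otimes\mathcal{T}_{n^k}$ followed by the quotient by $\mathcal{K}(\Gamma(n))=M_{d_k}(\mathcal{K}(\Gamma(n^k)))$), and then using contractivity of $q$ to transfer the norm and distance estimates.
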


We are finally ready for the proof of Proposition \ref{ca d}.

\begin{proof}[Proof of Proposition \ref{ca d}]
Let $\epsilon>0$, and let $X$ be a finite subset of the unit ball of $\mathcal{O}_n$.  Corollary \ref{comb lem} implies that for any large $k$ we have that for all $a\in X$ and $i\in \{0,1\}$, the quantities $\| [q(h_{i,k}),\iota(a)]\|$, $d(q(h_{i,k})\iota(a),q(B_{i,k}))$, and $d(q(h_{0,k}h_{1,k})\iota(a),q(B_{0,k}\cap B_{1,k}))$ are smaller than $\epsilon/2$.  We may assume moreover that $k\equiv 1$ modulo $n-1$.  Fix this $k$ for the remainder of the proof.

As discussed on \cite[page 488]{Winter:2010eb}, we have a canonical unital inclusion $\mathcal{O}_{n^k}\to \mathcal{O}_n$ by treating suitable products of the generators of $\mathcal{O}_n$ as generators of $\mathcal{O}_{n^k}$.  Moreover, $d_k$ is equal to $k$ modulo $n-1$.  It follows that the $K$-theory of $M_{d_k}(\mathcal{O}_n)$ is given by $\Z/(n-1)\Z$ in dimension zero and zero in dimension one, with the class $[1]$ of the unit in $K_0$ represented by the residue of $k$ in $\Z/(n-1)\Z$.  Hence the $K$-theory invariants of $M_{d_k}(\mathcal{O}_n)$ and $\mathcal{O}_n$ agree, as we are assuming that $k \equiv 1$ modulo $n-1$.  In particular, the Kirchberg-Phillips classification theorem (see for example \cite[Corollary 8.4.8]{Rordam:2002cs}) gives a unital isomorphism $M_{d_k}(\mathcal{O}_n)\cong \mathcal{O}_n$.  Combining this with the inclusion $\mathcal{O}_{n^k}\to \mathcal{O}_n$ mentioned above gives a unital inclusion 
\begin{equation}\label{beta map}
\beta:M_{d_k}(\mathcal{O}_{n^k})\to \mathcal{O}_n.
\end{equation}
Now, the composition $\beta\circ \iota:\mathcal{O}_n\to \mathcal{O}_n$ of $\beta$ as in line \eqref{beta map} and $\iota$ as in line \eqref{iota map} is a unital inclusion, whence necessarily induces an isomorphism on $K$-theory.  As $\mathcal{O}_n$ satisfies the UCT, $\beta\circ \iota$ is therefore a $KK$-equivalence (see for example \cite[Proposition 7.3]{Rosenberg:1987bh}).  Hence the uniqueness part of the Kirchberg-Phillips classification theorem (see for example \cite[Theorem 8.3.3, (iii)]{Rordam:2002cs}) implies that $\beta\circ \iota:\mathcal{O}_n\to \mathcal{O}_n$ is approximately unitarily equivalent to the identity.  Thus there is a sequence $(u_m)$ of unitaries in $\mathcal{O}_n$ such that $\|a-u_m\beta\iota(a)u_m^*\|\to 0$ for all $a\in \mathcal{O}_n$.  Choose $m$ large enough so that $\|a-u_m\beta\iota(a)u_m^*\|<\epsilon/2$ for all $a\in X$.

Set $h:=u_m\beta(q(h_{0,k}))u_m^*$, $C_0:=u_m\beta(q(B_{0,k}))u_m^*$, $D_0:=u_m\beta(q(B_{1,k}))u_m^*$, and $E_0:=u_m\beta(q(B_{1,k}\cap B_{0,k}))u_m^*$.  Set $C$ to be the $C^*$-subalgebra of $\mathcal{O}_n$ spanned by $C_0$ and the unit, and similarly for $D$ and $E$.  Our choices, plus the fact that $q(h_{0,k}+h_{1,k})=1$ (see line \eqref{alm id}), imply that this data satisfies the definition of decomposability (Definition \ref{ais}), so we are done.
\end{proof}

\subsection{Groupoids with finite dynamical complexity}\label{fdc app}

In this section, we give another interesting class of $C^*$-algebras with finite complexity: $C^*$-algebras of groupoids with finite dynamical complexity.  To avoid repeating the same assumptions, let us stipulate that throughout this appendix the word ``groupoid'' means ``locally compact, Hausdorff, \'{e}tale groupoid''; we will often also assume that $G$ has compact base space, but not always.  For background on this class of groupoids and their $C^*$-algebras, we recommend \cite[Section 5.6]{Brown:2008qy}, \cite[Section 2.3]{Renault:2009zr}, or \cite{Sims:2017aa}.

Note that if $G$ is a groupoid in this sense, then any open subgroupoid $H$ of $G$ (i.e.\ $H$ is an open subset of $G$ that is algebraically a groupoid with the inherited operations) is also a groupoid in this sense.  Again, to avoid too much repetition, let us say that the word ``subgroupoid'' means ``open subgroupoid''.

The following definitions are essentially contained in the authors' joint work with Guentner \cite[Definition A.4]{Guentner:2014bh}.

\begin{definition}\label{gpd decomp def}
Let $G$ be a groupoid, let $H$ be a subgroupoid of $G$, and let $\mathcal{C}$ be a set of subgroupoids of $G$.  We say that $H$ is \emph{decomposable} over $\mathcal{C}$ if for any compact subset $K$ of $H$ there exists an open cover $\{U_0, U_1\}$ of $r(K)\cup s(K)$ such that for each $i\in \{0,1\}$ the subgroupoid of $H$ generated by 
$$
\{h\in K\mid s(h)\in U_i\} 
$$
is contained in an element of $\mathcal{C}$. 
\end{definition}

\begin{definition}\label{fdc def}
For an ordinal number $\alpha$:
\begin{enumerate}[(i)]
\item if $\alpha=0$, let $\mathcal{C}_0$ be the class of groupoids $G$ such that for any compact subset $K$ of $G$ there is a subgroupoid $H$ of $G$ such that $K\subseteq H$, and such that the closure of $H$ is compact;
\item if $\alpha>0$, let $\mathcal{C}_\alpha$ be the class of groupoids that decompose over the collection of their subgroupoids in the class $\bigcup_{\beta<\alpha}\mathcal{C}_\beta$.
\end{enumerate}
We say that a groupoid $G$ has \emph{finite dynamical complexity} if $G$ is contained in $\mathcal{C}_\alpha$ for some ordinal $\alpha$.  If $G$ has finite dynamical complexity, the \emph{complexity rank} of $G$ is the smallest $\alpha$ such that $G$ is in $\mathcal{C}_\alpha$.
\end{definition}


The main result of this section is as follows.  For the statement, recall that a groupoid is \emph{ample} if it has totally disconnected base space, and \emph{principal} if the units are the elements $g\in G$ that satisfy $s(g)=r(g)$.  Recall also that a $C^*$-algebra is subhomogeneous if it is isomorphic to a $C^*$-subalgebra of $M_N(C(X))$ for some $N\in \N$ and compact Hausdorff space $X$.  Recall finally the notion of complexity rank relative to a class of $C^*$-algebras from Definition \ref{f c}.

\begin{proposition}\label{fdc prop}
Let $G$ be a groupoid with compact base space.
\begin{enumerate}[(i)]
\item \label{fdc part} The complexity rank of $C^*_r(G)$ relative to the class of subhomogeneous $C^*$-algebras is bounded above by the complexity rank of $G$.
\item \label{sfdc part} If $G$ is ample and principal, then the complexity rank of $C^*_r(G)$ (relative to the class of finite-dimensional $C^*$-algebras) is bounded above by the complexity rank of $G$.
\end{enumerate}
In particular, if $G$ is second countable and has finite dynamical complexity, then $C^*_r(G)$ satisfies the UCT.
\end{proposition}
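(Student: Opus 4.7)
The plan is to prove parts (i) and (ii) simultaneously by transfinite induction on the complexity rank $\alpha$ of $G$, writing $\mathcal{D}_\alpha^{\mathrm{sub}}$ and $\mathcal{D}_\alpha^{\mathrm{fd}}$ for the classes of Definition \ref{f c} built respectively from separable subhomogeneous and from finite-dimensional $C^*$-algebras. The inductive claim is that $G \in \mathcal{C}_\alpha$ implies $C^*_r(G) \in \mathcal{D}_\alpha^{\mathrm{sub}}$, and implies $C^*_r(G) \in \mathcal{D}_\alpha^{\mathrm{fd}}$ when $G$ is in addition ample and principal; since open subgroupoids of an ample principal groupoid are ample and principal, the inductive hypothesis stays within the appropriate class at each step.

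For the base case $\alpha = 0$, any $f \in C_c(G)$ has its compact support inside some relatively compact open subgroupoid $H$, so $f \in C^*_r(H) \subseteq C^*_r(G)$, and thus $C^*_r(G)$ is locally of the form $C^*_r(H)$ for such $H$. The closure $\overline{H}$ is a compact \'{e}tale groupoid, so its source fibres are compact and discrete, hence finite, and its orbits have uniformly bounded finite cardinality $N$; this makes $C^*_r(\overline{H})$ an $N$-subhomogeneous algebra, and $C^*_r(H)$ inherits subhomogeneity as a subalgebra. When $G$ is in addition ample and principal, $\overline{H}$ is a compact ample principal \'{e}tale groupoid, i.e.\ an equivalence relation with uniformly finite classes on a Stone space, and $C^*_r(\overline{H})$ is then an AF algebra, in particular locally finite-dimensional.

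For the successor step $\alpha = \beta+1$, I would show that $C^*_r(G)$ decomposes in the sense of Definition \ref{ais} over subalgebras of the form $C^*_r(H')^+$ with $H' \in \mathcal{C}_\beta$, which by induction belong to $\mathcal{D}_\beta^{\mathrm{sub}}$ (or $\mathcal{D}_\beta^{\mathrm{fd}}$). Given finite $X$ in the unit ball of $C^*_r(G)^+$ and $\epsilon > 0$, approximate each $x \in X$ by an element of $C_c(G) + \C \cdot 1$ whose non-unit parts share a common compact support $K = K^{-1} \subseteq G$, and apply Definition \ref{gpd decomp def} to $K$ to obtain an open cover $\{U_0, U_1\}$ of $r(K) \cup s(K)$ and subgroupoids $H_0', H_1' \in \mathcal{C}_\beta$ containing the subgroupoids generated by $K \cap s^{-1}(U_i)$ for $i = 0, 1$. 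Take $h = \phi_0 \in C_0(G^{(0)}) \subseteq C^*_r(G)$ for a carefully chosen partition of unity $\phi_0 + \phi_1 = 1$ on $r(K) \cup s(K)$ with $\phi_i$ supported in $U_i$, and set $C = C^*_r(H_0')^+$, $D = C^*_r(H_1')^+$, and $E = C^*_r(H_0' \cap H_1')^+$. Then $f_x h$ is supported in $K \cap s^{-1}(\mathrm{supp}\,\phi_0) \subseteq H_0'$, and symmetrically for $f_x(1-h)$, which supplies the containment conditions of Definition \ref{ais}. The limit step is immediate from unwinding definitions. Finally, for the UCT consequence: second countability of $G$ makes $C^*_r(G)$ separable; separable subhomogeneous $C^*$-algebras are type I, hence nuclear and UCT; so part (i) together with Corollary \ref{fc cor} gives the UCT (and nuclearity) of $C^*_r(G)$.

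The main obstacle is arranging the commutator estimate $\|[h, x]\| < \epsilon$. For $f \in C_c(G)$ and $\phi \in C_0(G^{(0)})$ one computes $[\phi, f](g) = (\phi(r(g)) - \phi(s(g)))\,f(g)$, so the partition $\phi_0$ must satisfy a ``slow variation'' condition $|\phi_0(r(g)) - \phi_0(s(g))| < \epsilon$ for all $g \in K$. In the ample principal case this is essentially free: one refines the cover $\{U_0, U_1\}$ to a finite cover of $K$ by clopen bisections and takes $\phi_i$ to be the indicator of a clopen subset of $G^{(0)}$ chosen so that $\phi_i \circ r = \phi_i \circ s$ on $K$, yielding vanishing commutator. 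In the general non-ample setting one has to iterate and average over a finite refinement of $K$ by open bisections to build a continuous partition with the required slow-variation property, along the lines of the partition-of-unity constructions in \cite{Guentner:2014bh} and \cite{Guentner:2009tg}; this is fiddly but by now standard.
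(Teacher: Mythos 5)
Your overall strategy---transfinite induction with the same base-case/successor/limit trichotomy, and the successor step handled by turning the groupoid decomposition into a $C^*$-algebraic one via a slowly-varying partition of unity---is the same as the paper's. The paper packages the successor step as Lemma \ref{dec to dec}, whose heavy lifting is outsourced to the proof of \cite[Lemma B.3]{Willett:2019aa}; your inline sketch of the partition construction and the commutator estimate is essentially an attempt to reprove that reference, and your honest flagging of the slow-variation issue is appropriate. Two remarks: you should also say why $\operatorname{span}\{C^*_r(H'),1\}$ (rather than just $C^*_r(H')$) lands in $\mathcal{D}_\beta$, which the paper does via a small case analysis on the unit; and the limit-ordinal step is fine as stated.

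The genuine gap is in the base case. You assert that the closure $\overline{H}$ of an open subgroupoid $H$ with compact closure is itself a compact \'etale groupoid, and then work with $C^*_r(\overline{H})$. This is false in general: the closure of a subgroupoid inside a topological groupoid need not be closed under composition (limits $a_i\to a$, $b_i\to b$ with $a_i,b_i\in H$ composable need not give composable pairs $(a_i,b_i)$), so $C^*_r(\overline{H})$ is not even defined. The paper avoids this entirely. For part \eqref{fdc part} it invokes \cite[Lemma 8.14]{Guentner:2014aa} directly on $C^*_r(H)$, with $H$ open of compact closure; the relevant finiteness of fibres can be proved for $H$ itself (each $H^x$ is a discrete subset of the compact $\overline{H}$, uniformly bounded by the number of bisections needed to cover $\overline{H}$) without ever claiming $\overline{H}$ is a groupoid. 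For part \eqref{sfdc part} the paper does something stronger in the ample, principal case: given compact $K\subseteq G$ with $G\in\mathcal{C}_0$, it first finds an open relatively compact subgroupoid $L\supseteq K$, then covers $K$ by a compact open $K'\subseteq L$ using ampleness, and shows that the subgroupoid $H$ generated by $K'$ is a genuinely \emph{compact open} subgroupoid (using the bound on fibre sizes in $L$ to bound word length); only then does it apply the CEER structure theorem \cite[Lemma 3.4]{Giordano:2003aa} to get local finite-dimensionality. That extra generation argument is what replaces your unjustified closure step, and you would need some version of it.
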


Before getting into the proof of this, let us discuss some remarks and examples.  

\begin{example}\label{fdc exes}
Let $G(X)$ be the coarse groupoid associated to a bounded geometry metric space $X$: see \cite[Section 3]{Skandalis:2002ng} or \cite[Chapter 10]{Roe:2003rw} for background.  For such spaces $X$, Guentner, Tessera and Yu \cite{Guentner:2009tg} introduced a notion called \emph{finite decomposition complexity}; it comes with a natural complexity rank, defined to be the smallest ordinal $\alpha$ such that $X$ is in the class $\mathfrak{D}_\alpha$ of \cite[Definition 2.2.1]{Guentner:2013aa}.  Then \cite[Theorem A.7]{Guentner:2014bh} shows that $G(X)$ has finite dynamical complexity if and only if $X$ has finite decomposition complexity\footnote{This result was one of the key motivations for the definition of finite dynamical complexity, and also motivates the terminology.}; moreover, inspection of the proof shows that the two complexity ranks agree.  It follow from this and \cite[Theorem 4.1]{Guentner:2013aa} that for any $n\in \N$ there are spaces $X$ such that $G(X)$ is not in $\mathcal{C}_n$, but is in $\mathcal{C}_N$ for some finite $N>n$.  Moreover it follows from \cite[Discussion below 2.2.1]{Guentner:2013aa} or the main result of \cite{Chen:2015uc} that there are spaces $X$ such that $G(X)$ is in $\mathcal{C}_\alpha$ for some infinite $\alpha$, but not for any finite $\alpha$.  
\end{example}

Example \ref{fdc exes} shows that the range of possible values of the complexity rank for groupoids is quite rich.  As we do not know the corresponding fact for $C^*$-algebras, the following question is natural.

\begin{question}
Are there any circumstances when the complexity rank of $C^*_r(G)$ is bounded above by that of $G$?
\end{question}

It seems very unlikely that there is a positive answer in general, but it is conceivable that there could be a positive answer for coarse groupoids.

\begin{example}\label{fdc exes 2}
Transformation groupoids provide natural examples with finite complexity rank.  Using the main result of \cite{Amini:2020ue}, the complexity rank of the transformation groupoid associated to any free action of a virtually cyclic group on a finite-dimensional space is one.  We guess that the techniques used in the proof of \cite[Theorem 1.3]{Conley:2020ta} should show that for many discrete groups $\Gamma$, any free action on the Cantor set $X$ gives rise to a groupoid $X\rtimes \Gamma$ with finite dynamical complexity; however, we did try to look into the details, and would be interested in any progress here.  These ideas lead to the following conjecture.  
\end{example}

\begin{conjecture}
If $\Gamma$ has finite decomposition complexity then $X\rtimes \Gamma$ has finite dynamical complexity for \emph{any} free action of $\Gamma$ on the Cantor set.
\end{conjecture}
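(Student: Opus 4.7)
The plan is to proceed by transfinite induction on the decomposition rank of $\Gamma$, viewing $\Gamma$ as a discrete metric space equipped with a proper left-invariant metric (the word metric from a finite generating set in the finitely generated case; more generally, the bounded geometry metric-family formulation of Guentner--Tessera--Yu). The base case is straightforward: if $\Gamma$ is bounded as a metric family, then $\Gamma$ is finite, and $X\rtimes\Gamma$ is then a compact groupoid (since $X$ is compact), hence in $\mathcal{C}_0$. All the content is in the inductive step. Given a compact $K\subseteq X\rtimes\Gamma$, one writes $K\subseteq X\times F$ for a finite symmetric $F\subseteq\Gamma$ containing the identity, and picks $r$ larger than the maximal word-length over $F$. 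The FDC hypothesis at rank $\alpha$ produces an $r$-decomposition $\Gamma=A_0\cup A_1$ in which each $A_i=\bigsqcup_{j}A_{i,j}$ is an $r$-disjoint union and the metric family $\{A_{i,j}\}$ has strictly smaller decomposition rank.

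The crux is a \emph{transport lemma}: convert the group-level decomposition $\Gamma=A_0\cup A_1$ into a clopen decomposition $X=U_0\cup U_1$ such that, in every orbit $\Gamma\cdot x$ (identified with $\Gamma$ via the bijection $\gamma\mapsto\gamma x$ guaranteed by freeness), the preimage of $U_i$ agrees with $A_i$ up to $F$-controlled error. Granted such a transport, the subgroupoid of $X\rtimes\Gamma$ generated by $\{(x,\gamma)\in K:x\in U_i\}$ breaks, by the $r$-disjointness and the bound $r>\max_{\gamma\in F}|\gamma|$, into a disjoint union of groupoid pieces, each orbit-wise modelled on one of the $A_{i,j}$'s. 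These pieces collectively form an open subgroupoid of $X\rtimes\Gamma$ which, by the inductive hypothesis applied to the metric family $\{A_{i,j}\}$ (rather than to $\Gamma$ itself, the point being that FDC is a property of families, not of single spaces), lies in the class of complexity rank $<\alpha$. This completes the inductive step.

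The main obstacle, and the entire reason the statement is only a conjecture, is the \textbf{uniform} choice of clopen transport across all orbits simultaneously. For amenable $\Gamma$ one expects this to be accessible through a Kakutani--Rokhlin/quasi-tiling machine in the spirit of Ornstein--Weiss, as refined for general amenable group actions by Downarowicz--Huczek--Zhang: one produces clopen tilings of $X$ realizing an almost-invariant tiling of $\Gamma$, then broadcasts the FDC-decomposition of each tile across $X$ using the combinatorics of the tiling. Such an argument should settle the conjecture whenever $\Gamma$ is amenable and has FDC. For non-amenable groups with FDC (for instance those relatively hyperbolic examples of Goldfarb), no tiling technology is available. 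One could attempt a Borel-selector argument on the orbit equivalence relation, using zero-dimensionality of the Cantor set to upgrade a Borel selector to a clopen one, but descriptive-set-theoretic obstructions may well intervene. A proof in full generality therefore likely requires either (a) a structural theorem showing that FDC itself supplies enough residual amenability in any free Cantor action to permit tiling, or (b) a genuinely new substitute for Rokhlin tilings tailored to non-amenable actions on zero-dimensional compacta; it is precisely the absence of such techniques that makes the conjecture open.
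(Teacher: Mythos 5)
The statement you were asked to address is labelled a \emph{conjecture} in the paper, and the paper offers no proof of it.  Indeed, immediately before stating the conjecture the authors say that they tried to push through the techniques of \cite{Conley:2020ta} and could not complete the argument, explicitly inviting progress.  There is therefore no proof in the paper to compare your proposal against, and a genuine gap in what you wrote is not a defect but the expected outcome.

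To your credit, you did not present your sketch as a proof.  What you produced is an accurate analysis of the shape a proof would have to take, and a correct identification of the missing ingredient.  The inductive strategy — transfinite induction on the decomposition rank of \cite{Guentner:2009tg}, with the successor step converting an $r$-decomposition $\Gamma = A_0 \cup A_1$ into a clopen cover $\{U_0, U_1\}$ realising the decomposition orbit-wise, matched against the cover $\{U_0, U_1\}$ appearing in Definition~\ref{gpd decomp def} — is exactly what the authors have in mind.  The ``transport lemma'' you isolate is exactly the obstruction they ran into: the techniques of \cite{Conley:2020ta} give \emph{Borel} decompositions of this type across orbits, and the sticking point is upgrading Borel to clopen uniformly in $x$.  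Your remarks that a quasi-tiling argument should handle the amenable case while the non-amenable case seems to need something genuinely new are sensible and match the present state of knowledge.  One small point worth recording for a future attempt: your base case ($\Gamma$ bounded, hence finite) only covers the start of the induction, whereas at successor stages the inductive hypothesis must be invoked for metric \emph{families} $\{A_{i,j}\}$ rather than for a single group; you flag this yourself, but it means the statement actually being proved by transfinite induction needs to be phrased from the outset for families (equivalently, for the relevant open subgroupoids of $X\rtimes\Gamma$), not for groups.  None of this changes the verdict: you correctly diagnosed that the conjecture is open, correctly located why, and did not overclaim.
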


\begin{remark}\label{no new uct}
Proposition \ref{fdc prop} does not give new information on the UCT: this is because all groupoids with finite dynamical complexity are amenable by \cite[Theorem A.9]{Guentner:2014bh}, whence their groupoid $C^*$-algebras satisfy the UCT by Tu's theorem \cite[Proposition 10.7]{Tu:1999bq}.  However, it seems interesting to have an approach to the UCT for a large class of groupoids that does not factor through the Dirac-dual-Dirac machinery employed by Tu.
\end{remark}

We now turn to the proof of Proposition \ref{fdc prop}.  For a subgroupoid $H$ of a groupoid $G$, write $H':=H\cup G^{(0)}$, which is also a subgroupoid of $G$.

\begin{lemma}\label{add base lem}
Let $G$ be a groupoid with compact base space, and let $H$ be a subgroupoid in $\mathcal{C}_\alpha$.  Then $H\cup G^{(0)}$ is a subgroupoid of $G$ that is also in $\mathcal{C}_\alpha$.
\end{lemma}

\begin{proof}
We proceed by transfinite induction on $\alpha$.  For the base case $\alpha=0$, let $H$ be a subgroupoid of $G$ in $\mathcal{C}_0$, and let $K'$ be a compact subset of $H'$.  As the base space in an \'{e}tale groupoid is open, $K:=K'\setminus G^{(0)}$ is also a compact set, and is contained in $H$.  As $H$ is in $\mathcal{C}_0$, there exists a subgroupoid $L$ of $H$ that contains $K$, and that has compact closure.  Hence $L'$ is a subgroupoid of $H'$ that contains $K'$ and has compact closure.  Thus $H'$ is in $\mathcal{C}_0$ too.  The inductive step follows the same idea.
\end{proof}

The lemma below is very similar to \cite[Lemma B.3]{Willett:2019aa}.

\begin{lemma}\label{dec to dec}
Let $G$ be a groupoid with compact base space.  Let $H$ be a subgroupoid of $G$ that decomposes over some class $\mathcal{C}$ of subgroupoids of $G$.  Then $H'$ decomposes over the collection of subgroupoids $L'$, where $L$ is a subgroupoid of $H$ that is in $\mathcal{C}$.  
\end{lemma}

\begin{proof}
Let $X$ be a finite subset of the unit ball of $C^*_r(H')$, and $\epsilon>0$.  As $C_c(H)+C(G^{(0)})$ is dense in $C^*_r(H')$, perturbing $X$ slightly, we may assume that $X$ is contained in a subset of $C^*_r(H')$ of the form $C_c(K)+C(G^{(0)})$, where $K$ is an open and relatively compact subset of $H$.  The proof of \cite[Lemma B.3]{Willett:2019aa} gives us open subgroupoids $H_1$ and $H_2$ of $H$ and a positive contraction $h$ in $C_c(H_1^{(0)})\subseteq C^*_r(H_1)$ such that $H_1$, $H_2$ and $H_1\cap H_2$ are in the class $\mathcal{C}$, and such that for all $x\in X$, $hx\in C^*_r(H_1)$, $(1-h)x\in C^*_r(H_2)$, and $(1-h)hx\in C^*_r(H_1\cap H_2)$.  Then the data $h$, $C:=C^*_r(H_1')$, $D=C^*_r(H_2')$, and $E=C^*_r(H_1'\cap H_2')$ give the desired decomposability statement. 
\end{proof}

%



\begin{proof}[Proof of Proposition \ref{fdc prop}]
For part \eqref{fdc part}, fix a groupoid $G$.  We show by transfinite induction on $\alpha$ that if $H$ is an open subgroupoid of $G$ in the class $\mathcal{C}_\alpha$, and if $H'=H\cup G^{(0)}$, then $C^*_r(H')$ is in the class $\mathcal{D}_\alpha$ of Definition \ref{f c}, where we define $\mathcal{D}_\alpha$ relative to the class of subhomogeneous $C^*$-algebras.  Applying this to $H=G$ then gives the desired conclusion for $C^*_r(G)$.  

For the base case, we need to show that if $H$ is an open subgroupoid of $G$ in the class $\mathcal{C}_0$ and if $H'=H\cup G^{(0)}$, then $C^*_r(H')$ is locally subhomogeneous.   Let a finite subset $X$ of $C^*_r(H')$ and $\epsilon>0$ be given.  As $C_c(H')$ is dense in $C^*_r(H')$, up to a perturbation, we may assume $X$ is contained in $C_c(K)$ for some open and relatively compact subset $K$ of $H'$.  Lemma \ref{add base lem} implies that $H'$ is in $\mathcal{C}_0$, whence there is an open subgroupoid $L$ of $H'$ with compact closure that contains $K$, and therefore so that $X$ is contained in $C^*_r(L)$.   On the other hand, $C^*_r(L)$ is subhomogeneous by the proof \cite[Lemma 8.14]{Guentner:2014aa}, so we are done with the base case.

Assume now that $\alpha>0$ (and is either a successor ordinal or limit ordinal), and let $H$ be a subgroupoid of $G$ in the class $\mathcal{C}_\alpha$.  According to Lemma \ref{dec to dec}, we have that $H'$ decomposes over 
$$
\Bigg\{C^*_r(L')\mid L\text{ an open subgroupoid of $H'$ in } \bigcup_{\beta<\alpha}\mathcal{C}_\beta\Bigg\}.
$$
which completes the proof of part \eqref{fdc part} by inductive hypothesis.

We now look at part \eqref{sfdc part}, so let $G$ be principal and ample.  We will show that if $G$ is in $\mathcal{C}_0$, then $C^*_r(G)$ is locally finite dimensional; thanks to our work in part \eqref{fdc part}, this will suffice for the proof.  

Let then $G$ be an element of $\mathcal{C}_0$.  We claim that for any compact subset $K$ of $G$ there is a compact open subgroupoid of $H$ of $G$ that contains $K$.  The claim shows that $C^*_r(G)$ is locally finite-dimensional.  Indeed, up to a perturbation we can assume any finite subset of $C^*_r(G)$ is contained in $C_c(K)$ for some open and relatively compact subset $K$ of $G$, and so in $C^*_r(H)$ for some compact, open subgroupoid of $G$.  It is well-known that a compact, Hausdorff, \'{e}tale, principal groupoid with totally disconnected base space has a locally finite-dimensional $C^*$-algebra: for example, this follows directly from the structure theorem for ``CEERs'' in \cite[Lemma 3.4]{Giordano:2003aa}. 

To establish the claim, let a compact subset $K$ of $G$ be given.   According to the definition of $\mathcal{C}_0$ there exists an open subgroupoid $L$ of $G$ with compact closure such that $K$ is contained in $L$.  Note first that as $L$ has compact closure, there is some $m\in \N$ such that $L$ is covered by $m$ open bisections from $G$.  Hence in particular, for any $x\in L^{(0)}$, we have that the range fiber $L^x$ has at most $m$ elements.  Working entirely inside $L$, it suffices to prove that if $K$ is a compact subset of a principal, ample groupoid $L$ such that $\sup_{x\in L^{(0)}}|L^x|=m<\infty$, then there is a compact, open subgroupoid $H$ of $L$ that contains $K$.  

Now, as $L$ is ample (and \'{e}tale), each point $l\in K$ is contained in a compact, open subset of $L$.  As finitely many of these compact, open subsets cover $K$, there is a compact, open subset $K'$ of $L$ such that $K\subseteq K'$.  Let $H$ be the subgroupoid of $L$ generated by $K'$.  A subgroupoid generated by an open subset is always open (see for example \cite[Lemma 5.2]{Guentner:2014aa}), so it suffices to prove that $H$ is compact.  Let $(h_i)_{i\in I}$ be an arbitrary net consisting of elements from $H$.  Each $h_i$ can be written as a finite product $h_i=k_i^{(1)}\cdots k_i^{(n_i)}$, with $k_i^{(j)}$ in $K'':=K'\cup (K')^{-1}\cup s(K')\cup r(K')$.   As each range fibre from $L$ has at most $m$ elements, we may assume that $n_i\leq m$ for all $m$; in fact we may assume it is exactly $m$, as otherwise we can just ``pad'' it with identity elements.  Write then $h_i=k_i^{(1)}\cdots k_i^{(m)}$.  As $K''$ is compact, we may pass to a subnet of $I$, and thus assume that each net $(k_i^{(j)})_{i\in I}$ has a convergent subnet, converging to some $k^{(j)}$ in $K''$.  It follows on passing to this subnet that $(h_i)$ converges to $k^{(1)}\cdots k^{(m)}$.  As we have shown that every net in $H$ has a convergent subnet, $H$ is compact, completing the proof.
\end{proof}

\bibliography{Generalbib}

\end{document}